\numberwithin{equation}{section}
\newtheorem{theorem}{Theorem}[section]
\newtheorem{lemma}[theorem]{Lemma}
\newtheorem{proposition}[theorem]{Proposition}
\newtheorem{corollary}[theorem]{Corollary}
\theoremstyle{definition}
\newtheorem{definition}[theorem]{Definition} 
\newtheorem{procedure}[theorem]{Procedure} 
\newtheorem{remark}[theorem]{Remark}
\newtheorem{example}[theorem]{Example}
\newcommand{\fq}{\mbox{$\mathfrak{q}$}}
\newcommand{\bn}{\mbox{$\mathbb{N}$}}
\newcommand{\bz}{\mbox{$\mathbb{Z}$}}
\newcommand{\bq}{\mbox{$\mathbb{Q}$}}
\newcommand{\bm}{\mbox{$\mathcal{M}$}}
\newcommand{\bl}{\mbox{$\mathcal{L}$}}
\newcommand{\rad}{\mbox{${\rm rad}$}}
\newcommand{\rank}{\mbox{${\rm rank}$}}
\newcommand{\ch}{\mbox{${\rm char}$}}
\newcommand{\diag}{\mbox{${\rm diag}$}}
\newcommand{\adj}{\mbox{${\rm adj}$}}
\newcommand{\hull}{\mbox{${\rm Hull}$}}
\begin{document}


\title[]{Degree and algebraic properties of lattice and matrix ideals}  
\thanks{The second author was partially supported by the
MTM2010-20279-C02-01 grant. The third author was partially supported by SNI}

\author{Liam O'Carroll}
\address{
Maxwell Institute for Mathematical Sciences\\
School of Mathematics\\
University of Edinburgh \\
EH9 3JZ, Edinburgh, Scotland
}
\email{L.O'Carroll@ed.ac.uk}

\author{Francesc Planas-Vilanova}
\address{
Departament de Matem\`atica Aplicada 1\\
Universitat Polit\`ecnica de Catalunya\\
Diagonal 647, ETSEIB \\
08028 Barcelona, Catalunya
}
\email{francesc.planas@upc.edu}

\author{Rafael H. Villarreal}
\address{
Departamento de
Matem\'aticas\\
Centro de Investigaci\'on y de Estudios
Avanzados del
IPN\\
Apartado Postal
14--740 \\
07000 Mexico City, D.F.
}
\email{vila@math.cinvestav.mx}

\keywords{Lattice ideal, graded binomial ideal, 
degree, primary decomposition, PCB ideal}

\subjclass[2010]{Primary 13F20; Secondary 13A15, 13H15, 13P05, 05E40, 14Q99} 

\begin{abstract} We study the degree
of non-homogeneous lattice ideals over arbitrary fields, and give
formulae to compute the degree in terms of the torsion of certain
factor groups of $\mathbb{Z}^s$ and in terms of relative volumes of
lattice polytopes.  We also study primary decompositions of lattice
ideals over an arbitrary field using the Eisenbud-Sturmfels theory of
binomial ideals over algebraically closed fields. We then use these
results to study certain families of integer matrices (PCB, GPCB, CB,
GCB matrices) and the algebra of their corresponding matrix ideals. In
particular, the family of generalized positive critical binomial
matrices (GPCB matrices) is shown to be closed under transposition,
and previous results for PCB ideals are extended to GPCB ideals.
Then, more particularly, we give some applications to the theory of
$1$-dimensional binomial ideals.  If $G$ is a connected graph, we show
as a further application that the order of its sandpile group is the
degree of the Laplacian ideal and the degree of the toppling ideal. We
also use our earlier results to give a structure theorem for graded
lattice ideals of dimension $1$ in $3$ variables and for homogeneous
lattices in $\mathbb{Z}^3$ in terms of critical binomial ideals (CB
ideals) and critical binomial matrices, respectively, thus
complementing a well-known theorem of Herzog on the toric ideal of a
monomial space curve.
\end{abstract}

\maketitle

\section{Introduction}\label{intro}

Let $S=K[t_1,\ldots,t_s]$ be a polynomial ring over a field $K$. As
usual, $\mathfrak{m}$ will denote the maximal ideal of $S$ generated
by $t_1,\ldots,t_s$. For an arbitrary ideal $I$ of $S$ there are
various ways of introducing the notion of {\it degree\/}; let us
briefly recall one of them. The vector space of polynomials in $S$
(resp. $I$) of degree at most $i$ is denoted by $S_{\leq i}$ (resp.
$I_{\leq i}$). If $H_I^a(i):=\dim_K(S_{\leq i}/I_{\leq i})$ is the
affine Hilbert function of $S/I$ and $k$ is the Krull dimension of
$S/I$, the positive integer
$$
\deg(S/I):=k!\, \lim_{i\rightarrow\infty}{H_I^a(i)}/{i^k}
$$
is called the {\it degree\/} or {\it multiplicity\/} of $S/I$. If
$S=\oplus_{i=0}^{\infty}S_i$ has the standard grading and
$I\subset S$ is a graded ideal, then $H_I^a$ is the Hilbert-Samuel
function of $S/I$ with respect to $\mathfrak{m}$ in the sense of
\cite[Definition~B.3.1]{Vas1}. The notion of degree 
plays a central role in this paper. One of our aims is to 
give a formula for the degree when $I$ is any lattice
ideal.

The set of nonnegative integers (resp. positive integers) is denoted
by $\mathbb{N}$ (resp. $\mathbb{N}_+$). A {\it binomial\/} is a
polynomial of the form $t^b-t^c$, where $b,c\in \mathbb{N}^s$ and
where, if $c=(c_i)\in\mathbb{N}^s$, we set $t^c=t_1^{c_1}\cdots
t_s^{c_s}$. We use the term ``binomial'' as a shorthand for what
elsewhere has been called {\it pure difference binomial\/}
\cite[p.~2]{EisStu} or {\it unital binomial\/} \cite{kahle-miller}.  A
     {\it binomial ideal\/} is an ideal generated by binomials.  The
     set $\{i\, |\, c_i\neq 0\}$, denoted by ${\rm supp}(c)$, is
     called the {\it support\/} of $c$.

Consider an $s\times m$ integer matrix $L$ with column vectors
$a_1,\ldots,a_m$. Each $a_i$ can be written uniquely as
$a_i=a_i^+-a_i^-$, where $a_i^+$ and $a_i^-$ are in $\mathbb{N}^s$ and
have disjoint support.  The {\it matrix ideal\/} of $L$, denoted by
$I(L)$, is the ideal of $S$ generated by
$t^{a_1^+}-t^{a_1^-},\ldots,t^{a_m^+}-t^{a_m^-}$.  A {\it matrix
  ideal\/} is an ideal of the form $I(L)$ for some $L$. Matrix ideals
are a special class of binomial ideals. Some of our results concern
certain integer matrices and the algebra of their matrix
ideals.

A subgroup $\mathcal{L}$ of $\mathbb{Z}^s$ is called a {\it
  lattice\/}. A {\it lattice ideal\/}, over the field $K$, is an ideal
of $S$ of the form $I(\mathcal{L})=(t^{a^+}-t^{a^-}\vert\,
a\in\mathcal{L})$ for some lattice $\mathcal{L}$ in
$\mathbb{Z}^s$. Let $\mathcal{L}$ be the lattice generated by the
columns $a_1,\ldots,a_m$ of an integer matrix $L$.  It is well-known
that $I(L)$ and $I(\mathcal{L})$ are related by the equality
$I(\mathcal{L})=(I(L)\colon(t_1\cdots t_s)^\infty)$ and that
$I(\mathcal{L})$ is also a matrix ideal. The class of lattice ideals
has been studied in many places, see \cite{cca} and the references
there.  This concept is a natural generalization of a toric ideal.

Using commutative algebra methods and the Eisenbud-Sturmfels theory of
binomial ideals over algebraically closed fields \cite{EisStu}, in
this paper we study algebraic properties and primary decompositions of
lattice ideals and binomial ideals of a variety of types. By and
large, we focus on the structure of the class of graded binomial
ideals $I$ that satisfy the vanishing condition $V(I,t_i)=\{0\}$ for
all $i$, where $V(\cdot)$ is the variety of $(\cdot)$. This class of
ideals includes the graded lattice ideals of dimension $1$
\cite{ci-lattice}, the vanishing ideals over finite fields of
algebraic toric sets \cite{algcodes}, the toric ideals of monomial
curves \cite{He3}, the Herzog-Northcott ideals \cite{opHN}, the PCB
ideals \cite{opPCB} and the Laplacian ideals of complete graphs
\cite{riemann-roch}. We will also present some other interesting
families of ideals that satisfy this hypothesis. In particular, for
$s=3$, we completely determine the structure of any graded lattice
ideal in terms of critical binomial ideals and also the structure of
any homogeneous lattice in $\mathbb{Z}^3$. The transpose of $L$ is
denoted by $L^\top$. If $I(L^\top)$ is graded, following
\cite{opPCB}, we study when $I(L)$ is also graded.

The contents of this paper are as follows. In Section~\ref{prelim}, we 
introduce the notion of degree via Hilbert polynomials and 
observe that the degree is independent of the base field $K$ 
(Proposition~\ref{nov26-12}). We present some of the results on
lattice ideals that will be needed throughout the paper, introduce
some notation, and recall how the structure of
$T(\mathbb{Z}^s/\mathcal{L})$, the torsion subgroup of
$\mathbb{Z}^s/\mathcal{L}$,  
can be read off from the normal form of $L$. All results of this
section are well-known.  

In Section~\ref{prim-dec-deg-tor}, we study primary decompositions of
lattice ideals. The first main result is an auxiliary theorem that
relates the degree of $S/I(\mathcal{L})$ and the torsion of
$\mathbb{Z}^s/\mathcal{L}$ to the primary decomposition of
$I(\mathcal{L})$ over an arbitrary field $K$
(Theorem~\ref{bounds-for-the-number-of-associated-primes}).  If $K$ is
algebraically closed, the primary decomposition of $I(\mathcal{L})$ is
given in \cite[Corollary~2.5]{EisStu} in terms of lattice ideals of
partial characters of the saturation of $\mathcal{L}$. In this
situation, the primary components are generated by polynomials of the
form $t^a-\lambda t^b$, where $0\neq\lambda\in K$. Let $\gamma$ be the
order of $T(\mathbb{Z}^s/\mathcal{L})$. If $K$ is a field containing
the $\gamma$-th roots of unity with $\ch(K)=0$ or $\ch(K)=p$, $p$ a
prime with $p\nmid \gamma$, it is well-known that $I(\mathcal{L})$ is
a radical ideal (see Theorem~\ref{may24-05}). Assuming that
$I(\mathcal{L})$ is a graded lattice ideal of dimension $1$, following
\cite{opPCB} we give explicitly the minimal primary decomposition of
$I(\mathcal{L})$ in terms of the normal decomposition of $L$
(Theorem~\ref{main-new}).

Section~\ref{degree-section} is devoted to developing a formula for
the degree of any lattice ideal. First, we exhibit a formula for the
degree that holds for any toric ideal
(Theorem~\ref{toric-degree-non-homg}), the graded case was shown in
\cite[Theorem~4.16, p.~36]{Stur1} and \cite{ehrhart}.  If $S$ has the
standard grading and $I(\mathcal{L})$ is a graded lattice ideal of
dimension $1$, the degree of $S/I(\mathcal{L})$ is the order of
$T(\mathbb{Z}^s/\mathcal{L})$ \cite{degree-lattice}. As usual, we
denote the {\it relative volume\/} of a lattice polytope $\mathcal{P}$
by ${\rm vol}(\mathcal{P})$ and the convex hull of a set $\mathcal{A}$
by ${\rm conv}(\mathcal{A})$.

We come to the main result of  Section~\ref{degree-section}.

\noindent {\bf Theorem~\ref{degree-lattice-char=any}}{\it\ {\rm (a)} 
If ${\rm rank}(\mathcal{L})=s$, then
$\deg(S/I(\mathcal{L}))=|\mathbb{Z}^s/\mathcal{L}|$.\vspace{-2mm}
\begin{itemize} 
\item[\rm (b)] If ${\rm rank}(\mathcal{L})<s$, there is an integer
  matrix $A$ of size $(s-r)\times s$ with ${\rm rank}(A)=s-r$ such
  that we have the containment of rank $r$ lattices ${\mathcal
    L}\subset {\rm ker}_\mathbb{Z}(A)$.

\item[\rm (c)] If ${\rm rank}(\mathcal{L})<s$ and $v_1,\ldots,v_s$
are the columns of $A$, then 
$$
\deg(S/I(\mathcal{L}))=
\frac{|T(\mathbb{Z}^s/\mathcal{L})|(s-r)!{\rm
vol}({\rm conv}(0,v_1,\ldots,v_s))}{|T(\mathbb{Z}^{s-r}/\langle 
v_1,\ldots,v_s\rangle)|}.
$$
\end{itemize}}
One can effectively use Theorem~\ref{degree-lattice-char=any} to
compute the degree of any lattice ideal (Examples~\ref{oct10-12} and
\ref{nov25-12}). For certain families, we show explicit formulae for
the degree (Corollary~\ref{nov28-12}).  For a $1$-dimensional lattice
ideal $I(\mathcal{L})$, not necessarily homogeneous, we can express
the degree in terms of a $\mathbb{Z}$-basis of the lattice
$\mathcal{L}$ (Corollary~\ref{inspired-by-francesc-example},
Example~\ref{jan10-13}).

Section~\ref{primary-dec-1} focuses on graded binomial ideals
satisfying the vanishing condition $V(I,t_i)=\{0\}$ for all $i$. For
ideals of 
this type, we characterize when they are lattice ideals
(Proposition~\ref{2variables}). This enables us to present some
applications of the main result of Section~\ref{prim-dec-deg-tor} to
the theory of binomial ideals.

We show the following result on the structure of graded matrix ideals
$I$, writing $\hull(I)$ for the intersection of the isolated primary
components of $I$.

\noindent {\bf Proposition~\ref{structureI(L)}}{\it\ 
Let $I$ be the matrix ideal of an $s\times m$
integer matrix $L$ and let $\bl$ be the lattice spanned by the columns
of $L$. Suppose that $I$ is graded and that $V(I,t_i)=\{0\}$ for
all $i$. Then:
\begin{itemize}
\item[$(a)$] $I$ has a minimal primary decomposition either of the
  form $I=\fq_1\cap\cdots\cap \fq_c$, if $I$ is unmixed, or else
  $I=\fq_1\cap\cdots\cap \fq_c\cap\fq$, if $I$ is not unmixed, where
  the $\fq_i$ are $\mathfrak{p}_i$-primary ideals with ${\rm
    ht}(\mathfrak{p}_i)=s-1$, and $\fq$ is an $\mathfrak{m}$-primary
  ideal.
\item[$(b)$] $I(\bl)=\fq_1\cap\cdots\cap \fq_c=\hull(I)$.  
\item[$(c)$] $\rank(L)=s-1$ and there exists $\mathbf{d}\in\bn^{s}_{+}$ with
  $\mathbf{d} L=0$. 
\item[$(d)$] If $I$ is not unmixed and $h=t_1\cdots t_s$, there exists
  $a\in\bn_+$ such that $I(\bl)=(I\colon h^{a})$, $\fq=I+(h^a)$ is an
  irredundant $\mathfrak{m}$-primary component of $I$ and
  $I=I(\bl)\cap \fq$.
\item[$(e)$] Either $c\leq |T(\mathbb{Z}^s/\mathcal{L})|$, if
  $\ch(K)=0$, or else $c\leq |G|$, if $\ch(K)=p$, $p$ a prime, where
  $G$  is the unique largest subgroup of $T(\mathbb{Z}^s/\mathcal{L})$
  whose order is relatively prime to $p$. If $K$ is algebraically
  closed, then equality holds.
\end{itemize}}

As a consequence, for the family of matrix ideals satisfying the
hypotheses of Proposition~\ref{structureI(L)}, we obtain the following
formula for the degree:
$$
\deg(S/I)=\max\{d_1,\ldots,d_s\}|T(\bz^s/\bl)|,
$$
where $\mathbf{d}=(d_1,\ldots ,d_s)\in\bn^s_+$ is the weight vector, with
$\gcd(\mathbf{d})=1$, that makes
the matrix ideal $I$ homogeneous (Corollary~\ref{degreeI(L)}). 

In Section~\ref{gpcb-section} we restrict our study to matrix ideals 
associated to square integer matrices of a certain type. Throughout,
set ${\bf 1}=(1,\ldots ,1)$.   

\begin{definition}\label{square-matrix-new}
Let $a_{i,j}\in\mathbb{N}$, $i,j=1,\ldots,s$, and let $L$ be an
$s\times s$ matrix of the
following special form:
\begin{eqnarray}\label{gcb-matrix}
L=\left(\begin{array}{rrrr}
a_{1,1}&-a_{1,2}&\cdots&-a_{1,s}\\
-a_{2,1}&a_{2,2}&\cdots&-a_{2,s}\\
\vdots\phantom{+}&\vdots\phantom{+}&\cdots&\vdots\phantom{+}\\
-a_{s,1}&-a_{s,2}&\cdots&a_{s,s}
\end{array}\right).
\end{eqnarray}
The matrix $L$ is called: (a) a {\em pure binomial\/} matrix (PB
matrix, for short) if $a_{j,j}>0$ for all $j$, and for each column of
$L$ at least one off-diagonal entry is non-zero; (b) a {\em positive
  pure binomial} matrix (PPB matrix) if all the entries of $L$ are
non-zero; (c) a {\em critical binomial} matrix (CB matrix) if $L$ is a
PB matrix and $L{\bf 1}^{\top}=0$; (d) a {\em positive critical
  binomial} matrix (PCB matrix \cite{opPCB}) if all the entries of $L$
are non-zero and $L{\bf 1}^{\top}=0$; (e) a {\em generalized critical
  binomial} matrix (GCB matrix), if $L$ is a PB matrix and there
exists ${\bf b}\in \mathbb{N}^{s}_{+}$ such that $L\mathbf{b}^{\top
}=0$; and (f) a {\em generalized positive critical binomial} matrix
(GPCB matrix) if all the entries of $L$ are non-zero and there exists
${\bf b}\in \mathbb{N}^{s}_{+}$ such that $L\mathbf{b}^{\top }=0$.

If $L$ is a PB matrix, we will call $I(L)=(f_1,\ldots ,f_s)$ the PB
{\it ideal associated\/} to $L$, where $f_i$ is the binomial defined
by the $i$-th column of $L$ (see Definition~\ref{matrix-ideal}). We
will use similar terminology when $L$ is a PPB, CB, PCB, GCB or GPCB
matrix.
\end{definition}

Summarizing, we have the following inclusions among 
these classes of matrices and ideals:
\[
\begin{array}{ccccc}
\mbox{CB}&\subset&\mbox{GCB}&\subset&\mbox{PB}\\ 
\cup & &\cup & & \cup
\\ \mbox{PCB}&\subset&\mbox{GPCB}&\subset&\mbox{PPB}
\end{array}
\]
It turns out that $L$ is a CB matrix if and only if $L$
is the Laplacian matrix of a weighted digraph without sinks or 
sources (see Section~\ref{laplacian-section}). Thus, in principle, we
can and will use the techniques of algebraic graph theory
\cite{Biggs,godsil}, matrix theory \cite{nonnegative-matrices} and
digraph theory \cite{digraphs} to study CB and GCB  
matrices and their matrix ideals.  

The {\it support\/} of a polynomial $f$, denoted by ${\rm supp}(f)$,
is the set of variables that occur in $f$.  If $I(L)=(f_1,\ldots,f_s)$
is the matrix ideal of a PB matrix $L$ and $|{\rm supp}(f_j)|\geq 4$,
for $j=1,\ldots ,s$, we show that $I(L)$ is not a lattice ideal
(Proposition~\ref{oct6-12}).  Let $g_1,\ldots,g_s$ be the binomials
defined by the rows of a GCB matrix $L$ and let $I$ be the matrix
ideal of $L^\top$. If $V(I,t_i)=\{0\}$ and $|{\rm supp}(g_i)|\geq 3$
for all $i$, we show that $I$ is not a complete intersection
(Proposition~\ref{feb9-13}).

The GPCB matrices (resp. ideals) are a generalization of the PCB
matrices (resp. ideals) introduced and studied in \cite{opPCB}. The
origin of this generalization is in the overlap between the results in
\cite[Section~3]{degree-lattice} and the results in
\cite[Section~5]{opPCB} (see \cite[Remark~5.8]{opPCB}). While the
class of PCB matrices is easily seen not to be closed under
transposition, the wider class of GPCB matrices is shown to be closed
under transposition (Theorem~\ref{GPCB-new}).

We are also interested in displaying new families of binomial ideals
verifying the usual hypotheses above, namely, graded matrix ideals $I$
of integer matrices such that $V(I,t_i)=\{0\}$ for all $i$. Some new
such families are the GPCB ideals (Remark~\ref{onecanapply}), the
Laplacian ideals associated to connected weighted graphs
(Proposition~\ref{laplacian-appl}) and the GCB ideals that arise from
matrices with strongly connected underlying digraphs
(Proposition~\ref{gcb-with-strongly-connected-digraph}).  Thus we can
apply to these families some of the results of this article.

In the rest of Section~\ref{gpcb-section}, we extend to GPCB ideals
some properties that hold for PCB ideals.  We give an explicit syzygy
among the generators of a GPCB ideal
(Proposition~\ref{explicitrelation}). This will be used to give an
explicit description of an irredundant embedded component of a GPCB
ideal in at least $4$ variables (Theorem~\ref{embeddedcomp}).  We give
an explicit description of the hull of a GPCB ideal and, if $s\geq 4$,
of an irredundant embedded component (Proposition~\ref{explicitui} and
Theorem~\ref{embeddedcomp}). For $s=2$, we give a description of a
GPCB ideal $I(L)$ and its hull in terms of the entries of the matrix
$L$ (Lemma~\ref{cases=2}). This description will be used in
Section~\ref{lattice-dim1-3vars} (see
Proposition~\ref{when-is-unmixed}) to characterize when $I(L)$ is a
lattice ideal.

In Section~\ref{laplacian-section} we show how our results apply to
matrix ideals arising from Laplacian matrices of weighted graphs and
digraphs. We are interested in relating the combinatorics of a graph
(resp. digraph) to the algebraic invariants and properties of the
matrix ideal associated to the Laplacian matrix of the graph
(resp. digraph).  Let $G =(V,E,w)$ be a weighted simple graph, where
$V=\{t_1,\ldots,t_s\}$ is the set of vertices, $E$ is the set of edges
and $w$ is a weight function that associates a weight $w_e$ with every
$e\in E$. The Laplacian matrix of $G$, denoted by $L(G)$, is a prime
example of a CB matrix. Laplacian matrices of complete graphs are PCB
matrices; this type of matrix occurs in \cite{riemann-roch}.  The
matrix ideal $I\subset S$ of $L(G)$ is called the {\it Laplacian
  ideal} of $G$.  If $I\subset S$ is the Laplacian ideal of $G$, the
lattice ideal $I(\mathcal{L})=(I\colon(t_1\cdots t_s)^\infty)$ is
called the {\it toppling\/} ideal of $G$
\cite{riemann-roch,perkinson}. If $G$ is connected, the toppling ideal
has dimension $1$.

The torsion subgroup of the factor group $\mathbb{Z}^s/{\rm
  Im}(L(G))$, denoted by $K(G)$, is called the {\it critical group\/}
or the {\it sandpile group} of $G$ \cite{alfaro-valencia,lorenzini}.
The group $K(G)$ is equal to the torsion subgroup of
$\mathbb{Z}^s/\mathcal{L}$.  Below, we denote the set of edges of $G$
incident to $t_i$ by $E(t_i)$. Thus we can give an application of our
earlier results to this setting.

\noindent {\bf Proposition~\ref{laplacian-appl}}{\it\ Let $G=(V,E,w)$
be a connected weighted simple graph with 
vertices $t_1,\ldots,t_s$ and let $I\subset S$ be its
Laplacian ideal. Then the following hold.
\begin{itemize}
\item[(a)] $V(I,t_i)=\{0\}$ for all $i$.
\item[(b)] $\deg(S/I)=\deg(S/I(\mathcal{L}))=|K(G)|$.  
\item[(c)] ${\rm Hull}(I)=I(\mathcal{L})$.
\item[(d)] If $|E(t_i)|\geq 3$ for all $i$, then $I$ is 
not a lattice ideal. 
\item[(e)] If $G=\mathcal{K}_s$ is a complete graph, then
$\deg(S/I)=s^{s-2}$.   
\end{itemize}}

As another application, we show that the Laplacian ideal is an almost
complete intersection for any connected simple graph without vertices
of degree $1$ (Proposition~\ref{aci-laplacian}).

Given a square integer matrix $L$, we denote its underlying digraph
by $G_L$ (Definition~\ref{underlying-digraph}). If $L$ is a GCB matrix
and $G_L$ is strongly connected, we show that $L^\top$ is a GCB
matrix (Theorem~\ref{GCB-new-perron-Frobenius}).
If $L$ is a GCB matrix, we show that $G_L$ is 
strongly connected if and only if $V(I(L),t_i)=\{0\}$ for
all $i$ (Proposition~\ref{gcb-with-strongly-connected-digraph}). 
Thus, the results of the previous sections can also be applied to GCB 
ideals that arise from matrices with strongly connected underlying
digraphs.

Finally, in Section~\ref{lattice-dim1-3vars} we focus on matrix ideals
with $s=3$ and apply our earlier results in this setting. From
\cite[Theorem~6.1]{peeva-sturmfels}, it follows that graded lattice
ideals of height $2$ in $3$ variables are generated by at most $3$
binomials. The main results of Section~\ref{lattice-dim1-3vars}
uncover the structure of this type of ideal and the structure of
graded lattices of rank $2$ in $\mathbb{Z}^3$. We show that a graded
lattice ideal in $K[t_1,t_2,t_3]$ of height $2$ is generated by a full
set of critical binomials (Definition~\ref{fullset},
Theorem~\ref{nov18-12}); our proof follows that of
\cite[pp.~137--140]{kunz}. This result complements the well-known
result of Herzog \cite{He3} showing that the toric ideal of a monomial
space curve is generated by a full set of critical binomials.

It is easy to see that an ideal $I\subset K[t_1,t_2]$ is a graded
lattice ideal of dimension $1$ if and only if $I$ is a PCB ideal. The
main result of Section~\ref{lattice-dim1-3vars} is the analogue of
this result in the case of $3$ variables. Concretely, an ideal
$I\subset K[t_1,t_2,t_3]$ is a graded lattice ideal of dimension $1$
if and only if $I$ is a CB ideal (Theorem~\ref{mainSection}). Then we
show that the graded lattices of rank $2$ in $\mathbb{Z}^3$ are
precisely the lattices generated by the columns of a CB matrix of size
$3$ (Corollary~\ref{jan6-13}).  As a corollary of
Theorem~\ref{mainSection}, and for $s=3$, we deduce a characterization
of the structure of the hull of a GCB ideal
(Corollary~\ref{hull(GPCB)}).

For all unexplained
terminology and additional information,  we refer to 
\cite{EisStu,cca,monalg} (for the theory of binomial and lattice
ideals), \cite{CLO,singular-book} (for Gr\"obner bases and Hilbert
functions) and \cite{BHer,Mats,ZS} (for commutative algebra).

\section{Preliminaries}\label{prelim}

In this section, we 
present some of the results that will be needed throughout the paper
and introduce some more notation. All results of this
section are well-known. To avoid repetitions, we continue to employ
the notations and 
definitions used in Section~\ref{intro}.

Let $S=K[t_1,\ldots,t_s]$ be a polynomial ring over a field $K$ and
let $I$ be an ideal of $S$. As usual, $\mathfrak{m}$ will denote the 
maximal ideal of $S$ generated by $t_1,\ldots,t_s$. 
The vector space of polynomials in $S$
(resp. $I$) of degree at most $i$ is denoted by $S_{\leq i}$ (resp.
$I_{\leq i}$). The functions 
$$
H_I^a(i)=\dim_K(S_{\leq i}/I_{\leq i})\ \ \mbox{ and }\ \
H_I(i)=H_I^a(i)-H_I^a(i-1)
$$
are called the {\it affine Hilbert function} and the {\it Hilbert
function} of $S/I$ respectively. 

Let $k=\dim(S/I)$ be the Krull dimension of $S/I$. According 
to \cite[Remark~5.3.16, p.~330]{singular-book}, there are unique
polynomials $h^a_I(t)=\sum_{j=0}^{k}a_jt^j\in 
\mathbb{Q}[t]$ and $h_I(t)=\sum_{j=0}^{k-1}c_jt^j\in
\mathbb{Q}[t]$ of degrees $k$ and $k-1$, respectively, such that
$h^a_I(i)=H_I^a(i)$ and $h_I(i)=H_I(i)$ for 
$i\gg 0$.  By convention, the zero polynomial has degree $-1$. Notice 
that $a_k(k!)=c_{k-1}((k-1)!)$ for $k\geq 1$. If $k=0$, then
$H_I^a(i)=\dim_K(S/I)$ for $i\gg 0$. 

\begin{definition}
The integer $a_k(k!)$, denoted by ${\rm deg}(S/I)$, is 
called the {\it degree\/} of $S/I$. 
\end{definition}

\begin{remark}
If $S=\oplus_{i=0}^{\infty}S_i$ has the standard grading and
$I\subset S$ is a graded ideal, then $H_I(i)$ is equal to 
$\dim_K(S_i/I_i)$  
for all $i$, and $H_I^a$ is the Hilbert-Samuel
function of $S/I$ with respect to 
$\mathfrak{m}$ in the sense of
\cite[Definition~B.3.1]{Vas1}. 
\end{remark}

We will use the following multi-index notation: for
$a=(a_1,\ldots,a_s)\in\mathbb{Z}^s$, set $t^a=t_1^{a_1}\cdots
t_s^{a_s}$. Notice that $t^a$ is a monomial in the Laurent polynomial
ring  $T:=K[t_1^{\pm 1},\ldots,t_s^{\pm 1}]$. If $a_i\geq 0$ for all $i$,
$t^a$ is just a monomial in $S$.

\begin{definition} The {\it graded reverse lexicographical order\/} 
(GRevLex for short) on the monomials of $S$ is defined as $t^b\succ
  t^a$ if and only if $\deg(t^b)>\deg(t^a)$, or $\deg(t^b)=\deg(t^a)$
  and the last nonzero entry of $b-a$ is negative.
\end{definition}

Let $\succ$ be a monomial order on $S$ and let $I\subset S$ be an
ideal. As usual, if $g$ is a
polynomial of $S$, we will denote the {\it leading term\/} of $g$ 
by ${\rm in}(g)$ and the {\it initial ideal\/} of $I$ by ${\rm
in}(I)$. We refer to \cite{CLO} for the
theory of Gr\"obner bases. Let $u=t_{s+1}$ be a new variable. For
$f\in S$ of degree $e$ define 
$$
f^h=u^ef\left({t_1}/{u},\ldots,{t_s}/{u}\right);
$$
that is,  $f^h$ is the {\it homogenization\/} of the polynomial 
$f$ with respect to $u$. The {\it homogenization\/} 
of $I$ is the ideal $I^h$ of $S[u]$ 
given by $I^h=(f^h|\, f\in I)$, and $S[u]$ is given the standard
grading. 

The Gr\"obner bases of $I$ and $I^h$ are nicely related.

\begin{lemma}\label{elim-ord-hhomog}
Let $I$ be an ideal of $S$ and let $\succ$ be the GRevLex order
on $S$ and $S[u]$ respectively.

{\rm (a)} If $g_1,\ldots,g_r$ is a Gr\"obner basis of $I$, then
$g_1^h,\ldots,g_r^h$ is a Gr\"obner basis of $I^h$. 

{\rm (b)} $H_I^a(i)=H_{I^h}(i)$ for $i\geq 0$.

{\rm (c)} $\deg(S/I)=\deg (S[u]/I^{h})$. 

\end{lemma}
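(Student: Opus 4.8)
The plan is to prove the three statements of Lemma~\ref{elim-ord-hhomog} in order, using standard properties of homogenization with respect to the GRevLex order. For part (a), the key point is that GRevLex is specially adapted to homogenization: since $u=t_{s+1}$ is the last variable, for any polynomial $f\in S$ the leading term of $f^h$ with respect to GRevLex on $S[u]$ is exactly the leading term of $f$ with respect to GRevLex on $S$ (the extra powers of $u$ added to lower-degree terms never change which term is maximal, because GRevLex first compares total degree and then penalizes higher powers of the last variable). Hence $\mathrm{in}(f^h)=\mathrm{in}(f)$ for all $f\in S$. Granting this, if $g_1,\dots,g_r$ is a Gr\"obner basis of $I$, then $\mathrm{in}(I^h)$ contains $(\mathrm{in}(g_1),\dots,\mathrm{in}(g_r))=(\mathrm{in}(g_1^h),\dots,\mathrm{in}(g_r^h))$; for the reverse containment one uses that every element of $I^h$ is, after dehomogenizing (setting $u=1$), congruent to an element of $I$, together with the fact that homogenization and the passage to initial terms are compatible, to conclude that $\mathrm{in}(I^h)$ is generated by the $\mathrm{in}(g_i^h)$. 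This is the step I expect to require the most care, since one must argue that no new leading terms appear in $I^h$ beyond those coming from a Gr\"obner basis of $I$; the cleanest route is to invoke the standard fact (e.g.\ from \cite{CLO}) that for GRevLex the homogenization of a Gr\"obner basis is a Gr\"obner basis of the homogenized ideal.

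For part (b), I would compare the two Hilbert functions degree-by-degree. The homogenization map gives, for each $i$, a $K$-linear isomorphism between $S_{\leq i}$, the space of polynomials of degree $\leq i$ in $S$, and $(S[u])_i$, the space of homogeneous polynomials of degree exactly $i$ in $S[u]$: send $f\mapsto u^{\,i-\deg f}f^h$ (equivalently $f\mapsto u^i f(t_1/u,\dots,t_s/u)$), with inverse given by setting $u=1$. The point is then to check that this isomorphism carries $I_{\leq i}$ onto $(I^h)_i$. One inclusion is immediate from the definition of $I^h$; for the other, given a homogeneous $F\in (I^h)_i$, its dehomogenization $F|_{u=1}$ lies in $I$ and has degree $\leq i$, and $F$ is recovered (up to the isomorphism) from it — here part (a) is what guarantees there is no ``loss'': a Gr\"obner basis argument shows $(I^h)_i$ is spanned by the appropriate monomial multiples of the $g_j^h$, which are themselves homogenizations of elements of $I$. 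Therefore $H_I^a(i)=\dim_K(S_{\leq i}/I_{\leq i})=\dim_K((S[u])_i/(I^h)_i)=H_{I^h}(i)$ for all $i\geq 0$.

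For part (c), I would simply feed part (b) into the definitions. Write $k=\dim(S/I)$. By part (b), $H_{I^h}(i)=H_I^a(i)$ agrees for $i\gg 0$ with the polynomial $h_I^a(t)=\sum_{j=0}^k a_j t^j$ of degree $k$. On the other hand, $S[u]/I^h$ is a standard graded ring whose Hilbert function $H_{I^h}$ agrees for $i\gg 0$ with a polynomial of degree $\dim(S[u]/I^h)-1$; comparing, we get $\dim(S[u]/I^h)=k+1$ and the two Hilbert polynomials are literally equal, so their leading coefficients coincide: the coefficient of $t^k$ in $h_I^a$ is $a_k$, which is also the leading coefficient of the Hilbert polynomial of the graded ring $S[u]/I^h$. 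By the definition of degree in both the affine and the graded settings, $\deg(S/I)=a_k(k!)=\deg(S[u]/I^h)$, as claimed. No serious obstacle arises in parts (b) and (c); essentially all the content is in the GRevLex compatibility underlying part (a).
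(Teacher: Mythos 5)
Your proof is correct and follows essentially the same route as the paper: part (a) by the standard GRevLex--homogenization fact (the paper just cites it), part (b) by the degree-wise $K$-linear isomorphism $S_{\leq i}\simeq S[u]_i$ carrying $I_{\leq i}$ onto $(I^h)_i$ (the paper phrases the same argument via the dehomogenization surjection $u\mapsto 1$ and a kernel computation), and part (c) by feeding (b) into the definition of degree. The only cosmetic differences are that your appeal to (a) inside the proof of (b) is unnecessary (the dehomogenize/rehomogenize argument already closes that loop), and that you deduce $\dim(S[u]/I^h)=\dim(S/I)+1$ from the Hilbert-polynomial comparison, whereas the paper simply cites this classical fact from Zariski--Samuel.
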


\begin{proof} (a): This follows readily 
from \cite[Propositions~2.4.26 and 2.4.30]{monalg}. (b): Fix $i\geq 0$. The mapping
  $S[u]_{i}\rightarrow S_{\leq i}$ induced by mapping $u\mapsto 1$ is
  a $K$-linear surjection. Consider the induced composite $K$-linear
  surjection $S[u]_{i}\rightarrow S_{\leq i}\rightarrow S_{\leq
    i}/I_{\leq i}.$ An easy check shows that this has kernel
  $I_{i}^{h}$. Hence, we have a $K$-linear isomorphism of
  finite-dimensional $K$-vector spaces
\begin{equation*}
S[u]_{i}/I_{i}^{h}\simeq S_{\leq i}/I_{\leq i}.
\end{equation*}
Thus $H_{I}^{a}(i)=H_{I^{h}}(i)$. (c): From classical theory
\cite[p.~192]{ZS}, $\dim(S[u]/I^h)$ is equal to $\dim(S/I)+1$. Hence 
the equality follows from (b). 
\end{proof}

\begin{proposition}\label{additivity-of-the-degree}
Let $I$ be an ideal of $S$ and let
$\mathfrak{p}_1,\ldots,\mathfrak{p}_r$ be the set of associated
primes of $I$ of dimension $\dim(S/I)$. If
$I=\mathfrak{q}_1\cap\cdots\cap\mathfrak{q}_m$ 
is a minimal primary decomposition of $I$ such that
${\rm rad}(\mathfrak{q}_i)=\mathfrak{p}_i$ for $i=1,\ldots,r$, then 
$\deg(S/I)=\sum_{i=1}^r\deg(S/\mathfrak{q}_i)$.
\end{proposition}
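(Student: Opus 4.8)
The plan is to reduce the statement to the classical additivity of multiplicity over minimal primes, using the primary decomposition together with the dimension-theoretic fact that only the top-dimensional primary components contribute to the leading term of the affine Hilbert polynomial. First I would pass to the homogenization: by Lemma~\ref{elim-ord-hhomog}(c), $\deg(S/I) = \deg(S[u]/I^h)$, and one checks that $I^h = \mathfrak{q}_1^h \cap \cdots \cap \mathfrak{q}_m^h$ is a primary decomposition of $I^h$ in $S[u]$ with $\mathrm{rad}(\mathfrak{q}_i^h) = \mathfrak{p}_i^h$, where $\dim(S[u]/\mathfrak{q}_i^h) = \dim(S/\mathfrak{q}_i) + 1$ for each $i$ (homogenization preserves dimension and primary-ness for these ideals). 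Thus it suffices to prove the additivity statement in the graded setting, where $\deg$ is the leading coefficient of the ordinary Hilbert polynomial times $k!$, and this is where I can invoke the standard machinery directly.

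In the graded case, the key step is the exact sequence argument. Set $J_i = \mathfrak{q}_i^h$ and let $d = \dim(S[u]/I^h)$. The natural injection $S[u]/I^h \hookrightarrow \bigoplus_{i=1}^m S[u]/J_i$ shows that $\deg(S[u]/I^h) \le \sum_i \deg(S[u]/J_i)$ with the understanding that lower-dimensional summands contribute $0$ to the degree in dimension $d$; more precisely, the Hilbert polynomial of a module of dimension $<d$ has degree $<d-1$, hence does not affect the coefficient of $t^{d-1}$. For the reverse inequality, I would iterate the short exact sequences
$$
0 \to S[u]/\big(J_1 \cap \cdots \cap J_j\big) \to S[u]/\big(J_1\cap\cdots\cap J_{j-1}\big) \oplus S[u]/J_j \to S[u]/\big(J_{j-1}' + J_j\big) \to 0,
$$
where the last term is supported on a closed set of dimension $< d$ whenever $J_j$ is a top-dimensional component not among $J_1,\dots,J_{j-1}$, because minimality of the decomposition forces the relevant radical to strictly contain some $\mathfrak{p}_i^h$. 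Additivity of Hilbert polynomials along these sequences then yields, in the coefficient of $t^{d-1}$, exactly $\sum_{i : \dim = d} \deg(S[u]/J_i)$, and the primary components of dimension $<d$ drop out entirely. Since $\dim(S[u]/J_i) = d$ precisely when $\dim(S/\mathfrak{q}_i) = k$, i.e.\ for $i = 1,\dots,r$, and $\deg(S[u]/J_i) = \deg(S/\mathfrak{q}_i)$ by Lemma~\ref{elim-ord-hhomog}(c) applied to $\mathfrak{q}_i$, we get $\deg(S/I) = \sum_{i=1}^r \deg(S/\mathfrak{q}_i)$.

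The main obstacle I anticipate is the careful bookkeeping that the "error terms" $S[u]/(J_1' + J_j)$ really do have dimension strictly less than $d$. This requires knowing that the radical of $J_1 \cap \cdots \cap J_{j-1}$ plus $J_j$ has dimension $< d$ when $\mathfrak{p}_j^h$ is not contained in any $\mathfrak{p}_i^h$ for $i < j$ among the top-dimensional primes — one orders the components so that $\mathfrak{p}_1,\dots,\mathfrak{p}_r$ come first and uses that distinct minimal (hence incomparable) top-dimensional primes have a sum of dimension $<d$; the embedded components $\mathfrak{q}_i$ with $i>r$ simply have $\dim < k$ to begin with, so their homogenizations contribute dimension $<d$ outright. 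Alternatively, and perhaps more cleanly, one can bypass the homogenization entirely and run the same filtration/exact-sequence argument directly on the affine Hilbert functions $H^a$, using that $H^a_{I}$ and $H^a_{J}$ for $J$ of lower dimension differ in the coefficient of $t^{k-1}$ by $0$; the homogenization is only a convenience to be able to cite graded results verbatim. Either way, the computation is routine once the dimension count on the error terms is in place.
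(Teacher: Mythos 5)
Your proposal is correct, and its skeleton coincides with the paper's: both pass to the homogenization, use that $I^h=\mathfrak{q}_1^h\cap\cdots\cap\mathfrak{q}_m^h$ with each $\mathfrak{q}_i^h$ primary, ${\rm rad}(\mathfrak{q}_i^h)=\mathfrak{p}_i^h$ and dimension shifted by one (exactly the facts the paper cites from Zariski--Samuel, which you only gesture at with ``one checks''), and then descend via Lemma~\ref{elim-ord-hhomog}(c). The genuine difference is the graded step: the paper simply invokes the associativity law of multiplicities (citing the Greuel--Pfister lemma), whereas you re-prove that statement from scratch using the exact sequences $0\to R/(A\cap B)\to R/A\oplus R/B\to R/(A+B)\to 0$ plus the dimension count that the error terms, and all components of dimension $<d$, contribute Hilbert polynomials of degree $<d-1$. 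Your bookkeeping is sound: the top-dimensional associated primes are minimal, hence pairwise incomparable, incomparability is preserved under homogenization, and in an affine domain a strictly larger prime strictly drops the dimension, so ordering the top-dimensional components first makes every correction term small. What your route buys is self-containedness; what the paper's buys is brevity by citation. One caveat: your closing remark that one could ``bypass the homogenization entirely'' and run the same argument on affine Hilbert functions is not as routine as you suggest, since $H^a$ is not additive along those exact sequences --- $(A+B)_{\leq i}$ can be strictly larger than $A_{\leq i}+B_{\leq i}$, so the degree-filtered sequences need not be exact in the middle; the homogenization is doing real work there, not serving merely as a convenience.
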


\begin{proof} 
By \cite[p.~181, $\mbox{[7]--[9]}$]{ZS} and \cite[top of p.~192]{ZS},
$\mathfrak{q}_{1}^{h},\ldots,,\mathfrak{q}_{r}^{h}$ are the primary
components of $I^{h}$ of maximal dimension. Hence, by (part of) the
associativity law of multiplicities (cf. \cite[Lemma~5.3.11,
  p.~327]{singular-book}), we get
\begin{equation*}
\textstyle\deg (S[u]/I^{h})=\sum_{i=1}^{r}\deg
(S[u]/\mathfrak{q}_{i}^{h}).
\end{equation*}
Hence, by Lemma~\ref{elim-ord-hhomog}(c), 
$\deg (S/I)=\sum_{i=1}^{r}\deg (S/\mathfrak{q}_{i})$.
\end{proof}

\begin{definition}
The torsion
subgroup of an abelian group 
$(M,+)$, denoted by $T(M)$, is the set of all $x$ in $M$ such that
${\ell}x=0$ for some $\ell\in\mathbb{N}_+$.  
\end{definition}

\subsection*{Binomial and lattice ideals}  

Let $\mathcal{L}\subset\mathbb{Z}^s$ be a lattice and let
$I(\mathcal{L})\subset S$ be its lattice ideal.  It is well-known that
the height of $I(\mathcal{L})$ is the rank of $\mathcal{L}$ and that
$I(\mathcal{L})$ is a toric ideal if and only if
$\mathbb{Z}^s/\mathcal{L}$ is free as a $\mathbb{Z}$-module
\cite[p.~131]{cca}. Let $p$ be the characteristic of the field $K$.
The next result will be relevant in our context because it says that
when $p=0$, or when $p>0$ and $p$ is relatively prime to the
cardinality of the torsion subgroup of $\mathbb{Z}^s/\mathcal{L}$,
then $I(\mathcal{L})$ is a radical ideal and hence $I(\mathcal{L})$
has a unique minimal primary decomposition (see
Theorem~\ref{main-new}).

\begin{theorem}{\cite[pp.~99-106]{gilmer}}\label{may24-05} If $p=0$, 
then ${\rm rad}(I(\mathcal{L}))=I(\mathcal{L})$, and if $p> 0$, then
$${\rm rad}(I(\mathcal{L}))=(t^a-t^b\vert\, p^r(a-b)\in{\mathcal L} 
\mbox{ for some }r\in\mathbb{N}).
$$
\end{theorem}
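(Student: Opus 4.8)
The plan is to prove Theorem~\ref{may24-05} by reducing to the well-known description of the radical of a lattice ideal in terms of the \emph{saturation} of the lattice, and then translating this into the stated elementwise criterion. Recall that the \emph{saturation} of $\mathcal{L}$ is $\mathrm{Sat}(\mathcal{L}) = \{a \in \mathbb{Z}^s \mid \ell a \in \mathcal{L} \text{ for some } \ell \in \mathbb{N}_+\}$, so that $\mathbb{Z}^s/\mathrm{Sat}(\mathcal{L})$ is the free part of $\mathbb{Z}^s/\mathcal{L}$ and $\mathrm{Sat}(\mathcal{L})/\mathcal{L} = T(\mathbb{Z}^s/\mathcal{L})$. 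First I would recall (this may be quoted from Gilmer's book, or proved directly) that $I(\mathrm{Sat}(\mathcal{L}))$ is a prime ideal, being a toric ideal since $\mathbb{Z}^s/\mathrm{Sat}(\mathcal{L})$ is torsion-free; in particular it is radical. Since $I(\mathcal{L}) \subseteq I(\mathrm{Sat}(\mathcal{L}))$, we always have $\mathrm{rad}(I(\mathcal{L})) \subseteq I(\mathrm{Sat}(\mathcal{L}))$. The whole content of the theorem is to identify $\mathrm{rad}(I(\mathcal{L}))$ precisely, and this identification is characteristic-dependent.

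The characteristic-zero case is the easier half. Here one shows $I(\mathcal{L})$ is already radical, equivalently $I(\mathcal{L}) = I(\mathrm{Sat}(\mathcal{L}))$. One inclusion is free. For the reverse, take a generator $t^{a^+}-t^{a^-}$ of $I(\mathrm{Sat}(\mathcal{L}))$ with $a \in \mathrm{Sat}(\mathcal{L})$, so $\ell a \in \mathcal{L}$ for some $\ell \in \mathbb{N}_+$; then $t^{\ell a^+} - t^{\ell a^-} \in I(\mathcal{L})$. The point is that over a field of characteristic $0$ (or more generally where $\ell$ is invertible), the binomial $t^{a^+}-t^{a^-}$ divides $t^{\ell a^+}-t^{\ell a^-}$ up to units, using the factorization $x^\ell - y^\ell = (x-y)(x^{\ell-1}+\cdots+y^{\ell-1})$ after clearing denominators — more carefully, one works in the localization $S[(t_1\cdots t_s)^{-1}]$ or uses that $I(\mathcal{L})$ is the contraction of its extension there, together with the standard fact that a binomial lies in $I(\mathcal{L})$ iff the exponent difference lies in $\mathcal{L}$ after localizing. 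So $t^{a^+}-t^{a^-} \in \mathrm{rad}(I(\mathcal{L}))$, giving $I(\mathrm{Sat}(\mathcal{L})) \subseteq \mathrm{rad}(I(\mathcal{L})) \subseteq I(\mathrm{Sat}(\mathcal{L}))$, and equality throughout.

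For characteristic $p>0$, let $J = (t^a - t^b \mid p^r(a-b) \in \mathcal{L} \text{ for some } r \in \mathbb{N})$; note $p^r(a-b) \in \mathcal{L}$ exactly says $a - b$ lies in the $p$-saturation $\mathrm{Sat}_p(\mathcal{L}) := \{c \mid p^r c \in \mathcal{L} \text{ some } r\}$. One shows $\mathrm{rad}(I(\mathcal{L})) = J = I(\mathrm{Sat}_p(\mathcal{L}))$. For $J \subseteq \mathrm{rad}(I(\mathcal{L}))$: if $p^r(a-b) \in \mathcal{L}$, then using the Frobenius identity $(t^a - t^b)^{p^r} = t^{p^r a} - t^{p^r b} \in I(\mathcal{L})$ in characteristic $p$, so $t^a - t^b \in \mathrm{rad}(I(\mathcal{L}))$. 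For the reverse $\mathrm{rad}(I(\mathcal{L})) \subseteq J$: it suffices to show $J$ is radical, since $I(\mathcal{L}) \subseteq J$ is clear. Here $J = I(\mathrm{Sat}_p(\mathcal{L}))$ is again a lattice ideal, and writing $|T(\mathbb{Z}^s/\mathrm{Sat}_p(\mathcal{L}))|$ — this torsion group has order prime to $p$ — one invokes the characteristic-zero-style argument again: for the remaining "prime to $p$" part of the torsion, exponents $\ell$ with $\gcd(\ell,p)=1$ are invertible in $K$, so the same divisibility argument as above shows $J = I(\mathrm{Sat}(\mathcal{L}))$, which is prime. Hence $J$ is radical and we conclude.

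I expect the main obstacle to be the careful bookkeeping in the divisibility step: showing rigorously that a binomial $t^{a^+}-t^{a^-} \in I(\mathcal{L})$ precisely when $a \in \mathcal{L}$, modulo the subtlety that this elementwise characterization only holds cleanly in the Laurent ring $T$ (or after localizing at $t_1 \cdots t_s$) and must be contracted back to $S$, and keeping track of which exponents $\ell$ are units in $K$ so that $x^\ell - y^\ell$ genuinely has $x-y$ as a factor giving membership in the radical. The clean way to package all of this is to factor $\mathcal{L}$ through its saturation and $p$-saturation once and for all, cite the primality of toric ideals, and reduce every claim to the single lemma that $I(\mathcal{L}')$ is radical whenever $\mathbb{Z}^s/\mathcal{L}'$ has torsion of order invertible in $K$. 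The rest is the two one-line identities $x^\ell - y^\ell = (x-y)(\cdots)$ and $(x-y)^{p^r} = x^{p^r} - y^{p^r}$.
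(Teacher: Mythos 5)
The paper offers no proof of this theorem at all --- it is quoted verbatim from Gilmer [pp.~99--106] --- so the only question is whether your argument stands on its own, and it does not: both halves rest on the false identification of ``radical'' with ``equal to the lattice ideal of the ($p$-)saturation.'' In characteristic $0$ you assert that $I(\mathcal{L})$ being radical is \emph{equivalent} to $I(\mathcal{L})=I(\mathrm{Sat}(\mathcal{L}))$; the latter is a prime (toric) ideal, so this would make every lattice ideal in characteristic $0$ prime, which is false. Take $\mathcal{L}=\langle(2,-2)\rangle\subset\mathbb{Z}^2$ over $\mathbb{Q}$: then $I(\mathcal{L})=(t_1^2-t_2^2)=(t_1-t_2)\cap(t_1+t_2)$ is radical but strictly smaller than $I(\mathrm{Sat}(\mathcal{L}))=(t_1-t_2)$, and indeed $t_1-t_2\notin\mathrm{rad}(I(\mathcal{L}))$. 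The divisibility step meant to give the reverse inclusion is backwards: from $\ell a\in\mathcal{L}$ you learn that $t^{a^+}-t^{a^-}$ \emph{divides} $t^{\ell a^+}-t^{\ell a^-}\in I(\mathcal{L})$, i.e.\ some element of $I(\mathcal{L})$ is a multiple of your binomial; that never places a \emph{power} of the binomial inside $I(\mathcal{L})$, which is what membership in the radical requires (and which fails in the example just given, since $(t_1+t_2)\nmid(t_1-t_2)^N$). The same error recurs in characteristic $p$ when you conclude $J=I(\mathrm{Sat}(\mathcal{L}))$ ``which is prime'': this contradicts the very statement being proved --- in characteristic $3$ with the same $\mathcal{L}$ one has $\mathrm{Sat}_3(\mathcal{L})=\mathcal{L}$, so the theorem gives $\mathrm{rad}(I(\mathcal{L}))=(t_1^2-t_2^2)$, not the prime $(t_1-t_2)$. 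Only your Frobenius inclusion $J\subseteq\mathrm{rad}(I(\mathcal{L}))$ is correct.

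What is genuinely missing is precisely the lemma you defer to in your last paragraph and never prove: a lattice ideal $I(\mathcal{L}')$ is radical whenever the torsion of $\mathbb{Z}^s/\mathcal{L}'$ has order invertible in $K$ (any order in characteristic $0$, order prime to $p$ in characteristic $p$). Applied to $\mathcal{L}$ itself in characteristic $0$, and to $\mathrm{Sat}_p(\mathcal{L})$ in characteristic $p$ (whose quotient has no $p$-torsion), it finishes the proof together with the Frobenius step; but it cannot be extracted from the divisibility argument. The standard route --- essentially what lies behind the Gilmer citation --- is to pass to the Laurent ring $T=K[t_1^{\pm 1},\ldots,t_s^{\pm 1}]$, identify $T/I(\mathcal{L}')T$ with the group algebra $K[\mathbb{Z}^s/\mathcal{L}']$, note that this is reduced because the torsion contributes factors $K[x]/(x^{\gamma_i}-1)$ with $x^{\gamma_i}-1$ separable when $\gamma_i$ is invertible in $K$ (the free part only adds Laurent variables), and then contract back using $I(\mathcal{L}')=I(\mathcal{L}')T\cap S$, which the paper records in Theorem~\ref{jun12-02} and Corollary~\ref{nov4-12}. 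Your reduction of the right-hand side to a lattice ideal is fine, since $t^a-t^b=t^m(t^{c^+}-t^{c^-})$ with $c=a-b$ shows that $J=I(\mathrm{Sat}_p(\mathcal{L}))$; it is the radicality lemma itself that your write-up replaces by an incorrect primality claim.
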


The degree is independent of the base field $K$.  

\begin{proposition}\label{nov26-12} If $I_\mathbb{Q}(\mathcal{L})$ 
is the lattice ideal of $\mathcal{L}$ over the field $\mathbb{Q}$,
then
$$
\deg(S/I(\mathcal{L}))
=\deg(\mathbb{Q}[t_1,\ldots,t_s]/I_\mathbb{Q}(\mathcal{L})).
$$
\end{proposition}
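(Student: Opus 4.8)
The plan is to reduce the equality of degrees over $K$ and over $\mathbb{Q}$ to a statement about affine Hilbert functions, and then to exploit the fact that the generators $t^{a^+}-t^{a^-}$ of a lattice ideal have coefficients $\pm 1$, hence are defined over $\mathbb{Z}$. Concretely, choose a finite generating set $f_1,\ldots,f_r$ of $I(\mathcal{L})$ consisting of binomials; these same polynomials, viewed inside $\mathbb{Q}[t_1,\ldots,t_s]$, generate $I_\mathbb{Q}(\mathcal{L})$. By Lemma~\ref{elim-ord-hhomog}(c) it suffices to prove $\deg(S[u]/I(\mathcal{L})^h)=\deg(\mathbb{Q}[t_1,\ldots,t_s,u]/I_\mathbb{Q}(\mathcal{L})^h)$, i.e. to pass to the homogenizations and compare Hilbert \emph{functions} degree by degree. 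So the real target is: $\dim_K (S_{\le i}/I(\mathcal{L})_{\le i}) = \dim_\mathbb{Q}(\mathbb{Q}[t]_{\le i}/I_\mathbb{Q}(\mathcal{L})_{\le i})$ for all $i$.

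First I would fix a monomial order $\succ$ on $S$ (say GRevLex, as in the excerpt) and recall the standard fact that the affine Hilbert function of $S/I$ equals that of $S/\operatorname{in}(I)$, since the standard monomials (those not in $\operatorname{in}(I)$) of degree $\le i$ form a $K$-basis of $S_{\le i}/I_{\le i}$. Thus it is enough to show that the initial ideal of $I(\mathcal{L})$ is the same monomial ideal whether we compute over $K$ or over $\mathbb{Q}$. The key step is a field-independence statement for Gröbner bases of $I(\mathcal{L})$: run Buchberger's algorithm starting from the binomial generators $f_1,\ldots,f_r\in\mathbb{Z}[t]$. The crucial observation is that the S-polynomial of two binomials is again a binomial (a difference of two monomials, possibly zero), with coefficients in $\{0,\pm 1\}$, and that the division algorithm applied to a binomial by a list of binomials proceeds by cancelling leading terms with coefficient $\pm 1$ at each step, producing only binomials with $\pm 1$ coefficients. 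Hence no denominators and no field-dependent coefficients ever arise: the algorithm performs literally the same steps over $K$ and over $\mathbb{Q}$ (indeed over $\mathbb{Z}$), and terminates with a reduced Gröbner basis $g_1,\ldots,g_t$ consisting of binomials with coefficients $\pm 1$, the same list for every field. Consequently $\operatorname{in}(I(\mathcal{L}))=(\operatorname{in}(g_1),\ldots,\operatorname{in}(g_t))$ is a monomial ideal independent of $K$, and likewise $\operatorname{in}(I_\mathbb{Q}(\mathcal{L}))$ equals the same monomial ideal.

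From there the argument closes immediately: the set of standard monomials of degree $\le i$ is the same in both cases, so the two affine Hilbert functions agree for all $i$; therefore the affine Hilbert polynomials $h^a_{I(\mathcal{L})}$ and $h^a_{I_\mathbb{Q}(\mathcal{L})}$ coincide, their leading coefficients coincide, and $\deg(S/I(\mathcal{L}))=\deg(\mathbb{Q}[t]/I_\mathbb{Q}(\mathcal{L}))$ by the definition of degree. (Alternatively, one can phrase the whole thing after homogenizing via Lemma~\ref{elim-ord-hhomog}(a), which says homogenization carries a Gröbner basis to a Gröbner basis, so the graded Hilbert function of $S[u]/I(\mathcal{L})^h$ is field-independent; either route works.)

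The main obstacle — and the only point that needs genuine care — is justifying the claim that Buchberger's algorithm, applied to a pure-difference binomial ideal, stays within the world of $\pm 1$-coefficient binomials and hence is insensitive to the ground field. One must check (i) that the S-polynomial $S(t^a-t^b,\,t^c-t^d)$ of two such binomials, after clearing the monomial factors, is again of the form $t^{a'}-t^{b'}$ (or $0$), and (ii) that reducing a binomial modulo a list of binomials, step by step, only ever subtracts a monomial multiple (with coefficient $\pm 1$) of a list element, so the running remainder is always a binomial with coefficients in $\{0,\pm 1\}$. Both are elementary but must be stated cleanly; this is essentially the content of \cite[Corollary~1.2]{EisStu} (binomial ideals have binomial Gröbner bases) together with the refinement that for \emph{pure difference} binomials no scalars other than $\pm 1$ occur, which makes the computation literally identical over $K$ and $\mathbb{Q}$. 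Once this lemma is in hand, the rest is bookkeeping with Hilbert functions and Lemma~\ref{elim-ord-hhomog}.
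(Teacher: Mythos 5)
Your proposal is correct and follows essentially the same route as the paper: both arguments rest on the observation that Gr\"obner basis computations for a pure-difference binomial ideal involve only coefficients $\pm 1$, so the (reduced) Gr\"obner basis, and hence the Hilbert function and the degree, are independent of the ground field (the paper transfers the reduced Gr\"obner basis from $\mathbb{Q}$ to $K$ and checks Buchberger's criterion, while you run Buchberger over $\mathbb{Z}$ from the start and compare initial ideals, with the homogenization step via Lemma~\ref{elim-ord-hhomog} handled the same way). No essential difference or gap.
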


\begin{proof} 
Let $\succ$ be the GRevLex order on $S$ and
$S_\mathbb{Q}=\mathbb{Q}[t_1,\ldots,t_s]$, and on the extensions
$S[u]$ and $S_\mathbb{Q}[u]$, respectively. Let $G_\mathbb{Q}$ be the
reduced Gr\"obner basis of $I_\mathbb{Q}(\mathcal{L})$. We set
$I=I(\mathcal{L})$ and $I_\mathbb{Q}=I_\mathbb{Q}(\mathcal{L})$. 
Notice that $\mathbb{Z}/p\mathbb{Z}\subset K$, where $p={\rm
char}(K)$.  
Hence $S_{\mathbb{Z}}=\mathbb{Z}[t_1,\ldots,t_s]$ maps into $S$. 
If $G$ denotes the image of $G_\mathbb{Q}$ under this map, 
then using Buchberger's criterion
\cite[p.~84]{CLO}, it is seen that $G$ is a Gr\"obner basis
of $I$. Hence, by Lemma~\ref{elim-ord-hhomog}(a), $G_\mathbb{Q}^h$ and
$G^h$ are Gr\"obner basis of $I_\mathbb{Q}^h$ and $I^h$, respectively,
where $G^h$ is the set of all $f^h$ with $f\in G$. 
Therefore, the rings $S_\mathbb{Q}[u]/I_\mathbb{Q}^h$ and $S[u]/I^h$ 
have the same Hilbert function. Thus, by
Lemma~\ref{elim-ord-hhomog}, the result follows.
\end{proof}

If $I\subset S$ is an ideal and $h\in S$, we set $(I\colon h):=\{f\in
S\vert\, fh\in I\}$. This is the {\it colon ideal} of $I$
relative to $h$. The {\it
saturation\/} of $I$ with respect to $h$ is the ideal $(I\colon
h^{\infty}):=\cup_{k=1}^\infty(I\colon h^k)$.

The following is a well-known result that follows 
from \cite[Corollary~2.5]{EisStu}.  

\begin{theorem}{\rm\cite{EisStu}}\label{jun12-02} Let $I$ be a binomial ideal 
of $S$. Then the following are equivalent.
\begin{center}
{\rm (a)} $I$ is a lattice ideal; {\rm (b)} $I=(I\colon(t_1\cdots
t_s)^\infty)$; {\rm (c)} $t_i$ is a non-zero-divisor of $S/I$ for all $i$.
\end{center}
\end{theorem}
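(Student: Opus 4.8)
The plan is to prove the chain of equivalences (a) $\Rightarrow$ (c) $\Rightarrow$ (b) $\Rightarrow$ (a), keeping the reference to \cite[Corollary~2.5]{EisStu} essentially for the one implication that genuinely needs it. First I would dispose of the easy direction (b) $\Rightarrow$ (c): if $I=(I\colon (t_1\cdots t_s)^\infty)$ and $t_i f \in I$ for some $i$ and some $f\in S$, then $f\in (I\colon t_i)\subset (I\colon (t_1\cdots t_s)^\infty)=I$, so each $t_i$ is a non-zero-divisor on $S/I$. Conversely, (c) $\Rightarrow$ (b): the inclusion $I\subset (I\colon(t_1\cdots t_s)^\infty)$ is automatic, and if $f(t_1\cdots t_s)^k\in I$ for some $k$, then peeling off one variable at a time and using that each $t_i$ is regular on $S/I$ forces $f\in I$; hence the two ideals coincide. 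These two steps are routine and require nothing beyond the definitions.

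The substantive implication is (c) $\Rightarrow$ (a), and this is where I would invoke the Eisenbud--Sturmfels structure theory. Since $I$ is a binomial ideal, \cite[Corollary~2.5]{EisStu} (over $\overline{K}$, together with a faithful-flatness descent argument of the kind already used in the proof of Proposition~\ref{nov26-12}) describes the primary decomposition of $I$: the associated primes are themselves binomial primes, each arising as $(I(\mathcal{L}')\colon (t_1\cdots t_s)^\infty)+(t_i\mid i\in\sigma)$ for a subset $\sigma\subset\{1,\dots,s\}$ and a lattice $\mathcal{L}'$ in the complementary coordinates. If some $\sigma$ were nonempty, then the corresponding $t_i$ would lie in an associated prime of $I$, hence be a zero-divisor on $S/I$, contradicting (c). Therefore $\sigma=\emptyset$ for every associated prime, and chasing through the Eisenbud--Sturmfels description of the primary components in that case shows that $I$ is the lattice ideal of the saturation of the relevant sublattice --- equivalently, $I=I(\mathcal{L})$ where $\mathcal{L}=\{a\in\mathbb{Z}^s\mid t^{a^+}-t^{a^-}\in I\}$.

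Alternatively, and more self-containedly, for (c) $\Rightarrow$ (a) I would argue directly: set $\mathcal{L}=\{a\in\mathbb{Z}^s\mid t^{a^+}-t^{a^-}\in I\}$, check this is a lattice (closure under negation is clear; closure under addition follows from the binomial identity $t^{a^+ + b^+}-t^{a^-+b^-}=t^{b^+}(t^{a^+}-t^{a^-})+t^{a^-}(t^{b^+}-t^{b^-})$ up to multiplication by a monomial, using that the $t_i$ are non-zero-divisors to cancel the monomial factor that appears). Then $I(\mathcal{L})\subset I$ is immediate, and for the reverse inclusion one uses that $I$ is generated by binomials $t^{u}-t^{v}$ with $t^u,t^v$ having a common factor $t^w$ stripped off; non-zero-divisibility of the $t_i$ lets one replace $t^u-t^v$ by $t^{u-w}-t^{v-w}$ inside $I$, putting it in the form $t^{a^+}-t^{a^-}$ with $a\in\mathcal{L}$.

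The main obstacle is the passage to the algebraic closure in the Eisenbud--Sturmfels route: \cite[Corollary~2.5]{EisStu} is stated over algebraically closed fields, so one must verify that the conclusion ``$I$ is a lattice ideal'' descends from $\overline{K}$ to $K$. This is handled exactly as in Proposition~\ref{nov26-12} --- a binomial ideal is a lattice ideal iff the $t_i$ are non-zero-divisors (which is condition (c), stable under flat base change) --- so in fact the cleanest exposition is to prove (b) $\Leftrightarrow$ (c) elementarily as above, prove (a) $\Rightarrow$ (b) from the fact that $t_i$ is regular on $S/I(\mathcal{L})$ (well known, \cite[p.~131]{cca}), and prove (c) $\Rightarrow$ (a) by the direct lattice construction, so that \cite{EisStu} is only needed implicitly through the standing structure theory rather than invoked for a base-change-sensitive statement.
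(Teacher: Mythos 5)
Your proposal is correct, but note that the paper offers no proof of Theorem~\ref{jun12-02} at all: it is quoted as a well-known consequence of \cite[Corollary~2.5]{EisStu}, so any honest argument is ``a different route'' from the paper. Your elementary pieces are sound: (b)$\Leftrightarrow$(c) is exactly the standard colon-ideal manipulation (a product of non-zero-divisors modulo $I$ is a non-zero-divisor), (a)$\Rightarrow$(c) is the classical regularity of the $t_i$ on $S/I(\mathcal{L})$ (visible from the $\mathbb{Z}^s/\mathcal{L}$-grading, or \cite{cca}), and your direct construction for (c)$\Rightarrow$(a) --- setting $\mathcal{L}=\{a\in\mathbb{Z}^s\mid t^{a^+}-t^{a^-}\in I\}$, proving closure under addition via the identity $t^{a^++b^+}-t^{a^-+b^-}=t^{b^+}(t^{a^+}-t^{a^-})+t^{a^-}(t^{b^+}-t^{b^-})$ and cancelling the monomial $t^{\min}$ by regularity, then stripping common monomial factors off the given binomial generators to get $I\subset I(\mathcal{L})$ --- is complete and is in fact the cleanest known proof, working over any field with no base-change issues. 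Your first (Eisenbud--Sturmfels) route is the weaker part: \cite[Corollary~2.5]{EisStu} decomposes lattice ideals of partial characters, not arbitrary binomial ideals (the associated-prime description with the sets $\sigma$ lives in a later section of that paper), and the phrase ``lattice ideal of the saturation of the relevant sublattice'' is not literally what comes out --- $I$ equals $I(\mathcal{L})$ for the lattice you defined, not the lattice ideal of its saturation; since you ultimately discard that route in favour of the direct one, this does not damage the proof. Finally, observe that the paper's own Lemma~\ref{sep1-12} (from \cite{ci-lattice}) gives (b)$\Rightarrow$(a) in one line: if $I$ is generated by binomials with exponent vectors $a_1,\ldots,a_m$ spanning $\mathcal{L}$, then $(I\colon(t_1\cdots t_s)^\infty)=I(\mathcal{L})$, which under (b) is $I$ itself --- though one should check that the source does not derive that lemma from Theorem~\ref{jun12-02} before leaning on it.
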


\begin{lemma}{\cite{ci-lattice}}\label{sep1-12} Let 
$\mathcal{L}\subset\mathbb{Z}^s$ be a lattice. Then $\mathcal{L}$ is
generated by $a_1,\ldots,a_m$ if and only if 
$$
((t^{a_1^+}-t^{a_1^-},\ldots,t^{a_m^+}-t^{a_m^-})\colon(t_1\cdots
t_s)^{\infty})=I(\mathcal{L}).
$$
\end{lemma}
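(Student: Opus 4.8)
The plan is to prove the two implications separately, using the characterization of lattice ideals provided by Theorem~\ref{jun12-02}. Set $J=(t^{a_1^+}-t^{a_1^-},\ldots,t^{a_m^+}-t^{a_m^-})$, so $J$ is a binomial ideal, and let $\mathcal{L}'$ denote the lattice generated by $a_1,\ldots,a_m$.

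For the implication ``$\Leftarrow$'': assume $\mathcal{L}$ is generated by $a_1,\ldots,a_m$, i.e. $\mathcal{L}'=\mathcal{L}$. By the very definition of the lattice ideal, each generator $t^{a_i^+}-t^{a_i^-}$ of $J$ lies in $I(\mathcal{L})$, so $J\subset I(\mathcal{L})$; and since by Theorem~\ref{jun12-02} the lattice ideal $I(\mathcal{L})$ satisfies $I(\mathcal{L})=(I(\mathcal{L})\colon(t_1\cdots t_s)^\infty)$, taking saturations of the inclusion $J\subset I(\mathcal{L})$ gives $(J\colon(t_1\cdots t_s)^\infty)\subset I(\mathcal{L})$. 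For the reverse inclusion, I would show $t^{a^+}-t^{a^-}\in(J\colon(t_1\cdots t_s)^\infty)$ for every $a\in\mathcal{L}$. Writing $a=\sum_i n_i a_i$ with $n_i\in\mathbb{Z}$, this is a standard computation: for each single generator one has $t^c(t^{a_i^+}-t^{a_i^-})=t^{c+a_i^+}-t^{c+a_i^-}$ for suitable monomial multiplier $t^c$, and one combines these telescopically (handling the negative $n_i$ by multiplying through by an appropriate power of $t_1\cdots t_s$ to clear denominators) to see that some monomial multiple of $t^{a^+}-t^{a^-}$ lies in $J$. Hence $t^{a^+}-t^{a^-}\in(J\colon(t_1\cdots t_s)^\infty)$, and therefore $I(\mathcal{L})\subset(J\colon(t_1\cdots t_s)^\infty)$, giving equality.

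For the implication ``$\Rightarrow$'': assume $(J\colon(t_1\cdots t_s)^\infty)=I(\mathcal{L})$. By the ``$\Leftarrow$'' direction applied to the lattice $\mathcal{L}'$ generated by $a_1,\ldots,a_m$, we already know $(J\colon(t_1\cdots t_s)^\infty)=I(\mathcal{L}')$. Hence $I(\mathcal{L})=I(\mathcal{L}')$, and I must deduce $\mathcal{L}=\mathcal{L}'$. This follows from the standard fact that the lattice is recoverable from its lattice ideal: if $a\in\mathcal{L}$ then $t^{a^+}-t^{a^-}\in I(\mathcal{L})=I(\mathcal{L}')$, and since $t^{a^+}-t^{a^-}$ is a difference of two distinct monomials lying in a lattice ideal, its exponent difference $a$ must lie in $\mathcal{L}'$ — this is exactly the content that the vectors $a$ with $t^{a^+}-t^{a^-}\in I(\mathcal{L}')$ are precisely the elements of $\mathcal{L}'$, which can be cited from \cite{EisStu} or \cite{cca} or proved directly via the $\mathbb{Z}^s$-grading argument below. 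By symmetry $\mathcal{L}'\subset\mathcal{L}$, so $\mathcal{L}=\mathcal{L}'$ and $a_1,\ldots,a_m$ generate $\mathcal{L}$.

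The main obstacle is the lemma underlying the ``$\Rightarrow$'' step, namely that a lattice ideal $I(\mathcal{L})$ determines $\mathcal{L}$ via $\mathcal{L}=\{a\in\mathbb{Z}^s : t^{a^+}-t^{a^-}\in I(\mathcal{L})\}$. The clean way to handle this is to work in the Laurent ring $T=K[t_1^{\pm1},\ldots,t_s^{\pm1}]$: extend $I(\mathcal{L})$ to $T$, where it becomes generated by $\{t^a-1 : a\in\mathcal{L}\}$, grade $T$ by $\mathbb{Z}^s/\mathcal{L}$ (assigning $t^a$ the degree of $a$ modulo $\mathcal{L}$), and observe that $t^a-1$ is then a difference of an element of degree $\bar a$ and an element of degree $\bar 0$, so it lies in the extended ideal only when $\bar a=\bar 0$, i.e. $a\in\mathcal{L}$. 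This elementary grading argument, combined with the fact that passing from $S$ to $T$ and back is harmless for binomials (the relevant contraction statement), disposes of the obstacle; alternatively one simply cites \cite[Corollary~2.5]{EisStu} or \cite[p.~131]{cca}. Everything else is routine binomial bookkeeping.
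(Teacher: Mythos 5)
The paper does not prove this lemma at all: it is quoted from \cite{ci-lattice} and used as a black box, so there is no in-paper argument to compare against. Your proposal is a correct self-contained proof and is, in substance, the standard one (and the one in the cited reference): for the direction where $\mathcal{L}=\mathcal{L}'$ you saturate the inclusion $J\subset I(\mathcal{L})$ using Theorem~\ref{jun12-02}, and get the reverse inclusion from the telescoping identities $t^{a+b}-1=t^{a}(t^{b}-1)+(t^{a}-1)$ and $t^{-a}-1=-t^{-a}(t^{a}-1)$ in the Laurent ring, which show that a monomial multiple of $t^{a^+}-t^{a^-}$ lies in $J$ for every $a\in\mathcal{L}$; for the converse you reduce to the fact that $I(\mathcal{L})$ determines $\mathcal{L}$, proved by grading $T=K[t_1^{\pm1},\ldots,t_s^{\pm1}]$ by $\mathbb{Z}^s/\mathcal{L}'$. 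In that last step you should make explicit the two small points your sketch leans on: the extended ideal $I(\mathcal{L}')T$ is homogeneous of degree $\bar 0$ and proper (its quotient is the group algebra $K[\mathbb{Z}^s/\mathcal{L}']\neq 0$), and $t^a$ is a unit in $T$, so if $t^a-1$ lay in it with $\bar a\neq\bar 0$ the ideal would contain $1$; with that observation (or the citation to \cite[Corollary~2.5]{EisStu}) the argument is complete.
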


If $B$ is a subset of $\mathbb{Z}^s$, $\langle B\rangle$ will denote
the subgroup of  
$\mathbb{Z}^s$ generated by $B$. Let 
$T$ be the 
Laurent polynomial ring $K[\mathbf{t}^{\pm 1}]=K[t_1^{\pm
1},\ldots,t_s^{\pm 1}]$. 
As usual, if $I$ is an ideal of $S$,
$IT$ will denote its extension in
$T$. Part (b) of the next result can be applied to any matrix ideal.

\begin{corollary}\label{nov4-12}
{\rm(a)} If $I(\mathcal{L})=(\{t^{a_i^+}-t^{a_i^-}\}_{i=1}^m)$, then
$\mathcal{L}=\langle\{a_i\}_{i=1}^m\rangle$.

{\rm (b)} If $I=(\{t^{a_i^+}-t^{a_i^-}\}_{i=1}^m)\subset S$ and
$\mathcal{L}=\langle\{a_i\}_{i=1}^m\rangle$, 
then $I(\mathcal{L})=IT\cap S$.
\end{corollary}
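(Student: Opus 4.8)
The plan is to derive both parts directly from Lemma~\ref{sep1-12}, using Theorem~\ref{jun12-02} in part~(a) and the standard identification of a saturation with a contraction from a localization in part~(b).

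For part~(a), assume $I(\mathcal{L})=(t^{a_1^+}-t^{a_1^-},\ldots,t^{a_m^+}-t^{a_m^-})$. Since $I(\mathcal{L})$ is a lattice ideal, Theorem~\ref{jun12-02} gives $I(\mathcal{L})=(I(\mathcal{L})\colon(t_1\cdots t_s)^\infty)$. Rewriting the saturation in terms of the given generators, we obtain
$$
\bigl((t^{a_1^+}-t^{a_1^-},\ldots,t^{a_m^+}-t^{a_m^-})\colon(t_1\cdots t_s)^\infty\bigr)=I(\mathcal{L}),
$$
and Lemma~\ref{sep1-12} then forces $\mathcal{L}=\langle a_1,\ldots,a_m\rangle$, which is the assertion.

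For part~(b), set $I=(t^{a_1^+}-t^{a_1^-},\ldots,t^{a_m^+}-t^{a_m^-})$, $\mathcal{L}=\langle a_1,\ldots,a_m\rangle$ and $h=t_1\cdots t_s$. I would first observe that $T=K[\mathbf{t}^{\pm 1}]$ equals the localization $S[h^{-1}]$, because inverting $h$ simultaneously inverts all the $t_i$; consequently, for any ideal $J$ of $S$ one has the familiar identity $JT\cap S=(J\colon h^\infty)$, since an $f\in S$ belongs to $JT$ exactly when $h^kf\in J$ for some $k\in\mathbb{N}$. Applying this with $J=I$ gives $IT\cap S=(I\colon h^\infty)$. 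On the other hand, as $\mathcal{L}$ is generated by $a_1,\ldots,a_m$, Lemma~\ref{sep1-12} yields $(I\colon h^\infty)=I(\mathcal{L})$. Combining the two equalities gives $IT\cap S=I(\mathcal{L})$. The argument only uses that $I$ is generated by the displayed binomials, so it applies verbatim to an arbitrary matrix ideal.

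No genuine obstacle is expected: the single routine point worth spelling out is the localization identity $IT\cap S=(I\colon h^\infty)$, which is immediate from the description of $IT$ as the set of fractions $g/h^k$ with $g\in I$ together with the fact that $S[h^{-1}]=K[\mathbf{t}^{\pm 1}]$.
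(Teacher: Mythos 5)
Your proof is correct. Part (a) is exactly the paper's argument: Theorem~\ref{jun12-02} gives $I(\mathcal{L})=(I(\mathcal{L})\colon(t_1\cdots t_s)^\infty)$, and Lemma~\ref{sep1-12} then identifies the lattice with $\langle a_1,\ldots,a_m\rangle$. For part (b) you diverge from the paper in an instructive way: the paper simply refers the reader to \cite[Corollary~2.5]{EisStu} and its proof, whereas you give a self-contained argument by noting that $T=K[\mathbf{t}^{\pm 1}]=S[h^{-1}]$ with $h=t_1\cdots t_s$, invoking the standard contraction identity $IT\cap S=(I\colon h^\infty)$ (which indeed holds here, e.g.\ because $S$ is a domain, so clearing the denominator $h^k$ causes no trouble), and then applying the ``only if'' direction of Lemma~\ref{sep1-12} to get $(I\colon h^\infty)=I(\mathcal{L})$. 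What your route buys is that part (b) is proved entirely from results already stated in the paper, with the only external input being the elementary localization fact; what the paper's citation buys is brevity and a pointer to the Eisenbud--Sturmfels framework in which the same saturation computation is carried out. Mathematically the two amount to the same underlying observation, and your version closes the gap the paper leaves implicit.
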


\begin{proof} (a): By Theorem~\ref{jun12-02}, 
$(I(\mathcal{L})\colon(t_1\cdots t_s)^\infty)=I(\mathcal{L})$. Hence,
by Lemma~\ref{sep1-12},
$\mathcal{L}=\langle \{a_i\}_{i=1}^m\rangle$. 

(b): See \cite[Corollary~2.5]{EisStu} and its proof. 
\end{proof}

\subsection*{Normal forms and critical groups} Let
$\mathcal{L}\subset\mathbb{Z}^s$ be a lattice of rank $r$ generated by
$a_1,\ldots,a_m$ and let $L$ be the $s\times m$ matrix of rank $r$
with column vectors $a_1,\ldots,a_m$.

There are invertible integer matrices $P$ and $Q$ such that
$PLQ=\Gamma$, where $\Gamma$ is a $s\times m$ ``diagonal'' matrix
$\Gamma={\rm diag}(\gamma_1,\gamma_2,\ldots , \gamma_r,0,\ldots ,0)$,
with $\gamma_i\in\mathbb{N}_+$ and $\gamma_i\mid
\gamma_j$ if $i\leq j$.  The matrix $\Gamma$ is called the {\em normal
  form} of $L$ and the expression $PLQ=\Gamma$ the {\em normal
  decomposition} of $L$ ($\Gamma$ is also called the {\it Smith normal
  form\/} of $L$).  The integers $\gamma_1,\ldots,\gamma_r$ are the
      {\it invariant factors} of $L$. The greatest common divisor of
      all the non-zero $i\times i$ sub-determinants of $L$ will be
      denoted by $\Delta_i(L)$.  The cardinality of a finite set $C$ is
      denoted by $|C|$.

\begin{definition} The group $T(\mathbb{Z}^s/\mathcal{L})$ 
is called the {\it critical group\/} of $\mathcal{L}$ or the {\it
critical group} of $L$.
\end{definition}

Critical groups of lattices will play an important role in several
parts of the paper. Their structure can easily be determined using the
next result, which follows from the {\it fundamental structure theorem
  for finitely generated abelian groups\/} \cite{JacI}. Thus, using
any algebraic system that computes normal forms of matrices, {\it
  Maple\/} \cite{maple} for instance, one can determine the structure
of critical groups.

\begin{theorem}\label{torsion} {\rm (a)} 
\cite[Theorem~3.9]{JacI} $\gamma_1=\Delta_1(L),\
\gamma_i=\Delta_i(L)\Delta_{i-1}(L)^{-1}$ for $i=2,\ldots,r$.

{\rm (b)} \cite[pp.~187-188]{JacI} $\mathbb{Z}^s/\mathcal{L}\simeq
\mathbb{Z}/(\gamma_1)\oplus
\mathbb{Z}/(\gamma_2)\oplus\cdots\oplus
\mathbb{Z}/(\gamma_r)\oplus\mathbb{Z}^{s-r}$.

{\rm (c)} $T(\mathbb{Z}^s/\mathcal{L})\simeq \mathbb{Z}/(\gamma_1)\oplus
\mathbb{Z}/(\gamma_2)\oplus\cdots\oplus\mathbb{Z}/(\gamma_r)$.

{\rm (d)} $\Delta_r(L)=|T(\mathbb{Z}^s/\mathcal{L})|=\gamma_1\cdots \gamma_r$.

{\rm (e)}
$|T(\mathbb{Z}^s/\mathcal{L})|=|T(\mathbb{Z}^m/\mathcal{L}^{\top})|$,
where $\mathcal{L}^{\top}$ is the lattice of $\mathbb{Z}^m$ spanned by the
rows of $L$. 
\end{theorem}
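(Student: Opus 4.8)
The plan is to treat parts (a) and (b) as the input from the cited structure theorem \cite{JacI} and to deduce (c), (d), (e) from them together with elementary facts about normal forms and determinants.

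\textbf{Part (c).} By (b) there is an isomorphism $\mathbb{Z}^s/\mathcal{L}\simeq \mathbb{Z}/(\gamma_1)\oplus\cdots\oplus\mathbb{Z}/(\gamma_r)\oplus\mathbb{Z}^{s-r}$. Since each $\gamma_i\in\mathbb{N}_+$, each summand $\mathbb{Z}/(\gamma_i)$ is finite, hence equals its own torsion subgroup, whereas $\mathbb{Z}^{s-r}$ is torsion-free. As the torsion subgroup of a finite direct sum of abelian groups is the direct sum of the torsion subgroups, we get $T(\mathbb{Z}^s/\mathcal{L})\simeq\mathbb{Z}/(\gamma_1)\oplus\cdots\oplus\mathbb{Z}/(\gamma_r)$.

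\textbf{Part (d).} Iterating the recursion in (a) gives $\Delta_r(L)=\gamma_r\Delta_{r-1}(L)=\gamma_r\gamma_{r-1}\Delta_{r-2}(L)=\cdots=\gamma_r\gamma_{r-1}\cdots\gamma_2\Delta_1(L)=\gamma_1\gamma_2\cdots\gamma_r$. On the other hand, from (c) we have $|T(\mathbb{Z}^s/\mathcal{L})|=\prod_{i=1}^{r}|\mathbb{Z}/(\gamma_i)|=\gamma_1\cdots\gamma_r$. Hence the three quantities coincide.

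\textbf{Part (e).} Apply the already-established part (d) to the matrix $L^\top$, which has size $m\times s$, rank $r$, and columns spanning $\mathcal{L}^\top$; this yields $|T(\mathbb{Z}^m/\mathcal{L}^\top)|=\Delta_r(L^\top)$. It then suffices to observe that $\Delta_r(L^\top)=\Delta_r(L)$: every $r\times r$ submatrix of $L^\top$ is the transpose of an $r\times r$ submatrix of $L$, and $\det(M)=\det(M^\top)$, so the set of nonzero $r\times r$ minors (and hence their gcd) is the same for $L$ and $L^\top$. Combining with (d) for $L$ gives $|T(\mathbb{Z}^m/\mathcal{L}^\top)|=\Delta_r(L)=|T(\mathbb{Z}^s/\mathcal{L})|$. (Equivalently, transposing the normal decomposition $PLQ=\Gamma$ gives $Q^\top L^\top P^\top=\Gamma^\top$ with $P^\top$, $Q^\top$ invertible over $\mathbb{Z}$, so $L^\top$ has the same invariant factors $\gamma_1,\ldots,\gamma_r$ and the same critical group.)

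All the computations here are routine bookkeeping; the only step requiring a moment's care is the transposition identity in (e) — that $\Delta_r$, or equivalently the list of invariant factors, is unchanged under transposition — and this is immediate from $\det M=\det M^\top$ applied to $r\times r$ submatrices.
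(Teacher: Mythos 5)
Your proposal is correct and follows exactly the route the paper intends: the paper cites Jacobson for (a) and (b) and leaves (c), (d), (e) as immediate consequences, and your deductions — torsion of the direct sum for (c), telescoping the recursion in (a) together with (c) for (d), and invariance of the $r\times r$ minors (equivalently of the Smith normal form) under transposition for (e) — are precisely those routine steps. Nothing is missing.
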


If $m=s$, let $h_{i,j}$ be the
$(i,j)$-minor of $L$, i.e., the determinant of the matrix obtained
from $L$ by eliminating the $i$-th row and the $j$-th column of
$L$. Let $L_{i,j}=(-1)^{i+j}h_{j,i}$ and set 
${\rm adj}(L)=(L_{i,j})$, the {\it adjoint matrix\/} of $L$. Note that
$L_{i,i}=h_{i,i}$. The adjoint matrix will come up later in
Theorem~\ref{GPCB-new} and
Proposition~\ref{gcb-with-strongly-connected-digraph}. 

\section{Degree and torsion in primary
decompositions}\label{prim-dec-deg-tor}

In this section, we study primary decompositions of lattice ideals
over an arbitrary field, using the Eisenbud-Sturmfels theory of
binomial ideals over algebraically closed fields \cite{EisStu}. For
a graded lattice ideal of dimension $1$, we give the explicit minimal
primary decomposition over a field with enough roots of unity. 

\begin{lemma}\label{sep7-12} Let $F$ be a field extension of 
$K$, let $B=F[t_1,\ldots,t_s]$ be a polynomial ring with coefficients 
in $F$ and let $I$ be an ideal of $S$. Then the following hold. 
\begin{itemize}
\item[\rm(a)] $IB\cap S=I$.
\item[\rm(b)] $\deg(S/I)=\deg(B/IB)$.
\item[\rm(c)] If $\mathfrak{q}$ is a $\mathfrak{p}$-primary ideal of
  $B$, then $\mathfrak{q}\cap S$ is a $\mathfrak{p}\cap S$-primary
  ideal of $S$.
\item[\rm(d)] If $IB=\cap_{i=1}^{r}\mathfrak{q}_i$ is a primary
  decomposition of $IB$, then $I=\cap_{i=1}^r(\mathfrak{q}_i\cap S)$
  is a primary decomposition of $I$ such that ${\rm
    rad}(\mathfrak{q}_i\cap S)={\rm rad}(\mathfrak{q}_i)\cap S$.
\item[\rm(e)] If $I$ is the lattice ideal of $\mathcal{L}$ in $S$, then 
$IB$ is the lattice ideal of $\mathcal{L}$ in $B$. 
\end{itemize}
\end{lemma}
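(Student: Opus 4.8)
The plan is to prove the five parts in the order given, since each relies on the previous ones, and to exploit flatness of $B$ over $S$ throughout. For part (a), the point is that $B=F[t_1,\dots,t_s]$ is a free (hence faithfully flat) $S$-module, with $S$ a direct summand as an $S$-module: choosing an $F$-basis of $F$ containing $1$, one sees $B\cong S\oplus C$ as $S$-modules. Then $IB\cap S = (IB\cap S)\cap S = I(S\oplus C)\cap S = (IS\oplus IC)\cap S = I$. Alternatively, faithful flatness of $S\to B$ gives $IB\cap S=I$ directly. For part (b), I would pass to homogenizations as in the proof of Proposition~\ref{nov26-12}: a reduced Gr\"obner basis $G$ of $I$ with respect to GRevLex remains a Gr\"obner basis of $IB$ (Buchberger's criterion is insensitive to the field extension, since $S$-polynomials and remainders are computed with the same coefficients), so $S[u]/I^h$ and $B[u]/(IB)^h$ have the same Hilbert function by Lemma~\ref{elim-ord-hhomog}(a); then Lemma~\ref{elim-ord-hhomog}(c) gives $\deg(S/I)=\deg(B/IB)$. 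One must check $(IB)^h=I^hB[u]$, which is immediate since homogenization commutes with the coefficient-field extension.

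For part (c), suppose $\mathfrak{q}\subset B$ is $\mathfrak{p}$-primary and set $\mathfrak{q}'=\mathfrak{q}\cap S$, $\mathfrak{p}'=\mathfrak{p}\cap S$. That $\mathfrak{q}'$ is an ideal of $S$ with radical $\mathfrak{p}'$ is routine (contraction commutes with radical for a ring map, since $\mathrm{rad}(\mathfrak{q})\cap S = \mathrm{rad}(\mathfrak{q}\cap S)$). For primariness: if $fg\in\mathfrak{q}'$ with $f\in S$, $g\in S$, and $g\notin\mathfrak{p}'$, then $fg\in\mathfrak{q}$ and $g\notin\mathfrak{p}$, so $f\in\mathfrak{q}$ by primariness of $\mathfrak{q}$ in $B$; hence $f\in\mathfrak{q}\cap S=\mathfrak{q}'$. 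Part (d) then follows formally: intersecting $IB=\cap_i\mathfrak{q}_i$ with $S$ gives $I=IB\cap S=\cap_i(\mathfrak{q}_i\cap S)$ by part (a), each $\mathfrak{q}_i\cap S$ is primary by part (c), and $\mathrm{rad}(\mathfrak{q}_i\cap S)=\mathrm{rad}(\mathfrak{q}_i)\cap S$ by the contraction-radical identity. Part (e) is essentially a tautology from the definitions: $I(\mathcal{L})$ over $K$ is generated by the binomials $t^{a^+}-t^{a^-}$ for $a\in\mathcal{L}$, and these same generators define the lattice ideal over $F$; by Corollary~\ref{nov4-12}(a), or more directly by Theorem~\ref{jun12-02}(b) together with Lemma~\ref{sep1-12}, the ideal they generate in $B$ is already saturated with respect to $t_1\cdots t_s$ (saturation, being a colon computation, is again insensitive to the coefficient field), so it equals the lattice ideal of $\mathcal{L}$ in $B$.

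The main obstacle is part (b), specifically verifying that a Gr\"obner basis of $I$ over $K$ stays a Gr\"obner basis over $F$ and that homogenization is compatible with the base change; once that bookkeeping is in place, (b) reduces to Lemma~\ref{elim-ord-hhomog}. The remaining parts are soft, resting only on faithful flatness / direct-summand splitting of $S\hookrightarrow B$ and elementary properties of primary ideals under contraction. I would present (a), (c), (d), (e) briskly and spend the bulk of the proof text on the Gr\"obner-basis argument for (b), citing the proof of Proposition~\ref{nov26-12} to avoid repetition.
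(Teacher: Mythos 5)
Your proposal is correct and follows essentially the same route as the paper: faithful flatness of $S\hookrightarrow B$ for (a), (c), (d), and the Gr\"obner-basis/homogenization argument via Buchberger's criterion and Lemma~\ref{elim-ord-hhomog} for (b). For (e) the paper phrases the key step as multiplication by each $t_i$ on $B/IB$ remaining injective because $S/I\hookrightarrow B/IB$ is flat, which amounts to the same flat-base-change fact you invoke when saying that the saturation with respect to $t_1\cdots t_s$ is insensitive to the coefficient-field extension.
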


\begin{proof} Let $G=\{g_1,\ldots,g_m\}$ be a Gr\"obner basis of $I$
with respect to the GRevLex order. By Buchberger's criterion
\cite[Theorem~6, p.~84]{CLO}, 
$G$ is also a Gr\"obner basis for $IB$ with respect to the GRevLex
order on $B$. 

(a): Note that $K\hookrightarrow F$ is a faithfully flat extension.
Apply the functor $(-)\otimes _{K}S.$ By Base Change, it follows that
$S\hookrightarrow B$ is a faithfully flat extension. Hence, by
\cite[Theorem~7.5(ii)]{Mats}, $I=IB\cap S$ for any ideal $I$ of $S$.

(b): By Lemma~\ref{elim-ord-hhomog}(a), $I^h$ and $IB^h$ are both
generated by $G^h=\{g_1^h,\ldots,g_m^h\}$, and $G^h$ is a Gr\"obner
basis for $IB^h$. Hence, by Lemma~\ref{elim-ord-hhomog}(b), we get
$$
H_I^a(i)=H_{I^h}(i)=\dim_K(S[u]/I^h)_i=
\dim_F(B[u]/IB^h)_i=H_{IB^h}(i)=H_{IB}^a(i)
$$
for $i\geq 0$. Therefore, one has the equality 
$\deg(S/I)=\deg(B/IB)$.

(c): This is well-known and not hard to show.  

(d): This follows from  (a) and (c). 

(e): Let $t^{a_1^+}-t^{a_1^-},\ldots,t^{a_m^+}-t^{a_m^-}$ be a set of
generators of $I$. This set also generates $IB$.  
Then, by Lemma~\ref{sep1-12}, one has 
\begin{equation}\label{sep7-12-1}
(IB\colon_B\,\, (t_1\cdots
t_s)^\infty)=I_B,\tag{$*$}
\end{equation}
where $I_B$ is the lattice ideal of $\mathcal{L}$ in $B$. We claim
that $t_i$ is not a zero-divisor of $B/IB$ for $i=1,\ldots,s$.  Note
that since $S\hookrightarrow B$ is a flat extension, applying the
functor $(-)\otimes _{S}S/I$ and using Base Change, we deduce that
$S/I\hookrightarrow B/IB$ is a flat extension. Hence since the map
$S/I\stackrel{t_i}{\longrightarrow} S/I$ is injective, by
Theorem~\ref{jun12-02}, so is the map
$B/IB\stackrel{t_i}{\longrightarrow} B/IB$. Consequently, by the
claim, the left hand side of Eq.~(\ref{sep7-12-1}) is equal to $IB$
and we get the equality $IB=I_B$.
\end{proof}

We come to the first main result of this section. 

\begin{theorem}\label{bounds-for-the-number-of-associated-primes} 
Let $I(\mathcal{L})$ be a lattice ideal of $S$ over an arbitrary field
$K$ of characteristic $p$, let $c$ be the number of associated primes
of $I(\mathcal{L})$, and for $p>0$, let $G$ be the unique largest
subgroup of $T(\mathbb{Z}^s/\mathcal{L})$ whose order is relatively
prime to $p$. Then
\begin{itemize}
\item[{\rm (a)}] All associated primes of $I(\mathcal{L})$ have height
equal to ${\rm rank}(\mathcal{L})$.

\item[{\rm (b)}] $|T(\mathbb{Z}^s/\mathcal{L})|\geq c$ if $p=0$ and
$|G|\geq c$ if $p>0$, with equality if $K$ 
is algebraically closed.

\item[{\rm (c)}] $\deg(S/I(\mathcal{L}))\geq |T(\mathbb{Z}^s/\mathcal{L})|$
if $p=0$ and $\deg(S/I(\mathcal{L}))\geq |G|$ if 
$p>0$.
\end{itemize}
\end{theorem}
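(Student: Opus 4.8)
The plan is to reduce everything to the case of an algebraically closed field and then invoke the Eisenbud--Sturmfels description of the primary decomposition of $I(\mathcal{L})$. First I would fix $\overline{K}$ an algebraic closure of $K$, set $B=\overline{K}[t_1,\ldots,t_s]$, and use Lemma~\ref{sep7-12}: part (e) says $IB=I(\mathcal{L})$ computed over $\overline{K}$, part (d) says that contracting a primary decomposition of $IB$ back to $S$ gives a primary decomposition of $I(\mathcal{L})$ (so in particular the number of associated primes can only go down or stay the same under extension, giving the inequality ``$\leq$'' that underlies (b)), and part (b) says $\deg(S/I(\mathcal{L}))=\deg(B/IB)$, which reduces (c) to the algebraically closed case as well. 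Part (a) follows because height is preserved under the faithfully flat extension $S\hookrightarrow B$ together with the fact, recalled in the excerpt, that $\operatorname{ht}I(\mathcal{L})=\operatorname{rank}(\mathcal{L})$, applied to the lattice ideals (of partial characters of the saturation $\operatorname{Sat}(\mathcal{L})$) that appear as the radicals of the primary components over $\overline{K}$; each such partial character is defined on $\operatorname{Sat}(\mathcal{L})$, which has the same rank as $\mathcal{L}$, so every associated prime has height $\operatorname{rank}(\mathcal{L})$.

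Next, over $\overline{K}$, I would quote \cite[Corollary~2.5]{EisStu}: the minimal primary decomposition of $I(\mathcal{L})$ has its associated primes indexed by the partial characters $\rho$ of $\operatorname{Sat}(\mathcal{L})$ extending the trivial character on $\mathcal{L}$, equivalently by the group $\widehat{G}$ of characters of $\operatorname{Sat}(\mathcal{L})/\mathcal{L}$ when $p=0$, and when $p>0$ by the characters of the largest $p'$-quotient, which is exactly the group $G$ in the statement. Since $\operatorname{Sat}(\mathcal{L})/\mathcal{L}\cong T(\mathbb{Z}^s/\mathcal{L})$ (the saturation quotient is precisely the torsion of $\mathbb{Z}^s/\mathcal{L}$), we get $c=|T(\mathbb{Z}^s/\mathcal{L})|$ when $p=0$ and $c=|G|$ when $p>0$ over $\overline{K}$, hence the ``equality if $K$ is algebraically closed'' clause of (b), and by the contraction argument the inequality over arbitrary $K$. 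For (c), each of the $c$ top-dimensional primary components $\mathfrak{q}_i$ over $\overline{K}$ is itself (by the same Eisenbud--Sturmfels description) a lattice-like ideal of the form $(t^a-\lambda t^b)$-type generated, of dimension equal to $\dim(S/I(\mathcal{L}))$; applying Proposition~\ref{additivity-of-the-degree} gives $\deg(B/IB)=\sum_{i=1}^{c}\deg(B/\mathfrak{q}_i)\geq c$ since each $\deg(B/\mathfrak{q}_i)\geq 1$. Combining with $\deg(S/I(\mathcal{L}))=\deg(B/IB)$ and $c=|T(\mathbb{Z}^s/\mathcal{L})|$ (resp.\ $|G|$) over $\overline{K}$ yields (c).

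The main obstacle I anticipate is bookkeeping around the characteristic-$p$ case: making precise that the relevant indexing group for the associated primes is exactly the maximal $p'$-subgroup $G$ of $T(\mathbb{Z}^s/\mathcal{L})$, and that the ``extra'' $p$-part of the torsion collapses (it contributes to embeddedness / nilpotents rather than to new minimal primes, consistent with Theorem~\ref{may24-05}). I would handle this by combining Theorem~\ref{may24-05} (which identifies $\operatorname{rad}I(\mathcal{L})$ with the lattice ideal of $\{a-b : p^r(a-b)\in\mathcal{L}\}$, i.e.\ of the $p$-saturation $\operatorname{Sat}_p(\mathcal{L})$) with the fact that $\operatorname{Sat}(\mathcal{L})/\operatorname{Sat}_p(\mathcal{L})$ is precisely the $p'$-part $G$ — so the minimal primes of $I(\mathcal{L})$ are those of its radical, counted by $|G|$ over $\overline{K}$. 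One small point to be careful about in (a): I should note that associated primes and minimal primes of $I(\mathcal{L})$ all have the same height because, as the Eisenbud--Sturmfels decomposition shows, $I(\mathcal{L})$ over $\overline{K}$ has no embedded primes (it is ``equidimensional'' in the strong sense), so every embedded-prime subtlety is absent; alternatively, since $\operatorname{rad}(\mathfrak{q}_i)$ is a lattice ideal of a rank-$r$ lattice for each $i$, its height is $r$, and then $\mathfrak{q}_i\cap S$ has the same radical height by Lemma~\ref{sep7-12}(d).
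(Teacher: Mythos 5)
Your proposal is correct and follows essentially the same route as the paper: reduce to the algebraic closure via Lemma~\ref{sep7-12} (parts (b), (d), (e)), invoke the Eisenbud--Sturmfels decomposition \cite[Corollaries~2.2 and 2.5]{EisStu} to count the components by $|T(\mathbb{Z}^s/\mathcal{L})|$ (resp.\ $|G|$) and to get their common height ${\rm rank}(\mathcal{L})$, contract to $S$, and finish (c) with the additivity of the degree. The only cosmetic difference is in (a), where the paper passes heights down using that $S\subset\overline{S}$ is an integral extension of normal domains rather than your flatness phrasing; both arguments are valid.
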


\begin{proof} Let $\overline{K}$ be the algebraic closure of $K$ and
let $\overline{S}=\overline{K}[t_1,\ldots,t_s]$ be the corresponding
polynomial ring with coefficients in $\overline{K}$. Thus, we have an
integral extension $S\subset \overline{S}$ of normal domains. We set
$I=I(\mathcal{L})$ and $\overline{I}=I\overline{S}$, where the latter
is the extension of $I$ to $\overline{S}$. The ideal $\overline{I}$ is
the lattice ideal of $\mathcal{L}$ in $\overline{S}$ (see
Lemma~\ref{sep7-12}(e)). Hence, as $\overline{K}$ is algebraically
closed, by \cite[Corollaries~2.2 and 2.5]{EisStu} $\overline{I}$ has a
unique minimal primary decomposition
\begin{equation}\label{sep9-12}
\overline{I}=\overline{\mathfrak{q}}_1
\cap\cdots\cap\overline{\mathfrak{q}}_{c_1},\tag{$\dag$}
\end{equation}
where $c_1=|T(\mathbb{Z}^s/\mathcal{L})|$ if $p=0$ and $c_1=|G|$ if 
$p>0$. Notice that $c_1=|T(\mathbb{Z}^s/\mathcal{L})|$ if
$p$ is relatively prime to $|T(\mathbb{Z}^s/\mathcal{L})|$.
Furthermore, also by \cite[Corollaries~2.2 and 2.5]{EisStu}, one has
that if $\overline{\mathfrak{p}}_i={\rm rad}(\overline{\mathfrak{q}}_i)$
for $i=1,\ldots,c_1$, then
$\overline{\mathfrak{p}}_1,\ldots,\overline{\mathfrak{p}}_{c_1}$ are 
the associated primes of $\overline{I}$ and ${\rm
ht}(\overline{\mathfrak{p}}_i)={\rm rank}(\mathcal{L})$ for
$i=1,\ldots,c_1$. Hence, by Lemma~\ref{sep7-12}(d), one has a primary
decomposition
\begin{equation}\label{dec7-13}
I=(\overline{\mathfrak{q}}_1\cap S)
\cap\cdots\cap(\overline{\mathfrak{q}}_{c_1}\cap S)\tag{\ddag}
\end{equation}
such that ${\rm rad}(\overline{\mathfrak{q}}_i\cap
S)={\rm rad}(\overline{\mathfrak{q}}_i)\cap
S=\overline{\mathfrak{p}}_i\cap S$. We set
$\mathfrak{p}_i=\overline{\mathfrak{p}}_i\cap S$ and  
$\mathfrak{q}_i=\overline{\mathfrak{q}}_i\cap S$
for $i=1,\ldots,c_1$. 

(a): Since $S$ is a normal domain and $S\subset
\overline{S}$ is an integral extension, we get ${\rm
ht}(\mathfrak{p}_i)={\rm ht}(\overline{\mathfrak{p}}_i)={\rm
rank}(\mathcal{L})$ for all $i$
(see \cite[Theorems~5 and 20]{Mat}). 

(b): By Eq.~(\ref{dec7-13}), the associated primes of $I$
are contained in $\{\mathfrak{p}_1,\ldots,\mathfrak{p}_{c_1}\}$. 
Thus, $c_1\geq c$, which proves the
first part. Now, assume that $K=\overline{K}$. By (a), 
we may assume that $\mathfrak{p}_1,\ldots,\mathfrak{p}_c$ 
are the minimal primes of $I$. Consequently $I$ has a unique minimal primary
decomposition $I=\mathfrak{Q}_1\cap\cdots\cap\mathfrak{Q}_c$ 
such that $\mathfrak{Q}_i$ is $\mathfrak{p}_i$-primary and ${\rm
ht}(\mathfrak{p}_i)={\rm rank}(\mathcal{L})$ for $i=1,\ldots,c$. As
$I=\overline{I}$, from Eq.~(\ref{sep9-12}), we get that $c_1=c$.

(c): Using Lemma~\ref{sep7-12}(b), Eq.~(\ref{sep9-12}) that
was stated at the beginning of the proof, and
the additivity of the degree
(Proposition~\ref{additivity-of-the-degree}), we get
$$
\textstyle
\deg(S/I)=\deg(\overline{S}/\overline{I})=
\sum_{i=1}^{c_1}\deg(\overline{S}/\overline{\mathfrak{q}}_i)\geq c_1,
$$
as required.
\end{proof}

\subsection*{Primary decompositions of graded lattice ideals} 
The aim here is to give explicitly the minimal primary decomposition
of a graded lattice 
ideal of dimension $1$ in terms of the normal decomposition of an 
integer matrix (see Section~\ref{prelim}).

Let $\mathcal{L}$ be a lattice in $\mathbb{Z}^s$ of rank $s-1$
generated by $a_1,\ldots,a_m$ and let $L$ be the $s\times m$ matrix
with column vectors $a_1,\ldots,a_m$.  There are invertible integer
matrices 
$P=(p_{ij})$ and $Q$ such that $PLQ=\Gamma$, where 
$\Gamma={\rm diag}(\gamma_1,\gamma_2,\ldots
, \gamma_{s-1},0,\ldots ,0)$, with $\gamma_i\in\mathbb{N}_+$ and
$\gamma_i\mid\gamma_j$ 
if $i\leq j$. The torsion subgroup of $\mathbb{Z}^s/\mathcal{L}$ has
order $\gamma:=\gamma_1\cdots\gamma_{s-1}$. 
If $p_{i,*}$ denotes the $i$-th row of $P$, then the last row of $P$
satisfies $\gcd(p_{s,*})=1$,
$p_{s,*}L=0$ and $\mathcal{L}\subset{\rm ker}(p_{s,*})$. This follows
using the technique described in \cite[p.~37]{New} to solve systems
of linear equations over the integers.  Thus ${\rm
ker}(p_{s,*})$ is equal to $\mathcal{L}_s$, the saturation of $\mathcal{L}$ in
the sense of \cite{EisStu}. For convenience recall that 
$\mathcal{L}_s$ is the set of all $a\in\mathbb{Z}^s$ such that
${\eta}a\in\mathcal{L}$ for some $0\neq\eta\in\mathbb{Z}$. 

For the rest of this section we suppose that $K$ is a field 
containing the $\gamma_{s-1}$-th roots of unity with $\ch(K)=0$ or
$\ch(K)=p$, $p$ a prime with  
$p\nmid \gamma_{s-1}$. Under this assumption, the lattice ideal
$I(\mathcal{L})$ is radical because $\ch(K)=0$ or 
$\gcd(p,|T(\mathbb{Z}^s/\mathcal{L}|)=1$ (see
Theorem~\ref{may24-05}), 
and for each
$i$ the polynomial $z^{\gamma_i}-1$ has $\gamma_i$ distinct roots in
$K$. Write $\Lambda_i$ to denote the set of $\gamma_i$-th roots
of unity in $K$ and $\Lambda=\prod_{i=1}^{s-1}\Lambda_i$. The set
$\Lambda$ is a group under componentwise multiplication and
$|\Lambda|=\gamma$. We also
suppose that $I(\mathcal{L})$ is graded with respect to a weight
vector $\mathbf{d}=(d_1,\ldots,d_s)$ in $\mathbb{N}_+^s$ with
$\gcd(\mathbf{d})=1$. 
Notice that $\mathbf{d}=\pm p_{s,*}$. This means that in the
non-graded case, $\pm p_{s,*}$ can play the role of $\mathbf{d}$.
Consider a polynomial ring $K[x_1]$ in one variable. For any
$\lambda=(\lambda_1,\ldots,\lambda_{s-1})\in\Lambda$, 
there is an homomorphism of $K$-algebras
$$
\varphi_{\lambda}\colon S\to K[x_1],\ \ \ t_i\longmapsto\lambda_1^{p_{1,i}}\cdots
  \lambda_{s-1}^{p_{s-1,i}}x_1^{d_i}, \ \ \ i=1,\ldots ,s.
$$
The kernel of $\varphi_\lambda$, denoted by  $\mathfrak{a}_{\lambda}$,
is a prime ideal of $S$ of height $s-1$ \cite{opPCB}. 
This type of ideal was  
introduced in \cite{opPCB} to study the algebraic properties of PCB ideals. 

The primary decomposition of an arbitrary lattice ideal over an
algebraically closed field is given in \cite[Corollary~2.5]{EisStu}.
This decomposition will be used to prove
Theorem~\ref{degree-lattice-char=any}(c).    
The next result shows an explicit primary decomposition for the
class of ideals under consideration. 

\begin{theorem}\label{main-new}  If $I(\mathcal{L})$ is a graded
lattice ideal of dimension $1$, then 
$I(\mathcal{L})=\cap_{\lambda\in\Lambda}\mathfrak{a}_{\lambda}$ is 
the minimal primary decomposition of $I(\mathcal{L})$ 
into exactly $\gamma$ primary components. 
\end{theorem}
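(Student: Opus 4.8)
The plan is to verify three things: that each $\mathfrak{a}_\lambda$ is a prime ideal containing $I(\mathcal{L})$, that $\bigcap_{\lambda\in\Lambda}\mathfrak{a}_\lambda \subseteq I(\mathcal{L})$, and that the resulting decomposition is irredundant with exactly $\gamma$ components. The containment $I(\mathcal{L})\subseteq\mathfrak{a}_\lambda$ is the quickest part: a generator $t^{a^+}-t^{a^-}$ of $I(\mathcal{L})$ comes from some $a\in\mathcal{L}$, and since $\mathcal{L}\subset\ker(p_{s,*})$ with $\mathbf{d}=\pm p_{s,*}$, we have $\mathbf{d}\cdot a=0$, so the $x_1$-exponents of $\varphi_\lambda(t^{a^+})$ and $\varphi_\lambda(t^{a^-})$ agree; and writing $a=\sum$ (columns of $L$) one checks the root-of-unity scalars also match because $PL\Gamma$-relation forces $p_{j,\ast}\cdot a\equiv 0\pmod{\gamma_j}$ for $j\le s-1$. (This is essentially the computation already done in \cite{opPCB}; I would cite it and only sketch.)

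Next I would establish the reverse containment. Since $K$ contains the $\gamma_{s-1}$-th roots of unity and $\ch(K)=0$ or $\ch(K)=p\nmid\gamma_{s-1}$, Theorem~\ref{may24-05} gives that $I(\mathcal{L})$ is radical, so $I(\mathcal{L})=\bigcap_{\mathfrak{p}}\mathfrak{p}$ over its minimal primes $\mathfrak{p}$, and by Theorem~\ref{bounds-for-the-number-of-associated-primes}(a) each such $\mathfrak{p}$ has height $s-1$, i.e.\ dimension $1$. I claim every minimal prime of $I(\mathcal{L})$ equals some $\mathfrak{a}_\lambda$. One way: pass to $\overline{K}$; by \cite[Corollary~2.5]{EisStu} the minimal primes of $I(\mathcal{L})\overline{S}$ are exactly the lattice ideals of the partial characters of the saturation $\mathcal{L}_s=\ker(p_{s,*})$ extending the trivial character on $\mathcal{L}$, and there are $[\mathcal{L}_s:\mathcal{L}]=|T(\mathbb{Z}^s/\mathcal{L})|=\gamma$ of them (using $p\nmid\gamma$). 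On the other hand the $\gamma$ ideals $\mathfrak{a}_\lambda$ are prime of height $s-1$ and pairwise distinct (distinct $\lambda$ give distinct characters, detectable on suitable $t^{a^+}-\lambda' t^{a^-}$ with $a\in\mathcal{L}_s$), and each contains $I(\mathcal{L})$ hence contains a minimal prime; by counting, $\{\mathfrak{a}_\lambda\}_{\lambda\in\Lambda}$ is precisely the set of minimal primes of $I(\mathcal{L})$. Therefore $I(\mathcal{L})=\bigcap_{\lambda\in\Lambda}\mathfrak{a}_\lambda$, and since all the $\mathfrak{a}_\lambda$ are distinct minimal primes this intersection is irredundant with exactly $\gamma$ components, which is also the minimal \emph{primary} decomposition because $I(\mathcal{L})$ is radical.

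The main obstacle is the middle step: showing the $\mathfrak{a}_\lambda$ exhaust the minimal primes, rather than merely being primes that contain $I(\mathcal{L})$. The cleanest route is the counting argument above, matching $[\mathcal{L}_s:\mathcal{L}]=\gamma$ with $|\Lambda|=\gamma_1\cdots\gamma_{s-1}$ on one side and with the Eisenbud–Sturmfels count of minimal primes over $\overline{K}$ on the other; the slightly delicate point is checking that no two $\mathfrak{a}_\lambda$ coincide, which requires producing, for $\lambda\neq\mu$, an element $a\in\mathcal{L}_s$ and a binomial $t^{a^+}-\zeta\,t^{a^-}$ lying in exactly one of the two ideals — concretely, choosing $a$ among the generators of $\mathcal{L}_s/\mathcal{L}$ (the cyclic factors of order $\gamma_i$) and reading off the scalar $\prod_i\lambda_i^{p_{i,a}}$ versus $\prod_i\mu_i^{p_{i,a}}$. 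All of this is in the spirit of \cite[Section~3]{opPCB}; I would invoke those computations and keep the write-up to the counting and the radicality input.
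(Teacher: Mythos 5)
Your proposal is correct and follows essentially the same route as the paper's proof: the containment $I(\mathcal{L})\subseteq\mathfrak{a}_\lambda$ via the congruences $p_{i,*}\cdot a\equiv 0\ (\mathrm{mod}\ \gamma_i)$ for $a\in\mathcal{L}$, pairwise distinctness detected on generators of $\mathcal{L}_s/\mathcal{L}$ (the paper makes this precise using the columns of $P^{-1}$), and the conclusion by combining radicality with the Eisenbud--Sturmfels count of $\gamma$ components over $\overline{K}$, which is exactly the upper bound of Theorem~\ref{bounds-for-the-number-of-associated-primes}(b) that the paper invokes. The only difference is presentational: the paper routes the argument through the partial characters $\rho_\lambda$ and the ideals $I_\lambda$ with $I(\mathcal{L})\subset I_\lambda\subset\mathfrak{a}_\lambda$, while you work directly with $\varphi_\lambda$.
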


\begin{proof} First of all, recall from Theorem~\ref{may24-05} that
$I(L)$ is a radical 
ideal. For any
$\lambda=(\lambda_1,\ldots,\lambda_{s-1})$ in $\Lambda$, let
$\rho_\lambda\colon \mathcal{L}_s\rightarrow K^*$ be the partial character
of $\mathcal{L}_s$ given by
$$
\rho_\lambda(\alpha)=
\textstyle \prod_{i=1}^{s-1}\lambda_i^{p_{i,1}\alpha_1+\cdots+p_{i,s}\alpha_s},\
\ \ \mbox{ where }\ \alpha=(\alpha_1,\ldots,\alpha_s). 
$$
Clearly
$\rho_\lambda$ is a group homomorphism.  First we show the
following two conditions: 
(i) $\rho_\lambda(\alpha)=1$ for $\alpha\in\mathcal{L}$; 
(ii) If $\rho_\lambda(\alpha)=1$ for all $\alpha\in\mathcal{L}_s$, 
then $\lambda=(1,\ldots,1)$. 

(i): Take
$\alpha=(\alpha_1,\ldots,\alpha_s)$ in $\mathcal{L}$. To show 
$\rho_\lambda(\alpha)=1$ it suffices to prove:
\begin{equation}\label{feb20-13-1} 
{p_{i,1}\alpha_1+\cdots+p_{i,s}\alpha_s}\,\equiv\, 0\ {\rm mod}(\gamma_i)\ 
\mbox{ for all }i.
\end{equation}
As $\alpha$ is a linear combination of the columns of $L$, 
one has $\alpha^\top=L\mu^\top=P^{-1}\Gamma Q^{-1}\mu^\top$ 
for some $\mu\in\mathbb{Z}^m$.
Hence
$$
P\alpha^\top=\Gamma
Q^{-1}\mu^\top=(\eta_1\gamma_1,\ldots,\eta_{s-1}\gamma_{s-1},0)^\top 
$$
for some $\eta_i$'s in $\mathbb{Z}$. Thus Eq.~(\ref{feb20-13-1})
holds, as required. 

(ii): Fix $k$ such that $1\leq k\leq s-1$. We set
$P^{-1}=(r_{ij})$ and denote by $r_{*,k}$ the $k$-th column of
$P^{-1}$. Then $P^{-1}e_k^\top=r_{*,k}$ and $Pr_{*,k}=e_k^\top$, 
where $e_k$ is the $k$-th unit vector. Since $PP^{-1}=I$, 
we get that $\langle p_{i,*},r_{*,k}\rangle$ is $1$ if $i=k$ and is
$0$ otherwise. In particular $\langle p_{s,*},r_{*,k}\rangle=0$, 
i.e, $r_{*,k}$ is in $\mathcal{L}_s={\rm ker}(p_{s,*})$. Setting
$\alpha^\top=r_{*,k}$ 
and using that $\rho_\lambda(\alpha)=1$, we get
that $\lambda_k=1$.

Let $I_\lambda$ be the ideal of $S$ generated by all
$t^{\alpha^+}-\rho_\lambda(\alpha)t^{\alpha^-}$ with
$\alpha\in\mathcal{L}_s$. Let us see that $I(\mathcal{L})\subset
I_\lambda\subset\mathfrak{a}_\lambda$ for all $\lambda\in\Lambda$. The
first inclusion follows from (i). To show the second inclusion take
$f=t^{\alpha^+}-\rho_\lambda(\alpha)t^{\alpha^-}$ in $I_\lambda$, with
$\alpha\in\mathcal{L}_s$. It is easy to see that $f\in{\rm
  ker}(\varphi_\lambda)$ using that $\alpha\in{\rm ker}(\mathbf{d})$
and the definition of $\rho_\lambda$. Hence, as $\mathfrak{a}_\lambda$
is a prime ideal of height $s-1$ for all $\lambda\in \Lambda$, we get
that $\mathfrak{a}_\lambda$ is a primary component of $I(\mathcal{L})$
for all $\lambda\in\Lambda$.  We claim that
$\mathfrak{a}_\lambda\neq\mathfrak{a}_\upsilon$ if
$\lambda\neq\upsilon$ and $\lambda,\upsilon\in\Lambda$. To show this
assume that $\mathfrak{a}_\lambda=\mathfrak{a}_\upsilon$. Take an
arbitrary $\alpha=(\alpha_i)$ in $\mathcal{L}_s$. Then
$f=t^{\alpha^+}-\rho_\lambda(\alpha)t^{\alpha^-}$ is in
$I_\lambda\subset \mathfrak{a}_\lambda$. Thus
$f\in\mathfrak{a}_\upsilon={\rm ker}(\varphi_\upsilon)$. It follows
readily that $\rho_\lambda(\alpha)=\rho_\upsilon(\alpha)$. Therefore
$\rho_{\lambda\upsilon^{-1}}(\alpha)=1$ for all
$\alpha\in\mathcal{L}_s$, and by (ii) $\lambda=\upsilon$. This proves
the claim. Altogether $I(\mathcal{L})$ has at least $\gamma$ prime
components and by
Theorem~\ref{bounds-for-the-number-of-associated-primes}(b) it has at
most $\gamma$ prime components. Thus
$I(\mathcal{L})=\cap_{\lambda\in\Lambda}\mathfrak{a}_{\lambda}$.
\end{proof}

\begin{remark} The ideal $I_\lambda$ is called the lattice ideal
of $\mathcal{L}_s$ relative to the partial character $\rho_\lambda$
\cite{EisStu}.  Since $\mathfrak{a}_\lambda$ is a prime ideal
generated by ``binomials'' of the form $t^{\alpha^+}-\eta
t^{\alpha^-}$, with $\eta\in K^*$, it follows that the inclusion
$I_\lambda\subset\mathfrak{a}_\lambda$ is an equality.
\end{remark}

\section{The degree of lattice and toric ideals}\label{degree-section}

In this section we show that the homogenization of a lattice ideal (resp.
toric ideal) is again a lattice ideal (resp. toric ideal). For any
toric or lattice ideal, we give formulae to
compute the degree in terms of the torsion of certain factor groups of
$\mathbb{Z}^s$ and in terms of relative volumes of lattice polytopes.
A general reference for connections between monomial subrings, Ehrhart
rings, polyhedra and volume is \cite[Chapters~5 and
6]{BHer} (see also \cite{ehrhart,Stur1,handbook}).

Let $\mathcal{L}\subset\mathbb{Z}^s$ be a lattice and let $e_{i}$ be the
$i$-th unit vector in $\mathbb{Z}^{s+1}$. For
$a=(a_i)\in\mathbb{Z}^s$ define the {\it value\/} of $a$ as
$|a|=\sum_{i=1}^sa_i$, and the {\it homogenization\/} of $a$ with
respect to $e_{s+1}$ as 
$a^h=(a,0)-|a|e_{s+1}$ if $|a|\geq 0$ and
$a^h=(-a)^h$ if $|a|<0$. The choice of the last coordinate of $a^h$
is a convenience. The {\it homogenization\/} of $\mathcal{L}$,
denoted by $\mathcal{L}^h$, is the lattice of $\mathbb{Z}^{s+1}$
generated by all $a^h$ such that $a\in\mathcal{L}$.  

\begin{lemma}\label{o'carroll-comments} 
Let $I(\mathcal{L})\subset S$ be a lattice ideal and let
$f_1,\ldots,f_r$ be a set
of binomials such that the terms of $f_i$ have disjoint support for 
all $i$. Then the following hold.
\begin{itemize}
\item[(a)] $I(\mathcal{L})^h\subset S[u]$ is a
lattice ideal. 
\item[(b)] If $\mathcal{L}=\langle b_1,\ldots,b_r\rangle$, then
$\mathcal{L}^h=\langle b_1^h,\ldots,b_r^h\rangle$.
\item[(c)] $I(\mathcal{L}^h)=I(\mathcal{L})^h$.
\item[(d)] If $((f_1,\ldots,f_r)\colon(t_1\cdots
t_s)^\infty)=I(\mathcal{L})$, then $((f_1^h,\ldots,f_r^h)\colon(t_1\cdots
t_su)^\infty)=I(\mathcal{L})^h$.
\end{itemize}
\end{lemma}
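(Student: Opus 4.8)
The plan is to prove the four statements in an order that lets each build on the previous ones, since (c) and (d) are really the substance and (a), (b) are the elementary groundwork. First I would handle (b). The lattice $\mathcal{L}^h$ is generated, by definition, by all $a^h$ with $a\in\mathcal{L}$, so the inclusion $\langle b_1^h,\ldots,b_r^h\rangle\subset\mathcal{L}^h$ is not immediate and needs an argument: the map $a\mapsto a^h$ is not additive (because of the absolute value and the sign convention). The key observation is that $a\mapsto(a,0)-|a|e_{s+1}$ \emph{is} additive, and that $a^h$ and $(a,0)-|a|e_{s+1}$ differ at most by sign; so for $a=\sum n_ib_i\in\mathcal{L}$ one has $(a,0)-|a|e_{s+1}=\sum n_i\big((b_i,0)-|b_i|e_{s+1}\big)=\sum\pm n_ib_i^h\in\langle b_1^h,\ldots,b_r^h\rangle$, and then $a^h=\pm\big((a,0)-|a|e_{s+1}\big)$ lies there too. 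The reverse inclusion is trivial since each $b_i\in\mathcal{L}$. This gives (b).

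Next I would do (c). Write $\mathcal{L}=\langle b_1,\ldots,b_r\rangle$; by Corollary~\ref{nov4-12}(a) applied to the generators $f_i:=t^{b_i^+}-t^{b_i^-}$ of $I(\mathcal{L})$ (whose terms automatically have disjoint support), and by (b), it is enough to relate the homogenizations. The crux is the pointwise fact that for a binomial $f=t^{b^+}-t^{b^-}$ whose terms have disjoint support, its homogenization $f^h$ with respect to $u$ is exactly $t^{(b^h)^+}-t^{(b^h)^-}$ — one checks this by distinguishing $|b|\ge 0$ and $|b|<0$ and tracking where the compensating power of $u$ lands, using the disjoint-support hypothesis so that no cancellation occurs and $\deg f=\max(|b^+|,|b^-|)$. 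Granting this, $I(\mathcal{L})^h$ is generated by the $f_i^h=t^{(b_i^h)^+}-t^{(b_i^h)^-}$ (Lemma~\ref{elim-ord-hhomog}(a) gives that homogenizing a Gr\"obner basis gives generators of $I^h$; or one argues directly that $I^h$ is generated by homogenizations of any generating set of binomials with disjoint-support terms). Since $\mathcal{L}^h=\langle b_1^h,\ldots,b_r^h\rangle$, Lemma~\ref{sep1-12} or Corollary~\ref{nov4-12} identifies $I(\mathcal{L}^h)$ with $\big((f_1^h,\ldots,f_r^h)\colon(t_1\cdots t_su)^\infty\big)$; but $I(\mathcal{L})^h$ is a lattice ideal by Theorem~\ref{jun12-02} once we know each variable is a non-zero-divisor on $S[u]/I(\mathcal{L})^h$, which will come out of (a). So the logical order is really (b), then (a), then (c), then (d).

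For (a): $I(\mathcal{L})^h$ is a binomial ideal (homogenization of binomials with disjoint-support terms stays a binomial of that form, as just noted), so by Theorem~\ref{jun12-02} it suffices to show $t_1,\ldots,t_s,u$ are non-zero-divisors modulo $I(\mathcal{L})^h$. For $t_i$: the dehomogenization map $S[u]\to S$, $u\mapsto 1$, sends $I(\mathcal{L})^h$ onto $I(\mathcal{L})$, and $S[u]/I(\mathcal{L})^h$ is (via Lemma~\ref{elim-ord-hhomog}(a)) a homogeneous ring whose localization at $u$ is $S[u^{\pm1}]/I(\mathcal{L})[u^{\pm1}]$, where $t_i$ is a non-zero-divisor because it is one on $S/I(\mathcal{L})$ by Theorem~\ref{jun12-02}; since $S[u]/I(\mathcal{L})^h$ injects into that localization (no $u$-torsion: $u$ is a non-zero-divisor, see below), $t_i$ is a non-zero-divisor on it. For $u$: this is the standard fact that the homogenization $I^h$ of any ideal satisfies $(I^h\colon u)=I^h$, i.e.\ $u$ is a non-zero-divisor on $S[u]/I^h$ — it follows from the description of $I^h$ via the reduced Gr\"obner basis in Lemma~\ref{elim-ord-hhomog}(a), since no $g_i^h$ is divisible by $u$ under GRevLex, so $u\notin{\rm in}(I^h)$. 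This proves (a).

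Finally (d). Set $J=(f_1^h,\ldots,f_r^h)$. Dehomogenizing ($u\mapsto 1$) carries $J$ to $(f_1,\ldots,f_r)$, hence $(J\colon(t_1\cdots t_su)^\infty)$ to a subideal of $\big((f_1,\ldots,f_r)\colon(t_1\cdots t_s)^\infty\big)=I(\mathcal{L})$; and homogenizing back, using that $(J\colon(t_1\cdots t_su)^\infty)$ is already saturated with respect to $u$ and is a binomial ideal, one gets $(J\colon(t_1\cdots t_su)^\infty)\subset I(\mathcal{L})^h$. For the reverse inclusion, $I(\mathcal{L})^h=I(\mathcal{L}^h)$ by (c), and by Lemma~\ref{sep1-12} applied to the generators $f_i^h=t^{(b_i^h)^+}-t^{(b_i^h)^-}$ of $\mathcal{L}^h=\langle b_1^h,\ldots,b_r^h\rangle$ we get exactly $I(\mathcal{L}^h)=\big((f_1^h,\ldots,f_r^h)\colon(t_1\cdots t_su)^\infty\big)$, i.e.\ $I(\mathcal{L})^h\subset(J\colon(t_1\cdots t_su)^\infty)$. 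This closes (d).

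The main obstacle I anticipate is the bookkeeping in the pointwise claim ``$f^h=t^{(b^h)^+}-t^{(b^h)^-}$ for a disjoint-support binomial $f$'', because of the sign convention ($a^h=(-a)^h$ when $|a|<0$) and because one must be sure the homogenization degree equals $\max(|b^+|,|b^-|)$ — this is where the disjoint-support hypothesis is genuinely used, and it is easy to slip on a sign or on which of the two monomials picks up the power of $u$. Everything else is assembling the already-available tools (Theorem~\ref{jun12-02}, Lemma~\ref{sep1-12}, Corollary~\ref{nov4-12}, Lemma~\ref{elim-ord-hhomog}).
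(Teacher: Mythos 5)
Your proposal is correct and follows essentially the same route as the paper: (b) via linearity of $a\mapsto(a,0)-|a|e_{s+1}$, (a) via the non-zero-divisor criterion of Theorem~\ref{jun12-02} applied to the homogenized Gr\"obner basis (with the same leading-term argument for $u$), and (c), (d) by combining Corollary~\ref{nov4-12}, part (b), Lemma~\ref{sep1-12} in $S[u]$, and part (a) to remove the saturation, using the pointwise identity $f^h=t^{(b^h)^+}-t^{(b^h)^-}$ (up to sign) exactly as the paper does. The only local difference is that for $t_1,\ldots,t_s$ being non-zero-divisors modulo $I(\mathcal{L})^h$ you inject $S[u]/I(\mathcal{L})^h$ into its localization at $u$, identified with $(S/I(\mathcal{L}))[u^{\pm 1}]$, whereas the paper dehomogenizes and runs the division algorithm — both are valid; just take the binomials in (c) to be a binomial Gr\"obner basis of $I(\mathcal{L})$ (your parenthetical claim that homogenizing an arbitrary binomial generating set already yields $I^h$ is unjustified, and unneeded).
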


\begin{proof} We set $I=I(\mathcal{L})$. Let $\succ$ denote the GRevLex
order on $S$ and on $S[u]$, and let
$g_1,\ldots,g_m$ be the reduced Gr\"obner basis of $I$. By
Lemma~\ref{elim-ord-hhomog}(a), $g_1^h,\ldots,g_m^h$ is a Gr\"obner
basis of $I^h$.

(a): As $I^h$ is a binomial ideal, by Theorem~\ref{jun12-02} we need
only show that $t_i$ is a non-zero-divisor of $S[u]/I^h$ for
$i=1,\ldots,s+1$, where $t_{s+1}=u$. First, we show the case
$i=s+1$. Assume that $uf\in I^h$ for some $f\in S[u]$. By the Division
Algorithm \cite[Theorem~2.11]{Ene-Herzog}, we can write
$$
f=h_1g_1^h+\cdots+h_mg_m^h+\mathfrak{f},
$$ where $h_1,\ldots,h_m,\mathfrak{f}$ are in $S[u]$ and
$\mathfrak{f}$ is not divisible by any of the leading terms of
$g_1^h,\ldots,g_m^h$.  We claim that $\mathfrak{f}=0$. If
$\mathfrak{f}\neq 0$, then $u\mathfrak{f}\in I^h$ and consequently
$u\, {\rm in}(\mathfrak{f})\in{\rm in}(I^h)$, where ${\rm
  in}(\mathfrak{f})$ is the leading term of $\mathfrak{f}$ and ${\rm
  in}(I^h)$ is the initial ideal of $I^h$.  Hence, $u\, {\rm
  in}(\mathfrak{f})$ is a multiple of ${\rm in}(g_\ell^h)$ for some
$\ell$. Since $u$ does not appear in the monomial ${\rm
  in}(g_\ell)={\rm in}(g_\ell^h)$, we get that ${\rm
  in}(\mathfrak{f})$ is divisible by ${\rm in}(g_\ell^h)$, a
contradiction.  Thus, $\mathfrak{f}=0$ and $f\in I^h$, as
required. Next, we show the case $1\leq i\leq s$. Assume that
$t_{i}g\in I^h$ for some $i$ and some $g$ in $S[u]$. As $I^h$ is
graded, we may assume that $g$ is homogeneous of degree
$\delta$. Since $u$ is a non-zero-divisor of $S[u]/I^h$, we may assume
that $u$ does not divides the leading term of $g$. If
$f=g(t_1,\ldots,t_s,1)$, then $\deg(f)=\delta$, $f^h=g$ and $t_{i}f\in
I$. Hence $f\in I$. Therefore, as $g_1,\ldots,g_m$ is a Gr\"obner
basis, we can write $f=h_1g_1+\cdots+h_mg_m$, where
$\deg(f)\geq\deg(h_jg_j)$ for all $j$. It is not hard to see that
$g=f^h\in I^h$, as required.

(b): By changing the sign of $b_i$, if necessary, we may assume that
$|b_i|\geq 0$ for all $i$. Clearly $\mathcal{L}^h$ contains $\langle
b_1^h,\ldots,b_r^h\rangle$. Indeed, since $b_i\in\mathcal{L}$ for all
$i$, we get $b_i^h\in\mathcal{L}^h$ for all $i$. Now, we prove
that $\mathcal{L}^h$ is contained in $\langle
b_1^h,\ldots,b_r^h\rangle$.  
It suffices to show that $c^h\in\langle b_1^h,\ldots,b_r^h\rangle$ for
any $c\in\mathcal{L}$ with $|c|\geq 0$. By hypothesis, we can write
$c=\lambda_1b_1+\cdots+\lambda_rb_r$ for some integers
$\lambda_1,\ldots,\lambda_r$. Hence
$|c|=\lambda_1|b_1|+\cdots+\lambda_r|b_r|$. From the last two
equalities, we obtain that $c^h$ is equal to
$\lambda_1b_1^h+\cdots+\lambda_rb_r^h$, as required.

(c): We can write $g_i=t^{a_i^+}-t^{a_i^-}$, with $|a_i|\geq 0$, for 
$i=1,\ldots,m$. By Corollary~\ref{nov4-12}, $\mathcal{L}$ is generated
by $a_1,\ldots,a_m$. Then, by part (b),  $\mathcal{L}^h$ is
generated by $a_1^h,\ldots,a_m^h$. Notice that $g_i^h$ is equal to 
$t^{(a_i^h)^+}-t^{(a_i^h)^-}$ for all $i$. Therefore, using part (a)
and Lemma~\ref{sep1-12}, we get
\begin{eqnarray*}
I(\mathcal{L})^h&=&(I(\mathcal{L})^h\colon(t_1\cdots
t_su)^\infty)\\
&=&((t^{(a_1^h)^+}-t^{(a_1^h)^-},\ldots,t^{(a_m^h)^+}-t^{(a_m^h)^-})
\colon(t_1\cdots t_su)^\infty)=I(\mathcal{L}^h).
\end{eqnarray*}

(d): This part follows from Lemma~\ref{sep1-12} and part (b).
\end{proof}

Let $H=\{x^{v_1},\ldots,x^{v_s}\}\subset 
K[x_1^{\pm 1},\ldots,x_n^{\pm 1}]$ 
be a set of Laurent monomials, where $v_i\in\mathbb{Z}^n$, and let
$K[H]$ be the $K$-subalgebra 
generated by $H$. There is an epimorphism of $K$-algebras
\[ 
\varphi\colon S=K[t_1,\ldots,t_s] \longrightarrow K[H], \ \ \ 
t_i\longmapsto x^{v_i}.
 \]
The kernel of $\varphi$, denoted by $P$, is called the {\it toric
ideal\/} of  
$K[H]$. In general, $P$ is not a graded ideal. 
Since $S/P\simeq K[H]$, the {\it degree} of
$K[H]$ is defined to be the degree of $S/P$. 

\begin{lemma}\label{oct4-12} 
Let $P^h\subset S[u]$ be the
homogenization of $P$. Then the following hold.
\begin{itemize}
\item[{\rm(a)}] $P^h$ is the toric 
ideal of $K[H']:=K[z,x^{v_2-v_1}z,\ldots,x^{v_s-v_1}z,x^{-v_1}z]$.

\item[{\rm(b)}] The toric ideal of $K[x^{v_1}z,\ldots,x^{v_s}z,z]$ is
the toric ideal $P^h$ of $K[H']$. 
\end{itemize}
\end{lemma}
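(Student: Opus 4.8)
The plan is to realise $P^{h}$ as the kernel of one explicit toric presentation and then transport that presentation into the two shapes required in (a) and (b) by an invertible monomial substitution. First I would consider the $K$-algebra homomorphism
$$
\psi\colon S[u]=K[t_{1},\ldots,t_{s},u]\longrightarrow K[x_{1}^{\pm1},\ldots,x_{n}^{\pm1},z],
\qquad t_{i}\longmapsto x^{v_{i}}z\ \ (1\le i\le s),\quad u\longmapsto z,
$$
whose image is exactly the subalgebra $K[x^{v_{1}}z,\ldots,x^{v_{s}}z,z]$ of part (b); this subalgebra is a domain, so $\ker\psi$ is by definition its toric ideal. Grading $S[u]$ by total degree and $K[x_{1}^{\pm1},\ldots,x_{n}^{\pm1},z]$ by $z$-degree, the map $\psi$ is graded, so $\ker\psi$ is a homogeneous ideal of $S[u]$; and $P^{h}$, being generated by the homogeneous polynomials $f^{h}$ with $f\in P$, is homogeneous too. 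The inclusion $P^{h}\subseteq\ker\psi$ is immediate: for $f\in P$ the polynomial $f^{h}$ is homogeneous of degree $\deg f$ with $f^{h}(t_{1},\ldots,t_{s},1)=f$, so by homogeneity $\psi(f^{h})=z^{\deg f}f(x^{v_{1}},\ldots,x^{v_{s}})=z^{\deg f}\varphi(f)=0$.

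For the reverse inclusion I would take a homogeneous $F\in\ker\psi$ of degree $e$, factor $F=u^{k}\widetilde{F}$ with $u\nmid\widetilde{F}$, and set $g=\widetilde{F}(t_{1},\ldots,t_{s},1)\in S$; then $\deg g=e-k$ and $g^{h}=\widetilde{F}$. Since the target ring is a domain and $z^{k}\psi(\widetilde{F})=\psi(F)=0$, we get $\psi(\widetilde{F})=0$, which by homogeneity of $\widetilde{F}$ reads $z^{\,e-k}\varphi(g)=0$, hence $\varphi(g)=0$, i.e. $g\in P$ and $F=u^{k}g^{h}\in P^{h}$. Therefore $\ker\psi=P^{h}$, which is precisely assertion (b) for the algebra $K[x^{v_{1}}z,\ldots,x^{v_{s}}z,z]$. (One could instead argue by dimension: $P^{h}$ is prime, being the homogenisation of the prime ideal $P$, with $u$ a non-zero-divisor modulo $P^{h}$ by Lemma~\ref{o'carroll-comments}(a); and $\dim S[u]/P^{h}=\dim S/P+1$ by \cite[p.~192]{ZS}, while the same count gives $\dim S[u]/\ker\psi={\rm rank}\langle v_{1},\ldots,v_{s}\rangle+1=\dim S/P+1$, so the containment $P^{h}\subseteq\ker\psi$ of primes with equal coheight is an equality.)

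Finally I would deduce (a). Writing $z=x^{v_{1}-v_{1}}z$, the algebra $K[H']$ equals $K[\,x^{v_{i}-v_{1}}z\ (1\le i\le s),\ x^{-v_{1}}z\,]$. Let $\theta$ be the automorphism of $K[x_{1}^{\pm1},\ldots,x_{n}^{\pm1},z]$ fixing every $x_{i}$ and sending $z\mapsto x^{v_{1}}z$ (its inverse sends $z\mapsto x^{-v_{1}}z$); then $\theta(x^{v_{i}-v_{1}}z)=x^{v_{i}}z$ and $\theta(x^{-v_{1}}z)=z$, so $\theta$ restricts to an isomorphism of $K[H']$ onto $K[x^{v_{1}}z,\ldots,x^{v_{s}}z,z]$. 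If $\psi'\colon S[u]\to K[H']$ denotes the toric presentation $t_{i}\mapsto x^{v_{i}-v_{1}}z$, $u\mapsto x^{-v_{1}}z$, then $\theta\circ\psi'=\psi$, and since $\theta$ is injective $\ker\psi'=\ker\psi=P^{h}$; thus $P^{h}$ is the toric ideal of $K[H']$, which proves (a) and completes (b). I expect the one genuinely delicate point to be the inclusion $\ker\psi\subseteq P^{h}$, that is, controlling the exact power of $u$ when passing between a homogeneous element of the kernel and its dehomogenisation; the coordinate change yielding (a) is just bookkeeping.
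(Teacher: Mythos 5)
Your proposal is correct, but it follows a genuinely different route from the paper's. The paper proves (a) first, and does so with Gr\"obner bases: it takes the reduced Gr\"obner basis $G$ of $P$ in the GRevLex order, uses Lemma~\ref{elim-ord-hhomog}(a) to know that $G^h$ generates $P^h$, and checks binomial by binomial, via the linear relation $a_2(v_2-v_1)+\cdots+a_s(v_s-v_1)+(a_1+\cdots+a_s)v_1=0$, that $G^h\subset P'$; conversely each element of the reduced Gr\"obner basis of $P'$, being a homogeneous binomial, is seen to dehomogenize to a binomial of $P$, giving $P'\subset P^h$. Part (b) is then deduced from (a) through the substitution $z\mapsto x^{v_1}z$. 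You reverse the order: you identify the toric ideal of $K[x^{v_1}z,\ldots,x^{v_s}z,z]$ with $P^h$ directly, observing that the presentation $\psi$ is graded when the target carries the $z$-grading, so $\ker\psi$ is homogeneous, and that $\psi(F)=z^{e}\varphi\bigl(F(t_1,\ldots,t_s,1)\bigr)$ for homogeneous $F$ of degree $e$; the two inclusions then come from the standard interplay of homogenization and dehomogenization (your factorization $F=u^k\widetilde F$ with $g^h=\widetilde F$ is exactly the right bookkeeping), with no Gr\"obner bases and no restriction to binomial elements. You then obtain (a) by the same unit-monomial automorphism the paper uses, just applied in the opposite direction. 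What your route buys is a self-contained, characteristic-free identification of $P^h$ as the kernel of the ``cone'' presentation $t_i\mapsto x^{v_i}z$, $u\mapsto z$, valid for arbitrary elements of the kernel; what the paper's route buys is that it stays inside the Gr\"obner-basis machinery of Lemma~\ref{elim-ord-hhomog}, which is reused elsewhere, and it exhibits explicit binomial generators of $P^h$ along the way. The one delicate point in your argument, passing from homogeneous elements of $\ker\psi$ to the whole ideal, is indeed covered by the homogeneity of $\ker\psi$ that you established at the outset, so there is no gap; the dimension-count alternative you sketch in parentheses also works, since $P^h$ is prime and both quotients have dimension ${\rm rank}\,\langle v_1,\ldots,v_s\rangle+1$.
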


\begin{proof} (a): The toric ideal of $K[H']$, denoted by $P'$, is
the kernel of the map 
$\varphi\colon S[u]\rightarrow K[H']$ induced by $t_i\mapsto
x^{v_i-v_1}z$ for 
$i=1,\ldots,s+1$, where $v_{s+1}=0$ and $t_{s+1}=u$. Let $G$ be the
reduced Gr\"obner basis of $P$ with respect to the GRevLex order.
First, we show the
inclusion $P^h\subset P'$. Take an element $f$ of $G$. By
Lemma~\ref{elim-ord-hhomog}(a), it suffices to 
show that $f^h$ is in $P'$. We can write
$f=t^{a^+}-t^{a^-}$ with ${\rm in}(f)=t^{a^+}$. Thus,
$|a^+|\geq|a^{-}|$ and
$f^h=t^{a^+}-t^{a^-}u^{|a|}$, where $a=a^+-a^{-}$. We set
$a=(a_1,\ldots,a_s)$. 
From the equality
$$
0=a_1v_1+\cdots+a_sv_s=a_2(v_2-v_1)+\cdots+
a_s(v_s-v_1)+(a_1+\cdots+a_s)v_1,
$$
we get that $f^h\in P'$, as required. Let $G'$ be the
reduced Gr\"obner basis of $P'$ with respect to the GRevLex order.
Next we show the
inclusion $P'\subset P^h$. Take an element $f'$ of $G'$. It suffices to 
show that $f'$ is in $P^h$. As $f'$ is homogeneous in the standard
grading of $S[u]$, we can write $f'=t^{c^+}-t^{c^-}u^{|c|}$ with
${\rm in}(f')=t^{c^+}$ and $c=(c_1,\ldots,c_s)$.  Since $f'\in P'$, we get
$$
(z^{c_1^+})(x^{v_2-v_1}z)^{c_2^+}\cdots (x^{v_s-v_1}z)^{c_s^+}
=(z^{c_1^-})(x^{v_2-v_1}z)^{c_2^-}\cdots
(x^{v_s-v_1}z)^{c_s^-}(x^{-v_1}z)^{|c|}.
$$
Hence, $c_2(v_2-v_1)+\cdots+c_s(v_s-v_1)=-|c|v_1$. Consequently,
$c_1v_1+\cdots+c_sv_s=0$, that is, the binomial $f=t^{c^+}-t^{c^-}$ is
in $P$. As $f'=f^h$, we get $f'\in P^h$. 

(b): The map that sends $z$ to $x^{v_1}z$ induces an
isomorphism $K[H']\to K[H'']$. So this part is an immediate 
consequence of (a).
\end{proof}

\begin{proposition}{\rm(\cite[Proposition~3.5]{ehrhart},
\cite[Corollary~5.35]{handbook})}\label{apr3-01} Let
  $\mathcal{A}=\{\alpha_1,\ldots,\alpha_m\}$ be a set of points of
  $\mathbb{Z}^n$ and let $\mathcal{P}={\rm conv}({\mathcal A})$ be the
  convex hull of $\mathcal{A}$. Then
$$
|T(\mathbb{Z}^{n}/(\alpha_1-\alpha_m,\ldots,\alpha_{m-1}-\alpha_m))| 
\deg(K[x^{\alpha_1}z,\ldots,x^{\alpha_m}z])=r!{\rm
vol}(\mathcal{P}),
$$
where $r=\dim(\mathcal{P})$, ${\rm vol}(\mathcal{P})$ is the
relative volume of $\mathcal{P}$ and $z$ is a new variable.
\end{proposition}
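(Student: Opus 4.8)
The plan is to read off $\deg(K[H])$, where $H=\{x^{\alpha_1}z,\ldots,x^{\alpha_m}z\}$, from the leading term of the Hilbert function of an affine semigroup ring, and then to recognise $|T(\mathbb{Z}^n/D)|$, with $D:=\langle\alpha_1-\alpha_m,\ldots,\alpha_{m-1}-\alpha_m\rangle$, as the index of a sublattice that shows up in an Ehrhart count. Put $\hat\alpha_i=(\alpha_i,1)\in\mathbb{Z}^{n+1}$ and view $C:=K[H]$ as the affine semigroup ring $K[\mathbb{N}\hat{\mathcal A}]$, graded by the $z$-degree; the toric ideal $P$ of $K[H]$ is then homogeneous, so $\deg(K[H])=\deg(S/P)$ is $r!$ times the leading coefficient of $H_C(d)=\dim_K(K[H])_d=|\{\textstyle\sum_i c_i\alpha_i : c_i\in\mathbb{N},\ \sum_i c_i=d\}|$, a polynomial of degree $r=\dim\mathcal{P}$ for $d\gg0$ (here $\dim C=r+1$).

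First I would replace $C$ by its normalisation. By Gordan's lemma the saturated semigroup $\overline S=\mathbb{Z}\hat{\mathcal A}\cap\mathbb{R}_{\geq 0}\hat{\mathcal A}$ is finitely generated, $\overline C:=K[\overline S]$ is the integral closure of $C$, and $\overline C$ is a finite $C$-module with $\overline C$ and $C$ sharing the same field of fractions (see \cite[Chapter~6]{BHer}). Hence $\overline C/C$ is a finitely generated torsion $C$-module, so $\dim(\overline C/C)\leq r$, its Hilbert function is $O(d^{r-1})$, and $H_{\overline C}$ and $H_C$ have the same leading coefficient; in particular $\deg(K[H])$ equals $r!$ times the leading coefficient of $H_{\overline C}$. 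The point of this reduction is that $H_{\overline C}$ is transparent: slicing at $z$-degree $d$, the degree-$d$ part of the cone $\mathbb{R}_{\geq0}\hat{\mathcal A}$ is a copy of the dilate $d\mathcal{P}$, while the degree-$d$ part of the group $\mathbb{Z}\hat{\mathcal A}$ is the coset $d\alpha_m+D$ in $\mathbb{Z}^n$ (eliminate $c_m=d-\sum_{i<m}c_i$). Therefore $H_{\overline C}(d)=|d\mathcal{P}\cap(d\alpha_m+D)|$.

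Next I would apply Ehrhart theory. Let $L$ be the linear span of $\mathcal{P}-\alpha_m$ and $\Lambda=L\cap\mathbb{Z}^n$, a lattice of rank $r$ containing $D$; since $D$ spans $L$ over $\mathbb{Q}$, the index $g:=[\Lambda:D]$ is finite. For any coset of $D$ lying in the affine hull of $\mathcal{P}$, the number of its points in $d\mathcal{P}$ is ${\rm vol}_D(\mathcal{P})\,d^r+O(d^{r-1})$, where ${\rm vol}_D$ is the relative volume measured against $D$; and ${\rm vol}_D(\mathcal{P})={\rm vol}(\mathcal{P})/g$, since the covolume of $D$ is $g$ times that of $\Lambda$ and ${\rm vol}(\mathcal{P})$ is, by definition, normalised against $\Lambda$. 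Hence $\deg(K[H])=r!\,{\rm vol}(\mathcal{P})/g$. Finally, $\Lambda$ is exactly the saturation of $D$ in $\mathbb{Z}^n$ and has the same rank as $D$, so $\mathbb{Z}^n/\Lambda$ is torsion-free and $T(\mathbb{Z}^n/D)=\Lambda/D$; thus $g=|T(\mathbb{Z}^n/D)|$, and clearing the denominator yields $|T(\mathbb{Z}^n/D)|\deg(K[H])=r!\,{\rm vol}(\mathcal{P})$.

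I expect the main obstacle to be the Ehrhart step together with keeping the volume normalisations straight: one must be certain that the pertinent lattice on the affine hull of $\mathcal{P}$ is $\Lambda$, that $D$ sits inside it with index precisely $|T(\mathbb{Z}^n/D)|$, and that counting over the single coset $d\alpha_m+D$, whose base point drifts with $d$, still produces the stated leading coefficient — this uses that the leading term of an Ehrhart (quasi-)polynomial is unchanged by translating the polytope and by passing between cosets of a fixed lattice. The reduction to the normalisation, by contrast, is routine once one grants Gordan's lemma and the finiteness of the integral closure of an affine semigroup ring.
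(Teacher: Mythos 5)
The paper itself does not prove this proposition: it is quoted from \cite[Proposition~3.5]{ehrhart} and \cite[Corollary~5.35]{handbook}, so there is no internal proof to compare against. Your argument is correct, and it is essentially the standard proof underlying those references: identify $\deg(K[H])$ with $r!$ times the leading coefficient of the $z$-graded Hilbert function of the semigroup ring, pass to the normalization $K[\overline{S}]$ (a finitely generated module over the standard graded ring $C$, so the correction term is a torsion module of dimension at most $r$ and contributes only $O(d^{r-1})$), recognize $H_{\overline{C}}(d)$ as the lattice-point count $|d\mathcal{P}\cap(d\alpha_m+D)|$, and then let Ehrhart theory produce the leading term, with the index $[\Lambda:D]=|T(\mathbb{Z}^n/D)|$, $\Lambda=\mathbb{Z}^n\cap L$, accounting exactly for the discrepancy between volume normalized to $D$ and the relative volume (normalized to $\Lambda$). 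One simplification for the step you single out as delicate: since $d\mathcal{P}-d\alpha_m=d(\mathcal{P}-\alpha_m)$, you have the exact identity $|d\mathcal{P}\cap(d\alpha_m+D)|=|d(\mathcal{P}-\alpha_m)\cap D|$, and $\mathcal{P}-\alpha_m$ is a lattice polytope with respect to $D$ (its vertices lie among the $\alpha_i-\alpha_m$), so this count is an honest Ehrhart polynomial with leading coefficient ${\rm vol}_D(\mathcal{P})$; no appeal to quasi-polynomials or to invariance under drifting cosets is needed.
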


\begin{definition}
The term $r!{\rm vol}(\mathcal{P})$ is called the {\it normalized 
volume\/} of $\mathcal{P}$.
\end{definition}

The next
result holds for any toric ideal. 

\begin{theorem}\label{toric-degree-non-homg} Let $P$ be the toric ideal of
$K[H]=K[x^{v_1},\ldots,x^{v_s}]$, let $A$ be the $n\times s$ matrix
with column vectors $v_1,\ldots,v_s$ and let $r$ be the rank of
 $A$. Then 
$$
|T(\mathbb{Z}^n/\langle v_1,\ldots,v_s\rangle)|\deg(S/P)=r!{\rm
vol}({\rm conv}(v_1,\ldots,v_s,0)). 
$$
\end{theorem}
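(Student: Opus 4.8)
The plan is to reduce the non-homogeneous statement to the homogeneous situation treated by Proposition~\ref{apr3-01}, via the homogenization machinery already developed in Lemma~\ref{oct4-12}. First I would homogenize: by Lemma~\ref{oct4-12}(b), the ideal $P^h\subset S[u]$ is the toric ideal of $K[x^{v_1}z,\ldots,x^{v_s}z,z]$; here we are using the set of lattice points $\mathcal{A}=\{(v_1,1),\ldots,(v_s,1),(0,1)\}\subset\mathbb{Z}^{n+1}$, each lifted to height $1$. By Lemma~\ref{elim-ord-hhomog}(c), $\deg(S/P)=\deg(S[u]/P^h)$, so it suffices to compute the degree of the homogeneous toric ring $K[x^{v_1}z,\ldots,x^{v_s}z,z]$. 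Applying Proposition~\ref{apr3-01} to this $\mathcal{A}$ (with $m=s+1$, ambient dimension $n+1$, and $\alpha_m=(0,1)$ chosen as the base point), I get
$$
|T(\mathbb{Z}^{n+1}/\langle (v_1,1)-(0,1),\ldots,(v_s,1)-(0,1)\rangle)|\;\deg(S[u]/P^h)= \dim(\mathcal{P})!\;{\rm vol}(\mathcal{P}),
$$
where $\mathcal{P}={\rm conv}((v_1,1),\ldots,(v_s,1),(0,1))$.

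The two remaining points are bookkeeping identifications. For the torsion factor: the sublattice $\langle (v_1,1)-(0,1),\ldots,(v_s,1)-(0,1)\rangle=\langle (v_1,0),\ldots,(v_s,0)\rangle$ sits inside $\mathbb{Z}^n\times\{0\}\subset\mathbb{Z}^{n+1}$, so $\mathbb{Z}^{n+1}/\langle(v_i,0)\rangle\cong(\mathbb{Z}^n/\langle v_1,\ldots,v_s\rangle)\oplus\mathbb{Z}$, and passing to torsion subgroups gives $|T(\mathbb{Z}^{n+1}/\langle(v_i,0)\rangle)|=|T(\mathbb{Z}^n/\langle v_1,\ldots,v_s\rangle)|$, exactly the factor appearing on the left of the theorem. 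For the volume factor: $\mathcal{P}$ is the polytope ${\rm conv}(v_1,\ldots,v_s,0)$ lifted into the affine hyperplane $x_{n+1}=1$ of $\mathbb{R}^{n+1}$; this lift is a lattice-preserving affine isomorphism from the affine span of $\{v_1,\ldots,v_s,0\}$ in $\mathbb{R}^n$ onto $\{x_{n+1}=1\}$, hence it preserves dimension and relative volume. Thus $\dim(\mathcal{P})=r'$ and ${\rm vol}(\mathcal{P})={\rm vol}({\rm conv}(v_1,\ldots,v_s,0))$, where $r'=\dim({\rm conv}(v_1,\ldots,v_s,0))$. Since $0$ is one of the points, $r'$ equals the dimension of the linear span of $v_1,\ldots,v_s$, i.e. $r'=r=\mathrm{rank}(A)$; substituting everything yields the claimed formula.

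The step I expect to require the most care is the identification of the torsion factor — specifically verifying that the sublattice generated by the differences $(v_i,1)-(0,1)$ is precisely $\langle v_1,\ldots,v_s\rangle\times\{0\}$ (so that no extra torsion is introduced by the homogenizing coordinate) and that Proposition~\ref{apr3-01} is being applied with a consistent choice of base point $\alpha_m$. Everything else is the routine observation that embedding a polytope at height $1$ in one higher dimension is a unimodular (lattice-preserving) affine map, hence preserves both dimension and relative volume; and that $r'=r$ because the point $0$ forces the affine hull of $\{v_1,\dots,v_s,0\}$ to coincide with the linear span of $\{v_1,\dots,v_s\}$.
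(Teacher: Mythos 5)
Your proposal is correct and follows essentially the same route as the paper: homogenize using Lemma~\ref{elim-ord-hhomog}(c) and Lemma~\ref{oct4-12}(b), then invoke Proposition~\ref{apr3-01}. The only difference is that the paper applies Proposition~\ref{apr3-01} directly with $\alpha_i=v_i\in\mathbb{Z}^n$ for $i\leq s$ and $\alpha_{s+1}=0$ (the height-$1$ lift is already built into that proposition via the extra variable $z$), so the torsion group $T(\mathbb{Z}^n/\langle v_1,\ldots,v_s\rangle)$ and the polytope ${\rm conv}(v_1,\ldots,v_s,0)$ appear immediately; your additional lift to $\mathbb{Z}^{n+1}$ and the ensuing torsion and relative-volume identifications are correct but unnecessary bookkeeping.
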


\begin{proof} By Lemma~\ref{elim-ord-hhomog}(c), 
$\deg(S/P)=\deg(S[u]/P^h)$. On the other hand, by
Lemma~\ref{oct4-12}(b),
$P^h$ is the toric ideal of the monomial subring: 
$$
K[H'']=K[x^{v_1}z,\ldots,x^{v_s}z,z].
$$
Hence $S[u]/P^h\simeq K[H'']$. 
Therefore, setting $m=s+1$, $\alpha_i=v_i$ for $i=1,\ldots,m-1$,
$\alpha_m=0$ and $\mathcal{A}=\{\alpha_1,\ldots,\alpha_m\}$, 
the result follows readily from Proposition~\ref{apr3-01}. 
\end{proof}

We come to the main result of this section.

\begin{theorem}\label{degree-lattice-char=any} Let 
$\mathcal{L}\subset\mathbb{Z}^s$ be a lattice of 
rank $r$. Then the following hold. 
\begin{itemize}

\item[{\rm (a)}] If $r=s$, then $\deg(S/I(\mathcal{L}))
=|\mathbb{Z}^s/\mathcal{L}|$.  
 
\item[ {\rm (b)}] If $r<s$, there is an integer matrix $A$ of size
  $(s-r)\times s$ with ${\rm rank}(A)=s-r$ such that we have the
  containment of rank $r$ lattices ${\mathcal L}\subset {\rm
    ker}_\mathbb{Z}(A)$, with equality if and only if
  $\mathbb{Z}^s/{\mathcal L}$ is torsion-free.

\item[{\rm (c)}] If $r<s$ and $v_1,\ldots,v_s$ are the columns of $A$, then
\begin{eqnarray*}
\deg(S/I(\mathcal{L}))&=&
\frac{|T(\mathbb{Z}^s/\mathcal{L})|(s-r)!{\rm
vol}({\rm conv}(0,v_1,\ldots,v_s))}{|T(\mathbb{Z}^{s-r}/\langle 
v_1,\ldots,v_s\rangle)|}.
\end{eqnarray*}
\end{itemize}
\end{theorem}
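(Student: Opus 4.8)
The plan is to treat the three parts separately, reducing each to tools already established in the excerpt.

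\textbf{Part (a).} When $r=s$, the quotient $\mathbb{Z}^s/\mathcal{L}$ is finite, so $\dim(S/I(\mathcal{L}))=\operatorname{height}(I(\mathcal{L}))^{\perp}$... more precisely $\dim(S/I(\mathcal{L}))=s-\operatorname{rank}(\mathcal{L})=0$, and $\deg(S/I(\mathcal{L}))=\dim_K(S/I(\mathcal{L}))$. First I would pass to the algebraic closure $\overline{K}$, using Lemma~\ref{sep7-12}(b),(e) to see that the degree is unchanged and that $I(\mathcal{L})\overline{S}$ is again the lattice ideal of $\mathcal{L}$. Over $\overline{K}$ one invokes the Eisenbud--Sturmfels primary decomposition (\cite[Corollary~2.5]{EisStu}): the number of components, counted with the multiplicity of each component, should total $|\mathbb{Z}^s/\mathcal{L}|$. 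Alternatively, and perhaps more cleanly, one can observe that after the normal decomposition $P L Q=\Gamma=\operatorname{diag}(\gamma_1,\dots,\gamma_s)$, an invertible change of variables $t_i\mapsto$ monomials transforms $I(\mathcal{L})$ (after extending to the Laurent ring and using Corollary~\ref{nov4-12}) into the ideal $(t_1^{\gamma_1}-1,\dots,t_s^{\gamma_s}-1)$, whose quotient has $K$-dimension $\gamma_1\cdots\gamma_s=|\mathbb{Z}^s/\mathcal{L}|=|T(\mathbb{Z}^s/\mathcal{L})|$ by Theorem~\ref{torsion}(d). One must check this change of variables preserves degree, which follows since it is induced by a monomial automorphism of the torus compatible with the $\mathbb{Z}$-grading forced by homogenizing.

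\textbf{Part (b).} Since $\mathcal{L}$ has rank $r<s$, its saturation $\mathcal{L}_{\mathrm{sat}}=\{a\in\mathbb{Z}^s:\eta a\in\mathcal{L}\text{ for some }\eta\neq 0\}$ is a rank-$r$ direct summand of $\mathbb{Z}^s$, so $\mathbb{Z}^s/\mathcal{L}_{\mathrm{sat}}\cong\mathbb{Z}^{s-r}$ is free. Choosing a $\mathbb{Z}$-basis of this quotient yields a surjection $\mathbb{Z}^s\twoheadrightarrow\mathbb{Z}^{s-r}$ with kernel $\mathcal{L}_{\mathrm{sat}}$; writing this map as $x\mapsto Ax^\top$ for an $(s-r)\times s$ integer matrix $A$ of rank $s-r$ gives $\ker_{\mathbb{Z}}(A)=\mathcal{L}_{\mathrm{sat}}\supseteq\mathcal{L}$, with equality precisely when $\mathcal{L}=\mathcal{L}_{\mathrm{sat}}$, i.e.\ when $\mathbb{Z}^s/\mathcal{L}$ is torsion-free. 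This is essentially the construction already recalled in Section~\ref{prim-dec-deg-tor} for the rank-$(s-1)$ case via \cite[p.~37]{New}; I would just cite that technique.

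\textbf{Part (c).} This is the main content and the expected obstacle. The strategy is: let $v_1,\dots,v_s\in\mathbb{Z}^{s-r}$ be the columns of $A$ and let $\mathcal{L}'=\ker_{\mathbb{Z}}(A)=\mathcal{L}_{\mathrm{sat}}$. Since $\mathbb{Z}^s/\mathcal{L}'$ is free, $I(\mathcal{L}')$ is the toric ideal $P$ of $K[H]=K[x^{v_1},\dots,x^{v_s}]$ (this is the standard identification: the toric ideal of $K[x^{v_1},\dots,x^{v_s}]$ is the lattice ideal of the kernel of $v\mapsto\sum v_i e_i$). Applying Theorem~\ref{toric-degree-non-homg} to $P$ with $n=s-r$ gives
\[
|T(\mathbb{Z}^{s-r}/\langle v_1,\dots,v_s\rangle)|\cdot\deg(S/I(\mathcal{L}'))=(s-r)!\,\mathrm{vol}(\mathrm{conv}(0,v_1,\dots,v_s)).
\]
It remains to relate $\deg(S/I(\mathcal{L}))$ to $\deg(S/I(\mathcal{L}'))$: I expect $\deg(S/I(\mathcal{L}))=|T(\mathbb{Z}^s/\mathcal{L})|\cdot\deg(S/I(\mathcal{L}'))$, which plugged in yields the stated formula. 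To prove this multiplicativity, pass to $\overline{K}$ (Lemma~\ref{sep7-12}), where by \cite[Corollary~2.5]{EisStu} the ideal $I(\mathcal{L})\overline{S}$ decomposes as an intersection of $c_1=|T(\mathbb{Z}^s/\mathcal{L})|$ primary components (characteristic zero case; the positive-characteristic count is the same because degree is independent of the base field by Proposition~\ref{nov26-12}, so one may as well compute over $\mathbb{Q}$), each of the form $I_{\rho}$ for a partial character $\rho$ of $\mathcal{L}_{\mathrm{sat}}=\mathcal{L}'$, and each such component is a prime of the same height $r$ whose quotient is isomorphic as a graded ring (after a monomial/scalar change of coordinates $t_i\mapsto\lambda_i t_i$) to $\overline{S}/I(\mathcal{L}')$, hence has the same degree $\deg(\overline{S}/I(\mathcal{L}')\overline{S})$. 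Then the additivity of degree (Proposition~\ref{additivity-of-the-degree}) gives $\deg=c_1\cdot\deg(\overline{S}/I(\mathcal{L}'))$. The hard part is verifying that the scalar change of variables identifying each $I_\rho$ with $I(\mathcal{L}')$ genuinely preserves the affine Hilbert function (equivalently the degree): one homogenizes and checks the induced automorphism of $\overline{S}[u]$ respects the standard grading, so Hilbert functions agree term by term; I would spell this out carefully since it is exactly the point where the ``$\lambda$'' twist could in principle change multiplicities but does not.
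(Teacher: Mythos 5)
Your proposal is correct and, for the main part (c), follows the same skeleton as the paper: identify $I(\mathcal{L}_{\mathrm{sat}})=\ker_{\mathbb{Z}}(A)$ with the toric ideal $P$ of $K[x^{v_1},\ldots,x^{v_s}]$, apply Theorem~\ref{toric-degree-non-homg}, reduce via Proposition~\ref{nov26-12} and Lemma~\ref{sep7-12}(b) to an algebraically closed field of characteristic zero, invoke the Eisenbud--Sturmfels decomposition of $I(\mathcal{L})$ into $|T(\mathbb{Z}^s/\mathcal{L})|$ components $I_{\rho}(\mathcal{L}_s)$, and finish with the additivity of the degree. Where you genuinely diverge is in the two supporting steps. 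For the key claim that every component $I_{\rho}(\mathcal{L}_s)$ has the same degree as $P$, the paper twists the reduced Gr\"obner basis of $P$ by $\rho$ and checks via the division algorithm and Buchberger's criterion that one gets a Gr\"obner basis of $I_\rho(\mathcal{L}_s)$ with the same initial ideal; you instead produce a degree-preserving automorphism $t_i\mapsto\lambda_i t_i$ of $\overline{S}$ carrying $P$ to $I_\rho(\mathcal{L}_s)$. That works, but it silently uses that the partial character $\rho\colon\mathcal{L}_s\to\overline{K}^*$ extends to all of $\mathbb{Z}^s$ (equivalently that the $\lambda_i$ exist), which holds because $\overline{K}^*$ is divisible, hence injective as a $\mathbb{Z}$-module; this should be said explicitly. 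Once you have the rescaling automorphism, the worry about Hilbert functions is immediate: it preserves total degree, hence each $S_{\leq i}$, hence the affine Hilbert function, with no need to homogenize. Your argument is arguably cleaner and more conceptual than the Gr\"obner computation, at the price of the divisibility input; the paper's argument is characteristic-free at that step and purely combinatorial. For part (a) you also take a different route: the paper homogenizes, uses Lemma~\ref{o'carroll-comments} to see $I(\mathcal{L})^h=I(\mathcal{L}^h)$, and quotes the dimension-one result of \cite{degree-lattice}, whereas you note that in dimension zero $\deg(S/I)=\dim_K(S/I)$, that the $t_i$ are units in the Artinian ring $S/I(\mathcal{L})$ so $S/I(\mathcal{L})\simeq T/I(\mathcal{L})T\simeq K[\mathbb{Z}^s/\mathcal{L}]$, giving $|\mathbb{Z}^s/\mathcal{L}|$ directly; this is a more elementary argument (your remark about ``compatibility with the grading forced by homogenizing'' is unnecessary and somewhat off --- the point is simply that degree equals vector-space dimension here). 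Part (b) via the saturation being a direct summand with free quotient is essentially the paper's construction in different clothing: both produce $A$ with $\ker_{\mathbb{Z}}(A)=\mathcal{L}_{\mathrm{sat}}$.
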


\begin{proof} (a): By Lemma~\ref{elim-ord-hhomog}(c),
$\deg(S/I(\mathcal{L}))=\deg(S[u]/I(\mathcal{L})^h)$, and by
Lemma~\ref{o'carroll-comments}(c),
$I(\mathcal{L})^h=I(\mathcal{L}^h)$. Since $S[u]/I(\mathcal{L}^h)$ has
dimension $1$, by \cite[Theorem~3.12]{degree-lattice} the degree of 
$S[u]/I(\mathcal{L}^h)$ is $|T(\mathbb{Z}^{s+1}/\mathcal{L}^h)|$. 
Let $t^{a_1^+}-t^{a_1^-},\ldots,t^{a_m^+}-t^{a_m^-}$ be a set of
generators of $I(\mathcal{L})$. By Corollary~\ref{nov4-12},
$\mathcal{L}$ is generated 
by $a_1,\ldots,a_m$. We may assume that $|a_i|\geq 0$ for all $i$. 
Then, by Lemma~\ref{o'carroll-comments}(b),
$\mathcal{L}^h$ is generated by $a_1^h,\ldots,a_m^h$. Let $A$ and
$A^h$ be the matrices with rows $a_1,\ldots,a_m$ and
$a_1^h,\ldots,a_m^h$, respectively. Notice that $A^h$ is obtained 
from $A$ by adding the column vector $b=(-|a_1|,\ldots,-|a_m|)^\top$. 
Since $b$ is a linear combination of the columns of $A$, by the
fundamental theorem of finitely generated abelian groups (see
\cite[p.~187]{JacI} and Theorem~\ref{torsion}), we get 
that the groups $\mathbb{Z}^s/\mathcal{L}$ and
$\mathbb{Z}^{s+1}/\mathcal{L}^h$ 
have the same torsion. Thus, $|\mathbb{Z}^s/\mathcal{L}|$ is equal to
$|T(\mathbb{Z}^{s+1}/\mathcal{L}^h)|$. Altogether, the degree of
$S/I(\mathcal{L})$ is the order of $\mathbb{Z}^s/\mathcal{L}$.

(b): We may assume that $\mathcal{L}=
\mathbb{Z}\alpha_1\oplus\cdots\oplus\mathbb{Z}\alpha_r$, where 
$\alpha_1,\ldots,\alpha_s$ is a $\mathbb{Q}$-basis of 
$\mathbb{Q}^s$. Consider the hyperplane 
$H_i$ of $\mathbb{Q}^s$ generated by 
$\alpha_1,\ldots,\widehat{\alpha}_i,\ldots,\alpha_s$. Note 
that the subspace of $\mathbb{Q}^s$ generated by 
$\alpha_1,\ldots,\alpha_r$ is equal to 
$H_{r+1}\cap\cdots\cap H_s$. There is a normal vector 
$w_i\in\mathbb{Z}^s$ 
such that 
$$
H_i=\{\alpha\in\mathbb{Q}^s\vert\, \langle\alpha,w_i\rangle=0\}.
$$
It is not hard to see that the matrix $A$ with rows 
$w_{r+1},\ldots,w_s$ is the matrix with the required 
conditions, because,  
by construction, $\alpha_i\in H_j$ for $i\neq j$ and 
consequently $w_{r+1},\ldots,w_s$ are linearly independent. In
particular we have the equality ${\rm rank}(\mathcal{L})={\rm rank}
({\rm ker}_\mathbb{Z}(A))$. 

(c): By Proposition~\ref{nov26-12} and Lemma~\ref{sep7-12}(b), we may
assume that $K$ is algebraically closed of characteristic zero. Let
$P$ be the toric ideal of $K[x^{v_1},\ldots,x^{v_s}]$ over the field
$K$. By Theorem~\ref{toric-degree-non-homg}, we need only show the
equality
$$
\deg(S/I(\mathcal{L}))= |T(\mathbb{Z}^s/\mathcal{L})|\deg(S/P).
$$ 

Let $\mathcal{L}_s=\{a\in\mathbb{Z}^s\vert\,
{\eta}a\in\mathcal{L}\mbox{ for some
}\eta\in\mathbb{Z}\setminus\{0\}\}$ be the saturation of
$\mathcal{L}$. By part (b), $\mathcal{L}_s$ is equal to ${\rm
  ker}_\mathbb{Z}(A)$. We set
$c=|T(\mathbb{Z}^s/\mathcal{L})|$. Notice that
$T(\mathbb{Z}^s/\mathcal{L})=\mathcal{L}_s/\mathcal{L}$. Recall that a
{\it partial character\/} of $\mathcal{L}_s$ is a homomorphism from
the additive group $\mathcal{L}_s$ to the multiplicative group
$K^*=K\setminus\{0\}$.  According to \cite[Corollaries~2.2 and
  2.5]{EisStu}, there exist distinct partial characters
$\rho_1,\ldots,\rho_c$ of $\mathcal{L}_s$, extending the trivial
character $\rho(a)=1$ for $a\in\mathcal{L}$, such that the minimal
primary decomposition of $I(\mathcal{L})$ is given by
$$  
I(\mathcal{L})=I_{\rho_1}(\mathcal{L}_s)\cap \cdots
\cap I_{\rho_c}(\mathcal{L}_s),
$$
where $I_{\rho_i}(\mathcal{L}_s)$ is a prime ideal generated by all
$t^{a^+}-\rho_i(a)t^{a^-}$ with $a\in\mathcal{L}_s$. As $P$ is a
minimal prime of $I(\mathcal{L})$ by part (b), we may assume 
$\rho_1(a)=1$ for $a\in\mathcal{L}_s$, i.e.,
$P=I_{\rho_1}(\mathcal{L}_s)$. By the additivity of the degree (see
Proposition~\ref{additivity-of-the-degree}), we get 
$$ 
\deg(S/I(\mathcal{L}))=\deg(S/I_{\rho_1}(\mathcal{L}_s))+\cdots+
\deg(S/I_{\rho_c}(\mathcal{L}_s)).
$$
Therefore, it suffices to show that
$\deg(S/P)=\deg(S/I_{\rho_k}(\mathcal{L}_s))$ for  $k=1,\ldots,c$. 
The ideal $I_{\rho_k}(\mathcal{L}_s)$
contains no monomials because it is a prime ideal. Let
$\succ$ be the GRevLex order on $S$ and let $G=\{g_1,\ldots,g_m\}$ be
the reduced Gr\"obner basis of 
$P$ with respect to $\succ$. We can write $g_i=t^{a_i^+}-t^{a_i^-}$
for $i=1,\ldots,m$, with ${\rm in}(g_i)=t^{a_i^+}$. We set
$g_i'=t^{a_i^+}-\rho_{k}(a_i)t^{a_i^-}$ and
$G_k=\{g_1',\ldots,g_m'\}$. Next, we show that $G_k$ is a Gr\"obner basis
of $I_{\rho_k}(\mathcal{L}_s)$. Let $f\neq 0$ be an arbitrary 
(non-necessarily pure) binomial of $I_{\rho_k}(\mathcal{L}_s)$. We
claim that $f$ reduces to zero with respect to $G_k$ in the sense 
of \cite[p.~23]{Ene-Herzog}. By the Division Algorithm (see
\cite[Theorem~2.11]{Ene-Herzog}), we can write 
$$
f=h_1g_1'+\cdots+h_mg_m'+g,
$$
where ${\rm in}(f)\succ {\rm in}(h_ig_i')$ for all $i$, and $g$ is a
binomial (non-necessarily pure) in $I_{\rho_k}(\mathcal{L}_s)$ such that none of
the two terms of $g$ is divisible by any of the monomials
$t^{a_1^+},\ldots,t^{a_m^+}$. We can write  
$$g=\mu(t^a-\lambda t^b)=\mu t^{\delta}(t^{u^+}-\lambda t^{u^-}),$$ 
with $\mu,\lambda\in K^*$ and $a-b=u^+-u^-$. As $t^{u^+}-\lambda t^{u^-}$ is in
$I_{\rho_k}(\mathcal{L}_s)$, it can be seen that $u=u^+-u^-$ is in
$\mathcal{L}_s$. Since $t^{u^+}-\rho_k(u)t^{u^-}\in
I_{\rho_k}(\mathcal{L}_s)$, we get that $\lambda=\rho_k(u)$. If $g\neq 0$,
we obtain that $t^{u^+}-t^{u^-}$, being in $P$, has one of its terms in
the ideal ${\rm in}(P)=(t^{a_1^+},\ldots,t^{a_m^+})$, a
contradiction. Thus, $g$ must be zero, i.e., $f$ reduces to zero with
respect to $G_k$. This proves the claim. In particular, we obtain that
$I_{\rho_k}(\mathcal{L}_s)$ is generated by $G_k$. To show that $G_k$
is a Gr\"obner basis, note that the S-polynomial of $g_i'$ and
$g_j'$ is a binomial; thus, by the claim, it reduces to zero with
respect to $G_k$. 
Therefore by Buchberger's criterion \cite[Theorem~6, p.~84]{CLO}, $G_k$
is a Gr\"obner basis of $I_{\rho_k}(\mathcal{L}_s)$. 
Hence, $S/P$ and $S/I_{\rho_k}(\mathcal{L}_s)$
have the same degree.
\end{proof}

\begin{remark} Part (b) is well-known. The proof given here is
constructive and can be used---in one of the steps---to compute the
degree of an arbitrary lattice ideal (see Example~\ref{nov25-12}).
\end{remark}

The program {\it Normaliz} \cite{normaliz2} computes {\it normalized volumes\/} 
of lattice polytopes using polyhedral geometry. Thus we can compute
the degree of any lattice ideal using
Theorem~\ref{degree-lattice-char=any}. Next we give some applications
and present some examples.

\begin{corollary}\label{oct17-12} 
If $d_1,\ldots,d_s$ are positive integers and $I$
is the toric 
ideal of $K[x_1^{d_1},\ldots,x_1^{d_s}]$, 
then 
$$
\gcd(d_1,\ldots,d_s)\deg(S/I)=\max\{d_1,\ldots,d_s\}.
$$
\end{corollary}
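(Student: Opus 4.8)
The plan is to apply Theorem~\ref{toric-degree-non-homg} with $n=1$ to the monomial set $H=\{x_1^{d_1},\ldots,x_1^{d_s}\}$. Here the matrix $A$ is the $1\times s$ matrix $(d_1,\ldots,d_s)$, which has rank $r=1$ since all $d_i$ are positive, and the lattice $\langle v_1,\ldots,v_s\rangle$ spanned by the columns is the subgroup $\langle d_1,\ldots,d_s\rangle=\gcd(d_1,\ldots,d_s)\mathbb{Z}$ of $\mathbb{Z}$. First I would record that $|T(\mathbb{Z}^1/\langle v_1,\ldots,v_s\rangle)|=|\mathbb{Z}/\gcd(d_1,\ldots,d_s)\mathbb{Z}|=\gcd(d_1,\ldots,d_s)$, since this quotient is already finite (hence equal to its own torsion subgroup). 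Plugging into the formula of Theorem~\ref{toric-degree-non-homg} gives
\[
\gcd(d_1,\ldots,d_s)\,\deg(S/I)=1!\cdot{\rm vol}({\rm conv}(d_1,\ldots,d_s,0)).
\]

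The remaining task is to evaluate the relative volume of the polytope $\mathcal{P}={\rm conv}(\{0,d_1,\ldots,d_s\})\subset\mathbb{R}$. Since all $d_i>0$, this convex hull is simply the closed interval $[0,\max\{d_1,\ldots,d_s\}]$, a $1$-dimensional lattice polytope whose affine span is all of $\mathbb{R}$. Its relative (lattice-normalized) volume with respect to the lattice $\mathbb{Z}$ is just its ordinary length, namely $\max\{d_1,\ldots,d_s\}$. I would state this explicitly, perhaps citing the definition of relative volume from \cite[Chapters~5 and 6]{BHer} as already referenced in the section, since for a full-dimensional interval in $\mathbb{R}^1$ the relative volume coincides with Euclidean length. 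Combining the two displays yields exactly $\gcd(d_1,\ldots,d_s)\deg(S/I)=\max\{d_1,\ldots,d_s\}$, as desired.

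There is essentially no obstacle here: the only point requiring a word of care is the identification of the relative volume of a one-dimensional polytope with its length and the observation that $r=1$ (which uses $d_i>0$ to rule out the degenerate case where $A$ is the zero matrix). Both are immediate, so the proof is a short direct computation from Theorem~\ref{toric-degree-non-homg}. Alternatively, one could note that $I$ here is the toric ideal of a numerical-semigroup-type curve and the conclusion also follows from Theorem~\ref{degree-lattice-char=any}(c) applied to the lattice $\mathcal{L}=\ker_{\mathbb{Z}}(A)$, but the route through Theorem~\ref{toric-degree-non-homg} is the cleanest.
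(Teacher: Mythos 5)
Your proposal is correct and follows essentially the same route as the paper: apply Theorem~\ref{toric-degree-non-homg} with $n=1$, identify $|T(\mathbb{Z}/\langle d_1,\ldots,d_s\rangle)|=\gcd(d_1,\ldots,d_s)$, and compute the relative volume of the interval $[0,\max\{d_1,\ldots,d_s\}]$ as its length. The only difference is that you spell out the volume and rank observations a bit more explicitly than the paper does.
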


\begin{proof} We may assume that $d_1\leq\cdots\leq d_s$. The order of
$T(\mathbb{Z}/\langle d_1,\ldots,d_s\rangle)$ is
$\gcd(d_1,\ldots,d_s)$. Then, by  
Theorem~\ref{toric-degree-non-homg}, we get that
$\gcd(d_1,\ldots,d_s)\deg(S/I)$ is  
${\rm vol}([0,d_s])=d_s$. 
\end{proof}

\begin{corollary}\label{nov28-12} If $I(\mathcal{L})$ is a lattice
ideal of dimension 
$1$ which is homogeneous with respect to a positive vector $(d_1,\ldots,d_s)$,
then
$$
\gcd(d_1,\ldots,d_s)\deg(S/I(\mathcal{L}))=
\max\{d_1,\ldots,d_s\}|T(\mathbb{Z}^s/\mathcal{L})|.
$$
\end{corollary}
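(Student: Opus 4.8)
The plan is to deduce Corollary~\ref{nov28-12} directly from Theorem~\ref{degree-lattice-char=any}(c), since $I(\mathcal{L})$ has dimension $1$ means $\mathcal{L}$ has rank $r = s-1$, so we are in case (c) with $s - r = 1$. First I would invoke part (b) of the theorem to obtain an integer matrix $A$ of size $1 \times s$, say with single row $(v_1,\ldots,v_s)\in\mathbb{Z}^s$, such that $\mathcal{L}\subset\ker_{\mathbb{Z}}(A)$. The homogeneity of $I(\mathcal{L})$ with respect to the positive weight vector $\mathbf{d}=(d_1,\ldots,d_s)$ forces $\mathbf{d}\cdot a = 0$ for every $a\in\mathcal{L}$ (each generating binomial $t^{a^+}-t^{a^-}$ is homogeneous of degree $\mathbf{d}\cdot a^+=\mathbf{d}\cdot a^-$), so $\mathcal{L}\subset\ker_{\mathbb{Z}}(\mathbf{d})$. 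Since both $\ker_{\mathbb{Z}}(A)$ and $\ker_{\mathbb{Z}}(\mathbf{d})$ are rank-$(s-1)$ sublattices of $\mathbb{Z}^s$ containing the rank-$(s-1)$ lattice $\mathcal{L}$, and $\mathbf{d}$ is primitive (i.e. $\gcd(\mathbf{d})=1$), the row $(v_1,\ldots,v_s)$ is a scalar multiple of $\mathbf{d}$; I would choose $A$ so that in fact $v_i = d_i$ for all $i$ — equivalently, just take $A = \mathbf{d}$ from the outset, which is a legitimate choice since $\mathcal{L}\subset\ker_{\mathbb{Z}}(\mathbf{d})$ and $\mathbf{d}$ has rank $1$.

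With $v_i = d_i$, the formula in part (c) reads
\[
\deg(S/I(\mathcal{L})) = \frac{|T(\mathbb{Z}^s/\mathcal{L})|\cdot 1!\cdot \mathrm{vol}(\mathrm{conv}(0,d_1,\ldots,d_s))}{|T(\mathbb{Z}/\langle d_1,\ldots,d_s\rangle)|}.
\]
Now I would evaluate the two remaining quantities. The convex hull $\mathrm{conv}(0,d_1,\ldots,d_s)$ in $\mathbb{R}^1$ is simply the interval $[0,\max\{d_1,\ldots,d_s\}]$ (all the $d_i$ are positive), so its relative volume is $\max\{d_1,\ldots,d_s\}$, and $1!$ times this is again $\max\{d_1,\ldots,d_s\}$. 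For the denominator, the torsion subgroup of $\mathbb{Z}/\langle d_1,\ldots,d_s\rangle$ is the whole group $\mathbb{Z}/(\gcd(d_1,\ldots,d_s))$, which has order $\gcd(d_1,\ldots,d_s)$. This is exactly the computation carried out in the proof of Corollary~\ref{oct17-12}, so I would cite that or simply repeat the one-line observation.

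Substituting these values gives
\[
\deg(S/I(\mathcal{L})) = \frac{|T(\mathbb{Z}^s/\mathcal{L})|\cdot\max\{d_1,\ldots,d_s\}}{\gcd(d_1,\ldots,d_s)},
\]
which rearranges to the claimed identity $\gcd(d_1,\ldots,d_s)\deg(S/I(\mathcal{L})) = \max\{d_1,\ldots,d_s\}\,|T(\mathbb{Z}^s/\mathcal{L})|$.

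I do not anticipate a serious obstacle here: the corollary is essentially a specialization of Theorem~\ref{degree-lattice-char=any}(c) to the case $s - r = 1$ combined with the elementary volume and torsion computations already done in Corollary~\ref{oct17-12}. The only point requiring a word of care is the identification of the matrix $A$ with the weight vector $\mathbf{d}$ — that is, verifying that homogeneity with respect to $\mathbf{d}$ legitimately lets us take $A = \mathbf{d}$ in part (c) — but this follows immediately from the fact that every generating binomial of a lattice ideal homogeneous with respect to $\mathbf{d}$ satisfies $\mathbf{d}\cdot a = 0$ and hence $\mathcal{L}\subseteq\ker_{\mathbb{Z}}(\mathbf{d})$, with $\mathbf{d}$ having rank $1$ as required in (b).
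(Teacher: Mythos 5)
Your proposal is correct and follows essentially the same route as the paper: the paper's proof also takes $A$ to be the $1\times s$ matrix $(d_1,\ldots,d_s)$, notes $\mathcal{L}\subset\ker_{\mathbb{Z}}(A)$, identifies $|T(\mathbb{Z}/\langle d_1,\ldots,d_s\rangle)|=\gcd(d_1,\ldots,d_s)$ and ${\rm vol}({\rm conv}(0,d_1,\ldots,d_s))=\max\{d_1,\ldots,d_s\}$, and concludes by Theorem~\ref{degree-lattice-char=any}(c). Your extra remark justifying that one may legitimately take $A=\mathbf{d}$ is a sensible clarification but does not change the argument.
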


\begin{proof} Let $A$ be the $1\times s$ matrix $(d_1,\ldots,d_s)$. By
hypothesis $\mathcal{L}\subset{\rm ker}_\mathbb{Z}(A)$. The order of
$T(\mathbb{Z}/\langle d_1,\ldots,d_s\rangle)$ is
$\gcd(d_1,\ldots,d_s)$. Then, by
Theorem~\ref{degree-lattice-char=any}, we
get
$$
\deg(S/I(\mathcal{L}))=\frac{T(\mathbb{Z}^s/\mathcal{L}){\rm vol}({\rm
conv}(0,d_1,\ldots,d_s))}{T(\mathbb{Z}/\langle d_1,\ldots,d_s\rangle)}=\frac{
|T(\mathbb{Z}^s/\mathcal{L})|\max\{d_1,\ldots,d_s\}}{\gcd(d_1,\ldots,d_s)},
$$
as required.
\end{proof}

For $1$-dimensional lattice ideals that are not necessarily
homogeneous, we can express the degree in terms of a basis of the
lattice (see Example~\ref{jan10-13}).

\begin{corollary}\label{inspired-by-francesc-example} 
Let $\mathcal{L}\subset\mathbb{Z}^s$ be a lattice of rank $s-1$ and let
$\alpha_1,\ldots,\alpha_{s-1}$ be a $\mathbb{Z}$-basis of
$\mathcal{L}$. If $\alpha_i=(\alpha_{1,i},\ldots,\alpha_{s,i})$, for
$i=1,\ldots,s-1$, and 
\begin{small}
$$ 
v_i=(-1)^i\det\left(
\begin{matrix}
\alpha_{1,1}&\ldots &\alpha_{1,s-1}\\ 
\vdots& &\vdots \\
\alpha_{i-1,1}&\ldots &\alpha_{i-1,s-1}\\ 
\alpha_{i+1,1}&\ldots &\alpha_{i+1,s-1}\\ 
\vdots& &\vdots \\
\alpha_{s,1}&\ldots&\alpha_{s,s-1}
\end{matrix}
\right)\ \mbox{, for}\ i=1,\ldots,s,
$$
\end{small}
then
$\deg(S/I(\mathcal{L}))=\max\{v_1,\ldots,v_s,0\}-\min\{v_1,\ldots,v_s,0\}$.
\end{corollary}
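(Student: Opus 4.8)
The plan is to apply Theorem~\ref{degree-lattice-char=any}(c) to the explicit $1\times s$ integer matrix $A=(v_1,\ldots,v_s)$; in the notation of that theorem $r={\rm rank}(\mathcal{L})=s-1$, so $s-r=1$, and the formula will collapse to a one-dimensional relative volume divided by a gcd.

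First I would check that $A$ is an admissible matrix for Theorem~\ref{degree-lattice-char=any}(b)--(c). Each $v_i$ is, up to sign, the maximal minor of the $s\times(s-1)$ matrix $L$ with columns $\alpha_1,\ldots,\alpha_{s-1}$ obtained by deleting its $i$-th row. For a fixed $j$, adjoining the column $\alpha_j$ to $L$ produces an $s\times s$ matrix with a repeated column, hence determinant $0$; Laplace expansion along the adjoined column gives $\sum_{i=1}^s\alpha_{i,j}v_i=0$, i.e.\ $A\alpha_j^\top=0$, so $\mathcal{L}=\langle\alpha_1,\ldots,\alpha_{s-1}\rangle\subseteq{\rm ker}_\mathbb{Z}(A)$. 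Since the $\alpha_i$ are $\mathbb{Q}$-linearly independent, not every $v_i$ vanishes and ${\rm rank}(A)=1=s-r$. Here ${\rm ker}_\mathbb{Z}(A)$ automatically coincides with the saturation $\mathcal{L}_s$, being a saturated lattice of rank $r$ containing $\mathcal{L}$, so the proof of Theorem~\ref{degree-lattice-char=any}(c) applies verbatim to this choice of $A$.

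Then Theorem~\ref{degree-lattice-char=any}(c) yields
\[
\deg(S/I(\mathcal{L}))=\frac{|T(\mathbb{Z}^s/\mathcal{L})|\cdot 1!\cdot{\rm vol}({\rm conv}(0,v_1,\ldots,v_s))}{|T(\mathbb{Z}^{1}/\langle v_1,\ldots,v_s\rangle)|},
\]
and I would evaluate the three pieces on the right. The polytope ${\rm conv}(0,v_1,\ldots,v_s)\subset\mathbb{R}$ is the segment from $\min\{v_1,\ldots,v_s,0\}$ to $\max\{v_1,\ldots,v_s,0\}$, whose relative volume (ordinary length, the lattice being $\mathbb{Z}$) is $\max\{v_1,\ldots,v_s,0\}-\min\{v_1,\ldots,v_s,0\}$. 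The subgroup $\langle v_1,\ldots,v_s\rangle$ of $\mathbb{Z}$ equals $\gcd(v_1,\ldots,v_s)\mathbb{Z}$, so its torsion quotient has order $\gcd(v_1,\ldots,v_s)$. Finally, because $\{|v_1|,\ldots,|v_s|\}$ is precisely the set of maximal minors of the defining matrix $L$ of $\mathcal{L}$, one has $\gcd(v_1,\ldots,v_s)=\Delta_{s-1}(L)=|T(\mathbb{Z}^s/\mathcal{L})|$ by Theorem~\ref{torsion}(d). Substituting, the two torsion orders cancel and the stated formula drops out.

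The only step with genuine content is the identity $\gcd(v_1,\ldots,v_s)=|T(\mathbb{Z}^s/\mathcal{L})|$, and even that is immediate from Theorem~\ref{torsion}(d) once the $v_i$ are recognized as the maximal minors of a matrix whose columns generate $\mathcal{L}$; the rest is just unwinding the definition of relative volume and of the finite groups appearing in Theorem~\ref{degree-lattice-char=any}(c). So I do not anticipate any real obstacle beyond this bit of bookkeeping.
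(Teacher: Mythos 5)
Your proposal is correct and follows essentially the same route as the paper: both apply Theorem~\ref{degree-lattice-char=any}(c) with the $1\times s$ matrix $A=(v_1,\ldots,v_s)$, use $AB=0$ (the cofactor identity) to get $\mathcal{L}\subset\ker_{\mathbb{Z}}(A)$, and observe that $|T(\mathbb{Z}^s/\mathcal{L})|$ and $|T(\mathbb{Z}/\langle v_1,\ldots,v_s\rangle)|$ both equal $\gcd(v_1,\ldots,v_s)$, so the formula reduces to the length of the segment ${\rm conv}(0,v_1,\ldots,v_s)$. Your extra remarks justifying that this choice of $A$ is admissible (rank one, kernel equal to the saturation) are sound bookkeeping that the paper leaves implicit.
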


\begin{proof} Let $B$ be the $s\times (s-1)$ matrix with columns
$\alpha_1,\ldots,\alpha_{s-1}$ and let $A$ be the $1\times s$ matrix
  $(v_1,\ldots,v_s)$. We set $r=s-1$. The order of
  $T(\mathbb{Z}^s/\mathcal{L})$ is equal to $\gcd(v_1,\ldots,v_s)$,
  the $\gcd$ of the $r\times r$ minors of $B$. The order of
  $T(\mathbb{Z}/\langle v_1,\ldots,v_s\rangle)$ is also equal to
  $\gcd(v_1,\ldots,v_s)$. Since $AB=0$, we obtain that
  $\mathcal{L}\subset{\ker(A)}$. Hence, by
  Theorem~\ref{degree-lattice-char=any}, we get that
  $\deg(S/I(\mathcal{L}))$ is equal to ${\rm vol}({\rm
    conv}(0,v_1,\ldots,v_s))$ which is equal to
  $\max\{v_1,\ldots,v_s,0\}-\min\{v_1,\ldots,v_s,0\}$.
\end{proof}

The next examples illustrates how to use
Theorems~\ref{toric-degree-non-homg} and 
\ref{degree-lattice-char=any} to compute the degree. 

\begin{example}\label{oct10-12} Let $P$ be the toric ideal of the
monomial subring  
$$
K[H]=K[x_2x_1^{-1},x_3x_2^{-1},x_4x_3^{-1},
x_1x_4^{-1},x_5x_2^{-1},x_3x_5^{-1},x_4x_5^{-1}].
$$
We employ the notation of Theorem~\ref{toric-degree-non-homg}. 
The rank of $A$ is $4$ and the height of $P$ is $3$. Using {\it
Normaliz}, we get 
$$
4!{\rm vol}({\rm conv}(v_1,\ldots,v_7,0))=11.
$$
As the group $\mathbb{Z}^5/\langle v_1,\ldots,v_7\rangle$ is torsion 
free, by Theorem~\ref{toric-degree-non-homg} we have 
${\rm deg}(K[t_1,\ldots,t_7]/P)=11$.
\end{example}

\begin{example}\label{nov25-12} 
Let $K$ be the field of rational numbers and let $\mathcal{L}$ be the
lattice generated by  
\begin{eqnarray*}
a_1=(2,1,1,1,-1,-1,-1,-2),& &a_2=(1,1,-1,-1,1,1,-1,-1),\\ 
a_3=(2,-1,1,-2,1,-1,1,-1),& &
a_4=(5,-5,0,0,0,0,0,0).
\end{eqnarray*}
We use the notation of Theorem~\ref{degree-lattice-char=any}. The
lattice $\mathcal{L}$ has rank $4$ and 
 $a_1,\ldots,a_4,e_1,e_3,e_4,e_5$ form a $\mathbb{Q}$-basis of
 $\mathbb{Q}^8$. In this case we obtain the matrix 
$$
A=\left(\begin{array}{rrrrrrrr}
4& 4& 0& 0& 0& -1& 1& 6\\ 
0& 0& 1& 0& 0& 1& 0& 0\\
 0& 0& 0& 4& 0& 7& 9& -6\\
 0& 0& 0& 0& 2& -3& -3& 2
\end{array}\right)
$$
whose columns are denoted by $v_1,\ldots,v_8$. Let $P$ be the toric
ideal of $K[x^{v_1},\ldots,x^{v_8}]$. Therefore, by
Theorem~\ref{degree-lattice-char=any}, we get 
$$
\deg(S/I(\mathcal{L}))=|T(\mathbb{Z}^8/\mathcal{L})|\deg(S/P)=
\frac{5(4)!{\rm
vol}({\rm conv}(0,v_1,\ldots,v_8))}{|T(\mathbb{Z}^{4}/\langle
v_1,\ldots,v_8\rangle)|}=\frac{(5)(200)}{8}=125.
$$
The normalized volume of the polytope ${\rm conv}(0,v_1,\ldots,v_8)$
was computed using {\it Normaliz} \cite{normaliz2}. 
\end{example}

\section{Primary decompositions of 
homogeneous binomial ideals}\label{primary-dec-1}

In this part we present some applications of the main result of
Section~\ref{prim-dec-deg-tor} to the theory of graded binomial ideals
and graded lattice ideals of dimension $1$. We continue to employ the
notations and definitions used in Section~\ref{prelim}.

Given a subset $I\subset S$, its {\it variety\/}, denoted by $V(I)$,
is the set of all  
$a\in\mathbb{A}_K^s$ such that $f(a)=0$ for all $f\in I$, where
$\mathbb{A}_K^s$ denotes affine $s$-space over $K$. In this 
section we focus on homogeneous binomial ideals $I$ of $S$ 
with $V(I,t_i)=\{0\}$ for all $i$.

\begin{lemma}\label{sep16-12}
Let $I$ be a homogeneous binomial ideal of $S$ such that $V(I,t_i)=\{0\}$ for
all $i$. Then, for any associated prime $\mathfrak{p}$ of $I$, 
either $\mathfrak{p}=\mathfrak{m}$, or else
$\mathfrak{p}\subsetneq \mathfrak{m}$, $t_i\not\in\mathfrak{p}$ for all $i$ and ${\rm
ht}(\mathfrak{p})=s-1$. 
Moreover, ${\rm ht}(I)=s-1$.
\end{lemma}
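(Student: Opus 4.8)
The plan is to analyze the minimal and embedded primes of $I$ separately, using the vanishing hypothesis $V(I,t_i)=\{0\}$ to force a rigid structure. First I would fix an associated prime $\mathfrak{p}$ of $I$. Since $I$ is homogeneous, $\mathfrak{p}$ is homogeneous, so either $\mathfrak{p}=\mathfrak{m}$ or $\mathfrak{p}\subsetneq\mathfrak{m}$. In the latter case, I claim $t_i\notin\mathfrak{p}$ for every $i$: if $t_i\in\mathfrak{p}$ for some $i$, then $V(\mathfrak{p})\subseteq V(t_i)$; but also $I\subseteq\mathfrak{p}$ forces $V(\mathfrak{p})\subseteq V(I)$, and since $\mathfrak{p}$ is a homogeneous proper prime, $V(\mathfrak{p})$ has positive dimension (it contains a line through the origin), which contradicts the hypothesis that $V(I,t_i)=V(I)\cap V(t_i)=\{0\}$ once one checks $V(\mathfrak{p})\subseteq V(I,t_i)=\{0\}$. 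So no variable lies in $\mathfrak{p}$.

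Next I would pin down the height. Since $\mathfrak{p}$ contains no variable, the image of each $t_i$ in $S/\mathfrak{p}$ is a nonzero element of the domain $S/\mathfrak{p}$; in particular the localization $(S/\mathfrak{p})[(t_1\cdots t_s)^{-1}]$ is nonzero, and passing to the torus shows $\dim(S/\mathfrak{p})\geq 1$, i.e. $\mathrm{ht}(\mathfrak{p})\leq s-1$. For the reverse inequality, I would use that $I$ is a binomial ideal with $V(I,t_i)=\{0\}$ to conclude $\dim(S/I)\le 1$: any point of $V(I)$ with a zero coordinate is the origin, so $V(I)$ minus the origin lies in the torus, and on the torus the binomial equations $t^{a^+}=t^{a^-}$ cut out (a translate of) a subtorus whose dimension equals $s-\mathrm{rank}(\mathcal{L})$, where $\mathcal{L}$ is the lattice spanned by the exponent differences of the generators. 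Since this set is nonempty (it contains the all-ones point if the generators are pure-difference binomials — and here the ``binomials'' are pure difference by the paper's convention) and contained in the affine cone $V(I)$, while $V(I)$ has dimension $\le 1$ by homogeneity plus the vanishing condition forcing $\dim V(I)=\dim(V(I)\setminus\{0\})+1\le\dim(\text{torus part})+$ something — more cleanly, I would argue $\dim(S/I)=1$ directly: $V(I)$ is a nonempty cone (contains $0$), it is not just $\{0\}$ since e.g. the all-ones line... actually the cleanest route is: every minimal prime $\mathfrak{p}$ of $I$ is homogeneous and $\neq\mathfrak m$ (since $\dim(S/I)\ge 1$, because $S/I$ has the point at the all-ones vector scaled, giving a $1$-dimensional component), hence $\mathrm{ht}(\mathfrak p)\le s-1$; and combined with $\mathrm{ht}(\mathfrak p)\le s-1$ for all associated primes other than $\mathfrak m$, we get $\mathrm{ht}(I)\le s-1$. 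The reverse, $\mathrm{ht}(I)\ge s-1$, follows because $V(I)\setminus\{0\}$ lies in a subtorus of dimension $s-\mathrm{rank}(\mathcal L)$; the vanishing condition $V(I,t_i)=\{0\}$ forces this subtorus to have dimension exactly $1$ (if it were $\ge 2$, projecting would produce points with a coordinate going to $0$ in the closure — more precisely one uses that $\mathbb{Z}^s/\mathcal{L}$ having rank $\ge 2$ would let $V(I)$ contain a $2$-dimensional toric piece whose closure meets some coordinate hyperplane nontrivially, contradicting $V(I,t_i)=\{0\}$), so $\mathrm{rank}(\mathcal L)=s-1$ and every associated prime has height $\le s-1$ with at least one of height exactly $s-1$.

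I would then assemble these: for $\mathfrak{p}$ associated with $\mathfrak{p}\neq\mathfrak{m}$, we have shown $t_i\notin\mathfrak{p}$ for all $i$ and $\mathrm{ht}(\mathfrak{p})\le s-1$; the argument that $\mathrm{rank}(\mathcal{L})=s-1$ gives $\mathrm{ht}(I)=s-1$, and since $\mathfrak{p}\supseteq\mathfrak{p}'$ for some minimal prime $\mathfrak{p}'$ of $I$ with $\mathrm{ht}(\mathfrak{p}')=s-1$, and $\mathrm{ht}(\mathfrak p)\le s-1$, we conclude $\mathrm{ht}(\mathfrak p)=s-1$. The main obstacle I anticipate is the lower bound $\mathrm{ht}(I)\ge s-1$, equivalently $\dim(S/I)\le 1$: turning the set-theoretic vanishing condition $V(I,t_i)=\{0\}$ into a statement about $\mathrm{rank}(\mathcal L)$ requires care about the closure of the torus orbit and about whether $V(I)$ could have components not coming from the lattice (embedded or non-reduced structure is irrelevant for dimension, but one must be sure no extra positive-dimensional component of $V(I)$ hides inside a coordinate hyperplane). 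A clean way around this is to invoke Eisenbud–Sturmfels: since $I$ is a binomial ideal, over $\overline K$ all its associated primes are themselves binomial primes (lattice ideals of partial characters), and for such a prime $\mathfrak P$, $V(\mathfrak P)\cap V(t_i)$ is either empty, all of $V(\mathfrak P)$, or... — the hypothesis rules out the ``all of $V(\mathfrak P)$'' and ``positive-dimensional intersection'' cases, forcing $\dim V(\mathfrak P)\le 1$, hence $\mathrm{ht}(\mathfrak P)\ge s-1$, and descending to $S$ via $\mathrm{ht}(\mathfrak P\cap S)=\mathrm{ht}(\mathfrak P)$ finishes it.
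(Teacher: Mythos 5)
There is a genuine gap, and it is exactly at the point where the binomial hypothesis and the arbitrariness of the field interact. Throughout your argument you treat the hypothesis $V(I,t_i)=\{0\}$ --- which in this paper is a statement about $K$-rational points, $K$ an arbitrary field --- as if it controlled the geometry over $\overline{K}$, or equivalently the radicals and heights of ideals. Your very first step already fails: it is not true that a homogeneous prime $\mathfrak{p}\subsetneq\mathfrak{m}$ has a positive-dimensional variety of $K$-points. For instance $\mathfrak{p}=(t_1^2+t_1t_2+t_2^2)\subset\mathbb{Q}[t_1,t_2]$ is a homogeneous height-one prime with $V(\mathfrak{p})=\{0\}$, and it is even an associated prime of the pure-difference binomial ideal $(t_1^3-t_2^3)$, which satisfies the hypotheses of the lemma; so the situation genuinely occurs for the ideals under consideration. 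The same defect affects your height argument: the set of $K$-points of $V(I)$ does not bound the Krull dimension of $S/I$ over a non-closed field, and your fallback via Eisenbud--Sturmfels needs $V_{\overline{K}}(I\overline{S},t_i)=\{0\}$, whereas the hypothesis only speaks about $K$-points. Passing from the $K$-rational condition to the algebraic statement $\mathrm{rad}(I,t_i)=\mathfrak{m}$ (equivalently, to the vanishing condition over $\overline{K}$) is precisely the nontrivial content, and it is where the binomial structure must be used; your proposal never supplies this bridge. (Over an algebraically closed field your outline can be made to work, and your use of the all-ones point to rule out $\mathrm{rad}(I)=\mathfrak{m}$ is the same idea as the paper's last step; but the lemma is stated and used over arbitrary $K$.)

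The paper's proof is shorter and purely algebraic: it quotes \cite[Lemma~2.6]{ci-lattice}, which says that for a binomial ideal with $V(I,t_i)=\{0\}$ every minimal prime of $(I,t_i)$ is $\mathfrak{m}$. (One can prove this by a $0/1$-point argument: if some variable had no power in $\mathrm{rad}(I,t_i)$, take a minimal prime $\mathfrak{q}\supset(I,t_i)$ not containing it and check that the point with coordinates $1$ on the variables outside $\mathfrak{q}$ and $0$ on the rest is a nonzero $K$-point of $V(I,t_i)$ --- this is where pure-difference binomials are essential.) Granting that, $\mathrm{ht}(I,t_1)=s$ together with $\mathrm{ht}(I,t_1)\le\mathrm{ht}(I)+1$ gives $\mathrm{ht}(I)\ge s-1$; if $t_i\in\mathfrak{p}$ for an associated prime $\mathfrak{p}$, then $(I,t_i)\subset\mathfrak{p}$ forces $\mathfrak{p}=\mathfrak{m}$, so the non-maximal associated primes avoid all variables and have height exactly $s-1$; and $\mathrm{ht}(I)=s-1$ because no power of a variable can lie in a pure-difference binomial ideal. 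To repair your proposal you would need either to restrict to algebraically closed $K$ or to first prove the Lemma~2.6-type statement above; as written, the argument does not establish the lemma over an arbitrary field.
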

\begin{proof}
Since $I$ is homogeneous, any associated prime $\mathfrak{p}$ of $I$ is
homogeneous too. Hence $I\subset\mathfrak{p}\subset\mathfrak{m}$. In particular,
$(I,t_1)\subset \mathfrak{m}$. Let $\fq$ be any minimal prime ideal over
$(I,t_1)$. By \cite[Lemma~2.6]{ci-lattice}, $\fq=\mathfrak{m}$ and so
$s={\rm ht}(I,t_1)\leq {\rm ht}(I)+1$ (here we use the fact
that $I$ is homogeneous). Thus ${\rm ht}(I)\geq s-1$. Suppose that
$\mathfrak{p}$ is an associated prime of $I$, $\mathfrak{p}\neq\mathfrak{m}$.  Then
$t_i\not\in\mathfrak{p}$ for all $i$, because if 
some $t_i\in\mathfrak{p}$, $(I,t_i)\subset\mathfrak{p}$, and
by \cite[Lemma~2.6]{ci-lattice} again, $\mathfrak{p}$ would be equal to
$\mathfrak{m}$. In particular, ${\rm ht}(\mathfrak{p})=s-1$.  Finally ${\rm ht}(I)=s-1$,
otherwise $\mathfrak{m}$ would be the only associated prime of $I$. Thus
$\mathfrak{m}=\rad(I)$, a contradiction because $I$ cannot contain a power of
$t_i$ for any $i=1,\ldots ,s$.
\end{proof}

The assumption that $I$ is homogeneous in the result above is crucial.

\begin{example} Let $S=K[t_1,t_2,t_3]$ and
$I=(t_1t_2t_3-1)\mathfrak{m}$. Clearly $V(I,t_i)=\{0\}$ for all $i$.
However, ${\rm ht}(I)=1$ and ${\rm ht}(I,t_i)=3$ for all $i$.
\end{example}

An ideal $I$ is said to be {\it unmixed\/} if all of its associated primes have
the same height (see, e.g., \cite[p.~196]{ZS}). We write $\hull(I)$
for the intersection of the 
isolated primary components of $I$. 

\begin{proposition}\label{2variables}
Let $I$ be a graded binomial ideal of $S$ such that
$V(I,t_i)=\{0\}$ for all $i$. The following conditions are equivalent:
\begin{eqnarray*}
&&\mbox{$(a)$ $I$ is a lattice ideal;}\phantom{+}\mbox{$(b)$
    $I=(I:(t_1\cdots t_s)^{\infty})$;}\phantom{+}\mbox{$(c)$ $t_i$ is
    regular modulo $I$ for all $i$;}\\ &&\mbox{$(d)$
    $I=(I:t_1)$;}\phantom{+}\mbox{$(e)$ $I$ is
    Cohen-Macaulay;}\phantom{+}\mbox{$(f)$ $I$ is
    unmixed;}\phantom{+}\mbox{$(g)$ $I=\hull(I)$.} 
\end{eqnarray*}
\end{proposition}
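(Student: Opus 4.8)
The plan is to route every condition through condition (c), exploiting the fact that $I$ is a (graded) binomial ideal. First I would observe that the equivalences (a) $\Leftrightarrow$ (b) $\Leftrightarrow$ (c) are nothing but Theorem~\ref{jun12-02} applied to $I$, so no separate argument is needed for those. It then suffices to prove the implications (c) $\Rightarrow$ (d), (c) $\Rightarrow$ (e), (c) $\Rightarrow$ (f), (c) $\Rightarrow$ (g) and, in the other direction, (d) $\Rightarrow$ (c), (e) $\Rightarrow$ (f), (f) $\Rightarrow$ (c), (g) $\Rightarrow$ (f); chaining these closes all seven conditions into a single equivalence class. The workhorse throughout is Lemma~\ref{sep16-12}: every associated prime of $I$ is either $\mathfrak{m}$ or a prime of height $s-1$ containing none of the variables, and ${\rm ht}(I)=s-1$. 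The single recurring observation extracted from that lemma is the chain of equivalences ``no $t_i$ lies in any associated prime of $I$'' $\iff$ ``$\mathfrak{m}\notin{\rm Ass}(S/I)$'' $\iff$ ``every associated prime of $I$ has height $s-1$'' (the last one because a putative embedded prime, or the prime $\mathfrak{m}$, would have height exceeding $s-1={\rm ht}(I)$).

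For the forward implications out of (c): (c) $\Rightarrow$ (d) is immediate, since $t_1$ being a non-zero-divisor modulo $I$ gives $(I\colon t_1)=I$. For (c) $\Rightarrow$ (e), note that $t_1$ is a homogeneous non-zero-divisor on $S/I$, so $\depth(S/I)\geq 1$, while Lemma~\ref{sep16-12} gives ${\rm ht}(I)=s-1$, hence $\dim(S/I)=s-(s-1)=1$; therefore $\depth(S/I)=\dim(S/I)$ and $S/I$ is Cohen--Macaulay. For (c) $\Rightarrow$ (f): if every $t_i$ is regular modulo $I$ then no associated prime contains any $t_i$, so $\mathfrak{m}\notin{\rm Ass}(S/I)$, and by Lemma~\ref{sep16-12} every associated prime then has height $s-1$, i.e.\ $I$ is unmixed. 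For (c) $\Rightarrow$ (g): by the same reasoning all associated primes of $I$ share the height $s-1={\rm ht}(I)$, so none is embedded, which is exactly $I=\hull(I)$.

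For the converse implications, each reduces to recovering (c) (sometimes via (f)). For (d) $\Rightarrow$ (c): $I=(I\colon t_1)$ says $t_1$ is a non-zero-divisor modulo $I$, hence $\mathfrak{m}\notin{\rm Ass}(S/I)$, and Lemma~\ref{sep16-12} forces every associated prime to have height $s-1$ and to avoid all the $t_i$, giving (c). For (e) $\Rightarrow$ (f): a Cohen--Macaulay ring is unmixed. For (f) $\Rightarrow$ (c): if $I$ is unmixed all its associated primes have the common height ${\rm ht}(I)=s-1$ (Lemma~\ref{sep16-12}), so the height-$s$ prime $\mathfrak{m}$ is not among them, and Lemma~\ref{sep16-12} again yields that no $t_i$ lies in any associated prime. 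For (g) $\Rightarrow$ (f): $I=\hull(I)$ means every associated prime of $I$ is a minimal prime; but $\mathfrak{m}$ cannot be a minimal prime, since ${\rm ht}(I)=s-1<s$ guarantees a (homogeneous) minimal prime of height $s-1$ strictly contained in $\mathfrak{m}$; hence every minimal, and therefore every associated, prime of $I$ has height $s-1$, so $I$ is unmixed.

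I do not anticipate a serious obstacle: this proposition is essentially a packaging statement, and once Lemma~\ref{sep16-12} and Theorem~\ref{jun12-02} are in hand the whole circuit is formal. The only points that need a little care are the graded depth/dimension bookkeeping in the Cohen--Macaulay step (c) $\Rightarrow$ (e), and the elementary remark used in (g) $\Rightarrow$ (f) that $\mathfrak{m}$ can never be a minimal prime of $I$ here --- which is also what makes $\hull(I)$ genuinely discard only the $\mathfrak{m}$-primary component in the non-unmixed case.
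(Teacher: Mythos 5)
Your proof is correct and follows essentially the same route as the paper: the equivalences (a)--(c) come from Theorem~\ref{jun12-02}, and all the remaining implications are extracted from the structure of the associated primes given by Lemma~\ref{sep16-12} together with the fact that $\dim(S/I)=1$. The only (immaterial) difference is organizational: the paper passes through the explicit identity $\hull(I)=(I\colon t_1^{\infty})$ to get (d)$\Rightarrow$(f), whereas you go from (d) to (c) directly via associated primes and invoke the standard fact that Cohen--Macaulay implies unmixed.
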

\begin{proof}
The equivalences among $(a)$, $(b)$ and $(c)$ follow from
Theorem~\ref{jun12-02}. By Lemma~\ref{sep16-12}, $(f)$ and $(g)$ are
equivalent. Clearly $(c)\Rightarrow (d)$. By
Lemma~\ref{sep16-12}, ${\rm ht}(I)=s-1$ and, for any associated prime
$\mathfrak{p}$ of $I$, either $\mathfrak{p}=\mathfrak{m}$, 
or else $\mathfrak{p}\subsetneq \mathfrak{m}$,
${\rm ht}(\mathfrak{p})=s-1$ and $t_1\cdots t_s\not\in\mathfrak{p}$.
Therefore $I$ has 
a minimal primary decomposition either of the form
$I=\fq_1\cap\cdots\cap \fq_c=\hull(I)$, or else $I=\fq_1\cap\cdots\cap
\fq_c\cap \fq=\hull(I)\cap \fq$, where the $\fq_i$ are $\mathfrak{p}_i$-primary
ideals with ${\rm ht}(\mathfrak{p}_i)=s-1$, and $\fq$ is
$\mathfrak{m}$-primary. Therefore $(e)$ and $(f)$ are equivalent. Moreover,
either
$(I:t_1^{\infty})=\cap_{i=1}^{c}(\fq_i:t_1^{\infty})=\cap_{i=1}^{c}\fq_i=\hull(I)$,
or else $(I:t_1^{\infty})=\cap_{i=1}^{c}(\fq_i:t_1^{\infty})\cap
(\fq:t_1^{\infty})=\cap_{i=1}^{c}\fq_i=\hull(I)$, since
$(\fq:t_1^{\infty})=S$. In both cases, $\hull(I)=(I:t_1^{\infty})$.

Suppose $(d)$ holds. Then $I=(I:t_1)=(I:t_1^{\infty})=\hull(I)$ and $I$ is
unmixed. Thus $(d)\Rightarrow (f)$. Suppose $(f)$ holds. Then $\mathfrak{m}$ is
not an associated prime of $I$, and, by Lemma~\ref{sep16-12}, $t_i$
cannot be in any associated prime of $I$, so each $t_i$ is regular
modulo $I$ and $(c)$ holds. 
\end{proof}

\begin{definition}\label{matrix-ideal}
Let $L$ be an $s\times m$ integer matrix, let $l_{*,i}$ be the 
$i$-th column of $L$ and let $f_i=t^{l_{*,i}^+}-t^{l_{*,i}^-}$ be the
binomial of $S=K[t_1,\ldots,t_s]$ defined by $l_{*,i}$. 
We call $I(L):=(f_1,\ldots ,f_m)$ the {\it matrix ideal\/} associated
to $L$. 
\end{definition}

Whenever $I(L)$ is graded with respect to a weight vector
$\mathbf{d}=(d_1,\ldots,d_s)\in\mathbb{N}_+^s$, we can
and will suppose, without loss of generality, that $\gcd(\mathbf{d})=1$. 

\begin{definition}\cite[p.~397]{opPCB}\label{herzogideal}
Let $\mathbf{d}=(d_{1},\ldots ,d_{s})\in\bn^{s}_{+}$. The {\em Herzog
  ideal associated to $\mathbf{d}$}, denote by
$\mathfrak{p}_{\mathbf{d}}$, is the toric ideal of
$K[x_1^{d_1},\ldots,x_1^{d_s}]$, where $x_1$ is a variable.
\end{definition}

\begin{remark}\label{gradings}
Let $I(L)=(f_1,\ldots ,f_m)$ be the matrix ideal associated to an
$s\times m$ integer matrix $L$. The following conditions are
equivalent.
\begin{itemize}
\item[$(a)$] There exists an $\bn$-grading of $S$, with each $t_i$
  of weight $d_i>0$, under which each $f_i$ is homogeneous of
  positive degree;
\item[$(b)$] There exists $\mathbf{d}=(d_1,\ldots,d_s)\in\bn^s_{+}$ such that
  $\mathbf{d} L=0$;
\item[$(c)$] There exists $\mathbf{d} =(d_1,\ldots,d_s)\in\bn^s_{+}$ with
  $I(L)\subset \mathfrak{p}_{\mathbf{d}}$, the Herzog ideal
  associated to $\mathbf{d}$. 
\end{itemize}
\end{remark}

We give now a result on the structure of graded matrix ideals.

\begin{proposition}\label{structureI(L)}
Let $I$ be the matrix ideal of an $s\times m$ integer
matrix $L$ and let $\bl$ be the lattice spanned by the columns
of $L$. Suppose that $I$ is graded and that $V(I,t_i)=\{0\}$ for
all $i$. Then:
\begin{itemize}
\item[$(a)$] $I$ has a minimal primary decomposition either of the
  form $I=\fq_1\cap\cdots\cap \fq_c$, if $I$ is unmixed, or else
  $I=\fq_1\cap\cdots\cap \fq_c\cap\fq$, if $I$ is not unmixed, where
  the $\fq_i$ are $\mathfrak{p}_i$-primary ideals 
with ${\rm ht}(\mathfrak{p}_i)=s-1$,
  and $\fq$ is an $\mathfrak{m}$-primary ideal.
\item[$(b)$] For all
  $g\in\mathfrak{m}\setminus\cup_{i=1}^{c}\mathfrak{p}_i$, 
$I(\bl)=(I:(t_1\cdots t_s)^{\infty})=(I:g^{\infty})=\fq_1\cap\cdots\cap
  \fq_c=\hull(I)$. 
\item[$(c)$] $\rank(L)=s-1$ and there exists a unique Herzog ideal
$\mathfrak{p}_{\mathbf{d}}$ 
containing $I$.
\item[$(d)$] If $I$ is not unmixed and $h=t_1\cdots t_s$, there
exists $a\in\bn_+$ such that 
  $I(\bl)=(I\colon h^{a})$, $\fq=I+(h^a)$ is an irredundant
  $\mathfrak{m}$-primary 
  component of $I$ and $I=I(\bl)\cap \fq$.
\item[$(e)$] Either $c\leq |T(\mathbb{Z}^s/\mathcal{L})|$, if
  $\ch(K)=0$, or else $c\leq |G|$, if $\ch(K)=p$, $p$ a prime, where $G$
  is the unique largest subgroup of $T(\mathbb{Z}^s/\mathcal{L})$
  whose order is relatively prime to $p$. If $K$ is algebraically
  closed, then equality holds.
\end{itemize}
\end{proposition}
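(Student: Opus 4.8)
The plan is to deduce each part essentially by specializing the general machinery already in place. The matrix ideal $I=I(L)$ is a graded binomial ideal satisfying $V(I,t_i)=\{0\}$ for all $i$, so Lemma~\ref{sep16-12} applies verbatim: every associated prime $\mathfrak{p}$ of $I$ is either $\mathfrak{m}$ or satisfies $\mathrm{ht}(\mathfrak{p})=s-1$ with $t_i\notin\mathfrak{p}$ for all $i$, and $\mathrm{ht}(I)=s-1$. Part $(a)$ is then immediate by collecting the height-$(s-1)$ primary components $\fq_1,\ldots,\fq_c$ and, if present, the unique $\mathfrak{m}$-primary component $\fq$; the case split ``unmixed vs.\ not unmixed'' is exactly the absence or presence of $\fq$. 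Part $(c)$: from $\mathrm{ht}(I)=s-1$ and $I=I(L)$ with $\bl=\langle\text{columns of }L\rangle$, Corollary~\ref{nov4-12}(a)-type reasoning (or directly $\mathrm{ht}(I(\bl))=\rank(\bl)$) gives $\rank(L)=\rank(\bl)=s-1$; since $I$ is graded, Remark~\ref{gradings} furnishes $\mathbf{d}\in\bn^s_+$ with $\mathbf{d}L=0$, i.e.\ $I\subset\mathfrak{p}_{\mathbf{d}}$, and uniqueness of $\mathbf{d}$ (up to the normalization $\gcd(\mathbf{d})=1$) follows because $\rank(L)=s-1$ forces the left null space of $L$ to be $1$-dimensional over $\bq$; equivalently $\mathfrak{p}_{\mathbf{d}}$ is then one of the minimal primes $\mathfrak{p}_i$, and any Herzog ideal containing $I$ has height $s-1$ hence must be a minimal prime of this specific shape, pinning it down.

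For part $(b)$ I would argue as in the proof of Proposition~\ref{2variables}. Given $g\in\mathfrak{m}\setminus\cup_{i=1}^c\mathfrak{p}_i$, saturating the primary decomposition of $(a)$ at $g$ kills exactly the $\mathfrak{m}$-primary component (since $g\in\mathfrak{m}$, so $(\fq:g^\infty)=S$) and fixes each $\fq_i$ (since $g\notin\mathfrak{p}_i$, so $(\fq_i:g^\infty)=\fq_i$); hence $(I:g^\infty)=\fq_1\cap\cdots\cap\fq_c=\hull(I)$. Taking $g=t_1\cdots t_s$ is legitimate because no $t_i$ lies in any $\mathfrak{p}_i$, and then Theorem~\ref{jun12-02} together with Lemma~\ref{sep1-12} (or Corollary~\ref{nov4-12}(b)) identifies $(I:(t_1\cdots t_s)^\infty)$ with the lattice ideal $I(\bl)$. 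This simultaneously gives $I(\bl)=\hull(I)$ and shows $I(\bl)$ is one of the two pieces in the decomposition of $(a)$.

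Part $(d)$ is the one genuinely extra point. When $I$ is not unmixed, $h=t_1\cdots t_s$ is a zero-divisor mod $I$ only through the $\mathfrak{m}$-primary component, so the chain $(I:h)\subset(I:h^2)\subset\cdots$ stabilizes at $(I:h^\infty)=I(\bl)$; pick $a$ with $(I:h^a)=I(\bl)$. One must then check $I=I(\bl)\cap(I+(h^a))$ and that $\fq:=I+(h^a)$ is $\mathfrak{m}$-primary and irredundant. The inclusion $I\subset I(\bl)\cap\fq$ is trivial; for the reverse, an element of $I(\bl)\cap\fq$ is of the form $i+rh^a$ with $i+rh^a\in I(\bl)$, and since $I(\bl)=(I:h^a)$ contains no power of $h$ in any minimal prime while $h^a\in\fq$, a short localization argument at each $\mathfrak{p}_j$ and at $\mathfrak{m}$ shows the two sides agree locally everywhere, hence globally. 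That $\sqrt{\fq}=\mathfrak{m}$ follows since modulo $I+(h^a)$ each $t_i$ is nilpotent (as $t_i$ divides $h^a$ up to the other variables, one gets $t_i^{N}\in I+(h^a)$ for large $N$ using that $I$ is $\mathfrak{m}$-primary after inverting nothing — more carefully, $V(I,h)=V(I,t_1)\cup\cdots\cup V(I,t_s)=\{0\}$ by hypothesis, so $\sqrt{I+(h)}=\mathfrak{m}$, whence $\sqrt{I+(h^a)}=\mathfrak{m}$). Irredundancy: $I(\bl)\not\supset\fq$ because $I(\bl)$ has height $s-1<s$, and $\fq\not\supset I(\bl)$ because $\fq$ is $\mathfrak{m}$-primary while $I(\bl)$ has an associated prime of height $s-1$; thus $I=I(\bl)\cap\fq$ is irredundant and $\fq$ is a genuine embedded component.

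Finally, part $(e)$ is immediate from $(b)$ and Theorem~\ref{bounds-for-the-number-of-associated-primes}: by $(b)$, $I(\bl)=\hull(I)$ has exactly the primary components $\fq_1,\ldots,\fq_c$ at its minimal primes, so its number of associated primes is $c$ (its minimal primes are $\mathfrak{p}_1,\ldots,\mathfrak{p}_c$, and a lattice ideal has no embedded primes by Theorem~\ref{bounds-for-the-number-of-associated-primes}(a)); hence the bounds $c\leq|T(\bz^s/\bl)|$ if $\ch(K)=0$ and $c\leq|G|$ if $\ch(K)=p$ follow directly, with equality when $K=\overline{K}$. The main obstacle is the careful verification in $(d)$ that $\fq=I+(h^a)$ is $\mathfrak{m}$-primary and that the intersection $I=I(\bl)\cap\fq$ is exact and irredundant; everything else is a matter of assembling Lemma~\ref{sep16-12}, Theorem~\ref{jun12-02}, Remark~\ref{gradings}, and Theorem~\ref{bounds-for-the-number-of-associated-primes}, closely paralleling the proof of Proposition~\ref{2variables}.
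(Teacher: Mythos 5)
Your handling of parts (a), (b), (c) and (e) follows the paper's own proof almost step for step: Lemma~\ref{sep16-12} for (a), saturation of the decomposition at $g$ and at $h=t_1\cdots t_s$ combined with Lemma~\ref{sep1-12} for (b), the one-dimensionality of the left kernel of $L$ for the uniqueness of the Herzog ideal in (c), and feeding the count of associated primes of $I(\bl)=\hull(I)$ into Theorem~\ref{bounds-for-the-number-of-associated-primes}(b) for (e); these parts are fine.

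The gaps are in (d), exactly the part you identify as the real content. First, your proof of $I=I(\bl)\cap(I+(h^a))$ does not go through as written: after expressing an element of the right-hand side as $i+rh^a$ you invoke ``a short localization argument at each $\mathfrak{p}_j$ and at $\mathfrak{m}$,'' but localizing at $\mathfrak{m}$ accomplishes nothing (all the relevant primes lie in $\mathfrak{m}$, so the question there is just the original global one). The correct elementary argument is the colon-stabilization trick: choose $a$ with $(I\colon h^a)=(I\colon h^{2a})$; if $f=i+rh^a$ with $i\in I$ and $f\in(I\colon h^a)=I(\bl)$, then $rh^{2a}=h^af-h^ai\in I$, so $r\in(I\colon h^{2a})=(I\colon h^a)$, hence $rh^a\in I$ and $f\in I$. (The paper instead cites \cite[Proposition~7.2]{EisStu}.) Second, to show $\fq=I+(h^a)$ is $\mathfrak{m}$-primary you argue ``$V(I,h)=\{0\}$, so $\sqrt{I+(h)}=\mathfrak{m}$''; over an arbitrary field this Nullstellensatz-type inference is invalid (for instance $(t_1^2+t_2^2)\subset\mathbb{R}[t_1,t_2]$ has variety $\{0\}$ but is not $\mathfrak{m}$-primary), and the proposition is stated over an arbitrary $K$. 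One needs the graded binomial structure: a minimal prime of $I+(h^a)$ contains some $t_i$, hence contains $(I,t_i)$, and then \cite[Lemma~2.6]{ci-lattice} (the same lemma behind Lemma~\ref{sep16-12}) forces it to equal $\mathfrak{m}$. Third, your irredundancy reason is wrong as stated: that ``$\fq$ is $\mathfrak{m}$-primary while $I(\bl)$ has an associated prime of height $s-1$'' does not preclude $I(\bl)\subset\fq$ (an $\mathfrak{m}$-primary ideal can contain an ideal of height $s-1$, e.g.\ $(t_1)\subset(t_1,t_2^2)$). The correct reason uses the standing hypothesis of (d): if $\fq$ were redundant, then $I=I(\bl)$ would be a lattice ideal and hence unmixed, contradicting the assumption that $I$ is not unmixed. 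All three repairs are short and lie within the paper's toolkit, so your overall architecture is sound, but these steps of (d) do not hold up as written.
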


\begin{proof} We set $h=t_1\cdots t_s$. Item $(a)$ follows 
from Lemma~\ref{sep16-12} (see the proof of
Proposition~\ref{2variables}). By Lemma~\ref{sep1-12},
$I(\bl)=(I\colon h^{\infty})$, which proves the first equality in
$(b)$. Let $g\in\mathfrak{m}\setminus\cup_{i=1}^{c}\mathfrak{p}_i$. 
Suppose that $I=\fq_1\cap\cdots\cap \fq_c$. Then, for any
$a\in\bn_{+}$, we have
$(I\colon g^a)=\cap_{j=1}^{c}(\fq_j\colon g^a)=\cap_{j=1}^{c}\fq_j$, because
$g^a\not\in\mathfrak{p}_j$ and $\fq_j$ is $\mathfrak{p}_j$-primary. Suppose that
$I=\fq_1\cap\cdots\cap \fq_c\cap \fq$. Then, for $a\gg 0$, $(I\colon
g^a)=(I\colon g^\infty)$ and 
$g^a\in\mathfrak{m}^a\subset\fq$, so that $(\fq\colon g^a)=S$. Thus
$(I\colon g^a)=\cap_{j=1}^{c}(\fq_j\colon g^a)\cap
(\fq\colon g^a)=\cap_{j=1}^{c}\fq_j$. In either case, one has the equality 
$(I\colon g^{\infty})=\fq_1\cap\cdots\cap\fq_c$, which
coincides with $\hull(I)$. By a similar calculation, we get 
$(I\colon h^\infty)=\fq_1\cap\cdots\cap\fq_c$. 

In particular, using $(b)$, ${\rm ht}(I)={\rm ht}(I(\bl))$. Since
${\rm ht}(I(\bl))=\rank(\bl)=\rank(L)$, 
 it follows that $\rank(L)=s-1$.
Since $I$ is homogeneous, by Remark~\ref{gradings}, there exists
$\mathbf{d}\in\bn^{s}_{+}$ such that $I\subset\mathfrak{p}_{\mathbf{d}}$, the
Herzog ideal associated to $\mathbf{d}$. Since
$\rank(L^{\top})=\rank(L)=s-1$, then $\ker(L^{\top})$ is
generated as a $\bq$-linear subspace by $\mathbf{d}^{\top}$. 
Thus $\mathfrak{p}_{\mathbf{d}}$
is the unique Herzog ideal containing $I$ (see
\cite[Remark~3.2]{opPCB}). This proves $(c)$.

Suppose that $I$ is not unmixed. As $S$ is a Noetherian ring, there
exists $a\in\bn_{+}$ such that 
$(I\colon h^a)=(I\colon h^{\infty})$. By \cite[Proposition~7.2]{EisStu},
$I=(I\colon h^a)\cap (I+(h^a))$, where 
$(I\colon h^a)=I(\bl)$ is
the hull of $I$. Since $I$ is not unmixed, $I+(h^a)$ must be
irredundant. Moreover, by \cite[Lemma~2.6]{ci-lattice}, the only prime
ideal containing $I+(h^a)$ is $\mathfrak{m}$. It follows that $I+(h^a)$ is
$\mathfrak{m}$-primary. This proves $(d)$. Finally, $(e)$ follows from
Theorem~\ref{bounds-for-the-number-of-associated-primes}(b).
\end{proof}

Let $I$ be the matrix ideal associated to an $s\times m$
integer matrix $L$. The conditions $I$ homogeneous and $\rank(L)=s-1$
do not imply that $V(I,t_i)=\{0\}$ for all $i$. 

\begin{example} $L$ be the matrix with column vectors $(2,-2,0)$ and
$(-2,1,1)$ and let $I$ be its matrix ideal
$(t_1^2-t_2^2,t_1^2-t_2t_3)$. Clearly $I$ is 
homogeneous (with the standard grading) and $\rank(L)=2$. However
$(0,0,\lambda)\in V(I,t_1)$ for any $\lambda\in K$.
\end{example}

As a consequence of the previous result, we
obtain the following.

\begin{corollary}\label{degreeI(L)}
Let $I=I(L)$ be the matrix ideal associated to an $s\times m$
integer matrix $L$. Let $\bl$ be the lattice spanned by the columns
of $L$. Suppose that $I$ is homogeneous with respect to a weight vector
$\mathbf{d}=(d_1,\ldots ,d_s)\in\bn^s_+$ and that
$V(I,t_i)=\{0\}$ for all $i$. Then
\begin{eqnarray*}
\deg(S/I)=\max\{d_1,\ldots,d_s\}|T(\bz^s/\bl)|
=\max\{d_1,\ldots,d_s\}\Delta_{s-1}(L).
\end{eqnarray*}
\end{corollary}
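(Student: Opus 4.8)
The plan is to reduce the computation of $\deg(S/I)$ to $\deg(S/I(\bl))$ and then quote Corollary~\ref{nov28-12}. Everything needed is already in hand: Proposition~\ref{structureI(L)} describes the primary structure of $I$ and identifies its hull with $I(\bl)$, Proposition~\ref{additivity-of-the-degree} shows the degree depends only on the top-dimensional primary components, and Corollary~\ref{nov28-12} evaluates the degree of a graded $1$-dimensional lattice ideal.

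\textbf{Step 1: degree of $I$ equals degree of $I(\bl)$.} First I would invoke Proposition~\ref{structureI(L)}: under the present hypotheses $\rank(L)=s-1$, hence $\dim(S/I)=\dim(S/I(\bl))=1$, and $I$ has a minimal primary decomposition $I=\fq_1\cap\cdots\cap\fq_c$ (if $I$ is unmixed) or $I=\fq_1\cap\cdots\cap\fq_c\cap\fq$ (if not), where each $\fq_i$ is $\mathfrak{p}_i$-primary with ${\rm ht}(\mathfrak{p}_i)=s-1$, $\fq$ is $\mathfrak{m}$-primary, and $I(\bl)=\fq_1\cap\cdots\cap\fq_c=\hull(I)$. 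Since ${\rm ht}(\mathfrak{p}_i)=s-1$, the $\mathfrak{p}_i$ are exactly the associated primes of $I$ of dimension $\dim(S/I)=1$, while $\mathfrak{m}$ has dimension $0$; thus Proposition~\ref{additivity-of-the-degree} applied to the decomposition of $I$ gives $\deg(S/I)=\sum_{i=1}^{c}\deg(S/\fq_i)$. On the other hand, by Theorem~\ref{bounds-for-the-number-of-associated-primes}(a) every associated prime of $I(\bl)$ has height $\rank(\bl)=s-1$, so $I(\bl)=\fq_1\cap\cdots\cap\fq_c$ is a minimal primary decomposition all of whose components are $1$-dimensional, and Proposition~\ref{additivity-of-the-degree} again yields $\deg(S/I(\bl))=\sum_{i=1}^{c}\deg(S/\fq_i)$. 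Hence $\deg(S/I)=\deg(S/I(\bl))$.

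\textbf{Step 2: apply the lattice-ideal formula.} Next I would check that $I(\bl)$ is a $1$-dimensional lattice ideal, homogeneous with respect to $\mathbf{d}$: indeed $I$ homogeneous with respect to $\mathbf{d}$ forces $\mathbf{d}L=0$ by Remark~\ref{gradings}, so $\langle\mathbf{d},a\rangle=0$ for every $a\in\bl$, which means every generator $t^{a^+}-t^{a^-}$ of $I(\bl)$ is $\mathbf{d}$-homogeneous. Using the standing convention $\gcd(\mathbf{d})=1$, Corollary~\ref{nov28-12} then gives $\deg(S/I(\bl))=\max\{d_1,\ldots,d_s\}\,|T(\bz^s/\bl)|$, which combined with Step~1 is the first asserted equality. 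The second equality is immediate: since $\rank(\bl)=\rank(L)=s-1$, Theorem~\ref{torsion}(d) identifies $|T(\bz^s/\bl)|$ with $\Delta_{s-1}(L)$.

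\textbf{Main point to watch.} The argument is a chain of appeals to earlier results and carries no serious obstacle; the only thing requiring care is that the isolated primary components $\fq_1,\ldots,\fq_c$ appearing in the decomposition of $I$ are literally the same ideals as those appearing in the decomposition of $I(\bl)$, so that the two invocations of additivity produce matching sums. This is precisely the equality $I(\bl)=\hull(I)$ furnished by Proposition~\ref{structureI(L)}(b), so no extra work is needed.
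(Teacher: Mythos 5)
Your proposal is correct and follows essentially the same route as the paper's proof: reduce to $I(\bl)=\hull(I)$ via Proposition~\ref{structureI(L)} and the additivity of the degree (Proposition~\ref{additivity-of-the-degree}), then apply Corollary~\ref{nov28-12} and Theorem~\ref{torsion}(d). Your extra remarks (verifying $\mathbf{d}$-homogeneity of $I(\bl)$ and applying additivity to $I(\bl)$ as well) merely make explicit details the paper leaves implicit.
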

\begin{proof}
By Proposition~\ref{structureI(L)},
$\hull(I)=\fq_1\cap\cdots\cap\fq_c=I(\bl)$. Thus, by
Proposition~\ref{additivity-of-the-degree},
$$
\deg(S/I)=\textstyle\sum_{j=1}^{c}\deg(S/\fq_j)=\deg(S/I(\bl)).
$$
By
Corollary~\ref{nov28-12}, we have $\deg(S/I(\bl))=
\max\{d_1,\ldots,d_s\}|T(\bz^s/\bl)|$.  To complete the proof
notice that, by Theorem~\ref{torsion}, one has
$|T(\bz^s/\bl)|=\Delta_{s-1}(L)$ .
\end{proof}

\section{Generalized positive critical binomial ideals
}\label{gpcb-section}

We continue to employ the notations and 
definitions used in Section~\ref{intro}. In this section we restrict
to the study of certain classes of square integer matrices (see
Definition~\ref{square-matrix-new}) and their corresponding matrix
ideals.  

Recall that the {\it support\/} of a polynomial $f\in S$, denoted by
${\rm supp}(f)$, is defined as the set of all variables $t_i$ that
occur in $f$.

\begin{proposition}\label{oct6-12} 
Let $I=I(L)=(f_1,\ldots,f_s)$ be the matrix ideal associated to an
$s\times s$ PB matrix $L$. Suppose that $|{\rm supp}(f_j)|\geq 4$, for
all $j=1,\ldots ,s$. Then:
\begin{itemize}
\item[$(a)$] $I$ is not a lattice ideal.
\item[$(b)$] If $V(I,t_i)=\{0\}$ for all $i$, then $\mathfrak{m}$ is
  an associated prime of $I$.
\item[$(c)$] If $I$ is graded and $V(I,t_i)=\{0\}$ for all $i$,  
then $I$ has a minimal
  primary decomposition of the form $I=\fq_1\cap\cdots\cap
  \fq_c\cap\fq$, where the $\fq_i$ are $\mathfrak{p}_i$-primary
  ideals with 
  ${\rm ht}(\mathfrak{p}_i)=s-1$, and $\fq=(I,(t_1\cdots t_s)^a)$ 
is an $\mathfrak{m}$-primary ideal,
  for some $a\in\mathbb{N}_+$. Moreover, $I$ has at most
  $|T(\mathbb{Z}^s/\mathcal{L})|+1$ primary components. 
\end{itemize}
\end{proposition}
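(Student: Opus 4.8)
The plan is to establish the three items in order, building on Proposition~\ref{structureI(L)} and on the structural result \cite[Lemma~2.6]{ci-lattice} that any prime containing $(I,t_i)$ equals $\mathfrak{m}$. For part $(a)$, I would argue by contradiction: if $I=I(L)$ were a lattice ideal, then by Corollary~\ref{nov4-12}(a) we would have $I=I(\bl)$, where $\bl$ is the lattice spanned by the columns of $L$. The columns $l_{*,j}$ of a PB matrix generate $\bl$, but each $l_{*,j}$ has a positive entry in position $j$ (the diagonal entry $a_{j,j}>0$) and the off-diagonal entries are $\leq 0$; since $|{\rm supp}(f_j)|\geq 4$, each generator involves at least four variables. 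The key point is to exhibit a binomial in $I(\bl)$ (equivalently, a lattice vector of $\bl$) of smaller support—say, one supported on at most $3$ variables obtained by a suitable integer combination of two columns—and then show it cannot lie in the ideal $I$ generated by $f_1,\ldots,f_s$ for degree/support reasons. Concretely, any element of $I$ is an $S$-combination $\sum g_j f_j$, and a careful examination of leading terms (using a monomial order, or the fact that each $f_j$ has its two terms of disjoint support with the diagonal variable appearing only on one side) shows that no such combination produces a binomial whose support omits two of the variables. This is essentially the obstruction that appeared in \cite{opPCB} for PCB ideals, generalized to the PB case.

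For part $(b)$, assume $V(I,t_i)=\{0\}$ for all $i$. By Lemma~\ref{sep16-12}, every associated prime of $I$ is either $\mathfrak{m}$ or has height $s-1$ and avoids all the $t_i$. Suppose, for contradiction, that $\mathfrak{m}$ is not associated to $I$; then $I$ is unmixed, and by Proposition~\ref{2variables} (whose equivalences apply since $I$ is a graded binomial ideal with $V(I,t_i)=\{0\}$) $I$ would be a lattice ideal, contradicting part $(a)$. Hence $\mathfrak{m}\in{\rm Ass}(S/I)$.

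Part $(c)$ combines the two previous parts with Proposition~\ref{structureI(L)}. Since $I$ is graded with $V(I,t_i)=\{0\}$, items $(a)$ and $(d)$ of Proposition~\ref{structureI(L)} apply: by part $(b)$ above, $\mathfrak{m}$ is associated to $I$, so $I$ is \emph{not} unmixed, and therefore Proposition~\ref{structureI(L)}$(a)$ gives a minimal primary decomposition $I=\fq_1\cap\cdots\cap\fq_c\cap\fq$ with the $\fq_i$ being $\mathfrak{p}_i$-primary of height $s-1$ and $\fq$ being $\mathfrak{m}$-primary. Proposition~\ref{structureI(L)}$(d)$ with $h=t_1\cdots t_s$ then shows that for some $a\in\mathbb{N}_+$ the $\mathfrak{m}$-primary component can be taken to be $\fq=I+(h^a)=(I,(t_1\cdots t_s)^a)$, irredundant, with $I=I(\bl)\cap\fq$. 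Finally, the bound $c\leq|T(\mathbb{Z}^s/\mathcal{L})|$ (when $\ch(K)=0$; the characteristic-$p$ statement is analogous) comes from Proposition~\ref{structureI(L)}$(e)$, so the total number of primary components is at most $|T(\mathbb{Z}^s/\mathcal{L})|+1$.

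I expect the main obstacle to be part $(a)$: producing an explicit lattice vector of $\bl$ with small support and then rigorously proving it is not in $I$. The bookkeeping on supports and leading terms—showing no $S$-combination of the $f_j$ can shrink the support below $4$ variables, or more precisely below whatever threshold the small-support lattice element sits at—requires care, since one must rule out cancellations. The cleanest route is probably to fix the GRevLex order, note that ${\rm in}(I)$ is generated by monomials coming from ${\rm in}(f_j)$ (after computing a Gröbner basis, or at least controlling it enough), and show that the alleged small-support binomial has a leading term not divisible by any of these; alternatively one invokes the already-established machinery from \cite{opPCB} and adapts it. Parts $(b)$ and $(c)$ are then essentially formal consequences of the earlier propositions.
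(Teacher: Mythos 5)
Your outline of parts (b) and (c) is structurally close to the paper, but there are two genuine problems. The main one is part (a), which you leave as a plan rather than a proof, and the plan itself is shaky: it is not clear (and not needed) that the lattice $\bl$ contains a vector of support at most $3$ obtained from an integer combination of two columns of a PB matrix, and your fallback remark that ${\rm in}(I)$ is generated by the ${\rm in}(f_j)$ is false in general, since $f_1,\ldots,f_s$ need not be a Gr\"obner basis. The paper's argument is different and concrete: assuming $I$ is a lattice ideal, every $t_k$ is a non-zero-divisor modulo $I$ (Theorem~\ref{jun12-02}); one then writes down two explicit $S$-combinations of $f_1$ and $f_k$ (for a $k$ with $a_{k,1}>0$), each of the form $t_k g$ with $g\neq 0$ a binomial, so that $g\in(I\colon t_k)=I$. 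In the first combination $g$ has the term $t_k^{a_{k,k}-1}$, and in the second (after deducing $a_{1,1}>a_{1,k}$ from the first) $g'$ has the term $t_k^{a_{k,k}-1}t_1^{a_{1,1}-a_{1,k}}$. Writing $g$ (resp. $g'$) as an $S$-combination of $f_1,\ldots,f_s$ forces this one- or two-variable monomial to be divisible by a term of some $f_i$, which is impossible because $|{\rm supp}(f_i)|\geq 4$ means every off-diagonal term of every $f_i$ involves at least three variables, and the diagonal terms have too large an exponent. The essential point you are missing is that the support obstruction is applied to a single \emph{term} of an element of $(I\colon t_k)$, not to the support of a whole lattice vector; without this (or some equally explicit substitute) part (a) is not proved, and (b) and (c) collapse with it.

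A secondary issue: your proof of (b) invokes Lemma~\ref{sep16-12} and Proposition~\ref{2variables}, both of which require $I$ to be graded, but gradedness is not a hypothesis of part (b) (it enters only in (c)). The paper's argument avoids this: since $I$ is not a lattice ideal by (a), Theorem~\ref{jun12-02} gives some $t_i$ that is a zero-divisor modulo $I$, hence $t_i$ lies in some associated prime $\mathfrak{p}\supseteq(I,t_i)$, and the hypothesis $V(I,t_i)=\{0\}$ together with \cite[Lemma~2.6]{ci-lattice} forces $\mathfrak{p}=\mathfrak{m}$. Your part (c), on the other hand, is essentially the paper's: non-unmixedness plus Proposition~\ref{structureI(L)}(a), (d) and (e) give the decomposition $I=\fq_1\cap\cdots\cap\fq_c\cap\fq$ with $\fq=(I,(t_1\cdots t_s)^a)$ and the bound $c\leq|T(\mathbb{Z}^s/\mathcal{L})|$, which holds in every characteristic since $|G|\leq|T(\mathbb{Z}^s/\mathcal{L})|$.
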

\begin{proof} 
Assume that $I$ is a lattice ideal. There is $k>1$ such that
$f_1=t_1^{a_{1,1}}-t_k^{a_{k,1}}\cdots t_s^{a_{s,1}}$ and
$f_k=t_k^{a_{k,k}}-t_1^{a_{1,k}}\cdots
t_{k-1}^{a_{k-1,k}}t_{k+1}^{a_{k+1,k}}\cdots t_s^{a_{s,k}}$, with
$a_{k,1}>0$. We claim that $a_{1,1}>a_{1,k}$. If $a_{1,1}\leq
a_{1,k}$, then from the equality
\begin{eqnarray*}
(t_1^{a_{1,1}}-t_k^{a_{k,1}}\cdots
  t_s^{a_{s,1}})t_1^{a_{1,k}-a_{1,1}}t_2^{a_{2,k}}\cdots
  t_{k-1}^{a_{k-1,k}}t_{k+1}^{a_{k+1,k}}\cdots t_s^{a_{s,k}}
  \phantom{++++++}& &\\ +(t_k^{a_{k,k}}-t_1^{a_{1,k}}\cdots
  t_{k-1}^{a_{k-1,k}}t_{k+1}^{a_{k+1,k}}\cdots
  t_s^{a_{s,k}})=\phantom{}&
  &\\ t_k^{a_{k,k}}-t_1^{a_{1,k}-a_{1,1}}t_2^{a_{2,k}}\cdots
  t_{k-1}^{a_{k-1,k}}t_k^{a_{k,1}}t_{k+1}^{a_{k+1,k}+a_{k+1,1}}\cdots
  t_s^{a_{s,k}+a_{s,1}}=:t_kg\neq 0, & &
\end{eqnarray*}
one has $t_kg\in I$. As $t_k$ is a nonzero divisor of $S/I$, we get
$g\in I$. Thus, $g$ is a linear combination with coefficients in $S$
of $f_1,\ldots,f_s$. Since $t_k^{a_{k,k}-1}$ is a term of $g$, we
conclude that $t_k^{a_{k,k}-1}$ is a multiple of some term of $f_i$
for some $i\neq k$, a contradiction to the fact that, a fortiori,
${\rm supp}(f_{i})$ has at least $3$ variables. This proves
$a_{1,1}>a_{1,k}$.  Moreover, from the equality
\begin{eqnarray*}
(t_k^{a_{k,k}}-t_1^{a_{1,k}}\cdots
  t_{k-1}^{a_{k-1,k}}t_{k+1}^{a_{k+1,k}}\cdots
  t_s^{a_{s,k}})t_1^{a_{1,1}-a_{1,k}}\phantom{+++++++++++}& &
  \\ +(t_1^{a_{1,1}}-t_k^{a_{k,1}}\cdots
  t_s^{a_{s,1}})t_2^{a_{2,k}}\cdots
  t_{k-1}^{a_{k-1,k}}t_{k+1}^{a_{k+1,k}}\cdots t_s^{a_{s,k}}=
  \phantom{}& &\\ t_k^{a_{k,k}}t_1^{a_{1,1}-a_{1,k}}-t_2^{a_{2,k}}\cdots
  t_{k-1}^{a_{k-1,k}}t_k^{a_{k,1}}t_{k+1}^{a_{k+1,k}+a_{k+1,1}}\cdots
  t_s^{a_{s,k}+a_{s,1}}=:t_kg'\neq 0,& &
\end{eqnarray*}
the argument above shows that $t_k^{a_{k,k}-1}t_1^{a_{1,1}-a_{1,k}}$
is a multiple of some term of $f_i$ for some $i$, a contradiction to
the fact that ${\rm supp}(f_{i})$ has at least $4$ variables. Hence
$I$ is not a lattice ideal.

Since $I$ is not a lattice ideal, $t_i$ is a zero divisor of $S/I$ for
some $i$. Then some associated prime $\mathfrak{p}$ of $I$ contains
$t_{i}$. Hence, by \cite[Lemma~2.6]{ci-lattice}, $\mathfrak{p}$ is
equal to $\mathfrak{m}$, which proves $(b)$. Since $I$ is not a
lattice ideal, $I$ is not unmixed (see
Proposition~\ref{2variables}). The rest follows on applying
Proposition~\ref{structureI(L)}.
\end{proof}

\begin{proposition}\label{feb9-13} Let $g_1,\ldots,g_s$ be the binomials defined by
the rows of a GCB matrix $L$ and let $I$ be the ideal generated
by $g_1,\ldots,g_s$. If $V(I,t_i)=\{0\}$ and $|{\rm supp}(g_i)|\geq 3$
for all $i$, then $I$ is not a complete intersection. 
\end{proposition}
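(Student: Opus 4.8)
The plan is to prove that $I$ cannot be generated by fewer than $s$ elements, whereas ${\rm ht}(I)=s-1$; since $I$ is generated by the $s$ binomials $g_1,\ldots,g_s$, this forces $\mu(I)=s>{\rm ht}(I)$, and hence $I$ is not a complete intersection.

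First I would record the setup. The $i$-th row of $L$ is $(-a_{i,1},\ldots,a_{i,i},\ldots,-a_{i,s})$, so $g_i=t_i^{a_{i,i}}-\prod_{j\neq i}t_j^{a_{i,j}}$ and $I=(g_1,\ldots,g_s)=I(L^{\top})$, with $a_{i,i}\geq1$ for all $i$ because $L$ is a PB matrix. Since $L$ is GCB there is $\mathbf{b}\in\bn^s_+$ with $L\mathbf{b}^{\top}=0$, hence $\mathbf{b}L^{\top}=0$, so by Remark~\ref{gradings} the ideal $I=I(L^{\top})$ is graded and each $g_i$ is homogeneous for the $\mathbf{b}$-grading. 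As $I$ is graded and $V(I,t_i)=\{0\}$ for all $i$, Proposition~\ref{structureI(L)} applies to the matrix $L^{\top}$ and gives ${\rm ht}(I)=s-1$. So it suffices to rule out $\mu(I)\leq s-1$.

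Next I would carry out the only computation. Assume $\mu(I)\leq s-1$. Since the images $\overline{g}_1,\ldots,\overline{g}_s$ span $I/\mathfrak{m}I$ and $\dim_K(I/\mathfrak{m}I)<s$, they are $K$-linearly dependent; writing $\mathfrak{m}I=\sum_j\mathfrak{m}g_j$ this produces scalars $c_1,\ldots,c_s\in K$, not all zero, and elements $h_1,\ldots,h_s\in\mathfrak{m}$ with $\sum_{i=1}^s(c_i-h_i)g_i=0$. I would then fix an index $i_0$ with $c_{i_0}\neq0$ and compare, in
\[
(c_{i_0}-h_{i_0})\,g_{i_0}=-\sum_{i\neq i_0}(c_i-h_i)\,g_i,
\]
the coefficient of the monomial $t_{i_0}^{a_{i_0,i_0}}$. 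On the left this coefficient equals $c_{i_0}$: the term $c_{i_0}t_{i_0}^{a_{i_0,i_0}}$ contributes it, $h_{i_0}t_{i_0}^{a_{i_0,i_0}}$ has degree $>a_{i_0,i_0}$, and $\prod_{k\neq i_0}t_k^{a_{i_0,k}}$ (together with $h_{i_0}$ times it) has at least two variables in its support because $|{\rm supp}(g_{i_0})|\geq3$, hence is not a power of $t_{i_0}$. On the right, each monomial of $(c_i-h_i)g_i$ with $i\neq i_0$ is either divisible by $t_i^{a_{i,i}}$, hence by $t_i\neq t_{i_0}$, or is a multiple of $\prod_{k\neq i}t_k^{a_{i,k}}$, whose support has size $\geq2$ because $|{\rm supp}(g_i)|\geq3$; in neither case can it be a power of $t_{i_0}$. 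So the coefficient on the right is $0$, giving $c_{i_0}=0$, a contradiction. Hence $\mu(I)=s$ and $I$ is not a complete intersection.

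The only delicate point—and the one place the hypothesis that $|{\rm supp}(g_i)|\geq3$ for every $i$ is needed—is this last monomial bookkeeping: I must be certain that the diagonal monomial $t_{i_0}^{a_{i_0,i_0}}$ of $g_{i_0}$ cannot be reproduced by multiplying any $g_i$ ($i\neq i_0$), or $g_{i_0}$ itself, by a nonconstant polynomial, and that is precisely what fails once each off-diagonal monomial $\prod_{k\neq i}t_k^{a_{i,k}}$ genuinely involves at least two variables. Everything else is routine: the step $\mu(I)\leq s-1\Rightarrow$ $K$-linear dependence of the $\overline{g}_i$ is the graded Nakayama lemma, and ${\rm ht}(I)=s-1$ is supplied by Proposition~\ref{structureI(L)}, whose hypotheses are met because the GCB condition produces the grading through $\mathbf{b}$.
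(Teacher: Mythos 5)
Your proposal is correct and rests on the same core argument as the paper's proof: since $\operatorname{ht}(I)=s-1$ (via the gradedness coming from $\mathbf{b}$ and Lemma~\ref{sep16-12}/Proposition~\ref{structureI(L)}), a complete intersection would force a relation expressing some $g_{i_0}$, up to a unit, in terms of the other generators and $\mathfrak{m}I$, and then the pure power $t_{i_0}^{a_{i_0,i_0}}$ cannot occur among the monomials on the other side because every off-diagonal monomial has support of size at least $2$. The only difference is bookkeeping: you phrase the reduction via linear independence in $I/\mathfrak{m}I$ (graded Nakayama), whereas the paper discards one homogeneous generator and writes $g_s=h_1g_1+\cdots+h_{s-1}g_{s-1}$, so your write-up is just a slightly more detailed version of the same proof.
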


\begin{proof} We may assume that $g_i$ corresponds to the
$i$-th row of $L$. We proceed by contradiction.
Assume that $I$ is a complete intersection. As $I$ is graded, since
$L$ is a GCB matrix, and 
the height of $I$ is $s-1$, we may assume that $g_1,\ldots,g_{s-1}$ 
generate $I$. Hence, we can write
$$
g_s=t_s^{a_{s,s}}-t_1^{a_{s,1}}\cdots t_{s-1}^{a_{s,s-1}}=h_1g_1+\cdots+h_{s-1}g_{s-1}
$$
for some $h_1,\ldots,h_{s-1}$ in $S$, where $a_{s,s}>0$. Therefore, the monomial
$t_s^{a_{s,s}}$ has to occur in the right hand side of this equation,
a contradiction
to the fact that $|{\rm supp}(g_i)|\geq 3$ for all $i$.
\end{proof}

In what follows, we examine GPCB matrices---a natural extension
of the PCB matrices introduced in \cite{opPCB}---and the algebra of
their matrix ideals. The class of GPCB matrices is closed under
transposition, as the next result shows. 

\begin{theorem}\label{GPCB-new} Let $L$ be an integer matrix of size
 $s\times s$ with rows $\ell_1,\ldots,\ell_s$ and let $(L_{i,j})$ be
  the adjoint matrix of $L$. Suppose $L\mathbf{b}^\top=0$ for some
  $\mathbf{b}$ in $\mathbb{N}_+^s$. The following hold.
\begin{itemize}
\item[\rm(a)] If ${\rm
rank}(\ell_1,\ldots,\widehat{\ell}_i,\ldots,\ell_s)=s-1$ and 
$L_{i,i}\geq 0$ for all $i$,
then $\mathbf{c}L=0$ for some $\mathbf{c}\in\mathbb{N}_+^s$. 

\item[\rm(b)] If $L_{i,i}>0$ for all $i$, then $\mathbf{c}L=0$ for some
$\mathbf{c}\in\mathbb{N}_+^s$. 

\item[\rm(c)] If $L$ is a GCB matrix and ${\rm
rank}(\ell_1,\ldots,\widehat{\ell}_i,\ldots,\ell_s)=s-1$ for all $i$,
then $L^\top$ is a GCB matrix.  

\item[\rm(d)] If $L$ is a GPCB matrix, then ${\rm rank}(L)=s-1$ and
$L^{\top}$ is a GPCB 
matrix. 

\item[\rm(e)] If $s=3$ and $L$ is a GCB matrix, then ${\rm rank}(L)=2$
and $L^{\top}$ is a 
GCB matrix. 
\end{itemize}
\end{theorem}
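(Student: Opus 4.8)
The plan is to treat the five parts in order, since each uses the previous ones. For part (a), the hypotheses say that for each $i$ the row space of $\ell_1,\ldots,\widehat{\ell}_i,\ldots,\ell_s$ has rank $s-1$, so the left null space of the $(s-1)\times s$ matrix formed by deleting $\ell_i$ is one-dimensional, spanned by a vector proportional to the $i$-th row of $\adj(L)$, namely $(L_{i,1},\ldots,L_{i,s})$ up to sign (this is the standard cofactor/Cramer identity $\adj(L)\,L = \det(L)\,\mathrm{Id}$; since $L\mathbf{b}^\top=0$ and $\mathbf{b}$ has positive, hence nonzero, entries, $\det(L)=0$, so every row of $\adj(L)$ lies in the left kernel of $L$). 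The key point is a sign argument: because $L\mathbf{b}^\top=0$ with $\mathbf b\in\mathbb N_+^s$, each column $\ell_{*,j}$ of $L$ satisfies $\sum_j b_j \ell_{*,j}=0$, which forces the off-diagonal entries of a row to have sign opposite to the diagonal entry in the relevant sense; combined with $L_{i,i}\ge 0$ one shows all entries $L_{i,j}$ of a fixed row of $\adj(L)$ have the same sign, so after multiplying that row by $\pm 1$ we get a vector in $\mathbb N^s$ annihilating $L$ on the left. Then I would argue the resulting nonnegative left-null vector $\mathbf c$ actually has all entries \emph{strictly} positive: if some $c_k=0$, restricting the relation $\mathbf c L=0$ to the support would contradict the rank-$(s-1)$ hypothesis (or one can invoke irreducibility of the underlying digraph / a Perron--Frobenius type argument as the paper does elsewhere for CB matrices). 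I expect this positivity/sign bookkeeping to be the main obstacle; everything else is formal.

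For part (b): if $L_{i,i}>0$ for all $i$, then in particular $L_{i,i}\ge 0$; moreover $L_{i,i}=h_{i,i}$ is (up to sign conventions) the determinant of the $(s-1)\times(s-1)$ principal submatrix obtained by deleting row and column $i$, and $L_{i,i}\neq 0$ forces $\rank(\ell_1,\ldots,\widehat\ell_i,\ldots,\ell_s)=s-1$, so the hypotheses of (a) are met and (b) follows immediately. Part (c): if $L$ is a GCB matrix, then by definition $L$ is a PB matrix (so $a_{j,j}>0$ for all $j$, i.e. the diagonal of $L$ is positive) and there is $\mathbf b\in\mathbb N_+^s$ with $L\mathbf b^\top=0$. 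The diagonal entries $L_{i,i}=h_{i,i}$ of $\adj(L)$: I claim each is $\ge 0$ under the sign pattern (\ref{gcb-matrix}), since deleting row/column $i$ leaves a matrix of the same ``positive diagonal, nonpositive off-diagonal'' shape, which is a (possibly singular) $M$-matrix-type minor and hence has nonnegative determinant — this is where I would cite the matrix-theory references the paper points to, or prove it directly by induction on $s$. With the stated rank hypothesis and (a), we get $\mathbf c L=0$ for some $\mathbf c\in\mathbb N_+^s$; and $L^\top$ visibly has the sign pattern (\ref{gcb-matrix}) (the transpose keeps the diagonal and negates off-diagonal entries, staying in the same shape), its diagonal is positive, and each column of $L^\top$ has a nonzero off-diagonal entry because each row of $L$ did (the PB condition is self-dual here once we know $L^\top$ still has a nonzero off-diagonal entry in each column, which is the PB condition on rows of $L$ — this needs a one-line check, possibly using strong connectivity of $G_L$). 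Hence $L^\top$ is a GCB matrix. Part (d): a GPCB matrix has \emph{all} entries nonzero, so in particular every off-diagonal $(s-1)\times(s-1)$ minor situation is nondegenerate and one checks $\rank(\ell_1,\ldots,\widehat\ell_i,\ldots,\ell_s)=s-1$ for all $i$ directly (the positive-diagonal, negative-off-diagonal $(s-1)\times(s-1)$ submatrix is a nonsingular $M$-matrix, so its determinant is positive, giving $L_{i,i}>0$); then $\rank(L)=s-1$ because $\adj(L)\neq 0$ while $\det L=0$, and (b)/(c) give $\mathbf c\in\mathbb N_+^s$ with $\mathbf c L=0$, i.e. $L^\top\mathbf c^\top=0$; since $L^\top$ also has all entries nonzero, $L^\top$ is GPCB. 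Part (e): for $s=3$, a GCB matrix automatically satisfies $\rank(\ell_1,\ell_2)=\rank(\ell_1,\ell_3)=\rank(\ell_2,\ell_3)=2$ — any two rows are independent because the off-diagonal pattern and positive diagonal of a $2\times 3$ slab of shape (\ref{gcb-matrix}) cannot be proportional unless a whole row vanishes, which the PB condition forbids — so (c) applies and gives that $L^\top$ is GCB; and $\rank(L)=2=s-1$ follows as in (d).

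So the skeleton is: establish the cofactor identity and the sign lemma for $\adj(L)$ (the real content), deduce (a); observe (b) as a special case; bootstrap (a)+(b) together with the sign pattern of the transpose to get (c); specialize to the all-nonzero case for (d) and to $s=3$ for (e). The step I expect to fight with is the sign/positivity analysis of the entries of $\adj(L)$ — showing each row of the adjoint is (up to global sign) nonnegative and in fact strictly positive — which is really a statement about minors of matrices with the $M$-matrix sign pattern (\ref{gcb-matrix}); I would handle it either by an induction on $s$ exploiting that the relevant submatrices retain the sign pattern, or by invoking the Perron--Frobenius / nonnegative-matrix machinery the paper has already flagged (\cite{nonnegative-matrices}) via the identification of CB matrices with Laplacians of weighted digraphs without sinks or sources.
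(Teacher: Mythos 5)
Your skeleton matches the paper's (an adjugate argument for (a), (b) as a special case, nonnegativity of the principal cofactors for (c), then (d) and (e)), but the two steps you yourself flag as "the real content" are left open, and the routes you sketch for them do not work. In (a) there is \emph{no} sign-pattern hypothesis on $L$, so the claim that $L\mathbf{b}^\top=0$ ``forces the off-diagonal entries of a row to have sign opposite to the diagonal entry'' is unjustified (and false for a general integer matrix), and it is the only mechanism you offer for producing a nonnegative row of $\adj(L)$. The argument that actually closes this, and which the paper uses, is never stated in your sketch: since $L\,\adj(L)=\det(L)\,\mathrm{Id}=0$ and ${\rm ker}_\mathbb{Q}(L)$ is spanned by $\mathbf{b}$ (rank $s-1$), each column $L_i$ of $\adj(L)$ equals $\mu_i\mathbf{b}^\top$ for some $\mu_i\in\mathbb{Q}$; the rank hypothesis gives $L_i\neq 0$, and $L_{i,i}=\mu_i b_i\geq 0$ with $b_i>0$ then forces $\mu_i>0$, so \emph{all} entries of $\adj(L)$ are positive and any row of $\adj(L)$ is the required $\mathbf{c}$. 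Your fallback observation (a nonnegative nonzero left-kernel vector with a zero entry would contradict the rank hypothesis) is correct but only upgrades nonnegative to positive; it does not produce the nonnegative vector in the first place. Note also that the kernel of the matrix obtained by deleting $\ell_i$ is spanned by the $i$-th \emph{column}, not row, of $\adj(L)$, and its ``left null space'' is zero; this row/column point matters because the whole proof turns on comparing that column with $\mathbf{b}$.

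In (c) and (d) the M-matrix shortcut is likewise a gap: a matrix with positive diagonal and nonpositive off-diagonal entries need not have nonnegative determinant (e.g.\ $\left(\begin{smallmatrix}1&-2\\-2&1\end{smallmatrix}\right)$ has determinant $-3$), so ``same sign shape, hence M-matrix-type, hence $\det\geq 0$'' assumes exactly what must be proved. The nonnegativity (resp.\ positivity) of the principal cofactors has to be extracted from $L\mathbf{b}^\top=0$; the paper does this by passing to the associated CB matrix $\widetilde{L}=LB$ with $B=\diag(b_1,\ldots,b_s)$, whose rows sum to zero, applying the Gershgorin circle theorem to the principal submatrices $H_{i,i}$ (real eigenvalues are $\geq 0$, complex ones pair off, so $\det H_{i,i}\geq 0$, strictly in the GPCB case), and using multilinearity of the determinant to carry the sign back to $L$. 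Your part (e) argument (two proportional rows of a $3\times 3$ GCB matrix would force a column to have zero off-diagonal entries, contradicting the PB condition) is essentially correct and close to the paper's remark that $L_{i,j}>0$ for $i\neq j$, but as it stands parts (a), (c) and (d) are not proved.
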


\begin{proof} (a): Let $L_i$ be the $i$-th column of 
${\rm adj}(L)=(L_{i,j})$. Since
$\ell_1,\ldots,\widehat{\ell}_i,\ldots,\ell_s$ are linearly
independent, we get $L_i\neq 0$. The vector $\mathbf{b}$ generates 
${\rm ker}_\mathbb{Q}(L)$ because $L$ has rank $s-1$ and
$L\mathbf{b}^\top=0$. Then,
because of the equality $L{\rm adj}(L)=0$, we can write $L_i=\mu_i\mathbf{b}$
for some $\mu_i\in\mathbb{Q}$. Notice that $\mu_i>0$ because
$L_{i,i}\geq 0$ and $\mathbf{b}\in\mathbb{N}_+^s$. Hence, all entries of ${\rm
adj}(L)$ are positive integers. If $\mathbf{c}$ is any row of 
${\rm adj}(L)$, we get $\mathbf{c}L=0$ because ${\rm
adj}(L)L=0$. 

(b): For any $i$, the vectors are
$\ell_1,\ldots,\widehat{\ell}_i,\ldots,\ell_s$ are linearly 
independent because $L_{i,i}>0$. Thus this part follows from (a). 

 (c): Let $L$ be a GCB matrix as in
Definition~\ref{square-matrix-new}, Eq.~(\ref{gcb-matrix}).
($\mathrm{c}_1$): First we treat the case
$\mathbf{b}=\mathbf{1}=(1,\ldots,1)$, i.e., the case where $L$ is a CB
matrix. By part (a) it suffices to show that $L_{i,i}\geq 0$ for all
$i$. Let $H_{i,i}$ be the submatrix of $L$ obtained by eliminating the
$i$-th row and $i$-th column. By the Gershgorin Circle Theorem, every
(possibly complex) eigenvalue $\lambda$ of $H_{i,i}$ lies within at
least one of the discs $\{z\in\mathbb{C}\vert\, \| z -a_{j,j}\|\leq
r_j\}$, $j\neq i$, where $r_j=\sum_{u\neq i,j}|-a_{j,u}|\leq a_{j,j}$
since $L\mathbf{1}^\top=0$ and $a_{i,j}\geq 0$ for all $i,j$. If
$\lambda\in\mathbb{R}$, we get $|\lambda-a_{j,j}|\leq a_{j,j}$, and
consequently $\lambda \geq 0$. If $\lambda\notin\mathbb{R}$, then
since $H_{i,i}$ is a real matrix, its conjugate $\overline{\lambda}$
must also be an eigenvalue of $H_{i,i}$. Since $\det(H_{i,i})$ is the
product of the $s-1$ (possibly repeated) eigenvalues of $H_{i,i}$, we
get $L_{i,i}\geq 0$. This argument is adapted from the proof of
\cite[Lemma 2.1]{opPCB}. ($\mathrm{c}_2$): Now, we treat the general
case. Let $B$ be the $s\times s$ diagonal matrix
$\diag(b_1,\ldots,b_s)$, where $\mathbf{b}=(b_1,\ldots,b_s)$, and let
$\widetilde{L}=LB$. Notice that $\widetilde{L}\mathbf{1}^\top=0$
because $L\mathbf{b}^\top=0$, and $\widetilde{L}$ is a CB matrix
because $L$ is a GCB matrix. Let $(\widetilde{L}_{i,j})$ be the
adjoint matrix of $\widetilde{L}$. Since $b_i>0$ for all $i$, by the
multilinearity of the determinant, it follows that $L_{i,j}\neq 0$ if
and only if $\widetilde{L}_{i,j}\neq 0$. Hence, any set of $s-1$ rows
of $\widetilde{L}$ is linearly independent.  Therefore, applying case
($\mathrm{c}_1$) to $\widetilde{L}$, we obtain that
$\widetilde{L}^\top$ is a GCB matrix. Thus, there is
$\mathbf{c}\in\mathbb{N}_+^s$ such that
$\mathbf{c}\widetilde{L}=0$. Then, $\mathbf{c}L=0$, i.e., $L^\top$ is
a GCB matrix, as required.

(d): Let $L$ be a GPCB matrix. By the argument given in
($\mathrm{c}_1$), it follows readily that $L_{i,i}>0$ 
for all $i$. In particular the rank of $L$ is $s-1$. Hence, by 
part (b), $L^\top$ is a GPCB matrix.  

(e): Let $L$ be a GCB matrix with $s=3$. It is easy to see 
that $L_{i,j}>0$ for $i\neq j$. In particular $L$ has rank $2$ 
and any two rows of $L$ are linearly independent. Then by (c), $L^\top$ is
a GCB matrix. 
\end{proof}

Following the proof of (c) above, we call $\widetilde{L}:=LB$ the
{\it associated\/} CB {\it matrix\/} of $L$.

\begin{lemma}\label{march19-13} Let $L$ be a GCB matrix of size $s\times s$. 
If ${\rm rank}(L)=s-1$, then any set of $s-1$ columns is linearly
independent.
\end{lemma}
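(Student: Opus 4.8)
## Proof plan for Lemma~\ref{march19-13}

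The plan is to exploit the very special sign pattern of a GCB matrix, namely that the off-diagonal entries are all $\le 0$ and there is a vector $\mathbf{b}\in\mathbb{N}^s_+$ with $L\mathbf{b}^\top=0$. First I would reduce to the CB case: let $B=\diag(b_1,\dots,b_s)$ and $\widetilde L=LB$, so that $\widetilde L\mathbf{1}^\top=0$ and $\widetilde L$ is a CB matrix, exactly as in the proof of Theorem~\ref{GPCB-new}(c). Since $B$ is invertible over $\mathbb{Q}$, a set of columns of $L$ is linearly independent if and only if the corresponding set of columns of $\widetilde L$ is, and $\rank(\widetilde L)=\rank(L)=s-1$. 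So it suffices to prove the statement for a CB matrix, i.e.\ one with $\widetilde L\mathbf{1}^\top=0$, nonnegative diagonal, nonpositive off-diagonal, and each column having a nonzero off-diagonal entry.

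The key step is to show that for a CB matrix $M$ of rank $s-1$, every $(s-1)\times(s-1)$ submatrix obtained by deleting one row and one column is \emph{nonsingular}; equivalently, every cofactor $M_{i,j}$ is nonzero. I would argue this through the structure of $\ker_{\mathbb{Q}}(M)$, which is spanned by $\mathbf{1}$ since $\rank(M)=s-1$. Deleting column $j$ leaves a matrix $M'$ of rank $s-1$ (because the remaining $s-1$ columns, together with $M\mathbf{1}^\top=0$, span the column space, so they are themselves independent — this is precisely the "columns" half of the statement, and it is immediate: if $\sum_{k\ne j}\lambda_k m_k=0$ then setting $\lambda_j=0$ gives a kernel vector that is a multiple of $\mathbf{1}$, forcing all $\lambda_k=0$). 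Thus the columns of $M$ indexed by any $(s-1)$-subset are automatically independent. This actually already proves the lemma once we are in the CB case, since $\rank(\widetilde L)=s-1$ is given.

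So the cleanest route is: (1) observe $\ker_{\mathbb{Q}}(L)$ is one-dimensional, spanned by $\mathbf{b}$, since $\rank(L)=s-1$; (2) suppose some $s-1$ columns $\{a_k : k\ne j\}$ of $L$ satisfy a dependence $\sum_{k\ne j}\lambda_k a_k=0$; (3) extend by $\lambda_j=0$ to get $\sum_{k=1}^s\lambda_k a_k=0$, i.e.\ $(\lambda_1,\dots,\lambda_s)^\top\in\ker_{\mathbb{Q}}(L)$, hence $(\lambda_1,\dots,\lambda_s)=\mu\mathbf{b}$ for some $\mu\in\mathbb{Q}$; (4) since $b_j>0$ while $\lambda_j=0$, conclude $\mu=0$, so all $\lambda_k=0$, a contradiction. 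This makes no use of the sign pattern beyond $\mathbf{b}\in\mathbb{N}^s_+$ (only $b_j\ne 0$ is needed), and no reduction to the CB case is actually required — the one-dimensionality of the kernel plus positivity of $\mathbf{b}$ does everything.

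I expect no serious obstacle here: the only thing to be careful about is that $\rank(L)=s-1$ is a genuine hypothesis of the lemma (it is \emph{not} automatic for an arbitrary GCB matrix, only for GPCB matrices by Theorem~\ref{GPCB-new}(d), or for $s=3$ by Theorem~\ref{GPCB-new}(e)), and it is exactly this hypothesis that pins down $\dim\ker_{\mathbb{Q}}(L)=1$. The companion fact for rows, if wanted, would follow by applying the same argument to $L^\top$ together with $\mathbf{c}L=0$ for some $\mathbf{c}\in\mathbb{N}^s_+$ coming from Theorem~\ref{GPCB-new}(c)–(e), but that is not needed for the statement as written.
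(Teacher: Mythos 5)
Your proof is correct and rests on the same key observation as the paper's: the relation $b_1\ell_{*,1}+\cdots+b_s\ell_{*,s}=0$ with all $b_j>0$, combined with $\rank(L)=s-1$ (so that $\ker_{\mathbb{Q}}(L)$ is spanned by $\mathbf{b}^\top$), immediately forces any $s-1$ columns to be independent. The paper states this in one line; your kernel-vector phrasing (and your remark that the CB reduction is unnecessary) is just a slightly more spelled-out version of the same argument.
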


\begin{proof} Let $\ell_{*,1},\ldots,\ell_{*,s}$ be the columns of
$L$. By hypothesis, there is
$\mathbf{b}=(b_1,\ldots,b_s)\in\mathbb{N}_+^s$ such that
$L\mathbf{b}^\top=0$. Thus it suffices to observe that 
$b_1\ell_{*,1}+\cdots+b_s\ell_{*,s}=0$. 
\end{proof}

\begin{example} Let $L$ be the $4\times 4$ integer matrix of rank 3:
\begin{small}
\begin{eqnarray*}
L=\left(\begin{array}{rrrr} 5&-2&0&-1\\0&1&-1&0\\0&-1&1&0\\-1&0&-1&4
\end{array}\right)
\end{eqnarray*}
\end{small}
Then $L$ is a GCB matrix, where $L\mathbf{b}^\top=0$ if
$\mathbf{b}=(9, 19, 19, 7)$ but $L^\top$ is not a GCB matrix because
rows $2,3$ and $4$ are linearly dependent (see
Lemma~\ref{march19-13}).
\end{example}

\begin{remark}\label{onecanapply}
Observe that if $I=I(L)$ is a GPCB ideal associated to a GPCB matrix
$L$, then $(I,t_1)=(t_1,t_2^{a_{2,2}},\ldots ,t_s^{a_{s,s}})$, ${\rm
  rad}(I,t_1)=\mathfrak{m}$ and $V(I,t_1)=\{0\}$ (and similarly
$V(I,t_i)=\{0\}$ for all the other variables).  Moreover, by
Theorem~\ref{GPCB-new}, $I$ is homogeneous. Therefore, one can apply
to GPCB ideals most of the results of the previous section.
\end{remark}

In what follows, we extend to GPCB ideals some properties that hold
for PCB ideals. First of all, using Theorem~\ref{GPCB-new}, we get
that \cite[Proposition~3.3(a)--(c)]{opPCB} holds for any GPCB ideal
provided that we assume $s\geq 3$ in part (c). Regarding the
(un)mi\-xed\-ness 
property of GPCB ideals, observe 
that Proposition~\ref{structureI(L)} generalizes
\cite[Proposition~4.1]{opPCB}.

To simplify notations and to avoid repetitions, for the
rest of this section we assume that $L$ is a GPCB matrix with 
${\bf b}=(b_1,\ldots ,b_s)\in\mathbb{N}_{+}^{s}$, $\gcd({\bf b})=1$
and $L{\bf b}^{\top}=0$.  If ${\rm char}(K)=p$, $p$ a prime, since  
$\gcd({\bf b})=1$, then on reordering the
variables, one can always suppose without loss of generality that
$p\nmid b_s$. From now on and until the end of the section, we will 
suppose that, if ${\rm char}(K)=p>0$, then $p\nmid b_s$. The entries
of $L$ are denoted by $a_{i,i}$ and $-a_{i,j}$ if $i\neq j$. 
As in the proof of Theorem~\ref{GPCB-new}(c), the
matrix $\widetilde{L}:=LB,$ where $B:=\ $diag$(b_{1},...,b_{s}),$ denotes
the so-called PCB matrix associated to $L.$

\begin{proposition}\label{explicitrelation} 
$($cf. \cite[Remark~3.4]{opPCB}$)$ Let $I=I(L)=(f_1,\ldots ,f_s)$ be
  the ideal of a GPCB matrix $L$ and let $\widetilde{L}$ be the PCB
  matrix associated to $L$. For $i=1,\ldots ,s$, let
  $x_i=t_{i}^{a_{i,i}}$ and $y_i=t_{1}^{a_{1,i}}\cdots
  t_{i-1}^{a_{i-1,i}}t_{i+1}^{a_{i+1,i}}\cdots t_{s}^{a_{s,i}}$ be
  the two terms of $f_i$, so that $f_i=x_i-y_i$. If $b_i=1$, set
  $g_i=1$ and $q_i=0$. If $b_i>1$, set
  $g_i=\sum_{j=1}^{b_i}x_i^{b_i-j}y_i^{j-1}\in S$. Let
  $q_i=\sum_{j=1}^{b_i-1}jx_i^{b_i-1-j}y_i^{j-1}\in
  S$ and
\begin{eqnarray*}
&&b(1)=(0,0,b_{3,3}-b_{3,4}-\cdots -b_{3,s}-b_{3,1},
  b_{4,4}-b_{4,5}-\cdots -b_{4,s}-b_{4,1},\ldots
  ,b_{s,s}-b_{s,1})\in\mathbb{N}^{s},
  \\ &&b(2)=(b_{1,1}-b_{1,2},0,0,b_{4,4}-b_{4,5}-\cdots
  -b_{4,s}-b_{4,1}-b_{4,2},\ldots ,
  b_{s,s}-b_{s,1}-b_{s,2})\in\mathbb{N}^{s},
  \\&&b(3)=(b_{1,1}-b_{1,2}-b_{1,3},b_{2,2}-b_{2,3},0,0,\ldots
  ,b_{s,s}-b_{s,1}-b_{s,2}-b_{s,3})\in\mathbb{N}^{s}\mbox{, }\ldots
  ,\\ && b(s-1)=(b_{1,1}-b_{1,2}-\cdots -b_{1,s-1},\ldots
  ,b_{s-2,s-2}-b_{s-2,s-1},0,0)\in\mathbb{N}^{s}\mbox{ and } \\&&
  b(s)=(0,b_{2,2}-b_{2,3}-\cdots -b_{2,s},b_{3,3}-b_{3,4}-\cdots
  -b_{3,s},\ldots ,b_{s-1,s-1}-b_{s-1,s},0)\in\mathbb{N}^{s},
\end{eqnarray*}
where $b_{i,i}$ and $-b_{i,j}$, $i\neq j$, are the entries of
$\widetilde{L}$. Then, for each $i=1,\ldots ,s$,
\begin{itemize}
\item[$(a)$] $g_i$ is homogeneous and $t^{b(1)}g_1f_1+\ldots
  +t^{b(s)}g_sf_{s}=0$;
\item[$(b)$] $q_if_i=g_i-b_iy_i^{b_i-1}$. 
\end{itemize}
In particular,
  $b_iy_i^{b_i-1}t^{b(i)}f_i\in (f_1,\ldots ,f_{i-1},f_{i+1},\ldots
  ,f_s)+I^2$.
\end{proposition}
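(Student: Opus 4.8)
The plan is to establish $(a)$ and $(b)$ first and then read off the ``in particular'' clause. The starting observation is the elementary identity $g_if_i=x_i^{b_i}-y_i^{b_i}$, valid for every $i$: it is trivial when $b_i=1$, and otherwise it is the telescoping product $(x_i-y_i)\sum_{j=1}^{b_i}x_i^{b_i-j}y_i^{j-1}=x_i^{b_i}-y_i^{b_i}$. Since $x_i=t_i^{a_{i,i}}$, $y_i=\prod_{k\neq i}t_k^{a_{k,i}}$, and the entries of $\widetilde L=LB$ are $b_{i,i}=a_{i,i}b_i$ on the diagonal and $-b_{k,i}=-a_{k,i}b_i$ off it, this reads
\[
g_if_i=t_i^{b_{i,i}}-\prod_{k\neq i}t_k^{b_{k,i}}=:\widetilde f_i ,
\]
which is precisely the binomial attached to the $i$-th column of the PCB matrix $\widetilde L$. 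As for the homogeneity of $g_i$: since $L$ is a GPCB matrix, $I$ is homogeneous with respect to some positive weight vector $\mathbf d$ (Theorem~\ref{GPCB-new}(d), cf.\ Remark~\ref{onecanapply}), so $f_i$ homogeneous forces $\deg_{\mathbf d}x_i=\deg_{\mathbf d}y_i$; hence every monomial $x_i^{b_i-j}y_i^{j-1}$ occurring in $g_i$ has the same $\mathbf d$-degree $(b_i-1)\deg_{\mathbf d}x_i$, i.e.\ $g_i$ is homogeneous.

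Part $(b)$ reduces, after substituting $X=x_i$, $Y=y_i$, to the single-variable polynomial identity
\[
(X-Y)\sum_{j=1}^{n-1}jX^{n-1-j}Y^{j-1}=\sum_{j=1}^{n}X^{n-j}Y^{j-1}-nY^{n-1}\quad\text{in }\mathbb Z[X,Y],\ n=b_i
\]
(the case $b_i=1$ being $0=0$). I would prove this either by splitting the left side into two sums and telescoping, or, more slickly, by noting that $\sum_{j=1}^{n-1}jX^{n-1-j}Y^{j-1}=\partial_Y\bigl(\sum_{j=1}^{n}X^{n-j}Y^{j-1}\bigr)$ and differentiating the identity $X^n-Y^n=(X-Y)\sum_{j=1}^{n}X^{n-j}Y^{j-1}$ with respect to $Y$. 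In particular $(b)$ yields $g_if_i=q_if_i^2+b_iy_i^{b_i-1}f_i$.

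For the syzygy in $(a)$, by the first paragraph the relation $\sum_{i=1}^{s}t^{b(i)}g_if_i=0$ is the same as $\sum_{i=1}^{s}t^{b(i)}\widetilde f_i=0$, a relation purely among the column binomials of $\widetilde L$ (analogous to \cite[Remark~3.4]{opPCB}). Reading the indices $1,\dots,s$ cyclically, the data defining $b(1),\dots,b(s)$ fit the uniform pattern ``$b(i)$ vanishes in slots $i$ and $i+1$'', and using $b(i)_i=b(i)_{i+1}=0$ one computes
\[
t^{b(i)}\widetilde f_i=M_i-N_i,\qquad M_i:=t_i^{b_{i,i}}\prod_{k\neq i}t_k^{b(i)_k},\qquad N_i:=\prod_{k\neq i}t_k^{b(i)_k+b_{k,i}} .
\]
The crux is the monomial identity $N_i=M_{i-1}$ for every $i$ modulo $s$; granting it, $\sum_i t^{b(i)}\widetilde f_i=\sum_iM_i-\sum_iN_i=\sum_iM_i-\sum_iM_{i-1}=0$. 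To prove $N_i=M_{i-1}$ I would compare the two exponent vectors coordinate by coordinate: the $t_i$-exponent is $0$ on both sides; the $t_{i-1}$-exponent is $b_{i-1,i-1}$ on both sides, using $b(i)_{i-1}=b_{i-1,i-1}-b_{i-1,i}$; and for $k\notin\{i-1,i\}$ one checks $b(i-1)_k-b(i)_k=b_{k,i}$ directly from the explicit formulas. Every one of these checks comes down to the row-sum relation $\sum_{j\neq k}b_{k,j}=b_{k,k}$, which holds since $\widetilde L\mathbf 1^\top=LB\mathbf 1^\top=L\mathbf b^\top=0$. (For $s=2$ the statement is the trivial $\widetilde f_1+\widetilde f_2=0$.)

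Finally, for the ``in particular'' clause: from $(a)$, $t^{b(i)}g_if_i=-\sum_{j\neq i}t^{b(j)}g_jf_j\in(f_1,\dots,f_{i-1},f_{i+1},\dots,f_s)$, while from $(b)$, $t^{b(i)}g_if_i=q_it^{b(i)}f_i^2+b_iy_i^{b_i-1}t^{b(i)}f_i$ with $q_it^{b(i)}f_i^2\in I^2$; subtracting gives $b_iy_i^{b_i-1}t^{b(i)}f_i\in(f_1,\dots,f_{i-1},f_{i+1},\dots,f_s)+I^2$. I expect the only genuine obstacle to be the index bookkeeping in $N_i=M_{i-1}$: nothing there is deep, but one must be careful with the cyclic indexing of the vectors $b(i)$ (the vanishing slots $i,i+1$, wrapping around when $i=s$) and with the sign conventions linking the entries of $L$, of $B$, and of $\widetilde L=LB$.
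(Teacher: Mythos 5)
Your proof is correct, and its skeleton is the same as the paper's: you reduce everything to the factorization $\widetilde f_i=g_if_i$, where $\widetilde f_i$ is the binomial of the $i$-th column of the associated PCB matrix $\widetilde L=LB$, get homogeneity of $g_i$ from the positive grading supplied by Theorem~\ref{GPCB-new}, prove the characteristic-free identity $(x_i-y_i)q_i=g_i-b_iy_i^{b_i-1}$, and then obtain the ``in particular'' clause by multiplying $(b)$ by $t^{b(i)}f_i$ and invoking $(a)$, exactly as the paper does. Where you genuinely diverge is in the two ingredients the paper imports or asserts: for the syzygy $\sum_i t^{b(i)}\widetilde f_i=0$ the paper simply cites \cite[Remark~3.4]{opPCB} applied to $\widetilde L$, whereas you prove it from scratch via the cyclic telescoping $t^{b(i)}\widetilde f_i=M_i-N_i$ with $N_i=M_{i-1}$, all coordinate checks coming down to the row-sum relation $\sum_{j\neq k}b_{k,j}=b_{k,k}$ from $\widetilde L\mathbf 1^{\top}=0$ (I checked the bookkeeping, including the wrap-around case $N_1=M_s$, and it is right); and for $(b)$ the paper writes the identity out and motivates it by a dehomogenized Euclidean-algorithm argument in $K[Z]$, whereas you get it cleanly by differentiating $X^n-Y^n=(X-Y)\sum_{j=1}^{n}X^{n-j}Y^{j-1}$ with respect to $Y$ in $\mathbb Z[X,Y]$, which is automatically valid in every characteristic. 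Your route buys self-containedness (no appeal to the external remark) and a slicker derivation of $(b)$; the paper's route is shorter on the page precisely because it leans on the PCB case already established in \cite{opPCB}. The only cosmetic difference in $(a)$ is that you deduce homogeneity of $g_i$ directly from $\deg_{\mathbf d}x_i=\deg_{\mathbf d}y_i$, while the paper deduces it from $\widetilde f_i=f_ig_i$ being homogeneous and $S$ a domain; both are fine.
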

\begin{proof}
Applying \cite[Remark~3.4]{opPCB} to $\widetilde{L}$, the PCB matrix
associated to $L$, one gets the syzygy $t^{b(1)}\widetilde{f}_1+\ldots
+t^{b(s)}\widetilde{f}_{s}=0$, where $\widetilde{f}_{i}$ is the binomial
defined by the $i$-th column of $\widetilde{L}$. Note that
$\widetilde{f}_{i}=f_{\widetilde{l}_{*,i}}=x_i^{b_i}-y_i^{b_i}=
(x_i-y_i)(x_i^{b_{i}-1}+\cdots +x_i^{b_i-j}y_i^j+\cdots +y_i^{b_i-1})=
f_ig_i$ (even if $b_i=1$). It follows that $t^{b(1)}g_1f_1+\ldots
+t^{b(s)}g_sf_{s}=0$. Since $\mathbf{d} L=0$ for some
$\mathbf{d}\in\mathbb{N}_+^s$ (see Theorem~\ref{GPCB-new}), 
clearly $\mathbf{d}\widetilde{L}=0$ and
$\widetilde{f}_{i}$ is homogeneous. Since $\widetilde{f}_i=f_ig_i$ and $S$ is
a domain, $g_i$ is homogeneous too. This proves $(a)$. On the other
hand, one has the following identity:
\begin{eqnarray*}
&&q_if_i=(x_i^{b_i-2}+2x_i^{b_i-3}y_i+\cdots
  +(b_i-1)y_i^{b_i-2})(x_i-y_i)=\\&&x_i^{b_i-1}+x_i^{b_i-2}y_i+\cdots
  +x_iy_i^{b_i-2}-(b_i-1)y_i^{b_i-1}=g_i-b_iy_i^{b_i-1}.
\end{eqnarray*}
To see where this equality comes from, consider the polynomials
$f=X-Y$ and $g=X^{b-1}+X^{b-2}Y+\cdots +Y^{b-1}$ in a polynomial ring
$K[X,Y]$, where $b$ is a positive integer. Set $Z=X/Y$ and
dehomogenize $f$ and $g$ to obtain $u=Z-1$ 
and $v=Z^{b-1}+Z^{b-2}+\cdots +1$ in the Euclidean domain $K[Z]$. If
${\rm char}(K)=0$ or ${\rm char}(K)=p$, $p$ a prime with $p\nmid b$, $u$ and $v$ are
relatively prime. The Euclidean Algorithm explicitly gives us two
polynomials $\alpha,\beta\in K[Z]$ with $\alpha u+\beta v=1$. On
rehomogenizing and multiplying by $b$ one gets the desired identity,
which holds in any characteristic. This proves $(b)$. Finally, on
multiplying the equality $q_if_i=g_i-b_iy_i^{b_i-1}$ by $t^{b(i)}f_i$,
one gets $b_iy_i^{b_i-1}t^{b(i)}f_i=t^{b(i)}g_if_i-q_it^{b(i)}f_i^2\in
(f_1,\ldots ,f_{i-1},f_{i+1},\ldots ,f_s)+I^2$.
\end{proof}

\begin{lemma}\label{gradeI} Let $I(L)=(f_1,\ldots ,f_s)$ be the
ideal of a GPCB matrix $L$. Then, any subset of $s-1$ elements of $f_1,\ldots ,f_s$ is a
  regular sequence in $S$.
\end{lemma}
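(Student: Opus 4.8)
The plan is to reduce the statement, via the Cohen--Macaulayness of $S=K[t_1,\ldots,t_s]$, to a single height computation. Fix $j\in\{1,\ldots,s\}$ and put $I_j=(f_i\mid i\neq j)$ and $h=t_1\cdots t_s$; recall from the standing assumptions that the entries of the GPCB matrix $L$ are $a_{i,i}>0$ on the diagonal and $-a_{i,j}$ with $a_{i,j}>0$ off the diagonal, so that $f_i=t_i^{a_{i,i}}-\prod_{k\neq i}t_k^{a_{k,i}}$ and $\operatorname{rank}(L)=s-1$ by Theorem~\ref{GPCB-new}(d). The cases $s\le 2$ are immediate ($S$ is a domain and each $f_i$ is a nonzero binomial, so $I_j$ is generated by at most one element, which is a regular sequence), so I would assume $s\ge 3$. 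Since $I_j$ is generated by the $s-1$ elements $f_i$ ($i\neq j$) and $S$ is Cohen--Macaulay, it then suffices to prove ${\rm ht}(I_j)=s-1$: in a Cohen--Macaulay ring an ideal whose height equals its number of generators is generated by a regular sequence (inductively, the successive quotients are Cohen--Macaulay, hence have no embedded primes, so each generator lies outside all associated primes of the previous quotient; see, e.g., \cite{BHer}). By Krull's height theorem ${\rm ht}(I_j)\le s-1$, so the task becomes: show that every minimal prime $\mathfrak{p}$ of $I_j$ has height $\ge s-1$, which I would do by splitting according to whether $h\in\mathfrak{p}$.

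If $h\notin\mathfrak{p}$, I would pass to the lattice $\mathcal{L}'$ spanned by the columns $\ell_{*,i}$ of $L$ with $i\neq j$. Since $\operatorname{rank}(L)=s-1$, Lemma~\ref{march19-13} shows these $s-1$ columns are linearly independent, so $\operatorname{rank}(\mathcal{L}')=s-1$; by Lemma~\ref{sep1-12}, $(I_j\colon h^{\infty})=I(\mathcal{L}')$, and as $\mathfrak{p}\supseteq I_j$ is prime and misses $h$, it must contain $I(\mathcal{L}')$, whence ${\rm ht}(\mathfrak{p})\ge{\rm ht}(I(\mathcal{L}'))=\operatorname{rank}(\mathcal{L}')=s-1$ (the height of a lattice ideal equals the rank of its lattice). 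If instead $t_\ell\in\mathfrak{p}$ for some $\ell$, I would use that all off-diagonal entries of $L$ are nonzero: for every $i\neq j$ with $i\neq\ell$, the monomial $\prod_{k\neq i}t_k^{a_{k,i}}$ is divisible by $t_\ell$, so $f_i\equiv t_i^{a_{i,i}}\pmod{(t_\ell)}$ with $a_{i,i}>0$. Hence $I_j+(t_\ell)$ contains $t_\ell$ together with $t_i^{a_{i,i}}$ for all $i\neq j,\ell$, so $\rad(I_j+(t_\ell))\supseteq(t_i\mid i\neq j)$; consequently $\mathfrak{p}\supseteq(t_i\mid i\neq j)$ and ${\rm ht}(\mathfrak{p})\ge s-1$. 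Combining the two cases gives ${\rm ht}(I_j)=s-1$, hence the $s-1$ generators of $I_j$ form a regular sequence, and since $j$ was arbitrary we are done.

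I do not anticipate a genuine obstacle here. The two points that deserve a little care are the justification of the reduction to the height statement (a standard consequence of Cohen--Macaulayness of $S$, which I would state with a reference rather than reprove), and making sure the case $t_\ell\in\mathfrak{p}$ really uses the full GPCB hypothesis: it is precisely the nonvanishing of the off-diagonal entries, together with positivity of the diagonal, that forces the congruences $f_i\equiv t_i^{a_{i,i}}\pmod{(t_\ell)}$ and hence the inclusion $\rad(I_j+(t_\ell))\supseteq(t_i\mid i\neq j)$. Everything else is bookkeeping with the already-established facts about lattice ideals (Lemmas~\ref{sep1-12} and \ref{march19-13}, Theorem~\ref{GPCB-new}(d)).
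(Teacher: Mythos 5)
Your argument is correct, but it follows a different route from the paper. The paper disposes of this lemma in one line: by Theorem~\ref{GPCB-new} the ideal $I(L)$ is graded, and then it invokes the proof of \cite[Proposition~3.3]{opPCB}, which (as in Proposition~\ref{CBproperties}(b) here) rests on adjoining the missing variable: since every $f_i$ with $i\neq j$ reduces to the pure power $t_i^{a_{i,i}}$ modulo $t_j$, the $s$ homogeneous elements $\{f_i\}_{i\neq j}\cup\{t_j\}$ generate an $\mathfrak{m}$-primary ideal, hence form a homogeneous system of parameters and therefore a regular sequence in the Cohen--Macaulay graded ring $S$, and one simply drops $t_j$. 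You instead prove directly that ${\rm ht}(I_j)=s-1$ by analyzing the minimal primes: your Case 2 uses the same ``pure powers mod $t_\ell$'' trick, while your Case 1 (primes missing $t_1\cdots t_s$ contain $(I_j\colon h^\infty)=I(\mathcal{L}')$, of height $s-1$ by Lemma~\ref{march19-13} and Theorem~\ref{GPCB-new}(d)) is an extra ingredient the paper's route does not need; you then appeal to the principle that in a Cohen--Macaulay ring $n$ generators of a height-$n$ ideal form a regular sequence. One caution on that last step: as stated for arbitrary (non-local, non-graded) Cohen--Macaulay rings the principle is false --- e.g.\ in $K[x]\times K[y,z]$ the elements $(0,y),(1,z)$ generate a height-two ideal but the first is a zerodivisor --- because the height of an initial segment need not equal its number of generators. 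It is valid in your setting, either because $I_j$ is homogeneous for the positive weight vector $\mathbf{d}$ (so all associated primes of the successive quotients are graded, hence lie in $\mathfrak{m}$, and one reduces to the local Cohen--Macaulay statement), or because $S$ is a catenary affine domain where ${\rm ht}(\mathfrak{p})+\dim(S/\mathfrak{p})=s$ gives the needed control of initial segments; you should justify it one of these ways rather than by the blanket claim. With that point made precise, your proof is complete and self-contained, whereas the paper's is shorter but leans on the external reference \cite{opPCB}.
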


\begin{proof} The ideal $I(L)$ is graded by
Theorem~\ref{GPCB-new}. Thus the lemma follows using 
the proof of \cite[Proposition~3.3]{opPCB}. 
\end{proof}

The following result generalizes \cite[Proposition~3.5]{opPCB}.

\begin{corollary}\label{aci} 
 Let $I=I(L)$ be the ideal of a GPCB matrix $L$. Then the following hold.
\begin{itemize}
\item[$(a)$] For any associated prime $\mathfrak{p}$ of $I$, either
  ${\rm ht}(\mathfrak{p})=s-1$ and $t_{i}\not\in\mathfrak{p}$, for all
  $i=1,\ldots ,s$, or else $\mathfrak{p}=\mathfrak{m}$.
\item[$(b)$] For any minimal prime ideal $\mathfrak{p}$ over $I$,
  $IS_{\mathfrak{p}}$ is a complete intersection.
\item[$(c)$] If $s\geq 3$, $I$ is an almost complete intersection.
\end{itemize}
\end{corollary}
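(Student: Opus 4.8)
The plan is to deduce everything from the structural results already in place, namely Proposition~\ref{structureI(L)} together with Lemma~\ref{gradeI} and Proposition~\ref{explicitrelation}. First, for part $(a)$, I would observe that by Remark~\ref{onecanapply} a GPCB ideal $I=I(L)$ is graded (Theorem~\ref{GPCB-new}) and satisfies $V(I,t_i)=\{0\}$ for all $i$. Hence Lemma~\ref{sep16-12} applies directly: every associated prime $\mathfrak{p}$ of $I$ is homogeneous, and either $\mathfrak{p}=\mathfrak{m}$ or $t_i\notin\mathfrak{p}$ for all $i$ and $\mathrm{ht}(\mathfrak{p})=s-1$. That is exactly statement $(a)$; it is essentially immediate and I do not expect any obstacle here.

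For part $(b)$, let $\mathfrak{p}$ be a minimal prime over $I$. By part $(a)$ we have $\mathrm{ht}(\mathfrak{p})=s-1$, so $\mathrm{ht}(I)=s-1$ and $IS_{\mathfrak{p}}$ is an ideal of height $s-1$ in the local ring $S_{\mathfrak{p}}$. By Lemma~\ref{gradeI}, any $s-1$ of the generators $f_1,\ldots,f_s$ form a regular sequence in $S$; in particular they generate an ideal of height $s-1$. The point is to choose, for the given $\mathfrak{p}$, a subset of $s-1$ of the $f_i$ that already generates $I$ locally at $\mathfrak{p}$. Using the syzygy from Proposition~\ref{explicitrelation}(a), namely $t^{b(1)}g_1f_1+\cdots+t^{b(s)}g_sf_s=0$, together with the fact that $t_i\notin\mathfrak{p}$ and that (as in the proof of Theorem~\ref{GPCB-new}) $g_i$ has $x_i^{b_i-1}$ as a term with $x_i=t_i^{a_{i,i}}$ a unit mod $\mathfrak{p}$, one finds that $g_i\notin\mathfrak{p}$ for every $i$ (and $t^{b(i)}\notin\mathfrak{p}$). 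Pick any index $k$; then in $S_{\mathfrak{p}}$ the relation shows that $f_k$ lies in the ideal generated by the remaining $f_i$'s, since the coefficient $t^{b(k)}g_k$ of $f_k$ is a unit. Therefore $IS_{\mathfrak{p}}=(f_1,\ldots,f_{k-1},f_{k+1},\ldots,f_s)S_{\mathfrak{p}}$ is generated by $s-1$ elements and has height $s-1$, hence is a complete intersection.

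For part $(c)$, recall that an ideal $I$ of height $h$ is an almost complete intersection if it is not a complete intersection but can be generated by $h+1$ elements. Here $I=(f_1,\ldots,f_s)$ has height $s-1$, so it is generated by $s=(s-1)+1$ elements; it remains to show $I$ is not a complete intersection when $s\geq 3$. If it were, then $I$ would be Cohen--Macaulay, hence unmixed, hence (by Proposition~\ref{structureI(L)}, or directly by Proposition~\ref{2variables}) a lattice ideal with $t_i$ regular modulo $I$ for all $i$. But then the syzygy identity of Proposition~\ref{explicitrelation}, in the sharpened form $b_iy_i^{b_i-1}t^{b(i)}f_i\in(f_1,\ldots,f_{i-1},f_{i+1},\ldots,f_s)+I^2$, forces a dependence among the $f_i$ modulo $I^2$; since a complete intersection of $s$ elements needs $I/I^2$ to be free of rank $s$ over $S/I$, and $b_iy_i^{b_i-1}t^{b(i)}$ is a nonzerodivisor mod $I$ (all $t_j$ being regular), we obtain $f_i\in(f_1,\ldots,\widehat{f_i},\ldots,f_s)+I^2$, i.e. $I$ is generated by $s-1$ elements up to $I^2$, and Nakayama then shows $I$ is generated by $s-1$ elements — contradicting $\mathrm{ht}(I)=s-1$ together with the regular-sequence count from Lemma~\ref{gradeI} unless $s-1<s$, which is no contradiction, so more care is needed. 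The cleaner route, which I would actually follow, is: a complete intersection ideal has a minimal generating set of exactly $\mathrm{ht}(I)=s-1$ elements, so some $f_i$ is redundant; combined with Lemma~\ref{gradeI} (every $s-1$ of the $f_j$ form a regular sequence, in particular are minimal generators of an ideal requiring $s-1$ generators) one checks this is compatible only when $s-1$ generators already give $I$, and then the syzygy of Proposition~\ref{explicitrelation} applied to the omitted generator produces the genuine contradiction when $s\geq 3$, because the relation is not a Koszul relation. I expect this last step — pinning down precisely why $s\geq 3$ is needed and extracting the contradiction from the explicit syzygy — to be the main obstacle, and I would model the argument on the proof of \cite[Proposition~3.5]{opPCB}, of which this corollary is the stated generalization.
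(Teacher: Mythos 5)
Your part (a) is fine and is exactly the paper's argument (Remark~\ref{onecanapply}, Theorem~\ref{GPCB-new}, Lemma~\ref{sep16-12}), and in part (b) your strategy is also the paper's: localize the syzygy of Proposition~\ref{explicitrelation}(a) at a minimal prime $\mathfrak{p}$ and invert the coefficient of one generator, then invoke Lemma~\ref{gradeI}. However, your justification that $g_i\notin\mathfrak{p}$ for \emph{every} $i$ is not valid: a polynomial may lie in $\mathfrak{p}$ even though one of its terms is a unit modulo $\mathfrak{p}$ (indeed $f_i=x_i-y_i\in\mathfrak{p}$ while $x_i$ is a unit mod $\mathfrak{p}$). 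What is actually true, via Proposition~\ref{explicitrelation}(b) ($q_if_i=g_i-b_iy_i^{b_i-1}$), is that $g_i\equiv b_iy_i^{b_i-1}$ modulo $\mathfrak{p}$; hence $g_i\in\mathfrak{p}$ exactly when ${\rm char}(K)=p$ divides $b_i$, so your ``pick any index $k$'' fails in positive characteristic. The repair is the paper's: since $\gcd(\mathbf{b})=1$ one may (and the standing convention of the section does) assume $p\nmid b_s$, whence $t^{b(s)}g_s\notin\mathfrak{p}$, $f_s\in(f_1,\ldots,f_{s-1})S_{\mathfrak{p}}$, and $IS_{\mathfrak{p}}$ is generated by the regular sequence $f_1,\ldots,f_{s-1}$.

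The genuine gap is part (c): you concede that the key step --- showing $I$ is not a complete intersection when $s\geq 3$ --- is ``the main obstacle,'' and neither of your sketches (the $I/I^2$ freeness argument, which you yourself note does not yield a contradiction, nor the ``non-Koszul relation'' idea) is carried out. The paper disposes of this in one line by citing Proposition~\ref{feb9-13}: the generators $f_1,\ldots,f_s$ of $I(L)$ are the binomials of the rows of $L^\top$, which is a GCB (indeed GPCB) matrix by Theorem~\ref{GPCB-new}(d); since $L$ is GPCB every $f_i$ has full support, so $|{\rm supp}(f_i)|=s\geq 3$, and $V(I,t_i)=\{0\}$ holds by Remark~\ref{onecanapply}, so that proposition says $I$ is not a complete intersection. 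Its proof is the elementary monomial argument you were looking for: if the graded ideal of height $s-1$ were a complete intersection, some $s-1$ of the $f_i$ would generate it, and then the pure power $t_k^{a_{k,k}}$ occurring in the omitted binomial would have to occur in a combination of the others, which is impossible because every term of the remaining generators is either a pure power of a different variable or involves at least two variables. Without this (or an equivalent) argument, your part (c) is not a proof.
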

\begin{proof}
Item $(a)$ follows from Remark~\ref{onecanapply},
Theorem~\ref{GPCB-new} and Lemma~\ref{sep16-12}. Item (c) follows from
Proposition~\ref{feb9-13}. To prove $(b)$, let $I=(f_1,\ldots,f_s)$,
let $\mathfrak{p}$ be an arbitrary minimal prime over $I$, so
$t_i\not\in\mathfrak{p}$ for all $i=1,\ldots ,s$. Either ${\rm
  char}(K)=0$, or else we may suppose that ${\rm char}(K)=p$, $p$ a
prime with $p\nmid b_s$, because $\gcd({\bf b})=1$ (and $s\geq 2$).
In particular, by Proposition~\ref{explicitrelation},
$b_sy_s^{b_s-1}\not\in\mathfrak{p}$ (otherwise, if we had
$b_sy_s^{b_s-1}\in\mathfrak{p}$, it would follow that $y_s$ is in
$\mathfrak{p}$, so one of $t_1,\ldots ,t_{s-1}$ is in $\mathfrak{p}$,
a contradiction) and
$g_s=q_sf_s+b_sy_s^{b_s-1}\not\in\mathfrak{p}$. Therefore
$t^{b(s)}g_{s}\notin \frak{p}$ and it follows from
Proposition~\ref{explicitrelation}(a) and Lemma~\ref{gradeI} that
$IS_{\mathfrak{p}}=(f_1,\ldots ,f_{s-1})S_{\mathfrak{p}}$ is generated
by a regular sequence in $S_{\mathfrak{p}}$.
\end{proof}

In the next pair of results, we give an explicit description of the
hull of a GPCB ideal and, if $s\geq 4$, of an irredundant embedded
component.

\begin{proposition}\label{explicitui} 
$($cf. \cite[Proposition~4.4]{opPCB}$)$ Let $I=I(L)=(f_1,\ldots
,f_s)$ be the ideal of a GPCB matrix $L$. Set $J=(f_1,\ldots
  ,f_{s-1})$. Suppose that $g\in (J\colon f_s)$ is such that $g\not\in\mathfrak{p}$
  for any minimal prime $\mathfrak{p}$ over $I$. Then the following
  hold.
\begin{itemize}
\item[$(a)$] $I(\mathcal{L})={\rm Hull}(I)=(I\colon g)=(J\colon g)$.
\item[$(b)$] For $b(s)\in\mathbb{N}^{s}$ and $g_s\in S$ as in
  Proposition~\ref{explicitrelation},
  $I(\mathcal{L})=(I\colon t^{b(s)}g_s)=(J\colon t^{b(s)}g_s)$.
\end{itemize}
\end{proposition}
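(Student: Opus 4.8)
The plan is to exploit the syzygy from Proposition~\ref{explicitrelation} together with the structure theory already established in this section. First I would recall that, by Theorem~\ref{GPCB-new} and Remark~\ref{onecanapply}, $I$ is graded and $V(I,t_i)=\{0\}$ for all $i$, so Proposition~\ref{structureI(L)} applies: $I$ has primary components $\fq_1,\ldots,\fq_c$ of height $s-1$ (possibly together with an $\mathfrak{m}$-primary component $\fq$), $I(\mathcal{L})=\hull(I)=\fq_1\cap\cdots\cap\fq_c$, and for any $g\in\mathfrak{m}\setminus\bigcup_{i=1}^c\mathfrak{p}_i$ one has $I(\mathcal{L})=(I\colon g^\infty)$. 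By Corollary~\ref{aci}(a), the minimal primes $\mathfrak{p}_i$ of $I$ are exactly the height-$(s-1)$ associated primes, none of which contain any $t_j$; so the hypothesis that $g\not\in\mathfrak{p}$ for every minimal prime $\mathfrak{p}$ over $I$ says precisely that $g\not\in\bigcup_{i=1}^c\mathfrak{p}_i$.

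For part (a), the first step is to show $(I\colon g)=(J\colon g)$. Since $J\subseteq I$ we have $(J\colon g)\subseteq(I\colon g)$; conversely, the hypothesis $g\in(J\colon f_s)$ means $gf_s\in J$, and since $I=J+(f_s)$, multiplying any element $h\in(I\colon g)$ gives $hg\in I$, and then working modulo $J$ reduces to $hg f_s^{k}$-type terms—more cleanly, $gI = gJ + gf_sS \subseteq J$, hence $(I\colon g)g\subseteq I$ forces $(I\colon g)g\subseteq J$ after multiplying by... Actually the cleanest argument: $g\cdot I \subseteq J$ because $g J\subseteq J$ and $gf_s\in J$; therefore if $hg\in I$ then $hg\in$ the ideal $(I\colon g)\cdot g \subseteq$ ... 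I would instead argue $(I\colon g) = (I\colon g^\infty)$ (since $g$ avoids all minimal primes of $I$ and $I$ is unmixed away from $\mathfrak{m}$, colon by $g$ already kills the embedded part after one step — this needs the fact that $g\not\in\mathfrak p_i$ makes $(\fq_i\colon g)=\fq_i$, and $g\in\mathfrak m$ so $g^a\in\fq$ for large $a$; but we only colon by $g$ once, so I should instead note that $(I:g)\subseteq(I:g^\infty)=I(\mathcal L)$ and separately $I(\mathcal L)\subseteq(J:g)$). Concretely: $(I\colon g)\subseteq(I\colon g^\infty)=I(\mathcal{L})=\hull(I)$ by Proposition~\ref{structureI(L)}(b). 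For the reverse and to pin down $(J\colon g)$, take $h\in I(\mathcal{L})=\fq_1\cap\cdots\cap\fq_c$; then $hg\in I$, and since $gf_s\in J$ we get $hgf_s\in J f_s\subseteq J$... the key point is $g I\subseteq J$: indeed $gf_i\in J$ for $i<s$ trivially and $gf_s\in J$ by hypothesis, so $gh\in gI\subseteq J$, giving $h\in(J\colon g)$. Thus $I(\mathcal{L})\subseteq(J\colon g)\subseteq(I\colon g)\subseteq I(\mathcal{L})$, forcing equality throughout, which is exactly (a).

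For part (b), the step is to verify that $g:=t^{b(s)}g_s$ satisfies the hypotheses of (a). That $t^{b(s)}g_s\in(J\colon f_s)=((f_1,\ldots,f_{s-1})\colon f_s)$ follows directly from the syzygy in Proposition~\ref{explicitrelation}(a): rearranging $t^{b(1)}g_1f_1+\cdots+t^{b(s)}g_sf_s=0$ gives $t^{b(s)}g_sf_s = -\sum_{i<s}t^{b(i)}g_if_i\in J$. That $t^{b(s)}g_s$ lies in no minimal prime $\mathfrak{p}$ over $I$ is the argument already run in the proof of Corollary~\ref{aci}(b): each $t_j\not\in\mathfrak{p}$ so $t^{b(s)}\not\in\mathfrak{p}$, and $g_s=q_sf_s+b_sy_s^{b_s-1}\equiv b_sy_s^{b_s-1}\pmod{I}$ with $b_sy_s^{b_s-1}\not\in\mathfrak{p}$ (using $p\nmid b_s$ when $\ch(K)=p$, and that $y_s$ is a product of the $t_j$'s, none of which lie in $\mathfrak{p}$). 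Hence part (a) applies verbatim to this particular $g$, yielding (b). The main obstacle is largely bookkeeping: making sure the colon-ideal manipulations in (a) are airtight—specifically that $gI\subseteq J$ and that colon by a single element $g$ avoiding all minimal primes already recovers the full saturation $(I\colon g^\infty)=\hull(I)$—but all the structural input (unmixedness of $\hull(I)$, the list of minimal primes, the explicit syzygy) is already in hand, so no genuinely new difficulty arises.
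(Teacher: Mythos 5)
There is a genuine gap at the heart of part (a). Your easy chain $I\subseteq(J\colon g)\subseteq(I\colon g)\subseteq(I\colon g^{\infty})=I(\mathcal{L})$ is exactly what the paper establishes (using that $J$ is a graded complete intersection with $f_s\notin J$, so $g\in(J\colon f_s)\subseteq\mathfrak{m}$ and Proposition~\ref{structureI(L)}(b) applies), and your verification in (b) that $t^{b(s)}g_s\in(J\colon f_s)$ and lies outside every minimal prime is also the paper's argument. But the substance of the proposition is the reverse inclusion $I(\mathcal{L})\subseteq(J\colon g)$, i.e.\ $g\cdot\hull(I)\subseteq J$, and your argument for it is circular: you take $h\in I(\mathcal{L})$ and assert ``then $hg\in I$,'' which is not a hypothesis but is equivalent to the statement $(I\colon g)\supseteq\hull(I)$ you are trying to prove (note $(I\colon g)=\hull(I)\cap(\fq\colon g)$ when $I$ is not unmixed, and there is no a priori reason that $g\cdot\hull(I)$ lands in the $\mathfrak{m}$-primary component $\fq$, since $g\in\mathfrak{m}$ only gives $g^a\in\fq$ for large $a$). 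You then write ``$gh\in gI\subseteq J$,'' which silently replaces $h\in I(\mathcal{L})$ by $h\in I$; what that step actually proves is only $I\subseteq(J\colon g)$, the trivial end of the chain. So colonning by a single $g$ avoiding the minimal primes recovering the full saturation is precisely the point you flagged as ``bookkeeping,'' and it is never established.

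The missing argument (this is what the paper defers to \cite[Proposition~4.4]{opPCB}) genuinely uses that $J$ is a complete intersection (Lemma~\ref{gradeI}): since $S/J$ is Cohen--Macaulay of height $s-1$, every associated prime of $(J\colon g)$ has height $s-1$ and, as $(J\colon g)\supseteq I$, lies among the minimal primes $\mathfrak{p}_1,\ldots,\mathfrak{p}_c$ of $I$. Localizing at each $\mathfrak{p}_i$ kills the $\mathfrak{m}$-primary component and makes $g$ a unit, so $I_{\mathfrak{p}_i}=(J\colon g)_{\mathfrak{p}_i}=I(\mathcal{L})_{\mathfrak{p}_i}$; intersecting the contractions of these localizations over the associated primes of $(J\colon g)$ then yields $(J\colon g)\supseteq I(\mathcal{L})$, which closes the chain. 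Without some argument of this kind (unmixedness of $(J\colon g)$ coming from the complete-intersection property of $J$, plus localization at the minimal primes), your proposal proves only the containments that were already easy, so the proposition is not established as written.
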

\begin{proof}
Since $J$ is a graded complete intersection and $f_s\not\in J$,
$(J\colon f_s)\subset\mathfrak{m}$ and $g\in\mathfrak{m}$. 
By Proposition~\ref{structureI(L)}, $I(\mathcal{L})=\
$Hull$(I)=(I\colon g^{\infty })$.  
Hence $I\subset (J\colon g)\subset (I\colon g)\subset
(I\colon g^{\infty })=I(\mathcal{L})$. To
finish the proof of $(a)$, just proceed as in
\cite[Proposition~4.4]{opPCB}. By Proposition~\ref{explicitrelation},
$t^{b(s)}g_s\in (J\colon f_s)$ and
$t^{b(s)}g_s\in\mathfrak{m}\setminus\mathfrak{p}$, for all 
minimal prime ideals $\mathfrak{p}$ over $I$. Thus (b) follows from $(a)$.
\end{proof}

\begin{theorem}\label{embeddedcomp} $($cf. \cite[Theorem~4.10]{opPCB}$)$
Let $s\geq 4$. Let $I=I(L)=(f_1,\ldots ,f_s)$ be the ideal of a GPCB
matrix $L$. Suppose that $(I\colon g)=(I\colon g^{\infty})$ for some
$g\in \mathfrak{m}$, $g\not\in\mathfrak{p}$ for any minimal prime
ideal $\mathfrak{p}$ over 
$I$. Then the following hold.
\begin{itemize}
\item[$(a)$] $I+(g)$ is an irredundant $\mathfrak{m}$-primary
  component of $I$;
\item[$(b)$] For $b(s)\in\mathbb{N}^s$ and $g_s\in S$ as in
  Proposition~\ref{explicitrelation}, $I+(x^{b(s)}g_s)$ is an
  irredundant $\mathfrak{m}$-primary component of $I$.
\end{itemize}
\end{theorem}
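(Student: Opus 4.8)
The plan is to deduce (a) from the structure theory already developed for graded matrix ideals, and then obtain (b) by checking that $t^{b(s)}g_s$ is a permissible choice of $g$. First I would record why the theorem is not vacuous and set up the ambient primary decomposition. Since $L$ is a GPCB matrix, every entry of $L$ is nonzero, so each generator $f_j=t_j^{a_{j,j}}-\prod_{i\neq j}t_i^{a_{i,j}}$ has ${\rm supp}(f_j)=\{t_1,\ldots,t_s\}$, and hence $|{\rm supp}(f_j)|=s\geq 4$ for all $j$. Combining this with Remark~\ref{onecanapply} (which records that $I$ is graded and $V(I,t_i)=\{0\}$ for all $i$), Proposition~\ref{oct6-12} shows $I$ is not a lattice ideal, hence $I$ is not unmixed (Proposition~\ref{2variables}) and $\mathfrak{m}$ is an associated prime of $I$. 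Then Proposition~\ref{structureI(L)} gives a minimal primary decomposition $I=\fq_1\cap\cdots\cap\fq_c\cap\fq$, where $I(\mathcal{L})=\hull(I)=\fq_1\cap\cdots\cap\fq_c$, the $\fq_i$ are $\mathfrak{p}_i$-primary with ${\rm ht}(\mathfrak{p}_i)=s-1$, $\fq$ is $\mathfrak{m}$-primary and irredundant, the $\mathfrak{p}_i$ are exactly the minimal primes over $I$, and $(I\colon (g')^\infty)=I(\mathcal{L})$ for every $g'\in\mathfrak{m}\setminus\bigcup_{i=1}^c\mathfrak{p}_i$.

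For (a), I would first note that $I+(g)$ is $\mathfrak{m}$-primary: it is proper (contained in $\mathfrak{m}$), and any prime over it contains some minimal prime $\mathfrak{p}_i$ of $I$ together with $g\notin\mathfrak{p}_i$, hence strictly contains $\mathfrak{p}_i$, so has height $\geq s$ and equals $\mathfrak{m}$. Next, the hypothesis $(I\colon g)=(I\colon g^\infty)$ forces $(I\colon g)=(I\colon g^2)$, and then the elementary identity $I=(I\colon g)\cap(I+(g))$ holds: if $x=i+bg$ with $i\in I$ and $x\in(I\colon g)$, then $bg^2=gx-gi\in I$, so $b\in(I\colon g^2)=(I\colon g)$ and $x=i+bg\in I$. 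Since $g\in\mathfrak{m}\setminus\bigcup_{i=1}^c\mathfrak{p}_i$, Proposition~\ref{structureI(L)} identifies $(I\colon g)=(I\colon g^\infty)=I(\mathcal{L})=\fq_1\cap\cdots\cap\fq_c$, so $I=\fq_1\cap\cdots\cap\fq_c\cap(I+(g))$. The radicals $\mathfrak{p}_1,\ldots,\mathfrak{p}_c,\mathfrak{m}$ are pairwise distinct, so this is a primary decomposition; the $\fq_i$ cannot be omitted (they are the isolated components) and $I+(g)$ cannot be omitted since $\fq_1\cap\cdots\cap\fq_c=I(\mathcal{L})\neq I$ as $I$ is not a lattice ideal. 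Hence $I+(g)$ is an irredundant $\mathfrak{m}$-primary component of $I$.

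For (b), the plan is to verify that $g:=t^{b(s)}g_s$ satisfies all the hypotheses of (a). By Lemma~\ref{gradeI}, $J=(f_1,\ldots,f_{s-1})$ is a graded complete intersection and $f_s\notin J$, so $(J\colon f_s)\subseteq\mathfrak{m}$; Proposition~\ref{explicitrelation}(a) gives $t^{b(s)}g_sf_s\in J$, whence $t^{b(s)}g_s\in(J\colon f_s)\subseteq\mathfrak{m}$. That $t^{b(s)}g_s$ avoids every minimal prime $\mathfrak{p}$ of $I$ is precisely the computation in the proof of Corollary~\ref{aci}(b): from $g_s=q_sf_s+b_sy_s^{b_s-1}$ with $b_s\neq 0$ in $K$ (by the normalization $p\nmid b_s$ when $\ch(K)=p$) and with $y_s$ and $t^{b(s)}$ products of variables, none of which lies in $\mathfrak{p}$, one gets $t^{b(s)}g_s\notin\mathfrak{p}$. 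Finally, Proposition~\ref{explicitui}(b) gives $(I\colon t^{b(s)}g_s)=I(\mathcal{L})$, while the displayed fact from Proposition~\ref{structureI(L)} gives $(I\colon(t^{b(s)}g_s)^\infty)=I(\mathcal{L})$; hence $(I\colon g)=(I\colon g^\infty)$ for $g=t^{b(s)}g_s$. Applying (a) to this $g$ yields (b). The step I expect to be the main obstacle is not any isolated deduction but assembling the hypotheses for (b): checking in every characteristic that $t^{b(s)}g_s$ lands in $\mathfrak{m}$ and stays out of all minimal primes, which is exactly where the normalization $\gcd(\mathbf{b})=1$, the convention $p\nmid b_s$, and the explicit syzygy and identity of Proposition~\ref{explicitrelation} do the work, together with the clean equality $(I\colon t^{b(s)}g_s)=(I\colon(t^{b(s)}g_s)^\infty)$ supplied by Propositions~\ref{explicitui} and \ref{structureI(L)}.
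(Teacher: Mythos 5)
Your route is the one the paper intends: the paper's own proof of this theorem is a one-line citation of \cite[Theorem~4.10]{opPCB}, and your assembly of the argument --- the identity $I=(I\colon g)\cap(I+(g))$ deduced from $(I\colon g)=(I\colon g^{2})$, the identification $(I\colon g)=(I\colon g^{\infty})=\hull(I)=I(\bl)$ via Proposition~\ref{structureI(L)}, irredundancy because $I$ is not a lattice ideal (Proposition~\ref{oct6-12}, using $s\geq 4$ and the full supports of the $f_j$), and the verification via Propositions~\ref{explicitrelation}, \ref{explicitui} and the computation in Corollary~\ref{aci} that $t^{b(s)}g_s$ is an admissible choice of $g$ --- is exactly how that cited argument gets filled in.

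There is, however, one genuine gap: your justification that $I+(g)$ is $\mathfrak{m}$-primary. From the fact that a prime $P\supseteq I+(g)$ strictly contains some minimal prime $\mathfrak{p}_i$ of height $s-1$ you infer that $P$ ``has height $\geq s$ and equals $\mathfrak{m}$''. Height $s$ only makes $P$ a maximal ideal of $S$, not the irrelevant one: $(t_1-1,\ldots,t_s-1)$ also has height $s$. What makes the conclusion work is gradedness: if $g$ is homogeneous with respect to the weight vector $\mathbf{d}$, then $I+(g)$ is graded, its minimal primes are graded, and a graded prime of height $s$ (all weights being positive) must be $\mathfrak{m}$. This repairs part (b), since $t^{b(s)}g_s$ is homogeneous by Proposition~\ref{explicitrelation}(a); but in part (a), where $g$ is an arbitrary element of $\mathfrak{m}$, the step cannot be patched from the stated hypotheses alone. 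Indeed, choose $N$ with $\mathfrak{m}^N\subset\fq$ (the embedded component of Proposition~\ref{structureI(L)}(d)) and take $g=t_1^N(1-t_1)$: then $g\in\mathfrak{m}$, $g$ lies in no minimal prime of $I$ (these are graded and contain no variable), and $g\in\fq$ together with $g$ being regular modulo $I(\bl)$ gives $(I\colon g)=(I\colon g^{\infty})=I(\bl)$; yet every $f_j$ and $g$ vanish at the point $(1,\ldots,1)$, so $I+(g)\subset(t_1-1,\ldots,t_s-1)$ and $I+(g)$ is not $\mathfrak{m}$-primary. So in (a) you should add (or make explicit, as in the setting of \cite[Theorem~4.10]{opPCB} that the paper is extending) the hypothesis that $g$ is homogeneous, and invoke gradedness at precisely the point where you claim the minimal primes of $I+(g)$ coincide with $\mathfrak{m}$.
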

\begin{proof}
This is an straightforward extension of \cite[Theorem~4.10]{opPCB}.
\end{proof}

In the light of the preceding results, and taking into account 
Theorem~\ref{main-new} and Proposition~\ref{structureI(L)}, it is not
hard to see that analogues of \cite[Theorem~6.5 and
Theorem~7.1]{opPCB} hold for 
GPCB ideals.
The details are left to the
interested reader. 

For $s=2$, we now give an explicit description of a GPCB ideal and its
hull in terms of the entries of the corresponding GPCB matrix. This
description will be used later in Section~\ref{lattice-dim1-3vars}
(see Proposition~\ref{when-is-unmixed}).

\begin{lemma}\label{cases=2}
Let $I=I(L)=(f_1,f_2)$ be a GPCB ideal associated to a $2\times 2$
GPCB matrix $L$ and let $\mathcal{L}$ be the lattice of $\mathbb{Z}^2$
spanned by the columns of $L$. Then there exists
$(c_1,c_2)\in\mathbb{N}^2_+$ such that
$I(\mathcal{L})=(t_1^{c_1}-t_2^{c_2})$.  Moreover, either $I$ is a PCB
ideal, $I$ is principal and $I=I(\mathcal{L})$ is a lattice ideal, or
else $I$ is not a PCB ideal, $I$ is not principal and $I$ is not a
lattice ideal.
\end{lemma}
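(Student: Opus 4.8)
The plan is to pin down the rank-one lattice $\bl$ and the ideal $I(\bl)$ explicitly, and then to prove that the four conditions occurring in the statement --- ``$I$ is a PCB ideal'' (meaning $I=I(L')$ for \emph{some} $2\times2$ PCB matrix $L'$), ``$I$ is principal'', ``$I=I(\bl)$'' and ``$I$ is a lattice ideal'' --- are pairwise equivalent. The ``either\dots or else\dots'' dichotomy in the conclusion is then merely a restatement of this equivalence, so nothing more is needed for the proof proper; I would additionally record when each alternative occurs, since that is what Proposition~\ref{when-is-unmixed} will use.

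\emph{Identifying $\bl$ and $I(\bl)$.} Since $L$ is GPCB, $L\mathbf{b}^{\top}=0$ for some $\mathbf{b}\in\bn_{+}^{2}$, so the two (nonzero) columns of $L$ are parallel and $\rank(L)=1$. Writing the first column as $(a_{1,1},-a_{2,1})$ with $a_{1,1},a_{2,1}>0$, let $g=(e_{1},-e_{2})$ be the primitive vector on the line $\bq(a_{1,1},-a_{2,1})$ with $e_{1}>0$ and $\gcd(e_{1},e_{2})=1$, and write the columns of $L$ as $m_{1}g$ and $-m_{2}g$ with $m_{1},m_{2},e_{1},e_{2}\in\bn_{+}$. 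Then $\bl=\bz\,\gcd(m_{1},m_{2})\,g=\bz(c_{1},-c_{2})$ where $n:=\gcd(m_{1},m_{2})$, $c_{1}:=ne_{1}$, $c_{2}:=ne_{2}$. By Lemma~\ref{sep1-12}, $I(\bl)=((t_{1}^{c_{1}}-t_{2}^{c_{2}})\colon(t_{1}t_{2})^{\infty})$; since $S$ is a UFD and neither $t_{1}$ nor $t_{2}$ divides $t_{1}^{c_{1}}-t_{2}^{c_{2}}$, both variables are non-zero-divisors modulo $(t_{1}^{c_{1}}-t_{2}^{c_{2}})$, so this saturation is trivial and $I(\bl)=(t_{1}^{c_{1}}-t_{2}^{c_{2}})$ with $(c_{1},c_{2})\in\bn_{+}^{2}$. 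This proves the first assertion.

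\emph{The equivalence.} The elementary point is that if $h\in S$ is divisible by neither $t_{1}$ nor $t_{2}$, then $(h)$ is already saturated with respect to $t_{1}t_{2}$. Suppose $I=(h)$ is principal. As $f_{1}=t_{1}^{a_{1,1}}-t_{2}^{a_{2,1}}\in I$, $h$ divides $f_{1}$, and since $a_{1,1},a_{2,1}>0$ neither $t_{1}$ nor $t_{2}$ divides $f_{1}$, hence neither divides $h$; therefore $I=(h)=((f_{1},f_{2})\colon(t_{1}t_{2})^{\infty})=I(\bl)$ by Lemma~\ref{sep1-12}. Conversely, if $I=I(\bl)$ then $I=(t_{1}^{c_{1}}-t_{2}^{c_{2}})$ is principal, is a lattice ideal by definition, and is the PCB ideal of the PCB matrix with rows $(c_{1},-c_{1})$ and $(-c_{2},c_{2})$. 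If $I$ is a lattice ideal, then $I=(I\colon(t_{1}t_{2})^{\infty})$ by Theorem~\ref{jun12-02}, which equals $I(\bl)$ by Lemma~\ref{sep1-12}. Finally, if $I$ is a PCB ideal then $I=I(L')=(t_{1}^{p}-t_{2}^{q})$ for a $2\times2$ PCB matrix $L'$, hence $I$ is principal, and the previous step gives $I=I(\bl)$. Thus the four conditions are equivalent, which is exactly the claimed dichotomy.

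To see that both alternatives genuinely occur, I would observe that $I=I(\bl)$ if and only if $m_{1}\mid m_{2}$ or $m_{2}\mid m_{1}$: if, say, $m_{1}\mid m_{2}$ then $t_{1}^{c_{1}}-t_{2}^{c_{2}}=f_{1}$ divides $f_{2}$, so $I=(f_{1})=I(\bl)$; and if $n<m_{1}$ and $n<m_{2}$, then $I$ is homogeneous for the positive weight vector $\mathbf{d}=(e_{2},e_{1})$ (one checks $\mathbf{d}L=0$ directly from $L\mathbf{b}^{\top}=0$), with $\deg f_{i}=m_{i}e_{1}e_{2}$ and $\deg(t_{1}^{c_{1}}-t_{2}^{c_{2}})=ne_{1}e_{2}$; comparing degree-$ne_{1}e_{2}$ components of a hypothetical identity $t_{1}^{c_{1}}-t_{2}^{c_{2}}=h_{1}f_{1}+h_{2}f_{2}$ forces the right-hand side to vanish in that degree, so $t_{1}^{c_{1}}-t_{2}^{c_{2}}\notin I$ and $I\subsetneq I(\bl)$. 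The main obstacle is not any single computation but keeping the logic straight: one must remember that ``$I$ is a PCB ideal'' refers to realizability by some $2\times2$ PCB matrix rather than to $L$ itself being PCB, and the degree argument hinges on the positive grading furnished by $L\mathbf{b}^{\top}=0$; reducing to $\bz g$ with $g$ primitive is what makes everything transparent.
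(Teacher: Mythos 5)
Your proof is correct, and it is organized rather differently from the paper's. The paper works directly with $\mathbf{b}=(b_1,b_2)$, $\gcd(\mathbf{b})=1$: the relations $a_{1,1}b_1=a_{1,2}b_2$ and $a_{2,1}b_1=a_{2,2}b_2$ force $a_{1,1}=b_2c_1$, $a_{1,2}=b_1c_1$, $a_{2,1}=b_2c_2$, $a_{2,2}=b_1c_2$, whence $f_1=hg_2$ and $f_2=-hg_1$ with $h=t_1^{c_1}-t_2^{c_2}$ and $g_i$ the geometric-series factors, and $\mathcal{L}=\langle(c_1,-c_2)\rangle$ by B\'ezout; the dichotomy is then settled by a case split on $\mathbf{b}$: if some $b_i=1$ then $I=(h)=I(\mathcal{L})$ and the explicit PCB matrix is exhibited, while if $b_1,b_2>1$ then $b_1\nmid b_2$ and $b_2\nmid b_1$, so $f_1\nmid f_2$ and $f_2\nmid f_1$ (citing \cite[Lemma~8.2]{opHN}), hence $f_1,f_2$ is a minimal homogeneous generating set, $h\in I(\mathcal{L})\setminus I$, and ``not PCB'' and ``not lattice'' are obtained from \cite[Remark~2.3]{opPCB} and Proposition~\ref{structureI(L)}(b). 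You instead identify $\mathcal{L}$ through the primitive direction of the rank-one column space and a gcd, observe that a generator prime to $t_1t_2$ makes the saturation trivial, and prove the four properties (PCB ideal, principal, equal to $I(\mathcal{L})$, lattice ideal) pairwise equivalent --- the key steps being ``principal $\Rightarrow$ saturated $\Rightarrow$ equal to $I(\mathcal{L})$'' via Lemma~\ref{sep1-12}, ``lattice $\Rightarrow$ equal to $I(\mathcal{L})$'' via Theorem~\ref{jun12-02}, and ``PCB $\Rightarrow$ principal'' by inspecting a $2\times 2$ PCB matrix --- so the dichotomy follows with no case analysis and no appeal to the external divisibility facts or to Proposition~\ref{structureI(L)}. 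What the paper's route buys is the extra information of exactly when each alternative occurs (some $b_i=1$ versus $b_1,b_2>1$), which the lemma does not actually claim and which your optional $m_1\mid m_2$ criterion, with its degree argument replacing the divisibility citation, recovers in equivalent form; what your route buys is a shorter, more self-contained proof of precisely what is asserted.
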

\begin{proof}
Set ${\bf b}=(b_1,b_2)\in \mathbb{N}^{2}_{+}$, $\gcd(\mathbf{b})=1$, 
with $L{\bf
  b}^{\top}=0$. Then $a_{1,1}b_1=a_{1,2}b_2$ and
$a_{2,1}b_1=a_{2,2}b_2$. Since $\gcd(\mathbf{b})=1$, this forces
$a_{1,1}=b_2c_1$, $a_{1,2}=b_1c_1$, for some $c_1\in \mathbb{N}_{+}$, and
$a_{2,1}=b_2c_2$, $a_{2,2}=b_1c_2$, for some
$c_2\in\mathbb{N}_{+}$. Therefore,
\begin{eqnarray*}
L=\left(
\begin{array}{rr}a_{1,1}&-a_{1,2}\\-a_{2,1}&a_{2,2}\end{array}\right)=
\left(
\begin{array}{rr}b_2c_{1}&-b_1c_{1}\\-b_2c_{2}&b_1c_{2}\end{array}\right).
\end{eqnarray*}
Set $h:=t_1^{c_1}-t_2^{c_2}$ and $g_i:=t_1^{(b_i-1)c_1}+\cdots +
t_1^{(b_i-j)c_1}t_{2}^{jc_2}+\cdots +t_2^{(b_i-1)c_2}$, for $i=1,2$
(if $b_i=1$, we understand that $g_i=1$). Then
$f_1=t_1^{b_2c_1}-t_2^{b_2c_2}=hg_2$ and
$f_2=t_2^{b_1c_2}-t_1^{b_1c_1}=-hg_1$. Hence $I=I(L)=(hg_2,hg_1)$ and
$I\subset(h)$. Since $\gcd(\mathbf{b})=1$, there exist $v_1,v_2\in\mathbb{Z}$
such that $1=v_1b_1+v_2b_2$. Hence
\begin{eqnarray*}
v_2(a_{1,1},-a_{2,1})-v_1(-a_{1,2},a_{2,2})=
v_2(b_2c_1,-b_2c_2)-v_1(-b_1c_1,b_1c_2)=(c_1,-c_2),
\end{eqnarray*}
and $\mathcal{L}=\langle (a_{1,1},-a_{2,1}),(-a_{1,2},a_{2,2})\rangle =\langle
(c_1,-c_2)\rangle$. Therefore $I=I(L)\subset 
I(\mathcal{L})=(t_1^{c_1}-t_2^{c_2})=(h)$.

If $b_1=1$ or $b_2=1$, then $g_1=1$ or $g_2=1$ and
$I=(hg_1,hg_2)=(h)=I(\mathcal{L})$. Moreover $I$ is the PCB ideal associated
to the PCB matrix with columns $(c_1,-c_2)^{\top}$ and
$(-c_1,c_2)^{\top}$.

Suppose that $b_1,b_2>1$, with $\gcd(\mathbf{b})=1$. Then $b_1\nmid b_2$
and $b_2\nmid b_1$. It follows that $f_2\nmid f_1$ and $f_1\nmid f_2$
(see, e.g., \cite[Lemma~8.2]{opHN}). Since $I=(f_1,f_2)$ is
homogeneous, $f_1,f_2$ is a minimal homogeneous system of generators of
$I$. Hence $I$ is not principal and $h\in I(\mathcal{L})\setminus I$. In
particular, $I$ is not a PCB ideal (by \cite[Remark~2.3]{opPCB}) and
is not a lattice ideal (see Proposition~\ref{structureI(L)}(b)).
\end{proof}

We finish the section with an example. 

\begin{example}\label{twoforthepriceofone}
Let $L$ be the following PCB matrix and let $I=I(L)$ and
$I^{\top}=I(L^{\top})$ 
be the matrix ideals of $L$
and $L^{\top},$ respectively. 
\begin{eqnarray*}
L=\left(\begin{array}{rrrr}
4& -2& -1& -1\\ 
-1& 4& -2& -1\\
-1& -1& 3& -1\\
-1&-1&-1&3
\end{array}\right).
\end{eqnarray*}
Observe that 
$L\mathbf{1}^\top=0$ and $L^{\top}\mathbf{b}^\top=0$, with 
$\mathbf{b}=(20,24,31,25)$ and $\mathbf{1}=(1,1,1,1)$. 
Let $\bl$ and $\bl^{\top}$ be the lattices spanned by
the columns of $L$ and $L^{\top}$, respectively. Here
$\Delta_3(L)=1$. By Corollary~\ref{degreeI(L)},
$\deg(S/I)=31$ and $\deg(S/I^{\top})=1$. Moreover 
$I=\mathfrak{p}_{\mathbf{b}}\cap \fq$ and
$I^{\top}=\mathfrak{p}_{\mathbf{1}}\cap
\fq^{\prime}$ are minimal primary decompositions of $I$ and
$I^{\top}$, respectively, where $I(\bl)=\mathfrak{p}_{\mathbf{b}}$ is
the Herzog 
ideal associated to $\mathbf{b}$, $I(\bl^{\top})=\mathfrak{p}_{\mathbf{1}}$ is
the Herzog ideal associated to $\mathbf{1}$, and $\fq$ and
$\fq^{\prime}$ are $\mathfrak{m}$-primary ideals which can be calculated
explicitly (see Theorem~\ref{embeddedcomp}).
\end{example}

\section{Laplacian matrices and ideals}\label{laplacian-section}

In this section we show how our results can be applied to an
interesting family of binomial ideals arising from Laplacian matrices. 

Connecting combinatorial properties of
graphs to linear-algebraic properties of Laplacian matrices 
has attracted a great deal of attention 
\cite{Biggs,godsil,laplacian-survey}. We are interested in 
relating the combinatorics of the graph with the algebraic
invariants and properties of the binomial ideals associated to
Laplacian matrices.   

Let $S=K[t_1,\ldots,t_s]$ be a polynomial ring over a field $K$ and
let $G =(V,E,w)$ be a weighted connected simple graph, where
$V=\{t_1,\ldots,t_s\}$ is the set of vertices, $E$ is the
set of edges and 
$w$ is a weight function that associates a weight $w_e$ with every
edge $e$ in the graph. Edges of $G$ are unordered pairs $\{t_i,t_j\}$
with $i\neq j$. To define the Laplacian matrix, recall that
the {\it adjacency matrix\/} $A(G)$ of this graph is given by 
$$
A(G)_{i,j}:=\left\{
\begin{array}{ll}
w_e&\ \mbox{ if } e=\{t_i,t_j\}\in E,\\
0&\ \mbox{ otherwise}.
\end{array}
\right.
$$ Now, the {\it Laplacian\/} $L(G)$ of the graph $G$ is defined as
$L(G) := D(G)-A(G)$, where $D(G)$ is a diagonal matrix with entry
$D(G)_{i,i}$ equal to the weighted degree $\sum_{e\in E(t_i)}w_e$ of
the vertex $t_i$. Here, we denoted by $E(t_i)$ the set of edges
adjacent (incident) to $t_i$. One can check that the entries of the
Laplacian are given by
$$
L(G)_{i,j}:=\left\{
\begin{array}{ll}
\sum_{e\in E(t_i)} w_e&\mbox{ if } i=j,\\
-w_e&\mbox{ if }i\neq j\mbox{ and }e=\{t_i,t_j\}\in E,\\ 
0&\ \mbox{otherwise}.
\end{array}
\right.
$$
Notice that $L(G)$ is symmetric and $\mathbf{1}L(G)=0$. 
The Laplacian matrix is a prime example of a CB 
matrix. The
Laplacian matrices of complete graphs are PCB matrices; this type of
matrix occurs in \cite{riemann-roch}.  The binomial ideal $I\subset
S$ defined by the columns of $L(G)$ is called the {\it Laplacian
ideal} of $G$. If $I\subset S$ is the Laplacian ideal of $G$, the
lattice ideal  
$I(\mathcal{L})=(I\colon(t_1\cdots t_s)^\infty)$ is called the 
{\it toppling\/} ideal of the graph \cite{riemann-roch,perkinson}. If
$G$ is 
connected, the toppling
ideal is a lattice ideal of dimension $1$. 

The torsion subgroup of the factor group $\mathbb{Z}^s/{\rm
  Im}(L(G))$, denoted by $K(G)$, is called the {\it critical group\/}
or the {\it sandpile group} of $G$ (see
\cite{alfaro-valencia,lorenzini} for additional information). Notice
that $K(G)$ is the torsion subgroup of $\mathbb{Z}^s/\mathcal{L}$. The
structure, as a finite abelian group, of $K(G)$ is only known for a
few families of graphs (see \cite{alfaro-valencia} and the references
there). If $G$ is regarded as a multigraph (where each edge $e$ occurs
$w_e$ times), then by the Kirchhoff's matrix tree theorem, the order
of $K(G)$ is the number of spanning trees of $G$ and this number is
equal to the $(i,j)$-entry of the adjoint matrix of $L(G)$ for any
$(i,j)$ (see \cite[Theorem~6.3, p.~39]{Biggs} and
\cite[Theorem~1.1]{laplacian-survey}).

Next we give an application of our earlier results to this setting.

\begin{proposition}\label{laplacian-appl} Let $G=(V,E,w)$ be a
connected weighted simple graph with 
vertices $t_1,\ldots,t_s$ and let $I\subset S$ be its Laplacian
ideal. Then the following hold.
\begin{itemize}
\item[(a)] $V(I,t_i)=\{0\}$ for all $i$.
\item[(b)] $\deg(S/I)=\deg(S/I(\mathcal{L}))=|K(G)|$.  
\item[(c)] ${\rm Hull}(I)=I(\mathcal{L})$.
\item[(d)] If $|E(t_i)|\geq 3$ for all $i$, then $I$ is not a lattice ideal.
\item[(e)] If $G=\mathcal{K}_s$ is a complete graph, then
$\deg(S/I)=s^{s-2}$.   
\item[(f)] If $G$ is a tree, then $\deg(S/I)=\deg(S/I(\mathcal{L}))=1$.
\end{itemize}
\end{proposition}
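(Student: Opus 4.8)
The plan is to prove (a) directly and then obtain (b)--(f) by feeding $I$ into the machinery of Section~\ref{primary-dec-1} together with the combinatorial meaning of $K(G)$. Write $L=L(G)$; it is a CB matrix with $\mathbf{1}L=0$, so by Remark~\ref{gradings} the matrix ideal $I=I(L)=(f_1,\ldots,f_s)$ is graded with respect to the weight vector $\mathbf{1}=(1,\ldots,1)$ (and $\gcd(\mathbf{1})=1$), where $f_j=t_j^{d_j}-\prod_{k}t_k^{w_{jk}}$, the product being over the neighbours $t_k$ of $t_j$, $w_{jk}$ the weight of the edge $\{t_j,t_k\}$, and $d_j=\sum_{e\in E(t_j)}w_e>0$; in particular ${\rm supp}(f_j)=\{t_j\}\cup\{t_k:\{t_j,t_k\}\in E\}$ has exactly $1+|E(t_j)|$ elements. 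For (a) I would prove the stronger statement ${\rm rad}(I,t_i)=\mathfrak{m}$: in $S/(I,t_i)$ one has $\bar t_i=0$, so for each neighbour $t_j$ of $t_i$ the relation $f_j\equiv 0$ forces $\bar t_j^{\,d_j}=0$, since the monomial $\prod_k t_k^{w_{jk}}$ is divisible by $t_i$ (with exponent $w_{ij}\geq 1$); hence $\bar t_j$ is nilpotent. As $G$ is connected, walking along a path from $t_i$ to an arbitrary vertex $t_\ell$ and using at each step that a monomial divisible by a nilpotent element is nilpotent, every $\bar t_\ell$ is nilpotent, so $\mathfrak{m}={\rm rad}(I,t_i)$ and in particular $V(I,t_i)=\{0\}$.

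Granting (a), $I$ is a graded matrix ideal with $V(I,t_i)=\{0\}$ for all $i$, so Proposition~\ref{structureI(L)} applies: its part~$(b)$ gives $\hull(I)=I(\mathcal{L})$, which is our (c), and it also gives $\rank(L)=s-1$, so $\dim S/I(\mathcal{L})=1$. For (b) I would invoke Corollary~\ref{degreeI(L)} with the weight vector $\mathbf{1}$, obtaining $\deg(S/I)=\max\{1,\ldots,1\}\cdot|T(\bz^s/\mathcal{L})|=|T(\bz^s/\mathcal{L})|$; since $\mathcal{L}$ is the lattice spanned by the columns of $L(G)$, i.e.\ $\mathcal{L}={\rm Im}(L(G))$, one has $T(\bz^s/\mathcal{L})=K(G)$ by definition, whence $\deg(S/I)=|K(G)|$. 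The remaining equality $\deg(S/I)=\deg(S/I(\mathcal{L}))$ follows from $\hull(I)=I(\mathcal{L})$ together with the additivity of the degree (Proposition~\ref{additivity-of-the-degree}), exactly as in the proof of Corollary~\ref{degreeI(L)}.

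The remaining items are quick. For (d): if $|E(t_i)|\geq 3$ for all $i$, then $|{\rm supp}(f_j)|=1+|E(t_j)|\geq 4$ for all $j$, and $L(G)$ is a PB matrix (its diagonal entries $d_j$ are positive and, $G$ being connected, every column has a non-zero off-diagonal entry), so Proposition~\ref{oct6-12}(a) shows that $I$ is not a lattice ideal. For (e) and (f) I would use Kirchhoff's matrix-tree theorem, recalled just before the proposition, by which $|K(G)|$ equals the number of spanning trees of $G$: for $G=\mathcal{K}_s$ this number is $s^{s-2}$ (Cayley's formula; equivalently, the relevant cofactor of $L(\mathcal{K}_s)=sI_s-J_s$ equals $\det(sI_{s-1}-J_{s-1})=s^{s-2}$), so by (b) $\deg(S/I)=s^{s-2}$; and a tree has exactly one spanning tree, so $|K(G)|=1$ and (b) gives $\deg(S/I)=\deg(S/I(\mathcal{L}))=1$.

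I anticipate no serious obstacle: apart from (a), every step is a bookkeeping application of results already established, and (a) itself is elementary---it is, however, the one place where connectivity of $G$ is genuinely used and is exactly what makes Proposition~\ref{structureI(L)} and Corollary~\ref{degreeI(L)} applicable. The points worth double-checking are the conventions: that for a simple graph $|E(t_i)|$ counts the neighbours of $t_i$ (so that $|{\rm supp}(f_i)|=1+|E(t_i)|$), and, in (f), that a tree is taken with unit weights (the underlying multigraph of a weighted tree has $\prod_{e\in E}w_e$ spanning trees).
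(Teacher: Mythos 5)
Your proposal is correct and follows essentially the same route as the paper: part (a) by propagating vanishing along paths of the connected graph, parts (b)--(c) via Proposition~\ref{structureI(L)} together with Corollary~\ref{nov28-12} (packaged for you in Corollary~\ref{degreeI(L)}) and additivity of the degree, part (d) via Proposition~\ref{oct6-12}(a), and parts (e)--(f) via the matrix-tree theorem. The only cosmetic differences are that for (a) you prove the slightly stronger statement ${\rm rad}(I,t_i)=\mathfrak{m}$ instead of arguing directly with points $\alpha\in V(I,t_i)$ as the paper does, and your caveat about unit weights in (f) is a fair one, since the paper's own proof of (f) silently uses that a weighted tree with nontrivial weights would have $|K(G)|=\prod_e w_e$ rather than $1$.
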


\begin{proof} (a): For $1\leq k\leq s$, let $f_k$ be the binomial
defined by the $k$-th column of $L(G)$. Fix $i$ such that $1\leq i\leq s$.
Clearly $\{0\}$ is contained in $V(I,t_i)$ because $I$ is graded. To
show the 
reverse containment, let $\alpha=(\alpha_1,\ldots,\alpha_s)$ 
be a point in $V(I,t_i)$.
If $\{t_i,t_k\}\in
E(t_i)$, we claim that $\alpha_k=0$. We
can write 
$$ 
f_k=t_k^{\sum_{e\in E(t_k)}w_e}-\prod_{e\in E(t_k),\, t_j\in
e}t_j^{w_e}.
$$
Using that $f_k(\alpha)=0$, we get 
$\alpha_k^{\sum_{e\in E(t_k)}w_e}=\prod_{\{e\in E(t_k),\, t_j\in
e\}}\alpha_j^{w_e}$. Since $\{t_i,t_k\}$ is in $E(t_k)$ 
and using that $\alpha_i=0$, we obtain that $\alpha_k=0$, as claimed.
Let $\ell$ be an integer in $\{1,\ldots,s\}$. Since the graph $G$ is
connected, there is a path $\{v_1,\ldots,v_r\}$ joining $t_i$ and
$t_\ell$, i.e., $v_1=t_i$, $v_r=t_\ell$ and 
$\{v_j,v_{j+1}\}\in E(G)$ for all $j$. There is a permutation $\pi$
of $V$ such that $\pi(1)=i$, $\pi(r)=\ell$ and $v_j=t_{\pi(j)}$ for
$j=1,\ldots,r$. Then $t_{\pi(j)}\in 
E(t_{\pi(j-1)})$ for $j=2,\ldots,r$. Applying the claim successively 
for $j=2,\ldots,r$, we obtain
$\alpha_{\pi(2)}=0,\alpha_{\pi(3)}=0,\ldots,\alpha_{\pi(r)}=0$. 
Thus,  $\alpha_\ell=0$. This proves that $\alpha=0$.  

(b) and (c): These two parts follow from
Proposition~\ref{additivity-of-the-degree}, 
Corollary \ref{nov28-12} and Proposition~\ref{structureI(L)}(a), 
since $L(G)$ 
is homogeneous with respect to the weight vector $\mathbf{1}$.

(d): This part follows from Proposition~\ref{oct6-12}(a).

(e): By part (b) one has $\deg(S/I)=|K(G)|$, and by
\cite[p.~39]{Biggs} one has $|K(G)|=s^{s-2}$. 

(f): Since $|K(G)|$ is the
number of spanning trees of the graph $G$, 
this number is equal to $1$. Thus,
$K(G)=\{0\}$, i.e., $K(G)$ is torsion free. 
By (b), we get that $\deg(S/I)=1$. 
\end{proof}

\begin{example}\label{weighted-graph-example} 
Let $G$ be the weighted graph of Figure~\ref{weighted-graph} and let $I$ be
its Laplacian ideal. 
Then $I=(t_1^3-t_2t_3^2,\, t_2^8-t_1t_3^3t_4^4,\,
t_3^6-t_1^2t_2^3t_4,\, t_4^5-t_2^4t_3)$, $\deg(S/I)=67$ and 
$$
{\rm Hull}(I)=I(\mathcal{L})=(t_1^3-t_2t_3^2,\,t_1t_3^4-t_2^4t_4,\,
t_2^4t_3-t_4^5,\, t_3^6-t_1^2t_2^3t_4,\, t_2^8-t_1t_3^3t_4^4,\,
t_1^2t_2^7-t_3^5t_4^4).
$$
If $K=\mathbb{Q}$, the toppling ideal $I(\mathcal{L})$ has two
primary components of degrees $66$ and $1$.

\begin{figure}[h]
\setlength{\unitlength}{.025cm}
\thicklines
\begin{picture}(300,75)
\put(0,0){\circle*{5.5}}
\put(10,-5){$t_4$}
\put(-35,35){\circle*{5.5}}
\put(-35,35){\line(1,0){70}}
\put(-55,35){$t_2$}
\put(0,70){\circle*{5.5}}
\put(6,70){$t_1$}
\put(-25,55){\tiny $1$}
\put(23,55){\tiny $2$}
\put(0,40){\tiny $3$}
\put(-25,10){\tiny $4$}
\put(23,10){\tiny $1$}
\put(35,35){\circle*{5.5}}
\put(41,35){$t_3$}
\put(0,0){\line(1,1){35}}
\put(0,0){\line(-1,1){35}}
\put(-35,35){\line(1,1){35}}
\put(0,70){\line(1,-1){35}}
\put(100,30){$L(G)=\left(\begin{array}{rrrr}3&-1&-2&0\\
-1&8&-3&-4\\
-2&-3&6&-1\\
0&-4&-1&5
\end{array}\right)$}
\end{picture}
\caption{Weighted graph $G$.}
\label{weighted-graph}
\end{figure}
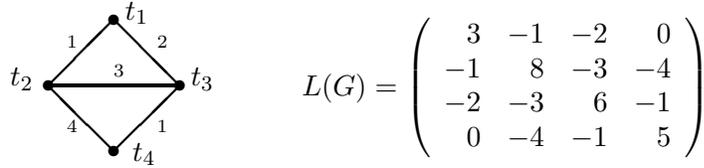

\end{example}

As another application, by Proposition~\ref{feb9-13}, the Laplacian
ideal is an almost complete intersection for any connected simple
graph without vertices of degree $1$.

\begin{proposition}\label{aci-laplacian} Let $G=(V,E,w)$ be a 
connected weighted simple graph with vertices $t_1,\ldots,t_s$ and let
$I\subset S$ be its Laplacian ideal. If $|E(t_i)|\geq 2$ for all $i$,
then $I$ is an almost complete intersection.
\end{proposition}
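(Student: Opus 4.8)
The plan is to check directly the two conditions behind the phrase ``almost complete intersection'': that ${\rm ht}(I)=s-1$ and that $I$ cannot be generated by $s-1$ elements, bearing in mind that the $s$ columns of the Laplacian matrix $L(G)$ already exhibit $I$ as generated by $s$ binomials $f_1,\ldots,f_s$.

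First I would invoke Proposition~\ref{laplacian-appl}(a), which gives $V(I,t_i)=\{0\}$ for all $i$. Since $I$ is homogeneous (with respect to the weight vector $\mathbf{1}$, because $\mathbf{1}L(G)=0$), Lemma~\ref{sep16-12} then yields ${\rm ht}(I)=s-1$. As $I$ is generated by the $s$ binomials coming from the columns of $L(G)$, this already gives $\mu(I)\le s={\rm ht}(I)+1$, where $\mu(I)$ denotes the minimal number of generators of $I$; moreover the generalized principal ideal theorem gives $\mu(I)\ge{\rm ht}(I)=s-1$. It therefore suffices to rule out the value $\mu(I)=s-1$, i.e.\ to show that $I$ is not a complete intersection.

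For this I would apply Proposition~\ref{feb9-13}. The Laplacian matrix $L(G)$ is a CB matrix, hence a GCB matrix, and it is symmetric, so the binomials defined by its rows are exactly the generators $f_1,\ldots,f_s$ of $I$, where
$$
f_i = t_i^{\sum_{e\in E(t_i)}w_e}-\prod_{e\in E(t_i),\, t_j\in e}t_j^{w_e}.
$$
Because $G$ is a \emph{simple} graph, the edges incident to $t_i$ join $t_i$ to $|E(t_i)|$ distinct vertices, so the monomial $\prod_{e\in E(t_i),\, t_j\in e}t_j^{w_e}$ involves exactly those $|E(t_i)|$ distinct variables, whence $|{\rm supp}(f_i)|=1+|E(t_i)|$. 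The hypothesis $|E(t_i)|\ge 2$ thus gives $|{\rm supp}(f_i)|\ge 3$ for all $i$, and $V(I,t_i)=\{0\}$ was established above, so Proposition~\ref{feb9-13} applies and $I$ is not a complete intersection.

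Putting the pieces together, $\mu(I)\le s$, $\mu(I)\ge s-1$ and $\mu(I)\ne s-1$ force $\mu(I)=s={\rm ht}(I)+1$, that is, $I$ is an almost complete intersection. I do not anticipate a genuine obstacle: the whole argument amounts to the dictionary translation ``$|E(t_i)|\ge 2$ if and only if $|{\rm supp}(f_i)|\ge 3$'' feeding into the already proved Proposition~\ref{feb9-13}, combined with the height computation of Lemma~\ref{sep16-12}; the only point needing a line of care is that the simplicity of $G$ guarantees distinct neighbours of $t_i$ contribute distinct variables to ${\rm supp}(f_i)$ (which in particular forces $s\ge 3$).
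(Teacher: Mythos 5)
Your proof is correct and follows essentially the same route as the paper, which deduces the result directly from Proposition~\ref{feb9-13} together with Proposition~\ref{laplacian-appl}(a); you simply make explicit the details the paper leaves implicit (the symmetry of $L(G)$ identifying row and column binomials, the count $|{\rm supp}(f_i)|=1+|E(t_i)|\geq 3$ via simplicity of $G$, and the height/$\mu(I)$ bookkeeping from Lemma~\ref{sep16-12}).
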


The notion of a Laplacian matrix can be extended to weighted 
digraphs, see \cite{corrales-valencia} and the references there. 
Let $G=(V,E,w)$ be a weighted 
digraph without loops and with vertices
$t_1,\ldots,t_s$, let $w(t_i,t_j)$ be the weight of the directed arc
from $t_i$ to $t_j$ and let $A(G)$ be the adjacency matrix of $G$
given by $A(G)_{i,j}=w(t_i,t_j)$. The Laplacian matrix
of $G$ is given by $L(G)=D^+(G)-A(G)$, where $D^+(G)$ is the diagonal
matrix with the out-degrees of the vertices of $G$ in the diagonal
entries. Note that $L(G)\mathbf{1}^\top=0$ and that the Laplacian matrix 
of a digraph may not be symmetric (see Example~\ref{march5-13}). If
$t_i$ is a sink, i.e., there is no arc of the form 
$(t_i,t_j)$, then the $i$-th row of $L(G)$ is zero. Thus, the rank of
$L(G)$ may be much less than $s-1$. When $G$ is a strongly connected
digraph, it is well-known that the rank of $L(G)$ is $s-1$. This
follows from the Perron-Frobenius theorem; see the proof of
Theorem~\ref{GCB-new-perron-Frobenius} .  

Let $G$ be a weighted digraph without sources or sinks, i.e., for each
vertex $t_i$ there is at least one arc of the form $(t_j,t_i)$ and
one arc of 
the form $(t_i,t_k)$. Then $L(G)$ is a CB matrix. Conversely if $L$ is
a CB matrix as in Definition~\ref{square-matrix-new}, then $L$ is the
Laplacian matrix of the weighted digraph $G$ defined as follows. A
pair $(t_i,t_j)$ is an arc of $G$ if and only if $i\neq j$ and
$a_{i,j}\neq 0$. The weight of the arc $(t_i,t_j)$ is $a_{i,j}$. 

\begin{definition}{\cite[p.~175,
p.~29]{godsil}}\label{underlying-digraph} Let $A=(a_{i,j})$ be an
$s\times s$ real matrix. The  
{\it underlying digraph\/} of $A$, denoted by $G_A$, has vertex set
$\{t_1,\ldots,t_s\}$, with an arc from vertex $t_i$ to vertex $t_j$ if
and only if $a_{i,j}\neq 0$. Note that this digraph may have loops. 
A digraph is called {\it strongly connected\/} if any two
vertices can be joined by a directed path. 
\end{definition} 

The underlying digraph of
the matrix of Example~\ref{weighted-graph-example} is strongly connected. 
If $G$ is a weighted
digraph and $L$ is its Laplacian matrix, then $G$ is obtained from
the underlying digraph $G_L$ of $L$ by removing all loops of $G_L$.

\begin{theorem}\label{GCB-new-perron-Frobenius} 
Let $L$ be a GCB matrix and let $G$ be its underlying graph. If $G$ is
strongly connected, then ${\rm rank}(L)=s-1$ and $L^\top$ is a GCB
matrix.
\end{theorem}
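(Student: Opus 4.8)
The plan is to use the Perron--Frobenius theorem, as the name of the theorem suggests. The first step is to reduce from $L$ to its associated CB matrix. Since $L$ is a GCB matrix there is $\mathbf{b}=(b_1,\ldots,b_s)\in\mathbb{N}_+^s$ with $L\mathbf{b}^\top=0$; put $B=\diag(b_1,\ldots,b_s)$ and $\widetilde{L}=LB$. Then $\widetilde{L}\mathbf{1}^\top=L\mathbf{b}^\top=0$, and multiplying the columns of $L$ by the positive scalars $b_j$ keeps positive diagonal entries and does not change the off-diagonal zero-pattern, so $\widetilde{L}$ is again a PB matrix of the shape (\ref{gcb-matrix}), i.e. a CB matrix, with the same underlying digraph $G$, which is strongly connected. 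Since $B$ is invertible, ${\rm rank}(L)={\rm rank}(\widetilde{L})$ and $\mathbf{c}L=0\iff\mathbf{c}\widetilde{L}=0$; hence it is enough to prove the theorem for $\widetilde{L}$, so we may assume $L$ is itself a CB matrix.

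Next I would set up the relevant stochastic matrix. Write $L=D-A$, where $D$ is the diagonal matrix with entries $a_{i,i}>0$ and $A=D-L$ is the nonnegative matrix with zero diagonal whose off-diagonal entries are the $a_{i,j}\ge 0$. The relation $L\mathbf{1}^\top=0$ says $\sum_{j}A_{i,j}=a_{i,i}$ for every $i$, so $P:=D^{-1}A$ is a row-stochastic nonnegative matrix; its zero-pattern is that of $A$, hence of the adjacency matrix of $G$, so $P$ is irreducible precisely because $G$ is strongly connected. By the Perron--Frobenius theorem, $\rho(P)=1$ is a simple eigenvalue of $P$ and there is a strictly positive left eigenvector $\mathbf{v}\in\mathbb{R}_{>0}^s$ with $\mathbf{v}P=\mathbf{v}$. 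Simplicity gives ${\rm rank}(P-I)=s-1$; since $P-I=-D^{-1}L$ and $D$ is invertible, ${\rm rank}(L)=s-1$, which is the first assertion.

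For the second assertion, $\mathbf{v}(P-I)=0$ gives $\mathbf{w}L=0$ with $\mathbf{w}:=\mathbf{v}D^{-1}\in\mathbb{R}_{>0}^s$. Because ${\rm rank}(L)=s-1$ and $L$ has integer entries, the left kernel of $L$ is a one-dimensional rational subspace; let $\mathbf{c}\in\mathbb{Z}^s$ be a primitive generator. Then $\mathbf{c}$ is a nonzero scalar multiple of the positive vector $\mathbf{w}$, so after possibly replacing $\mathbf{c}$ by $-\mathbf{c}$ we get $\mathbf{c}\in\mathbb{N}_+^s$ with $\mathbf{c}L=0$, i.e. $L^\top\mathbf{c}^\top=0$. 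Finally, $L^\top$ has the shape (\ref{gcb-matrix}): its diagonal entries are the $a_{i,i}>0$, and each of its columns, being a row of $L$, has a nonzero off-diagonal entry, since $L\mathbf{b}^\top=0$ forces $a_{i,i}b_i=\sum_{j\ne i}a_{i,j}b_j>0$ for every $i$. Thus $L^\top$ is a PB matrix with a positive integer vector in its kernel, i.e. a GCB matrix, as required.

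The only delicate points are the correct Perron--Frobenius set-up, namely recognizing $D^{-1}A$ as an irreducible stochastic matrix (this is exactly where strong connectivity is used) and the passage from the real positive null vector $\mathbf{w}$ to an integer positive one, which is immediate once ${\rm rank}(L)=s-1$ is known. An essentially equivalent alternative, avoiding explicit stochastic matrices, would be to show via Tutte's directed matrix--tree theorem applied to $\widetilde{L}$ that each principal cofactor $L_{i,i}$ is a positive integer (it counts weighted spanning arborescences rooted at $t_i$, of which strong connectivity guarantees at least one); this forces every $(s-1)$-subset of rows of $L$ to be linearly independent, so that ${\rm rank}(L)=s-1$ and Theorem~\ref{GPCB-new}(c) applies directly. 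I would present the Perron--Frobenius argument as the primary one, since it delivers both conclusions at once and matches the intended proof.
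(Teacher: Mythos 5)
Your proposal is correct and takes essentially the same route as the paper: reduce to the associated CB matrix $\widetilde{L}=LB$, form the row-stochastic matrix $D^{-1}A$ (irreducible because $G$ is strongly connected), and apply the Perron--Frobenius theorem to get simplicity of the eigenvalue $1$ and a strictly positive left null vector, which yields ${\rm rank}(L)=s-1$ and a vector $\mathbf{c}\in\mathbb{N}_+^s$ with $\mathbf{c}L=0$. Your explicit check that $L^\top$ satisfies the PB condition is a small detail the paper leaves implicit, but the argument is the intended one.
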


\begin{proof} By passing to the associated CB matrix $\widetilde{L}$, we
may assume that $L$ is a CB matrix (see the proof of
Theorem~\ref{GPCB-new}(c)). Let $L$ be a CB matrix as in
Definition~\ref{square-matrix-new}. We can write $L=D-A$, where
$D=\diag(a_{1,1},\ldots,a_{s,s})$ and $A$ is the matrix whose $i,j$
entry is $a_{i,j}$ if $i\neq j$ and whose diagonal entries are equal
to zero. We set $\delta_i=a_{i,i}$ for $i=1,\ldots,s$. By 
hypothesis $L\mathbf{1}^\top=0$, hence ${\rm rank}(L)\leq s-1$. There
exists a nonzero vector $\mathbf{b}\in\mathbb{Z}^s$ such that
$\mathbf{b}L=0$. Therefore 
$$ \mathbf{b}D=\mathbf{b}A=\mathbf{b}D(D^{-1}A).$$ Since
$L\mathbf{1}^\top=0$, we get that $D\mathbf{1}^\top=A\mathbf{1}^\top$
or equivalently $(D^{-1}A)\mathbf{1}^\top=\mathbf{1}^\top$. Thus, as
the entries of $D^{-1}A$ are nonnegative, the matrix $B:=D^{-1}A$ is
stochastic. It is well-known that the spectral radius $\rho(B)$ of a
stochastic matrix $B$ is equal to $1$
\cite[Theorem~5.3]{nonnegative-matrices}, where $\rho(B)$ is the
maximum of the moduli of the eigenvalues of $B$. As the diagonal
entries of $B$ are zero and $\delta_i>0$ for all $i$, the underlying
digraph $G_B$ of $B$ is equal to the digraph obtained from $G$ by
removing all loops of $G$. Since $G$ is strongly connected so is
$G_B$, and by the Perron-Frobenius Theorem for nonnegative matrices
\cite[Theorem~8.8.1, p.~178]{godsil}, $\rho(B)=1$ and $1$ is a simple
eigenvalue of $B$ (i.e., the eigenspace of $B$ relative to $\rho(B)=1$
is $1$-dimensional), and if $z$ is an eigenvector for $\rho(B)=1$,
then no entries of $z$ are zero and all have the same sign. Applying
this to $z=\mathbf{b}D$, we get that $b_i\neq 0$ for all $i$ and all
entries of $\mathbf{b}$ have the same sign. Hence, ${\rm
  ker}(L^{\top})=(\mathbf{b}^\top)$ for any non-zero vector
$\mathbf{b}$ such that $\mathbf{b}L=0$, so $L^\top$ is a GCB matrix of
rank $s-1$.
\end{proof}

The results of the previous sections can also be applied to GCB 
ideals that arise from matrices with strongly connected underlying
digraphs.

\begin{proposition}\label{gcb-with-strongly-connected-digraph} 
Let $L$ be a GCB matrix of size $s\times s$, let $G_L$ be the
underlying digraph of $L$ and let $I=I(L^\top)$ be the
matrix ideal of $L^\top$. The following conditions are equivalent.
\begin{itemize}
\item[\rm(a)] $G_L$ is strongly connected. 
\item[\rm(b)] $V(I,t_i)=\{0\}$ for all $i$.
\item[\rm(c)] $L_{i,j}>0$ for all $i,j$, where ${\rm
adj}(L)=(L_{i,j})$ is the 
adjoint of $L$.
\end{itemize}

\end{proposition}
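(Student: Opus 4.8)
The plan is to prove the three equivalences in a cycle, say $(a)\Rightarrow(c)\Rightarrow(b)\Rightarrow(a)$, exploiting the machinery already set up for GCB matrices and the vanishing condition.

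First I would handle $(a)\Rightarrow(c)$. Since $L$ is a GCB matrix and $G_L$ is strongly connected, Theorem~\ref{GCB-new-perron-Frobenius} gives that ${\rm rank}(L)=s-1$ and $L^\top$ is a GCB matrix. As in the proof of that theorem (passing to the associated CB matrix $\widetilde L=LB$ and using that $L_{i,j}\neq 0 \iff \widetilde L_{i,j}\neq 0$ by multilinearity of the determinant), it suffices to treat the CB case. There one writes $L=D-A$ with $B:=D^{-1}A$ stochastic, and the Perron--Frobenius theorem says the eigenspace of $B$ for $\rho(B)=1$ is $1$-dimensional and spanned by a vector all of whose entries are nonzero and of the same sign. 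The adjoint identity $L\,{\rm adj}(L)=\det(L)I=0$ forces every column of ${\rm adj}(L)$ to lie in $\ker_{\mathbb Q}(L)=\langle \mathbf{1}^\top\rangle$, hence each column is a scalar multiple of $\mathbf 1^\top$; the diagonal minors $L_{i,i}=\det(H_{i,i})$ are $\geq 0$ by the Gershgorin argument from the proof of Theorem~\ref{GPCB-new}(c$_1$), and in fact, because $G_L$ (equivalently $G_B$) is strongly connected, the submatrix $H_{i,i}$ is irreducible enough that $L_{i,i}>0$. Then every column of ${\rm adj}(L)$ is a \emph{positive} multiple of $\mathbf 1^\top$, so $L_{i,j}>0$ for all $i,j$. (Alternatively one can cite the digraph matrix-tree theorem: the $(i,j)$ entry of ${\rm adj}(L)$ counts weighted spanning in-trees rooted at a fixed vertex, which is positive exactly when $G_L$ is strongly connected.)

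Next, $(c)\Rightarrow(b)$. Write $I=I(L^\top)=(g_1,\dots,g_s)$, where $g_k$ is the binomial defined by the $k$-th column of $L^\top$, i.e.\ the $k$-th \emph{row} of $L$: $g_k=t_k^{a_{k,k}}-\prod_{j\neq k}t_j^{a_{k,j}}$. Since $L$ is graded via some $\mathbf d\in\mathbb N^s_+$, so is $I$, hence $\{0\}\subseteq V(I,t_i)$ for each $i$. For the reverse inclusion, fix $i$ and $\alpha=(\alpha_1,\dots,\alpha_s)\in V(I,t_i)$, so $\alpha_i=0$. Condition $(c)$ says all off-diagonal minors of $L$ are nonzero, which by the matrix-tree interpretation is equivalent to $G_L$ being strongly connected; concretely it guarantees that for every ordered pair $(i,k)$ there is a directed path $i=v_0\to v_1\to\cdots\to v_r=k$ in $G_L$, i.e.\ $a_{v_{\ell-1},v_\ell}\neq 0$ for all $\ell$. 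Evaluating $g_{v_\ell}$ at $\alpha$ and using that the arc $(v_{\ell-1},v_\ell)$ contributes the factor $t_{v_\ell}^{a_{v_{\ell-1},v_\ell}}$ to the monomial $y_{v_{\ell-1}}$ appearing in $g_{v_{\ell-1}}$, an inductive "propagation of zeros" argument identical to the proof of Proposition~\ref{laplacian-appl}(a) shows $\alpha_{v_0}=0\Rightarrow\alpha_{v_1}=0\Rightarrow\cdots\Rightarrow\alpha_{v_r}=0$. Hence $\alpha_k=0$ for all $k$, so $\alpha=0$ and $V(I,t_i)=\{0\}$.

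Finally, $(b)\Rightarrow(a)$, which I expect to be the main obstacle. I would argue by contraposition: suppose $G_L$ is not strongly connected. Then its vertex set partitions according to strongly connected components, and there is a nonempty proper subset $W\subsetneq\{t_1,\dots,t_s\}$ that is ``closed under out-arcs'' — i.e.\ a terminal (sink) strong component together with everything below it, so that no arc leaves $W$. Concretely, after reordering variables, $L$ (equivalently $L$ restricted to the rows/columns indexed by the complement) is block-triangular, and for every $k$ with $t_k\notin W$ the monomial $y_k$ in $g_k$ involves only variables outside $W$. Now build a point $\alpha$ by setting $\alpha_j=0$ for $t_j\in W$ and choosing the remaining coordinates to satisfy the equations $g_k=0$ for $t_k\notin W$: since those equations only involve the variables outside $W$ and form (the transpose of) a smaller GCB system with strongly connected underlying digraph on a complete graph of off-diagonal entries — so by the GPCB/syzygy theory one can solve them nontrivially over $\overline K$, or more simply just take all nonzero coordinates equal to a common value $\lambda$ chosen so the homogeneous binomials vanish, available because each such $g_k$ is a \emph{homogeneous} binomial in those variables — one obtains $\alpha\in V(I)$; and since $W\neq\emptyset$ we may pick $t_i\in W$, giving $\alpha\in V(I,t_i)$ with $\alpha\neq 0$. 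This contradicts $(b)$. The delicate point is producing a genuinely nonzero such $\alpha$ and making sure $\alpha$ lies in the variety over $K$ itself (not just $\overline K$) — but since $V(I,t_i)=\{0\}$ is a statement that is insensitive to field extension by Lemma~\ref{sep7-12}(a) applied to the radical, or because one can always arrange the nonzero coordinates to be a root of unity lying in $K$ after a finite extension while keeping the condition $V(I,t_i)=\{0\}$ stable, this is handled. Assembling the three implications completes the proof.
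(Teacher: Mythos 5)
Your cycle $(a)\Rightarrow(c)\Rightarrow(b)\Rightarrow(a)$ is a reasonable plan, and the first two legs are close to what the paper does: for $(a)\Rightarrow(c)$ the paper also gets $L_{i,i}\geq 0$ from the Gershgorin argument and then strict positivity of ${\rm adj}(L)$ via Theorem~\ref{GCB-new-perron-Frobenius}, Lemma~\ref{march19-13} and the proof of Theorem~\ref{GPCB-new}(a); your phrase ``$H_{i,i}$ is irreducible enough'' is not an argument (deleting a vertex can disconnect the digraph), but your fallback via the rank facts or the directed matrix-tree theorem would repair it. Note also that your $(c)\Rightarrow(b)$ is really $(c)\Rightarrow(a)\Rightarrow(b)$, with $(c)\Rightarrow(a)$ outsourced to the matrix-tree theorem, whereas the paper proves $(c)\Rightarrow(a)$ by an elementary block-triangular determinant argument; the propagation-of-zeros part is exactly the paper's adaptation of Proposition~\ref{laplacian-appl}(a).

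The genuine gap is in $(b)\Rightarrow(a)$, and it is the key combinatorial point. You place the zeros of $\alpha$ on a set $W$ that is closed under out-arcs (a sink strong component) and claim that the binomials $g_k$ with $t_k\notin W$ involve only variables outside $W$. But $t_j$ occurs in $y_k$ exactly when $(t_k,t_j)$ is an arc of $G_L$, so that claim needs the opposite closure: no arc may enter $W$ from outside, i.e.\ the complement of the zero set must be out-closed. For your $W$ this fails in general (a sink component is typically entered by arcs from outside), and then the equations $g_k=0$ for $t_k\notin W$ force $\alpha_k=0$ for every vertex that can reach $W$; for instance, for the CB matrix whose digraph is two $2$-cycles $\{t_1,t_2\}$ and $\{t_4,t_5\}$ joined by $t_2\to t_3\to t_4$, zeros on the sink component $\{t_4,t_5\}$ propagate back and your construction produces only $\alpha=0$, hence no contradiction. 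The correct choice is an in-closed zero set each of whose vertices still has an out-neighbour inside it: the paper takes $W=\{t_i\mid\text{there is a directed path from }t_i\text{ to }t_s\}$ (equivalently one may take a source strong component, which has at least two vertices because every column of a PB matrix has a nonzero off-diagonal entry), sets $\alpha=0$ on $W$ and $\alpha=1$ off $W$, and checks \emph{both} families of binomials vanish — you never verify the equations $g_k=0$ for $t_k\in W$, which is exactly where the choice of $W$ matters. With this $0/1$ point the issues you raise about solving binomial equations over $\overline{K}$ and descending to $K$ evaporate (and your proposed fix via Lemma~\ref{sep7-12}(a) would not by itself convert a $\overline{K}$-point into a $K$-point). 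As written, then, $(b)\Rightarrow(a)$ does not go through, and the equivalence is not established until this step is redone along the paper's lines.
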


\begin{proof} By passing to the associated matrix $\widetilde{L}$, we
may assume that $L$ is a CB matrix (see the proof of
Theorem~\ref{GPCB-new}(c)). 

(a) $\Rightarrow$ (b): Since any two
vertices can be joined by a directed path, the 
proof follows adapting the argument given to prove
Proposition~\ref{laplacian-appl}(a). 

(b) $\Rightarrow$ (a): We proceed by contradiction. Assume that $G_L$ is
not strongly connected. Without loss of generality we may assume that
there is no directed path from $t_1$ to $t_s$. Let $W$ be the set of
all vertices $t_i$ such that there is a directed path from $t_i$ to
$t_s$, the vertex $t_s$ being included in $W$. The set $W$ is nonempty
because $G_L$ has no sources or sinks by definition of a CB matrix,
and 
the vertex $t_1$ is not in $W$. Consider the vector $\alpha\in K^s$ 
defined as $\alpha_i=1$ if $t_i\notin W$ and $\alpha_i=0$ if $t_i\in
W$. To derive a contradiction it suffices to show that all binomials
of $I(L^\top)$ vanish at the nonzero vector $\alpha$. Let $L$ be
as in Definition~\ref{square-matrix-new} and let 
$f_k=t_k^{a_{k,k}}-\prod_{j\neq k}t_j^{a_{k,j}}$ be the binomial defined
by the $k$-row of $L$. If $t_k\in W$, there is a directed path
$\mathcal{P}$ from $t_k$ to $t_s$. Then $t_j$ is part of the path
$\mathcal{P}$ for some $j$ such that $a_{k,j}>0$. Thus, 
since $t_j\in W$, $f_k(\alpha)=0$. If $t_k\notin W$, then $t_j$ is 
not in $W$ for any $j$ such that $a_{k,j}>0$, because if $a_{k,j}>0$,
the pair $(t_k,t_j)$ is an arc of $G_L$. Thus, $f_k(\alpha)=0$. 

(a) $\Rightarrow$ (c): By the proof of Theorem~\ref{GPCB-new}(c)),
one has that 
$L_{i,i}\geq 0$ for all $i$. By
Theorem~\ref{GCB-new-perron-Frobenius} and using the proof of 
Theorem~\ref{GPCB-new}(a), we get that 
$L_{i,j}>0$ for all $i,j$. 

(c) $\Rightarrow$ (a):  We proceed by contradiction. Assume that $G_L$ is
not strongly connected. We may assume that there is no directed path
from $t_1$ to $t_s$. Let $W$ be as above and let $W^c=\{t_i\vert\,
t_i\notin W\}$ be its complement. We can write
$W^c=\{t_{\ell_1},\ldots,t_{\ell_r}\}$. Consider the $r\times r$ submatrix
$B$ obtained from $L$ by fixing rows $\ell_1,\ldots,\ell_r$ and columns
$\ell_1,\ldots,\ell_r$. Notice that by the arguments above 
$W^c=\cup_{t_k\in W^c}({\rm supp}(f_k))$. Hence any row of $B$ extends
to a row of $L$ by adding $0$'s only, and consequently the sum of the columns
of $B$ is zero. Hence $\det(B)=0$. By permuting rows and columns, $L$
can be brought to the form
$$
L'=\left(\begin{array}{cc}
C&0 \\ 
C'&C''
\end{array}\right)
$$
where $C$ and $C''$ are square matrices of orders $r$ and $s-r$, 
respectively, and $\det(C)=0$. Hence the adjoint of $L'$ has a zero
entry, and so does the adjoint of $L$, a contradiction. 
\end{proof}

\begin{corollary}\label{gcb-with-strongly-connected-digraph-cor} 
Let $L$ be a GCB matrix of size $s\times s$ and let $(L_{i,j})$ be its
adjoint. If $G_L$ is strongly connected, then
$$
\gcd(\{L_{i,k}\}_{i=1}^s)\deg(S/I(L^\top))=
\max(\{L_{i,k}\}_{i=1}^s)\gcd(\{L_{i,j}\})\ \mbox{ for any }\ k.
$$
\end{corollary}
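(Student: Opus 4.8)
The plan is to apply Corollary~\ref{degreeI(L)} to the matrix ideal $I(L^\top)$ and then rewrite the resulting formula, which involves a primitive kernel vector of $L$, in terms of the entries of the $k$-th column of ${\rm adj}(L)$.

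First I would verify the hypotheses of Corollary~\ref{degreeI(L)} for $I:=I(L^\top)$. Since $L$ is a GCB matrix with $G_L$ strongly connected, Theorem~\ref{GCB-new-perron-Frobenius} gives ${\rm rank}(L)=s-1$ and that $L^\top$ is again a GCB matrix; thus there is $\mathbf{b}\in\mathbb{N}_+^s$ with $L\mathbf{b}^\top=0$, which after dividing by $\gcd(\mathbf{b})$ we may take primitive, so $\gcd(\mathbf{b})=1$. Then $\mathbf{b}L^\top=(L\mathbf{b}^\top)^\top=0$, so by Remark~\ref{gradings} the ideal $I(L^\top)$ is homogeneous with respect to the positive weight vector $\mathbf{b}$, and by the implication $(a)\Rightarrow(b)$ of Proposition~\ref{gcb-with-strongly-connected-digraph}, $V(I(L^\top),t_i)=\{0\}$ for all $i$. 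Writing $\mathcal{L}^\top$ for the lattice spanned by the rows of $L$, Corollary~\ref{degreeI(L)} then gives
$$\deg(S/I(L^\top))=\max\{b_1,\ldots,b_s\}\,|T(\mathbb{Z}^s/\mathcal{L}^\top)|=\max\{b_1,\ldots,b_s\}\,\Delta_{s-1}(L),$$
where I have used $\Delta_{s-1}(L^\top)=\Delta_{s-1}(L)$. Since every $(s-1)\times(s-1)$ minor of $L$ equals, up to sign, one of the cofactor minors $h_{i,j}$ and $L_{i,j}=(-1)^{i+j}h_{j,i}$, and since all $L_{i,j}>0$ by Proposition~\ref{gcb-with-strongly-connected-digraph}$(c)$, we have $\Delta_{s-1}(L)=\gcd(\{L_{i,j}\})$, so $\deg(S/I(L^\top))=\max\{b_1,\ldots,b_s\}\gcd(\{L_{i,j}\})$.

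Next I would express $\max\{b_1,\ldots,b_s\}$ through the $k$-th column of ${\rm adj}(L)$. Because $\det(L)=0$, the identity $L\,{\rm adj}(L)=\det(L)I_s$ reads $L\,{\rm adj}(L)=0$, so the $k$-th column $(L_{1,k},\ldots,L_{s,k})^\top$ of ${\rm adj}(L)$ lies in ${\rm ker}_{\mathbb{Q}}(L)=\langle\mathbf{b}^\top\rangle$ (this is the argument in the proof of Theorem~\ref{GPCB-new}$(a)$); hence $(L_{1,k},\ldots,L_{s,k})=\mu_k\mathbf{b}$ for some $\mu_k\in\mathbb{Q}$, with $\mu_k>0$ since all $L_{i,j}>0$ and $\mathbf{b}\in\mathbb{N}_+^s$. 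Writing $\mu_k=p/q$ in lowest terms, from $\mu_k b_i\in\mathbb{Z}$ for all $i$ we get $q\mid b_i$ for all $i$, hence $q\mid\gcd(\mathbf{b})=1$ and $\mu_k\in\mathbb{N}_+$. Therefore $\gcd(\{L_{i,k}\}_{i=1}^s)=\mu_k$ and $\max(\{L_{i,k}\}_{i=1}^s)=\mu_k\max\{b_1,\ldots,b_s\}$, so $\max\{b_1,\ldots,b_s\}=\max(\{L_{i,k}\}_{i=1}^s)/\gcd(\{L_{i,k}\}_{i=1}^s)$. Substituting into the degree formula and clearing denominators yields, for every $k$,
$$\gcd(\{L_{i,k}\}_{i=1}^s)\deg(S/I(L^\top))=\max(\{L_{i,k}\}_{i=1}^s)\gcd(\{L_{i,j}\}),$$
which is the assertion.

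I expect the only subtle points to be the primitivity choice for $\mathbf{b}$ and the integrality of $\mu_k$ (both short), and keeping track that Proposition~\ref{gcb-with-strongly-connected-digraph} and Corollary~\ref{degreeI(L)} are applied to $I(L^\top)$, not $I(L)$. Everything else is a routine assembly of Theorem~\ref{GCB-new-perron-Frobenius}, Corollary~\ref{degreeI(L)}, and the cofactor identity.
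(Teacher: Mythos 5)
Your proposal is correct and follows essentially the same route as the paper: verify the hypotheses of Corollary~\ref{degreeI(L)} for $I(L^\top)$ via Proposition~\ref{gcb-with-strongly-connected-digraph}, use a grading coming from $\ker(L)$ (the paper takes the $k$-th column of ${\rm adj}(L)$ directly, you take the primitive vector $\mathbf{b}$ and note the column is an integer multiple $\mu_k\mathbf{b}$), and identify $\Delta_{s-1}(L)=\gcd(\{L_{i,j}\})$ from the positivity of ${\rm adj}(L)$. You merely spell out the proportionality and primitivity details that the paper leaves implicit, so no gap.
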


\begin{proof} By
Proposition~\ref{gcb-with-strongly-connected-digraph}, all entries of
${\rm adj}(L)$ are positive and any column of ${\rm adj}(L)$ 
gives a grading for $I(L^\top)$. Hence the formula follows from 
Corollary~\ref{degreeI(L)}.
\end{proof}

\begin{example}\label{march5-13} Let $G$ be the weighted digraph of
Figure~\ref{fig3}  
and let $L$ be its Laplacian matrix. The digraph $G_L$ is not
strongly connected, the CB ideal $I(L^\top)$ is 
graded but $I(L)$ is not.

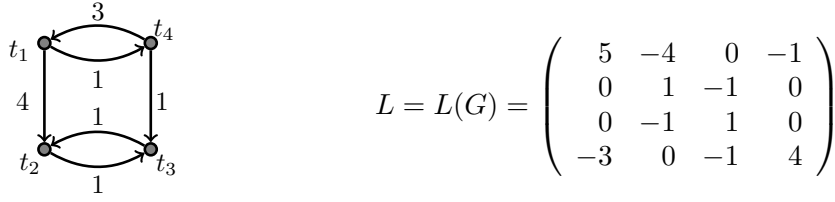
\begin{figure}[h]
\setlength{\unitlength}{.025cm}
\thicklines
\begin{picture}(400,85)(0,-30)
\begin{tabular}{c@{\extracolsep{2cm}}c}
\multirow{5}{3cm}{
	\begin{tikzpicture}[line width=1pt, scale=1]
		\tikzstyle{every node}=[inner sep=0pt, minimum width=4.5pt] 
		\draw (135:1)+(0,0.18) node (v1) [draw, circle, fill=gray] {};
		\draw (225:1)+(0,0.18) node (v2) [draw, circle, fill=gray] {};
		\draw (315:1)+(0,0.18) node (v3) [draw, circle, fill=gray] {};
		\draw (45:1)+(0,0.18) node (v4) [draw, circle, fill=gray] {};
		\draw (150:1.20)+(0,0.18) node () {\small $t_1$};
		\draw (225:1.27)+(0,0.18) node () {\small $t_2$};
		\draw (315:1.3)+(0,0.18) node () {\small $t_3$};
		\draw (45:1.25)+(0,0.18) node () {\small $t_4$};
		\draw (45:1)+(-0.7,0.6) node () {\small $3$};
		\draw (45:1)+(-0.7,-1.7) node () {\small $1$};
		\draw (45:1)+(-0.7,-0.8) node () {\small $1$};
		\draw (45:1)+(-0.7,-0.3) node () {\small $1$};
		\draw (45:1)+(0.15,-0.6) node () {\small $1$};
		\draw (45:1)+(-1.7,-0.6) node () {\small $4$};
		\draw (v1) edge[->] (v2);
		\draw[->] (0,0) {(v2) to [bend right] (v3)};
		\draw[->] (0,0) {(v3) to [bend right] (v2)};
		\draw (v4) edge[->] (v3);
		\draw[->] (0,0) {(v1) to [bend right] (v4)};
		\draw[->] (0,0) {(v4) to [bend right] (v1)};
	\end{tikzpicture}
}
& \\
&
$
L=L(G)=
\left(\begin{array}{rrrr}
 5 & -4 &  0 &  -1\\
 0 &  1 & -1 & 0\\
 0 & -1 &  1 & 0\\
-3 &  0 &  -1 &  4
\end{array}\right)
$\\
& \\
\end{tabular}
\end{picture}
\caption{A weighted digraph $G$ with four vertices and its Laplacian matrix.}
\label{fig3}
\end{figure}
\end{example}

\section{Homogeneous lattice ideals of dimension $1$ in $3$ variables
}\label{lattice-dim1-3vars}

The main results of this section
uncover the structure of lattice ideals of dimension $1$ in $3$
variables and the structure of the
homogeneous lattices of rank $2$ in $\mathbb{Z}^3$.

\begin{definition}\label{fullset} Let $I$ be a binomial ideal of $S$ and
let $f=t^a-t^b\in I$. We will say that: 
\begin{itemize}
\item[$(a)$] $f$ is $t_i$-{\em pure} 
if $t^a$ and $t^b$ are non-constant, have no common variables and 
${\rm supp}(a)=\{ i\}$; 

\item[$(b)$] $f$ is $t_i$-{\em critical} if $f$ is $t_i$-pure and for
  any other $t_i$-pure binomial $g=t_i^{c_i}-t^d$ of $I$, $c_i\geq
  a_i$; 

\item[$(c)$] A {\em full set} of pure (respectively, critical)
  binomials of $I$ is a family $f_1,\ldots ,f_s$ of binomials where
  each $f_i$ is a $t_i$-pure (respectively, $t_i$-critical) binomial
  of $I$.
\end{itemize}
\end{definition}

We begin with a result that complements the well-known result of Herzog
\cite{He3} that shows that the toric ideal of a monomial space curve is
generated by a full set of critical binomials.

\begin{theorem}\label{nov18-12}
Let $S=K[t_1,t_2,t_3]$ and let $I$ be a homogeneous lattice ideal of
$S$ of height $2$. Then $I$ is generated by a full set of critical
binomials. Concretely, and with a suitable renumbering of the
variables, only the two following cases can occur:
\begin{itemize}
\item[$(a)$] $I$ is minimally generated by $f_1=t_1^{a_{1}}-t_3^{c_3}$
  and $f_2=t_2^{b_2}-t_1^{b_1}t_3^{b_3}$, with $0\leq b_1\leq a_1$,
  $a_1,b_2,c_3>0$ and $b_1+b_3>0$;
\item[$(b)$] $I$ is minimally generated by
  $f_1=t_1^{a_{1}}-t_2^{a_2}t_3^{a_3}$,
  $f_2=t_2^{b_2}-t_1^{b_1}t_3^{b_3}$ and
  $f_3=t_3^{c_3}-t_1^{c_1}t_2^{c_2}$, with $0<a_2<b_2$, $0<a_3<c_3$,
  $0<b_1<a_1$, $0<b_3<c_3$, $0<c_1<a_1$ and $0<c_2<b_2$. Moreover,
  $a_1=b_1+c_1$, $b_2=a_2+c_2$ and $c_3=a_3+b_3$.
\end{itemize}
\end{theorem}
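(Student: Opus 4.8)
The plan is to start from the known structural facts about $I$: by hypothesis $I$ is a homogeneous lattice ideal of height $2$ in $S=K[t_1,t_2,t_3]$, so by Theorem~\ref{jun12-02} each $t_i$ is a nonzerodivisor modulo $I$, and by \cite[Theorem~6.1]{peeva-sturmfels} $I$ is minimally generated by at most $3$ binomials. Since $\dim(S/I)=1$ and $I$ is graded with respect to a positive weight vector $\mathbf{d}$, the ideal contains, for each $i$, some binomial involving $t_i$; in fact because $t_i$ is regular modulo $I$, $I$ cannot contain a pure power $t_i^{m}$, so any binomial of minimal support witnessing a variable must have a term that is a product of the \emph{other} variables. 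The first step is therefore to show that $I$ contains a $t_i$-pure binomial for each $i$: take any nonzero binomial $t^a-t^b\in I$ with $i\in\mathrm{supp}(a)$; using that $t_j$ is regular mod $I$ one can cancel common factors so that $t^a,t^b$ have disjoint support, and then—following the argument in \cite[pp.~137--140]{kunz}—combine binomials to replace a term by one supported on a single variable. Set $f_i$ to be a $t_i$-critical binomial, i.e.\ one minimizing the exponent $a_i$ of $t_i$ among $t_i$-pure binomials of $I$.

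The second step is the case analysis. Writing $f_1=t_1^{a_1}-t^{\beta}$, $f_2=t_2^{b_2}-t^{\gamma}$, $f_3=t_3^{c_3}-t^{\delta}$ with $\beta$ supported on $\{2,3\}$, $\gamma$ on $\{1,3\}$, $\delta$ on $\{1,2\}$, homogeneity with respect to $\mathbf{d}=(d_1,d_2,d_3)$ forces $a_1 d_1=\langle\beta,\mathbf{d}\rangle$, etc. If one of the $\beta,\gamma,\delta$ has support of size $1$—say $f_1=t_1^{a_1}-t_3^{c_3'}$—then after renumbering we are in case (a); here I would show $J:=(f_1,f_2)$ already equals $I$ by a colon/height computation ($J$ has height $2$, $t_i$ regular mod $J$, and $J$ is contained in the lattice ideal $I$ with the same lattice, so $J=I(\mathcal L)=I$), and derive $0\le b_1\le a_1$ from $t_1$-criticality of $f_1$ (if $b_1>a_1$ one could reduce the $t_1$-degree of $f_2$ using $f_1$, contradicting nothing directly, so the bound is really the reverse: if $b_1>a_1$ replace $f_2$ by $f_2 - t_1^{b_1-a_1}t_3^{b_3}f_1$-type combination; the precise inequality comes from minimality). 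If all three of $\beta,\gamma,\delta$ have support of size $2$, we are in case (b); the additive relations $a_1=b_1+c_1$, $b_2=a_2+c_2$, $c_3=a_3+b_3$ come from the syzygy among $f_1,f_2,f_3$: since $S/I$ has dimension $1$ and the three binomials cut out a $1$-dimensional ring, there is a single Koszul-type relation $m_1 f_1+m_2 f_2+m_3 f_3=0$ with monomial coefficients $m_i$, and comparing the monomials in this relation forces the exponents to match up exactly as stated. The strict inequalities ($0<b_1<a_1$ etc.) follow from $t_i$-criticality: e.g.\ $b_1<a_1$ because otherwise $f_2$ could be replaced by something of smaller $t_1$-degree yet still nonzero, while $b_1>0$ because if $b_1=0$ then $f_2=t_2^{b_2}-t_3^{b_3}$ is $t_2$-pure with support $\{3\}$, putting us in case (a) after renumbering.

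The main obstacle I expect is the bookkeeping in the size-$2$ case: proving that the three critical binomials actually \emph{generate} $I$ (not merely a finite-collength-smaller ideal), and extracting the exact additive identities. The clean way is: let $J=(f_1,f_2,f_3)$; then $J\subseteq I$, $\mathrm{ht}(J)=2$, and every $t_i$ is regular mod $J$ (this needs the explicit form of the syzygy to rule out $t_i$ being a zerodivisor on $S/J$—exactly where the relation $m_1f_1+m_2f_2+m_3f_3=0$ is used), so by Theorem~\ref{jun12-02} $J$ is a lattice ideal; since $J$ and $I$ span the same lattice (the $f_i$ already generate $\mathcal L$ up to saturation, by Corollary~\ref{nov4-12} applied to a Gr\"obner-basis argument), $J=I(\mathcal L)=I$. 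Then minimality of the generating set (and hence the ``$3$ generators are needed'' part of case (b) versus ``$2$ suffice'' in case (a)) follows from a graded Nakayama count on $I/\mathfrak m I$, using the strict inequalities to see that no $f_i$ lies in the ideal generated by the others plus $\mathfrak m I$. I would follow \cite[pp.~137--140]{kunz} closely for the reduction steps, since that reference handles precisely this kind of normalization of binomial generators in three variables.
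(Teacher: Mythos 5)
Your skeleton (extract a full set of critical binomials, then a case analysis in the spirit of Kunz) matches the paper's, but the step you use to conclude that the critical binomials actually generate $I$ has a genuine gap, and it is exactly the heart of the theorem. You argue that $J=(f_1,f_2)$ (resp.\ $(f_1,f_2,f_3)$) equals $I$ because $J$ is a lattice ideal contained in $I$ ``with the same lattice'', justified by saying the $f_i$ generate $\mathcal{L}$ ``up to saturation''. Saturation is not enough: the exponent vectors of the critical binomials span a rank-$2$ sublattice $\mathcal{L}'\subseteq\mathcal{L}$ which a priori may have finite index greater than one; $\mathcal{L}'$ and $\mathcal{L}$ automatically have the same saturation (namely $\ker_{\mathbb{Z}}(\mathbf{d})$), yet $I(\mathcal{L}')\subsetneq I(\mathcal{L})$ whenever $\mathcal{L}'\neq\mathcal{L}$ (Corollary~\ref{nov4-12}(a)). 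For example, $(t_1^2-t_2^2,\,t_2^2-t_3^2)$ is a homogeneous lattice ideal strictly contained in $(t_1-t_2,\,t_2-t_3)$, although the two lattices have the same saturation. So even after your colon/height computation shows each $t_i$ is regular modulo $J$, all you obtain is $J=I(\mathcal{L}')$ for the sublattice $\mathcal{L}'$, not $J=I$; proving $\mathcal{L}'=\mathcal{L}$ is equivalent to the generation statement you are trying to establish, and nothing in the proposal supplies it. The paper closes this gap by Kunz's degree induction: every pure binomial of $I$ is shown to be congruent, modulo the chosen critical binomials, to a multiple of a binomial of $I$ of strictly smaller degree, and a minimal-degree counterexample argument then gives $I=(f_1,f_2)$ or $I=(f_1,f_2,f_3)$. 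Some reduction of this kind, genuinely using criticality, is indispensable.

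Two further points. In case (b) you derive $a_1=b_1+c_1$, $b_2=a_2+c_2$, $c_3=a_3+b_3$ from an asserted ``single Koszul-type relation with monomial coefficients''; the existence of a syzygy of that precise shape is essentially equivalent to these identities (it is what Proposition~\ref{CBproperties}(a) records once one already knows the matrix is CB), so as written the argument is circular. The paper gets the identities cheaply and differently: the sum of the three exponent vectors is $(\alpha_1,\alpha_2,\alpha_3)\in\mathcal{L}$ with $\alpha_1d_1+\alpha_2d_2+\alpha_3d_3=0$, and criticality together with the bounds $0<b_1,c_1<a_1$ (etc.) forces each $\alpha_i=0$. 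Finally, your existence argument for $t_i$-pure binomials (``combine binomials to replace a term by one supported on a single variable'') is only a gesture; the paper deduces it from $V(I,t_i)=\{0\}$, valid for graded lattice ideals of dimension one by \cite[Proposition~2.9]{ci-lattice}, since if $I$ had no $t_3$-pure binomial then every pure binomial of $I$ would vanish on the $t_3$-axis. This last point is fixable (for instance, a suitable positive multiple of $(d_2+d_3,-d_1,-d_1)$ lies in $\mathcal{L}$), but it does require an argument rather than an appeal to \cite{kunz}.
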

\begin{proof} 
Let $\mathbf{d}=(d_1,d_2,d_3)$, $\gcd(\mathbf{d})=1$, be the grading in $S$
under which $I$ is homogeneous. Since $I$ is a lattice ideal, $t_i$ is
a non-zero divisor of $S/I$ for $i=1,2,3$. In particular, $I$ can be
generated by pure binomials, i.e., binomials of the form
$t_1^{e_1}-t_2^{e_2}t_3^{e_3}$, with $e_1>0$, and similarly for
$i=2,3$.

Since $I$ is a homogeneous lattice ideal of height $2$, by
\cite[Proposition~2.9]{ci-lattice}, $V(I,t_i)=\{0\}$ for all $i$. In
particular, $I$ contains $t_i$-pure binomials, for $i=1,2,3$. Indeed,
if $I$ contains no $t_3$-pure binomials, say, then $V(I,t_1)\supset
V(t_1,t_2)\neq \{0\}$, a contradiction.

Therefore there exist $f_1=t_1^{a_{1}}-t_2^{a_{2}}t_{3}^{a_{3}}$,
$f_2=t_2^{b_{2}}-t_1^{b_{1}}t_{3}^{b_{3}}$ and
$f_3=t_3^{c_{3}}-t_1^{c_{1}}t_{2}^{c_{2}}$, a full set of critical
binomials of $I$; i.e., $a_1,b_2,c_3>0$ and for any $t_1$-pure
binomial of $I$ of the form $t_1^{e_1}-t_2^{e_2}t_3^{e_3}$, one has
$e_{1}\geq a_{1}$, and similarly with the other variables $t_2$ and
$t_3$. Notice that one could have $f_j=-f_i$ for $j\neq i$.

Following the proof of Kunz in \cite[pp.~137--140]{kunz}, one can show
that $I$ is generated by a full set of critical binomials. For the
sake of clarity, we outline the main details of the proof.

After renumbering the variables one may suppose that $f_1$ is the one
of least degree among $f_1$, $f_2$ and $f_3$. Then $a_2\leq b_2$ and
$a_3\leq c_3$. Moreover, $a_2=b_2$ is equivalent to $a_{3}=0$, and, in
this case, $-f_1=t_2^{b_{2}}-t_1^{a_{1}}$ is $t_2$-critical and one
may choose $f_2$ to be $-f_1$. Similarly, $a_3=c_3$ is equivalent to
$a_2=0$, and, in this case, $-f_1=t_3^{c_{3}}-t_1^{a_{1}}$ is
$t_3$-critical and one may choose $f_3$ to be $-f_1$.

If $a_2=b_2$, (i.e., $a_3=0$), we may interchange the numbering of
the variables 
$t_2$ and $t_3$ so that, in the new numbering, $a_3=c_3$ and $a_2=0$.
Hence, there are only the  following two cases for $f_1$:
\begin{itemize}
\item[$(a)$:] $f_1=t_1^{a_{1}}-t_{3}^{c_{3}}$, if $a_2=0$ and
  $a_3=c_3$;
\item[$(b)$:] $f_1=t_1^{a_{1}}-t_{2}^{a_{2}}t_{3}^{a_{3}}$, with
  $0<a_2<b_2$ and $0<a_3<c_3$.
\end{itemize}

\noindent \underline{\sc Case $(a)$}. Here
$f_1=t_1^{a_{1}}-t_{3}^{c_{3}}$, $f_2=t_2^{b_2}-t_1^{b_1}t_3^{b_3}$
and $f_3=t_3^{c_3}-t_1^{a_1}=-f_1$, with $\deg(f_1)\leq
\deg(f_2)$. Moreover, one can suppose that $0\leq b_1\leq a_1$.

In a rather long, but not difficult way, one proves that for any pure
binomial $f$ of $I$, either $f$ is in $(f_1,f_2)$ or  $f$ modulo
$(f_1,f_2)$ is a multiple of a binomial $g$ of $I$ with
$\deg(g)<\deg(f)$. One starts by taking
$f=t_1^{e_1}-t_2^{e_2}t_3^{e_3}\in I$, with $e_1\geq a_1$, and proving
that $f-t_1^{e_1-a_1}f_1$ is a multiple of a binomial $g\in I$ with
$\deg(g)<\deg(f)$.  There is an analogous argument if
$f=t_2^{e_2}-t_1^{e_1}t_3^{e_3}\in I$ or
$f=t_3^{e_3}-t_1^{e_1}t_2^{e_2}\in I$.

One concludes that $I=(f_1,f_2)$. Indeed, suppose not and take $f$ a
pure binomial in $I\setminus (f_1,f_2)$ of the smallest possible
degree. We have seen that there exists $h\in (f_1,f_2)$ such that
$f-h$ is a multiple of a binomial $g\in I$ with
$\deg(g)<\deg(f)$. Since $f$ is the element in $I\setminus (f_1,f_2)$
of the smallest possible degree, this forces $g$ to be in
$(f_1,f_2)$. Hence $f\in (g,h)\subset (f_1,f_2)$, a contradiction.

\noindent \underline{\sc Case $(b)$}. Here
$f_1=t_1^{a_{1}}-t_2^{a_{2}}t_{3}^{a_{3}}$,
$f_2=t_2^{b_2}-t_1^{b_1}t_3^{b_3}$ and
$f_3=t_3^{c_3}-t_1^{c_1}t_2^{c_2}$, with $0<a_2<b_2$ and $0<a_3<c_3$.

Possibly after renumbering $t_2$ and $t_3$, one can assume that the
degree of $f_2$ is smaller than the degree of $f_3$.  Observe that
$b_1<a_1$ and, in particular, $b_3>0$. Analogously, $c_1<a_1$ and, in
particular, $c_2>0$. Moreover $b_3\leq c_3$, and $b_3=c_3$ is
equivalent to $b_1=0$. In this case, $-f_2=t_3^{c_3}-t_2^{b_2}$ is
$t_3$-critical and one may choose as $f_3$ the binomial $-f_2$.

Therefore, there are only the following two cases for $f_2$:
\begin{itemize}
\item[$(b.1)$:] $f_2=t_2^{b_2}-t_3^{c_3}$, if $b_1=0$ and $b_3=c_3$;
\item[$(b.2)$:] $f_2=t_2^{b_2}-t_1^{b_1}t_3^{b_3}$, with $0<b_1<a_1$
  and $0<b_3<c_3$.
\end{itemize}

\noindent \underline{\sc Case $(b.1)$}. Here
$f_1=t_1^{a_{1}}-t_2^{a_{2}}t_{3}^{a_{3}}$, $f_2=t_2^{b_2}-t_3^{c_3}$
and $f_3=t_3^{c_3}-t_2^{b_2}=-f_2$, with $0<a_2<b_2$, $0<a_3<c_3$
and $\deg(f_1)\leq \deg(f_2)$. Similarly to the proof of Case $(a)$,
one can show that $I=(f_1,f_2)$ (although now the doubly-pure binomial
has bigger degree than the other binomial).

\noindent \underline{\sc Case $(b.2)$}. Here
$f_1=t_1^{a_{1}}-t_2^{a_{2}}t_{3}^{a_{3}}$,
$f_2=t_2^{b_2}-t_1^{b_1}t_3^{b_3}$ and
$f_3=t_3^{c_3}-t_1^{c_1}t_2^{c_2}$, with $0<a_2<b_2$, $0<a_3<c_3$, $0<
b_1<a_1$ and $0<b_3<c_3$. Moreover, $0\leq c_1<a_1$, $c_2>0$ and
$\deg(f_1)\leq \deg(f_2)\leq \deg (f_3)$. Since $b_3>0$, then
$c_2<b_2$. Since $b_3<c_3$ (and $b_1\neq 0$), then $c_1>0$.

As in Case $(a)$, one can prove that, for each pure binomial $f$ of
$I$, either $f$ is in $(f_1,f_2,f_3)$ or $f$ modulo $(f_1,f_2,f_3)$ is
a multiple of a binomial $g$ of $I$ with $\deg(g)<\deg(f)$. One
concludes, as before, that $I=(f_1,f_2,f_3)$.

In Case $(b.2)$, $I$ is minimally generated by $f_1$, $f_2$ and $f_3$.
Indeed, if $f_3\in (f_1,f_2)$, say, then on taking $t_1=0$ and
$t_2=0$, one would get a contradiction.

Finally, $a_1=b_1+c_1$, $b_2=a_2+c_2$ and $c_3=a_3+b_3$ (see, here,
\cite[p~139, line~15]{kunz}). Indeed, let $\alpha_1:=a_1-b_1-c_1$,
$\alpha_2:=b_2-a_2-c_2$ and $\alpha_3:=c_3-a_3-b_3$. Clearly
$\alpha_1d_1+\alpha_2d_2+\alpha_3d_3=0$. We may suppose that
$\alpha_2$ and $\alpha_3$, say, have the same sign. Then necessarily
$\alpha_1=0$, because if not, since $f_1$ is $t_1$-critical and either
$t_1^{\alpha_1}-t_2^{-\alpha_2}t_3^{-\alpha_3}\in I$ or
$t_1^{-\alpha_1}-t_2^{\alpha_2}t_3^{\alpha_3}\in I$, we get that
either $\alpha_1\geq a_1$ or $-\alpha_1\geq a_1$, which would imply
that either $-b_1-c_1\geq 0$ or $b_1+c_1\geq 2a_1$, in a contradiction
to $0<b_1<a_1$ and $0<c_1<a_1$. Thus $\alpha_1=0$, so $\alpha_2=0$
and $\alpha_3=0$.
\end{proof}
\begin{corollary} 
If $I$ is a lattice ideal of dimension $0$ and $s=2$, then $I$ is
generated by at most $3$ binomials.
\end{corollary}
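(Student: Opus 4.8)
The plan is to reduce the statement, by homogenization, to the three-variable result Theorem~\ref{nov18-12}. Write $I=I(\mathcal{L})$ for a lattice $\mathcal{L}\subset\mathbb{Z}^2$; since $\dim(S/I)=0$ we have $\mathrm{rank}(\mathcal{L})=2$, so $I$ has height $2$. Introduce the new variable $u=t_3$ and pass to the homogenization $I^h\subset S[u]=K[t_1,t_2,t_3]$. First I would record the three facts needed about $I^h$: (i) $I^h$ is again a lattice ideal, in fact $I^h=I(\mathcal{L}^h)$ (Lemma~\ref{o'carroll-comments}(a),(c), using Corollary~\ref{nov4-12}); (ii) $I^h$ is homogeneous in the standard grading of $K[t_1,t_2,t_3]$, by construction; (iii) $\dim(S[u]/I^h)=\dim(S/I)+1=1$, hence $I^h$ has height $2$ (Lemma~\ref{elim-ord-hhomog}(c)). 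Thus $I^h$ satisfies the hypotheses of Theorem~\ref{nov18-12} (applied with $S$ there taken to be $K[t_1,t_2,t_3]=S[u]$), and that theorem tells us that $I^h$ is generated by a full set of critical binomials, which in case $(a)$ consists of $2$ binomials and in case $(b)$ of $3$ binomials; in either case $I^h$ is generated by at most $3$ binomials $G_1,\dots,G_r$ with $r\le 3$.

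Next I would descend from $S[u]$ back to $S$. Let $\pi\colon S[u]\to S$ be the $K$-algebra homomorphism fixing $t_1,t_2$ and sending $u\mapsto 1$ (dehomogenization). Then $\pi$ is surjective, and I claim $\pi(I^h)=I$: for $f\in I$ one has $f^h\in I^h$ and $\pi(f^h)=f$, so $I\subset\pi(I^h)$; conversely, since $I^h=(\{g^h\mid g\in I\})$, any element of $I^h$ is $\sum_j h_j g_j^h$ with $h_j\in S[u]$, $g_j\in I$, and $\pi\big(\sum_j h_j g_j^h\big)=\sum_j \pi(h_j)g_j\in I$. Because $\pi$ is a surjective ring homomorphism carrying $I^h$ onto $I$, it follows that $I=\big(\pi(G_1),\dots,\pi(G_r)\big)$. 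Finally, each $G_i$ is a pure-difference binomial $t^a-t^b$ (this is exactly the shape of the generators in Theorem~\ref{nov18-12}), so $\pi(G_i)$ is again a difference of two monomials, i.e.\ a binomial in the sense used here (or $0$, in which case it may simply be discarded, only lowering the count). Hence $I$ is generated by at most $3$ binomials.

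The only points requiring a little care — none of them serious — are: checking that $I^h$ genuinely meets the hypotheses of Theorem~\ref{nov18-12} (done via Lemmas~\ref{o'carroll-comments} and \ref{elim-ord-hhomog} as above); verifying that dehomogenization carries binomials to binomials and that a generator can only collapse to $0$, never to a non-binomial (a one-line inspection of the explicit forms in Theorem~\ref{nov18-12}(a),(b) shows in fact no collapse occurs, since the leading pure powers $t_1^{a_1},t_2^{b_2},t_3^{c_3}$ with positive exponents survive or produce $t^c-1$); and the standard observation that $I=\pi(I^h)$ so that a generating set of $I^h$ maps to a generating set of $I$ under the surjective ring map $\pi$. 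The genuinely substantive work — the structure theorem for graded lattice ideals of height $2$ in three variables — is already available as Theorem~\ref{nov18-12}, so I expect no real obstacle here; the corollary is essentially a formal consequence obtained by the homogenize/dehomogenize passage.
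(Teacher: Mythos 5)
Your proof is correct and follows essentially the same route as the paper: homogenize to get the graded lattice ideal $I^h\subset S[u]$ of dimension $1$ (Lemma~\ref{o'carroll-comments}), apply Theorem~\ref{nov18-12} to get at most $3$ binomial generators, and dehomogenize. The paper leaves the dehomogenization step implicit, whereas you spell it out (correctly); no discrepancy in substance.
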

\begin{proof} 
By Lemma~\ref{o'carroll-comments}, $I^h\subset S[u]$ is a graded
lattice ideal of dimension $1$. Thus, the result follows from
Theorem~\ref{nov18-12}.
\end{proof}
Before proceeding with the main result of the section, we state some
properties of GCB and CB binomial ideals in the case $s=3$.
\begin{lemma}\label{GCBishom}
Let $I=I(L)$ be the GCB ideal associated to a $3\times 3$ GCB matrix
$L$. Then $I$ is homogeneous and $V(I,t_i)=\{0\}$ for all $i$.
\end{lemma}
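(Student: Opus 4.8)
The plan is to handle the two assertions separately, in each case reducing from the general GCB matrix $L$ to its associated CB matrix and then to elementary facts about the shape of the binomials. First, since $L$ is a GCB matrix, there is ${\bf b}\in\mathbb{N}^3_+$ with $L{\bf b}^\top=0$; by Remark~\ref{gradings} (the equivalence of conditions $(a)$, $(b)$ there) this is exactly the statement that $S$ carries an $\mathbb{N}$-grading with each $t_i$ of positive weight $b_i$ under which each generator $f_i$ of $I(L)$ is homogeneous of positive degree. Hence $I$ is homogeneous. (Equivalently, one invokes Theorem~\ref{GPCB-new}(e) to pass to $L^\top$, but the grading statement is immediate from the definition and Remark~\ref{gradings}, so no Perron--Frobenius input is needed for this half.)

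For the second assertion, I would argue directly on the three generators. Write $L$ as in Definition~\ref{square-matrix-new}, so $f_j=t_j^{a_{j,j}}-\prod_{i\neq j}t_i^{a_{i,j}}$ with $a_{j,j}>0$ for all $j$ and, in each column, at least one off-diagonal entry nonzero (the PB condition). Fix $i$ and let $\alpha=(\alpha_1,\alpha_2,\alpha_3)\in V(I,t_i)$; since $I$ is homogeneous, $\{0\}\subset V(I,t_i)$ always, so only the reverse inclusion needs proof. Vanishing of the $i$-th generator gives $\alpha_i^{a_{i,i}}=\prod_{k\neq i}\alpha_k^{a_{i,k}}$, and since $\alpha_i=0$ and at least one exponent $a_{i,k}$ ($k\neq i$) is positive, we get $\prod_{k\neq i}\alpha_k^{a_{i,k}}=0$, hence $\alpha_k=0$ for some $k\neq i$ with $a_{i,k}>0$. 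Now I would propagate this vanishing: knowing $\alpha_k=0$, apply $f_k(\alpha)=0$ to conclude $\alpha_\ell=0$ for some $\ell\neq k$ with $a_{k,\ell}>0$. Because $s=3$ and each generator has its diagonal variable appearing, after at most two such steps all three coordinates are forced to be zero. More cleanly, one can phrase this via the underlying digraph $G_L$: vanishing of $\alpha_i$ forces, along every arc out of $t_i$, the head coordinate to vanish, so the set $Z=\{\,j : \alpha_j=0\,\}$ is closed under taking out-neighbours; since $L$ is a GCB matrix there are no sinks, so $Z$ cannot be a proper nonempty ``closed'' subset unless $G_L$ is disconnected in a way incompatible with $L{\bf b}^\top=0$ having a strictly positive kernel vector. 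For $s=3$ this case-check is short: the only way $\alpha\neq 0$ could survive is if some proper subset of $\{1,2,3\}$ were closed under out-neighbours and its complement under in-neighbours, and one checks that this contradicts either the PB condition (every column has a nonzero off-diagonal entry) or the existence of ${\bf b}\in\mathbb{N}^3_+$ with $L{\bf b}^\top=0$.

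The main obstacle is the combinatorial step: making rigorous that ``vanishing propagates along all of $G_L$.'' In the generic situation one wants $G_L$ to be strongly connected, which by Proposition~\ref{gcb-with-strongly-connected-digraph} and Theorem~\ref{GCB-new-perron-Frobenius} is exactly what makes $V(I(L^\top),t_i)=\{0\}$; but here the claim is about $I(L)$ itself (columns, not rows) and for $s=3$ specifically, where by Theorem~\ref{GPCB-new}(e) $L^\top$ is again a GCB matrix of rank $2$. So the cleanest route is: apply Theorem~\ref{GPCB-new}(e) to see that both $L$ and $L^\top$ are GCB matrices of rank $2$, note that the binomials of $I(L)$ are the row-binomials of $L^\top$, and then run the propagation argument of Proposition~\ref{laplacian-appl}(a) verbatim, using that any GCB matrix of rank $s-1$ (here $2$) with a strictly positive kernel vector has strongly connected underlying digraph when restricted appropriately. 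If one prefers to avoid invoking strong connectivity, the direct two-step chase above, using only that $a_{j,j}>0$ and that each column of $L$ has a nonzero off-diagonal entry, suffices for $s=3$ and is the shortest self-contained argument; I would present that version, with the digraph language as motivation.
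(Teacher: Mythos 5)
Your argument for homogeneity has a genuine error: it uses the wrong kernel. The GCB hypothesis gives a positive \emph{right} null vector, $L\mathbf{b}^{\top}=0$, whereas Remark~\ref{gradings}(b) requires a positive \emph{left} null vector, i.e.\ some $\mathbf{d}\in\mathbb{N}_{+}^{3}$ with $\mathbf{d}L=0$: the binomial $f_j$ built from the $j$-th \emph{column} is $\mathbf{d}$-homogeneous exactly when $\mathbf{d}$ is orthogonal to that column, and $L\mathbf{b}^{\top}=0$ only says $\mathbf{b}$ is orthogonal to the \emph{rows}. So the claim that gradedness is ``immediate from the definition and Remark~\ref{gradings}, so no Perron--Frobenius input is needed'' is false; if it were, the same argument would make $I(L)$ graded for GCB matrices of any size, contradicting Example~\ref{march5-13}, where $L$ is a CB matrix but $I(L)$ is not graded. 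The hypothesis $s=3$ is essential here, and it enters precisely through the fact that for $3\times 3$ GCB matrices $L^{\top}$ is again GCB (Theorem~\ref{GPCB-new}(e)), i.e.\ a positive left null vector exists. This is exactly the route you mention parenthetically and then dismiss; the paper instead produces the positive left null vector explicitly, splitting into the GPCB case (Remark~\ref{onecanapply}) and the case of a zero off-diagonal entry, say $a_{2,1}=0$, where it checks that the third row of $\operatorname{adj}(L)$, namely $(a_{2,2}a_{3,1},\,a_{1,1}a_{3,2}+a_{1,2}a_{3,1},\,a_{1,1}a_{2,2})$, lies in $\mathbb{N}_{+}^{3}$ and kills $L$ on the left. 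Either of these repairs your first half; as written, it does not stand.

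The second half is essentially sound, up to two slips. The exponents in $f_j$ are the column entries $a_{i,j}$, $i\neq j$, not the row entries you wrote; and the propagation step ``knowing $\alpha_k=0$, apply $f_k(\alpha)=0$ to get a new vanishing coordinate'' can stall, since the non-diagonal monomial of $f_k$ may already vanish through the coordinate $\alpha_i$ you started with. The correct two-step chase for $s=3$ is: $\alpha_i=0$ together with the $i$-th column binomial and the PB condition force a second coordinate to vanish; then the column binomial of the \emph{remaining} variable $\ell$ has its non-diagonal monomial supported on the two coordinates already known to be zero, with at least one positive exponent, so $\alpha_\ell^{a_{\ell,\ell}}=0$. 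With that fix your elementary argument works (indeed it uses only the PB shape, not the full GCB hypothesis), and it is simpler than the paper's treatment, which verifies $\operatorname{rad}(I,t_i)=\mathfrak{m}$ from the explicit generators in each case, using $a_{2,3}>0$ extracted from $L\mathbf{b}^{\top}=0$. The strong-connectivity detour in your last paragraph is unnecessary and, as stated in general rank-$(s-1)$ form, not justified; for $s=3$ it is true but would itself need the short argument you are trying to avoid.
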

\begin{proof}
Let $L$ be a $3\times 3$ GCB matrix, where ${\bf
  b}=(b_1,b_2,b_3)\in\bn^3_+$, $\gcd({\bf b})=1$ and $L{\bf
  b}^{\top}=0$,
\begin{eqnarray*}
L=\left(\begin{array}{rrr}
a_{1,1}&-a_{1,2}&-a_{1,3}\\-a_{2,1}&a_{2,2}&-a_{2,3}\\-a_{3,1}&-a_{3,2}&a_{3,3}
\end{array}\right).
\end{eqnarray*}
If $L$ is a GPCB matrix, $I=I(L)$ is homogeneous and $V(I,t_i)=\{0\}$
for all $i$ (see Remark~\ref{onecanapply}). Suppose that $L$ is not a
GPCB matrix, 
but a GCB matrix. After renumbering the variables, one may suppose that
$a_{2,1}=0$. In particular, $a_{3,1}>0$, $a_{1,2}+a_{3,2}>0$ and
$b_2a_{2,2}=b_3a_{2,3}$, so $a_{2,3}>0$. Let $h_{*,3}$ be the third
row of $\adj(L)$, the adjoint matrix of $L$:
\begin{eqnarray*}
h_{*,3}=(a_{2,2}a_{3,1},a_{1,1}a_{3,2}+a_{1,2}a_{3,1},a_{1,1}a_{2,2}).
\end{eqnarray*}
It follows that $h_{*,3}\in\bn^3_+$. Set
$\mathbf{d}=h_{*,3}/\gcd(h_{*,3})$. Then $\mathbf{d}\in\bn^3_+$,
$\gcd(\mathbf{d})=1$ and 
$\mathbf{d} L=0$. Hence $I=I(L)$ is homogeneous. Moreover,
\begin{eqnarray*}
I=(t_1^{a_{1,1}}-t_3^{a_{3,1}},t_2^{a_{2,2}}-t_1^{a_{1,2}}t_3^{a_{3,2}},
t_3^{a_{3,3}}-t_1^{a_{1,3}}t_2^{a_{2,3}}).
\end{eqnarray*}
Clearly $\rad(I,t_1)=\mathfrak{m}$ and $\rad(I,t_3)=\mathfrak{m}$.
Since $a_{2,3}>0$, it 
follows that $\rad(I,t_2)=\mathfrak{m}$ too. Thus $V(I,t_i)=\{0\}$ for all $i$.
\end{proof}

\begin{proposition}\label{CBproperties}
Let $I=I(L)$ be the CB ideal associated to a $3\times 3$ CB matrix
$L$.  Then the following conditions hold.
\begin{itemize}
\item[$(a)$] $t_2^{a_{2,3}}f_1+t_3^{a_{3,1}}f_2+t_1^{a_{1,2}}f_3=0$
  and $t_3^{a_{3,2}}f_1+t_1^{a_{1,3}}f_2+t_2^{a_{2,1}}f_3=0$;
\item[$(b)$] For $\{ i,j,k\}=\{1,2,3\}$, then $f_i,f_j,t_k$ is a
  regular sequence (in any order);
\item[$(c)$] $I$ is either a complete intersection or an almost
  complete intersection;
\item[$(d)$] $I$ is an unmixed ideal of height $2$;
\item[$(e)$] $I$ is a homogeneous lattice ideal and
  $I=I(\bl)=(I:(t_1t_2t_3)^{\infty})=\hull(I)$.
\end{itemize}
\end{proposition}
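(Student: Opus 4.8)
The plan is to work throughout with the three \emph{critical binomial} relations coming from $L\mathbf{1}^{\top}=0$, namely $a_{1,1}=a_{1,2}+a_{1,3}$, $a_{2,2}=a_{2,1}+a_{2,3}$ and $a_{3,3}=a_{3,1}+a_{3,2}$, where $f_i=t_i^{a_{i,i}}-\prod_{l\neq i}t_l^{a_{l,i}}$. For part $(a)$ I would simply expand the two stated combinations term by term: after multiplying each $f_i$ by its monomial coefficient and rewriting the exponents with the relations above, every product becomes a difference of two monomials and the six monomials cancel in a telescoping pattern. Before going further I would record, via Lemma~\ref{GCBishom} (which gives that $I$ is graded and $V(I,t_i)=\{0\}$ for all $i$) together with Lemma~\ref{sep16-12}, that ${\rm ht}(I)=s-1=2$; since $S$ is Cohen--Macaulay this also gives $\grade(I)=2$. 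Part $(c)$ is then immediate: $I$ is generated by the three binomials $f_1,f_2,f_3$ and has height $2$, so its minimal number of generators is $2$ (whence $I$ is a complete intersection) or $3$ (whence $I$ is an almost complete intersection), a height-$2$ ideal not being principal.

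For part $(b)$, fix $\{i,j,k\}=\{1,2,3\}$. The elements $f_i,f_j,t_k$ are homogeneous of positive degree for the weight vector for which $I$ is graded (Lemma~\ref{GCBishom}), and $S$ is Cohen--Macaulay, so it is enough to show they form a system of parameters, i.e. ${\rm ht}((f_i,f_j,t_k))=3$: a homogeneous system of parameters in a Cohen--Macaulay graded ring is a regular sequence, and this is independent of the order of the elements (\cite{BHer}). To compute the height I would reduce modulo $t_k$, i.e. pass to $K[t_i,t_j]$ and to the images $\bar f_i,\bar f_j$ obtained by setting $t_k=0$. The relation $a_{k,k}=a_{k,i}+a_{k,j}>0$ forces $a_{k,i}>0$ or $a_{k,j}>0$, so at least one of $\bar f_i,\bar f_j$ is a pure power of a variable, say $\bar f_i=t_i^{a_{i,i}}$; the other, $t_j^{a_{j,j}}$ or $t_j^{a_{j,j}}-t_i^{a_{i,j}}$, is monic of positive degree in $t_j$ over $K[t_i]$. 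Hence $K[t_i,t_j]/(\bar f_i,\bar f_j)$ is a finite-dimensional $K$-algebra, so $\dim S/(f_i,f_j,t_k)=0$ and $(f_i,f_j,t_k)$ has height $3$, as wanted.

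Parts $(d)$ and $(e)$ I would deduce from the Hilbert--Burch theorem. The two syzygies of part $(a)$ are precisely the columns of the $3\times 2$ matrix
\[
M=\begin{pmatrix} t_2^{a_{2,3}} & t_3^{a_{3,2}}\\ t_3^{a_{3,1}} & t_1^{a_{1,3}}\\ t_1^{a_{1,2}} & t_2^{a_{2,1}}\end{pmatrix},
\]
so $(f_1,f_2,f_3)M=0$, and a short computation of the three $2\times 2$ minors of $M$ (again using the critical binomial relations) shows that its maximal minors, up to sign, are $f_1,f_2,f_3$; thus $I=(f_1,f_2,f_3)=I_2(M)$. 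Since $\grade(I_2(M))={\rm ht}(I)=2$ is the maximal possible grade for the ideal of maximal minors of a $3\times 2$ matrix, Hilbert--Burch (\cite{BHer}) gives that $0\to S^{2}\xrightarrow{M}S^{3}\xrightarrow{(f_1,f_2,f_3)}S\to S/I\to 0$ is a free resolution, so $S/I$ is perfect of grade $2$, hence Cohen--Macaulay; this is part $(d)$ ($I$ unmixed of height $2$). For part $(e)$: $I$ is a graded binomial ideal with $V(I,t_i)=\{0\}$ for all $i$ (Lemma~\ref{GCBishom}), so Proposition~\ref{2variables} applies, and Cohen--Macaulayness of $S/I$ forces $I$ to be a lattice ideal with $I=\hull(I)=(I\colon(t_1t_2t_3)^{\infty})$; finally $(I\colon(t_1t_2t_3)^{\infty})=I(\mathcal{L})$ by Lemma~\ref{sep1-12}, since $\mathcal{L}$ is the lattice generated by the columns of $L$. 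Hence $I=I(\mathcal{L})=(I\colon(t_1t_2t_3)^{\infty})=\hull(I)$.

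The one real idea is spotting the Hilbert--Burch structure already present in part $(a)$: once the adjoint-type syzygies are assembled into $M$ and one checks $(f_1,f_2,f_3)M=0$ and that the maximal minors of $M$ recover $f_1,f_2,f_3$ up to sign, parts $(c)$--$(e)$ follow formally. So the bulk of the labour is the (routine but sign-sensitive) verification of the syzygies and minor identities in $(a)$ and for $M$; the elementary case split in $(b)$ is the other place that requires a little care.
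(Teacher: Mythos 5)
Your proof is correct, but it takes a genuinely different route from the paper on parts $(b)$--$(d)$. The paper checks $(a)$ directly (as you do), but then quotes $(b)$ from an external result (\cite[Proposition~4.2(c)]{opHN}), and proves $(c)$ and $(d)$ by splitting into the PCB and non-PCB cases: in the PCB case it cites Corollary~\ref{aci} and \cite[Remark~4.7, Proposition~3.3]{opPCB}, while in the non-PCB case some $a_{i,j}=0$ and the syzygies of $(a)$ show $I$ is generated by two of the $f_i$'s, hence is a complete intersection and unmixed; $(e)$ is then obtained from Lemma~\ref{GCBishom} and Proposition~\ref{structureI(L)}, which is essentially the same Section~\ref{primary-dec-1} machinery you invoke via Proposition~\ref{2variables} and Lemma~\ref{sep1-12}. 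Your replacements are self-contained and in fact prove more: the height/system-of-parameters computation modulo $t_k$ handles $(b)$ without the external citation (just make explicit that the weight vector from Lemma~\ref{GCBishom} is positive, so $f_i,f_j,t_k$ are homogeneous of positive degree and the graded CM argument applies); your $(c)$ avoids the PCB/non-PCB dichotomy entirely by counting generators against ${\rm ht}(I)=2$; and recognizing $I=I_2(M)$ for the $3\times 2$ syzygy matrix $M$ of $(a)$, with $\grade(I_2(M))=2$, lets Hilbert--Burch give a free resolution and Cohen--Macaulayness of $S/I$, from which unmixedness is immediate --- information the paper's proof does not record. The only cosmetic point is that the right-hand map in the Hilbert--Burch complex is the vector of \emph{signed} maximal minors, here $(-f_1,f_2,-f_3)$ rather than $(f_1,f_2,f_3)$; this changes nothing since the image is still $I$ and the composite vanishes by $(a)$.
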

\begin{proof}
The proof of $(a)$ follows from a simple check. By
Lemma~\ref{GCBishom}, $S$ can be graded, with $t_1,t_2,t_3$
and $f_1,f_2,f_3$ homogeneous elements of positive degree. Using
\cite[Proposition~4.2, $(c)$]{opHN}, one deduces $(b)$. If $I$ is a
PCB ideal, by Corollary~\ref{aci}, $I$ is an almost complete
intersection of height $2$. If $I$ is not a PCB ideal, $a_{i,j}=0$, 
for some $i\neq j$. Using $(a)$, $I$ is generated
by two of the three $f_1,f_2,f_3$. In particular, by $(b)$, $I$ is a
complete intersection. This proves $(c)$. If $I$ is a PCB ideal, by
\cite[Remark~4.7 and Proposition~3.3]{opPCB}, $I$ is an unmixed ideal
of height $2$. If $I$ is not a PCB ideal, $a_{i,j}=0$, 
for some $i\neq j$, by $(c)$, $I$ is a complete intersection, hence
unmixed too. This proves
$(d)$. By Lemma~\ref{GCBishom}, $I=I(L)$ is graded and
$V(I,t_i)=\{0\}$ for all $i$. Moreover $I=I(L)$ is an unmixed binomial ideal
associated to an integer matrix $L$ (in fact a CB matrix). By
Proposition~\ref{structureI(L)}, $I=I(\bl)=(I:(t_1t_2t_3)^{\infty})=\hull(I)$ is
a homogeneous lattice ideal.
\end{proof}

As a consequence of Theorem~\ref{nov18-12} and
Proposition~\ref{CBproperties}, we obtain the main result of the
section.

\begin{theorem}\label{mainSection}
Let $S=K[t_1,t_2,t_3]$ and let $I$ be an ideal of $S$.  Then $I$ is a
homogeneous lattice ideal of dimension $1$ if and only if $I$ is a CB
ideal.
\end{theorem}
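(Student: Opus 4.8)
The plan is to prove the two implications of Theorem~\ref{mainSection} separately, leaning on the two results that immediately precede it. For the direction ``$I$ a CB ideal $\Rightarrow$ $I$ a homogeneous lattice ideal of dimension $1$'', there is essentially nothing left to do: Proposition~\ref{CBproperties}(e) already states that the CB ideal $I=I(L)$ is a homogeneous lattice ideal with $I=I(\bl)$, and Proposition~\ref{CBproperties}(d) says $I$ has height $2$; since $S=K[t_1,t_2,t_3]$ has dimension $3$, the quotient $S/I$ has dimension $1$. So I would dispatch this half in one or two sentences, citing Proposition~\ref{CBproperties}(d),(e).

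For the converse, suppose $I\subset S=K[t_1,t_2,t_3]$ is a homogeneous lattice ideal of height $2$ (equivalently dimension $1$). The strategy is to invoke Theorem~\ref{nov18-12}, which tells us that, after renumbering the variables, $I$ falls into exactly one of the two explicitly described cases (a) or (b), and in each case gives a minimal generating set consisting of a full set of critical binomials with prescribed sign and inequality constraints on the exponents. What remains is to exhibit, in each case, a $3\times 3$ CB matrix $L$ whose associated matrix ideal $I(L)$ equals $I$. In Case (b) this is immediate: the three generators $f_1=t_1^{a_1}-t_2^{a_2}t_3^{a_3}$, $f_2=t_2^{b_2}-t_1^{b_1}t_3^{b_3}$, $f_3=t_3^{c_3}-t_1^{c_1}t_2^{c_2}$ are precisely the binomials attached to the columns of
\[
L=\left(\begin{array}{rrr}
a_1&-b_1&-c_1\\ -a_2&b_2&-c_2\\ -a_3&-b_3&c_3
\end{array}\right),
\]
and the matrix is a PB matrix with $L\mathbf{b}^\top=0$ for the weight vector $\mathbf{b}=(d_1,d_2,d_3)$ making $I$ homogeneous — indeed the relations $a_1=b_1+c_1$, $b_2=a_2+c_2$, $c_3=a_3+b_3$ from Theorem~\ref{nov18-12}(b) are exactly what is needed to check $L\mathbf{1}$-type balancing after scaling, but more directly the column binomials being homogeneous of positive degree under $\mathbf{d}$ means $\mathbf{d}L=0$, and one gets a positive integer kernel vector on the left; transposing, Theorem~\ref{GPCB-new} or a direct Gershgorin argument supplies the required $\mathbf{b}\in\mathbb{N}^3_+$ with $L\mathbf{b}^\top=0$, so $L$ is a CB (in fact PCB) matrix and $I(L)=(f_1,f_2,f_3)=I$.

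Case (a) needs slightly more care because $I$ is only $2$-generated there, $I=(f_1,f_2)$ with $f_1=t_1^{a_1}-t_3^{c_3}$ and $f_2=t_2^{b_2}-t_1^{b_1}t_3^{b_3}$, $0\le b_1\le a_1$, $b_1+b_3>0$. Here I would build a CB matrix by taking $f_3:=-f_1=t_3^{c_3}-t_1^{a_1}$ as the third ``generator'', so that
\[
L=\left(\begin{array}{rrr}
a_1&-b_1&-a_1\\ 0&b_2&0\\ -c_3&-b_3&c_3
\end{array}\right);
\]
one checks this is a PB matrix (each column has $a_{j,j}>0$ and at least one nonzero off-diagonal entry — using $b_1+b_3>0$ for the middle column and $a_1,c_3>0$ for the outer two), and that it is homogeneous: the weight vector $\mathbf{d}$ that grades $I$ satisfies $\mathbf{d}L=0$, whence (by the same transposition/Gershgorin reasoning, or directly by Lemma~\ref{GCBishom}'s computation of a positive kernel vector) there is $\mathbf{b}\in\mathbb{N}^3_+$ with $L\mathbf{b}^\top=0$, so $L$ is a CB matrix, and $I(L)=(f_1,f_2,f_3)=(f_1,f_2)=I$. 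I would also note that the subcase $a_2=0,\ a_3=c_3$ type degeneracies flagged in the proof of Theorem~\ref{nov18-12} are already absorbed into Case (a), and in Case (b.1) of that proof one is again in a situation covered by a CB matrix with a repeated generator, so no separate treatment is required.

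The main obstacle, such as it is, lies in Case (a): verifying that the matrix $L$ I write down is genuinely a CB matrix — i.e., that it admits a \emph{strictly positive} integer vector $\mathbf{b}$ in its right kernel, not merely a nonzero one. The existence of the homogenizing weight $\mathbf{d}\in\mathbb{N}^3_+$ gives $\mathbf{d}L=0$ on the left; to transfer positivity to the right kernel one should either compute the kernel explicitly (it is rank $1$ since $\mathrm{rank}(L)=2$, as $L\mathbf{1}$-balancing plus height $2$ force it) and read off that its entries are, up to sign, the $2\times 2$ minors of $L$, which the inequalities $0\le b_1\le a_1$, $a_1,b_2,c_3>0$, $b_1+b_3>0$ render non-negative; or invoke Theorem~\ref{GCB-new-perron-Frobenius}/the Perron--Frobenius argument once one knows the underlying digraph is strongly connected, which follows from $V(I,t_i)=\{0\}$ for all $i$ (guaranteed by \cite[Proposition~2.9]{ci-lattice} since $I$ is a homogeneous lattice ideal of height $2$) via Proposition~\ref{gcb-with-strongly-connected-digraph}. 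Either route is short; the bookkeeping of signs and the boundary case $b_1=0$ or $b_1=a_1$ is the only place where one must be slightly attentive.
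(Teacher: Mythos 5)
Your reverse implication and your Case (b) are essentially the paper's argument (in Case (b) the relations $a_1=b_1+c_1$, $b_2=a_2+c_2$, $c_3=a_3+b_3$ literally say the rows of your matrix sum to zero, so $L\mathbf{1}^\top=0$ and $L$ is a PCB matrix; no transposition or Gershgorin argument is needed, and indeed such an argument would be beside the point, since a CB matrix requires the specific vector $\mathbf{1}$ in the right kernel, not merely some $\mathbf{b}\in\mathbb{N}^3_+$, which would only give GCB). The genuine gap is in Case (a): the matrix you write down,
\[
L=\left(\begin{array}{rrr}
a_1&-b_1&-a_1\\ 0&b_2&0\\ -c_3&-b_3&c_3
\end{array}\right),
\]
obtained by taking $f_3:=-f_1$, is \emph{not} a CB matrix and cannot be repaired by any of the cited results: its second row is $(0,b_2,0)$ with $b_2>0$, so $L\mathbf{1}^\top\neq 0$; worse, its first and third columns are negatives of each other, so the right kernel is spanned by $(1,0,1)^\top$ and there is no strictly positive kernel vector at all — $L$ is not even GCB. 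The appeal to Theorem~\ref{GPCB-new}, Lemma~\ref{GCBishom} or Proposition~\ref{gcb-with-strongly-connected-digraph} is circular here, since those results assume $L$ is already GCB/GPCB (i.e., assume the positive right-kernel vector you are trying to produce), and knowing $\mathbf{d}L=0$ with $\mathbf{d}\in\mathbb{N}^3_+$ on the left does not transfer to the right kernel for an arbitrary PB matrix.

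The fix, which is what the paper does, is to choose a different third column: take $f_3:=-t_3^{b_3}f_1-t_1^{a_1-b_1}f_2=t_3^{b_3+c_3}-t_1^{a_1-b_1}t_2^{b_2}\in(f_1,f_2)$, so that
\[
L=\left(\begin{array}{rrc}
a_1&-b_1&-a_1+b_1\phantom{+}\\ 0&b_2&-b_2\\ -c_3&-b_3&b_3+c_3
\end{array}\right)
\]
has all row sums zero (hence $L\mathbf{1}^\top=0$), is a PB matrix (using $0\le b_1\le a_1$, $b_1+b_3>0$, $a_1,b_2,c_3>0$), and satisfies $I(L)=(f_1,f_2,f_3)=(f_1,f_2)=I$. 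With this replacement your outline becomes the paper's proof; as written, Case (a) does not go through.
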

\begin{proof} Suppose that $I$ is a homogeneous lattice ideal of $S$ 
of dimension $1$. By Theorem~\ref{nov18-12}, only two cases can occur
for $I$. In the first case, $I=(f_1,f_2)$, where
$f_1=t_1^{a_{1}}-t_3^{c_3}$ and $f_2=t_2^{b_2}-t_1^{b_1}t_3^{b_3}$,
with $0\leq b_1\leq a_1$ and $a_1,b_2,c_3>0$. Then $I$ is the CB ideal
associated to the CB matrix $L$, where
\begin{eqnarray*}
L=\left( \begin{array}{rrc}
  a_1&-b_1&-a_1+b_1\phantom{+}\\0&b_2&-b_2\\-c_3&-b_3&b_3+c_3
\end{array}\right),
\end{eqnarray*}
because $f_3=-t_3^{b_3}f_1-t_1^{a_1-b_1}f_2\in (f_1,f_2)$.

In the second case, $I=(f_1,f_2,f_3)$, where
$f_1=t_1^{a_{1}}-t_2^{a_2}t_3^{a_3}$,
$f_2=t_2^{b_2}-t_1^{b_1}t_3^{b_3}$ and
$f_3=t_3^{c_3}-t_1^{c_1}t_2^{c_2}$, with $0<a_2<b_2$, $0<a_3<c_3$,
$0<b_1<a_1$, $0<b_3<c_3$, $0<c_1<a_1$ and $0<c_2<b_2$, with
$a_1=b_1+c_1$, $b_2=a_2+c_2$ and $c_3=a_3+b_3$. Then $I$ is the PCB
ideal associated to the PCB matrix $L$, where
\begin{eqnarray*}
L=\left( \begin{array}{rrr}a_1&-b_1&-c_1\\-a_2&b_2&-c_2\\-a_3&-b_3&c_3
\end{array}\right).
\end{eqnarray*}
Conversely, if $I$ is a CB ideal, $I$ is a graded lattice ideal
of dimension $1$ by Proposition~\ref{CBproperties}.
\end{proof}

Next, we show that the homogeneous lattices of rank $2$ in
$\mathbb{Z}^3$ are precisely the lattices generated by the columns of
a CB matrix.

\begin{corollary}\label{jan6-13} 
Let $\bl$ be a lattice of rank $2$ in $\bz^3$. Then, $\bl$ is
homogeneous if and only if $\bl$ is generated by the columns of a CB
matrix.
\end{corollary}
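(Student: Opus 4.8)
The plan is to deduce both directions from Theorem~\ref{mainSection} together with Proposition~\ref{CBproperties}, exploiting the standard correspondence between a lattice $\bl\subset\bz^3$ of rank $2$ and its lattice ideal $I(\bl)\subset K[t_1,t_2,t_3]$: since ${\rm ht}(I(\bl))=\rank(\bl)=2$ in a $3$-dimensional ring, $I(\bl)$ has dimension $1$, and by definition $\bl$ is homogeneous precisely when $I(\bl)$ is homogeneous with respect to some positive weight vector $\mathbf{d}\in\bn^{3}_{+}$ (equivalently, $\bl\subseteq\ker_{\bz}(\mathbf{d})$).

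For ``$\Leftarrow$'' I would argue as follows. Assume $\bl=\langle\ell_{*,1},\ell_{*,2},\ell_{*,3}\rangle$ is generated by the columns of a $3\times 3$ CB matrix $L$. By Proposition~\ref{CBproperties}$(e)$ the CB ideal $I(L)$ is a homogeneous lattice ideal and $I(L)=I(\bl)$, where $\bl$ is the lattice spanned by the columns of $L$; hence $I(\bl)$ is homogeneous and $\bl$ is a homogeneous lattice. (Alternatively, Lemma~\ref{GCBishom} supplies $\mathbf{d}\in\bn^{3}_{+}$ with $\mathbf{d}L=0$, so $\bl\subseteq\ker_{\bz}(\mathbf{d})$.)

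For ``$\Rightarrow$'' I would start from a homogeneous lattice $\bl$ of rank $2$ in $\bz^3$, so that $I(\bl)$ is a homogeneous lattice ideal of dimension $1$ in $K[t_1,t_2,t_3]$. Theorem~\ref{mainSection} then gives that $I(\bl)$ is a CB ideal, say $I(\bl)=I(L)$ with $L$ a $3\times 3$ CB matrix, so that $I(\bl)$ is generated exactly by the three binomials $t^{\ell_{*,i}^{+}}-t^{\ell_{*,i}^{-}}$ attached to the columns $\ell_{*,i}$ of $L$. Since $I(\bl)$ is a lattice ideal presented by precisely these binomials, Corollary~\ref{nov4-12}$(a)$ forces $\bl=\langle\ell_{*,1},\ell_{*,2},\ell_{*,3}\rangle$, i.e.\ $\bl$ is generated by the columns of the CB matrix $L$.

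The genuinely substantive input is Theorem~\ref{mainSection}; this corollary essentially just repackages it. The one point to watch is the bookkeeping in the ``$\Rightarrow$'' direction: one must check that the matrix $L$ produced by Theorem~\ref{mainSection} is really $3\times 3$ and that its column binomials are exactly the given generating set of $I(\bl)$, so that Corollary~\ref{nov4-12}$(a)$ returns $\bl$ itself rather than a proper sublattice. Inspecting the two cases in the proof of Theorem~\ref{mainSection} shows $L$ is explicitly $3\times 3$ with its three column binomials generating $I(\bl)$, so this causes no difficulty, and I expect no real obstacle beyond this verification.
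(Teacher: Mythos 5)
Your proposal is correct and follows essentially the same route as the paper: the forward direction applies Theorem~\ref{mainSection} to the homogeneous dimension-$1$ lattice ideal $I(\bl)$ and then recovers $\bl$ from the columns of the resulting CB matrix via Corollary~\ref{nov4-12}, while the converse uses Proposition~\ref{CBproperties} (equivalently Lemma~\ref{GCBishom}) to produce $\mathbf{d}\in\bn^{3}_{+}$ with $\mathbf{d}L=0$, so that $\bl$ is homogeneous. Your extra check that the matrix from Theorem~\ref{mainSection} is genuinely $3\times 3$ with column binomials generating $I(\bl)$ is a reasonable bit of bookkeeping that the paper handles implicitly through Corollary~\ref{nov4-12}.
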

\begin{proof} 
If $\bl$ is homogeneous of rank $2$, $I(\bl)$ is homogeneous of height
$2$. By Theorem~\ref{mainSection}, $I(\bl)$ is a CB ideal. Hence, by
Corollary~\ref{nov4-12}, $\bl$ is generated by the columns of a CB
matrix. Conversely, let $\bl$ be the lattice generated by the columns
of a $3\times 3$ CB matrix $L$. Clearly $L$ has rank $2$ and, by
Proposition~\ref{CBproperties}, $\mathbf{d} L=0$ for some
$\mathbf{d}\in\bn^{3}_{+}$. In particular, $\bl$ is homogeneous.
\end{proof}

In the next result we add a new condition for a GPCB ideal to be a
lattice ideal (see Proposition~\ref{2variables} and
Proposition~\ref{structureI(L)} $(b)$).

\begin{proposition}\label{when-is-unmixed} 
Let $I=I(L)$ be the binomial ideal associated to an $s\times s$ GPCB
matrix $L$. Then $I$ is a lattice ideal if and only if $s\leq 3$ and
$I$ is a PCB ideal.
\end{proposition}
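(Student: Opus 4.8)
The plan is to prove both directions. For the ``if'' direction, suppose $s\le 3$ and $I$ is a PCB ideal. If $s=2$, then by Lemma~\ref{cases=2} a PCB ideal is principal and equals $I(\mathcal{L})$, hence is a lattice ideal. If $s=3$, then a PCB matrix is in particular a CB matrix, and by Proposition~\ref{CBproperties}(e) (or Theorem~\ref{mainSection}) $I=I(L)$ is a homogeneous lattice ideal. If $s=1$ the matrix form forces $I=(0)$, trivially a lattice ideal. This direction is essentially a bookkeeping exercise citing the earlier results.

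For the ``only if'' direction, suppose $I=I(L)$ is a lattice ideal with $L$ a GPCB matrix. First I would dispose of the large-support case: by Theorem~\ref{GPCB-new}, $I$ is graded and $\mathrm{rank}(L)=s-1$, and by Remark~\ref{onecanapply} $V(I,t_i)=\{0\}$ for all $i$; so Proposition~\ref{oct6-12}(a) applies and shows that if $|\mathrm{supp}(f_j)|\ge 4$ for all $j$, then $I$ is not a lattice ideal. For a GPCB matrix, every column $f_j$ has all entries of $L$ nonzero, so $|\mathrm{supp}(f_j)|=s$ for every $j$; hence $s\le 3$ is forced. Thus it remains to show that if $s\le 3$ and $I(L)$ is a lattice ideal then $L$ is a PCB matrix. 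For $s=2$ this is exactly the last assertion of Lemma~\ref{cases=2}: if $L$ is a $2\times 2$ GPCB matrix that is not PCB, then (in the notation there, with $b_1,b_2>1$ and $\gcd(b_1,b_2)=1$) $I$ is not principal and strictly contained in $I(\mathcal{L})$, so it is not a lattice ideal by Proposition~\ref{structureI(L)}(b). For $s=3$, a $3\times 3$ GPCB matrix has all off-diagonal entries strictly negative, so it is automatically a PCB matrix; there is nothing to prove in this subcase. The case $s=1$ is vacuous.

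Assembling these pieces: the large-support argument via Proposition~\ref{oct6-12} forces $s\le 3$; the case $s=1$ is trivial; the case $s=3$ is automatic since ``GPCB'' with $s=3$ already means ``PCB''; and the only genuine content is the case $s=2$, handled by Lemma~\ref{cases=2}. The main obstacle is really just making sure the support count is correct --- for a GPCB matrix all entries are nonzero, so each defining binomial $f_j=t_j^{a_{j,j}}-\prod_{i\ne j}t_i^{a_{i,j}}$ has support of size exactly $s$, which is what lets us invoke Proposition~\ref{oct6-12}(a) precisely when $s\ge 4$. Once that is in place, everything reduces to already-proved statements, so the proof is short.

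\begin{proof}
Suppose first that $s\le 3$ and $I$ is a PCB ideal. If $s=1$, the only possibility is $I=(0)$, a lattice ideal. If $s=2$, then by Lemma~\ref{cases=2} a PCB ideal is principal and equals $I(\mathcal{L})$, so it is a lattice ideal. If $s=3$, a PCB matrix is in particular a CB matrix, and by Proposition~\ref{CBproperties}(e), $I=I(L)=I(\bl)$ is a homogeneous lattice ideal.

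Conversely, suppose $I=I(L)$ is a lattice ideal with $L$ an $s\times s$ GPCB matrix. By Theorem~\ref{GPCB-new}(d), $\rank(L)=s-1$, and by Remark~\ref{onecanapply}, $I$ is graded and $V(I,t_i)=\{0\}$ for all $i$. Write $I=(f_1,\ldots,f_s)$ where $f_j=t_j^{a_{j,j}}-\prod_{i\ne j}t_i^{a_{i,j}}$ is the binomial defined by the $j$-th column of $L$. Since all entries of a GPCB matrix are nonzero, $|{\rm supp}(f_j)|=s$ for every $j$. If $s\ge 4$, then $|{\rm supp}(f_j)|\ge 4$ for all $j$, and Proposition~\ref{oct6-12}(a) shows that $I$ is not a lattice ideal, a contradiction. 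Hence $s\le 3$.

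If $s=1$, then $L=(a_{1,1})$ with $a_{1,1}=0$ (as $L\mathbf{b}^\top=0$ with $\mathbf{b}\in\bn_+$), so $I=(0)$, which is (vacuously) a PCB ideal. If $s=3$, a $3\times 3$ GPCB matrix has all off-diagonal entries strictly negative and all diagonal entries positive, hence is by definition a PCB matrix. Finally, if $s=2$, suppose $L$ is not a PCB matrix. By Lemma~\ref{cases=2}, $L$ is then not a PCB matrix precisely when $b_1,b_2>1$ (with $\gcd(b_1,b_2)=1$), in which case $I$ is not principal and $I\subsetneq I(\mathcal{L})$; by Proposition~\ref{structureI(L)}(b), $I$ is not a lattice ideal, contradicting our hypothesis. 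Therefore $L$ must be a PCB matrix.
\end{proof}
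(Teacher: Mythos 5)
The ``if'' direction and the reduction to $s\leq 3$ via Proposition~\ref{oct6-12} are fine and match the paper, as does the $s=2$ case via Lemma~\ref{cases=2}. But your $s=3$ case of the ``only if'' direction contains a genuine error: you claim that a $3\times 3$ GPCB matrix ``has all off-diagonal entries strictly negative and all diagonal entries positive, hence is by definition a PCB matrix.'' That is not the definition. A PCB matrix must satisfy $L\mathbf{1}^{\top}=0$, whereas a GPCB matrix only satisfies $L\mathbf{b}^{\top}=0$ for some $\mathbf{b}\in\mathbb{N}_+^s$; the sign pattern is the same for both classes. The paper itself exhibits $3\times 3$ GPCB matrices that are not PCB, e.g.\ the matrices in Examples~\ref{hullisPCB} and \ref{hullisnotPCB}, whose kernel vectors are $(2,1,1)$ and $(2,3,5)$ and whose rows do not sum to zero. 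So the $s=3$ subcase is not ``automatic''; it is in fact the main content of the forward direction. Note also that the conclusion to be proved is that the \emph{ideal} $I$ is a PCB ideal (i.e.\ $I=I(M)$ for some PCB matrix $M$), which does not follow from any statement about the shape of the given matrix $L$.

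The paper closes this gap as follows: by Theorem~\ref{GPCB-new} the ideal $I$ is graded (and of height $2$), so if $I$ is a lattice ideal it is a graded lattice ideal of dimension $1$ in $3$ variables; by Proposition~\ref{feb9-13} (using $V(I,t_i)=\{0\}$ and $|{\rm supp}|\geq 3$ coming from all entries of $L$ being nonzero) $I$ cannot be a complete intersection; then the structure theorem, Theorem~\ref{nov18-12}, says $I$ is generated by a full set of critical binomials, and since the two-generator case (a) is a complete intersection, $I$ must fall into case (b), which is exactly the PCB shape, as in the last paragraph of the proof of Theorem~\ref{mainSection}. You need this chain of results (or an equivalent argument) to finish $s=3$; as written, your proof of that subcase fails. (A minor additional quibble: your $s=1$ discussion is vacuous for a different reason --- a $1\times1$ matrix with nonzero entry cannot satisfy $L\mathbf{b}^{\top}=0$ with $b>0$, so there are no $1\times1$ GPCB matrices at all.)
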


\begin{proof} $\Rightarrow$) Assume that $I$ is a lattice ideal. 
By Proposition~\ref{oct6-12}, $s\leq 3$.
If $s=2$, by Lemma~\ref{cases=2}, $I$ is a PCB ideal. If $s=3$, by
Theorem~\ref{GPCB-new}, $I$ is a graded lattice ideal. 
Hence, by Proposition~\ref{feb9-13}, $I$ cannot be a complete
intersection. Applying Theorem~\ref{nov18-12}, we get as
in the final paragraph of the proof of Theorem~\ref{mainSection} that
$I$ is a PCB ideal.

$\Leftarrow)$ Assume that $I$ is a PCB ideal. In particular $I$ is a
CB ideal. If $s=3$, by Theorem~\ref{mainSection}, $I$ is a lattice
ideal. If $s=2$, by Lemma~\ref{cases=2}, $I$ is a lattice ideal. 
\end{proof}

As a corollary of Theorem~\ref{mainSection} we deduce the structure of
the hull of a GCB ideal. 

\begin{corollary}\label{hull(GPCB)}
Let $I=I(L)$ be the GCB ideal associated to a $3\times 3$ GCB matrix
$L$. Then $I(\bl)$ is a CB ideal.
\end{corollary}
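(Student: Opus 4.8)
The plan is to deduce the structure of $I(\bl)$ from the structure of $I$ itself together with Theorem~\ref{mainSection}. First I would invoke the already-established facts about GCB ideals in three variables: by Lemma~\ref{GCBishom}, $I=I(L)$ is homogeneous and satisfies $V(I,t_i)=\{0\}$ for all $i$. Since $L$ is a $3\times 3$ integer matrix, $I$ is the matrix ideal of $L$ in the sense of Definition~\ref{matrix-ideal}, and by Theorem~\ref{GPCB-new}(e) (or, in the genuinely non-positive GCB case, directly from $L\mathbf{b}^\top=0$ with $\mathbf{b}\in\bn^3_+$) the matrix $L$ has rank $2$. This puts $I$ squarely under the hypotheses of Proposition~\ref{structureI(L)}.

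Next I would apply Proposition~\ref{structureI(L)}(b): the hull of $I$ equals $I(\bl)$, the lattice ideal of the lattice $\bl$ spanned by the columns of $L$, and $\bl$ has rank $s-1=2$. By Proposition~\ref{structureI(L)}(c), $\rank(L)=2$, so $\bl$ is a rank-$2$ lattice in $\bz^3$, and it is homogeneous because $I$ is graded (the weight vector $\mathbf{d}\in\bn^3_+$ with $\mathbf{d}L=0$ annihilates $\bl$). Therefore $I(\bl)$ is a homogeneous lattice ideal of $S=K[t_1,t_2,t_3]$ of height $2$, i.e. of dimension $1$. Now Theorem~\ref{mainSection} applies verbatim: a homogeneous lattice ideal of dimension $1$ in three variables is precisely a CB ideal. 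Hence $I(\bl)$ is a CB ideal, which is exactly the assertion.

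An alternative, essentially equivalent route would be to pass to the associated CB matrix $\widetilde L := LB$ with $B=\diag(b_1,b_2,b_3)$, as in the proof of Theorem~\ref{GPCB-new}(c); one checks that $\widetilde L$ and $L$ span lattices with the same saturation, and $\widetilde L$ is a CB matrix, so Proposition~\ref{CBproperties}(e) gives that $I(\widetilde L)$ is already a lattice ideal equal to its own hull, whence $I(\bl)=I(\widetilde L)$ directly. But the first route is cleaner since it reuses Theorem~\ref{mainSection} as a black box.

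I do not expect any real obstacle here: the statement is a straightforward corollary, and every ingredient — homogeneity and the vanishing condition (Lemma~\ref{GCBishom}), $\hull(I)=I(\bl)$ and $\rank(L)=s-1$ (Proposition~\ref{structureI(L)}), and the characterization of dimension-$1$ homogeneous lattice ideals in three variables (Theorem~\ref{mainSection}) — is already in hand. The only point requiring a word of care is checking that $I(\bl)$ really has \emph{dimension exactly $1$} rather than $0$, which is immediate from $\rank(\bl)=2$ and $\dim S/I(\bl)=3-\mathrm{ht}(I(\bl))=3-2=1$; and that $I(\bl)$ is genuinely graded, which follows from the existence of $\mathbf{d}\in\bn^3_+$ with $\mathbf{d}L=0$ furnished by Lemma~\ref{GCBishom}.
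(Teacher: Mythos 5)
Your main argument is correct and is essentially the paper's own proof: Lemma~\ref{GCBishom} gives homogeneity and $V(I,t_i)=\{0\}$, Proposition~\ref{structureI(L)} gives $\hull(I)=I(\bl)$ a homogeneous lattice ideal of dimension $1$, and Theorem~\ref{mainSection} concludes. One caution about your sketched alternative only: the columns of $\widetilde{L}=LB$ span a finite-index \emph{sublattice} of $\bl$ when some $b_i>1$, so $I(\widetilde{L})$ is the lattice ideal of that sublattice and need not equal $I(\bl)$ (compare Procedure~\ref{findhull}, which requires $LQ=M$ with $\det(Q)=1$, and Example~\ref{hullisnotPCB}); since you rely on the first route, the proof stands.
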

\begin{proof}
By Lemma~\ref{GCBishom}, $I$ is homogeneous with
$V(I,t_i)=\{0\}$ for all $i$. Therefore, $I(\bl)$ is a homogeneous
lattice ideal of 
dimension 1 (see Proposition~\ref{structureI(L)}). Thus, by
Theorem~\ref{mainSection}, $I(\bl)$ is a CB ideal.
\end{proof}

We deduce a method to find a generating set for the hull of a GCB
ideal.

\begin{procedure}\label{findhull}
Given a $3\times 3$ GCB matrix $L$,
\begin{itemize}
\item[$(a)$] Find a CB matrix $M$ such that $\bm=\bl$, where $\bm$ and
  $\bl$ are the lattices of $\bz^3$ spanned by the columns of $M$ and
  $L$, respectively. Equivalently, find a CB matrix $M$ and a $3\times
  3$ integer matrix $Q$ with $\det(Q)=1$, such that $LQ=M$.
\item[$(b)$] By Proposition~\ref{CBproperties}, $I(M)$ is a lattice
  ideal and $I(M)=I(\bm)$. Hence $I(\bl)=I(M)$.
\end{itemize}
\end{procedure}

We illustrate this method with some examples.

\begin{example}\label{hullisPCB}
Let $I=(f_1,f_2,f_3)=(t_1^4-t_2t_3,t_2^3-t_1^5t_3,t_3^3-t_1^3t_2)$ be
the GPCB ideal associated to the GPCB matrix
\begin{eqnarray*}
L=\left(\begin{array}{rrr}
4&-5&-3\\-1&3&-1\\-1&-1&3
\end{array}\right).
\end{eqnarray*}
Here $L\mathbf{b}^\top=0$, with ${\bf b}=(2,1,1)$. We have
$(-1,2,-2)=(4,-1,-1)+(-5,3,-1)\in\bl$. Therefore $\bl=\langle
(4,-1,-1),(-1,2,-2),(-3,-1,3)\rangle$. Take
\begin{eqnarray*}
M=\left(\begin{array}{rrr}
4&-1&-3\\-1&2&-1\\-1&-2&3
\end{array}\right),
\end{eqnarray*}
which is a PCB matrix. Hence $I(\bl)=I(\bm)=I(M)=
(t_1^4-t_2t_3,t_2^2-t_1t_3^2,t_3^3-t_1^3t_2)$. In this example, the
hull of a GPCB ideal is a PCB ideal.
\end{example}

\begin{example}\label{hullisnotPCB}
Let $I=(f_1,f_2,f_3)=(t_1^4-t_2^2t_3,t_2^3-t_1t_3,t_3-t_1t_2)$ be the
GPCB ideal associated to the GPCB matrix
\begin{eqnarray*}
L=\left(\begin{array}{rrr}
4&-1&-1\\-2&3&-1\\-1&-1&1
\end{array}\right).
\end{eqnarray*}
Here $L\mathbf{b}^\top=0$, with ${\bf b}=(2,3,5)$. Let $Q$ and $M$ be
the $3\times 3$ integer 
matrices:
\begin{eqnarray*}
Q=\left(\begin{array}{rrr}
1&1&0\\1&2&0\\2&2&1
\end{array}\right)\mbox{, }\det(Q)=1\mbox{; }
M=\left(\begin{array}{rrr} 1&0&-1\\-1&2&-1\\0&-1&1
\end{array}\right),\mbox{ a CB matrix; }LQ=M.
\end{eqnarray*}
Hence $I(\bl)=I(\bm)=I(M)= (t_1-t_2,t_2^2-t_3,t_3-t_1t_2)$.  In this
example, the hull of a GPCB ideal is a CB ideal.
\end{example}

\begin{example}\label{tocompletetoPCB}
Let $\mathcal{L}=\langle(-2,4,-2),(-2,-3,4)\rangle$, which is a rank 2
homogeneous lattice with respect to the vector $(5,6,7)$. Thus the
lattice ideal $I(\mathcal{L})$ of $\mathcal{L}$ is a graded lattice
ideal of dimension 1. By Theorem~\ref{nov18-12}, $I(\mathcal{L})$ is
generated by a full set of critical binomials and, by
Theorem~\ref{mainSection}, $I(\mathcal{L})$ is a CB ideal (here a PCB
ideal). Concretely
\begin{eqnarray*}
I(\mathcal{L})=((t_2^4-t_1^2t_3^2,t_1^2t_2^3-t_3^4)\colon(t_1t_2t_3)^
\infty)=(t_1^4-t_2t_3^2,t_2^4-t_1^2t_3^2,t_1^2t_2^3-t_3^4).
\end{eqnarray*}
To obtain the above generating set one may ``complete'' the two
generators of $\bl$ to a CB (in fact a PCB) matrix $M$, namely,
\begin{eqnarray*}
M=\left(\begin{array}{rrr}
4&-2&-2\\-1&4&-3\\-2&-2&4
\end{array}\right),
\end{eqnarray*}
and apply Procedure~\ref{findhull}. The degree of $S/I$ is $14$. If
$K=\mathbb{Q}$, then $I=\mathfrak{p}_1\cap\mathfrak{p}_2$, where
$\mathfrak{p}_1,\mathfrak{p}_2$ are prime ideals of degree $7$.
\end{example}

\begin{example}\label{jan10-13}
Let $\mathcal{L}=\langle(2,-1,-1),(-3,1,-1)\rangle$, which is a rank 2
non-homogeneous lattice. The lattice ideal $I(\mathcal{L})$ of
$\mathcal{L}$ is a non-graded lattice ideal of height 2. By
Theorem~\ref{mainSection}, $I(\mathcal{L})$ cannot be a CB
ideal. Concretely
\begin{eqnarray*}
I(\mathcal{L})=((t_1^2-t_2t_3,t_2-t_1^3t_3)\colon(t_1t_2t_3)^
\infty)=(t_1^2-t_2t_3,t_1t_3^2-1)
\end{eqnarray*}
If we apply  Corollary~\ref{inspired-by-francesc-example} with
$v_1=-2$, $v_2=-5$ and $v_3=1$, we get that $I(\mathcal{L})$ has
degree $6$. 
\end{example}

\medskip

\noindent
{\bf Acknowledgments.} 
The authors would like to thank the referees for their
careful reading of the paper and for the improvements that
they suggested.

\bibliographystyle{plain}

\end{document}